\def\loc{\operatorname{loc}}
\definecolor{citation}{rgb}{0.11,0.67,0.84}
\definecolor{formula}{rgb}{0.1,0.2,0.6}
\definecolor{url}{rgb}{0.11,0.67,0.84}
\newcommand{\reqnomode}{\tagsleft@false}
\def\dx{\,{\rm d}x}
\def\dt{\,{\rm d}t}
\def\dy{\,{\rm d}y}
\def\dla{\,{\rm d}\lambda}
\def \d{\,{\rm d}}
\def \diver{\,{\rm div}}
\def\dist{\,{\rm dist}}
\def\deb{\rightharpoonup}
\def\supp{\,{\rm supp}}
\newcommand\ccc{\mathfrak{c}}
\DeclareRobustCommand*{\bfseries}{%
  \not@math@alphabet\bfseries\mathbf
  \fontseries\bfdefault\selectfont
  \boldmath
}
\DeclareMathOperator*{\osc}{osc}
\newlength{\defbaselineskip}
\newcommand{\setlinespacing}[1]
           {\setlength{\baselineskip}{#1 \defbaselineskip}}
\newcommand{\mint}{\mathop{\int\hskip -1,05em -\, \!\!\!}\nolimits}
\newtheorem{theorem}{Theorem}
\newtheorem{corollary}{Corollary}
\newtheorem{definition}{Definition}
\newtheorem{remark}{Remark}
\newtheorem{lemma}{Lemma}[section]
\newtheorem{proposition}{Proposition}[section]
\numberwithin{equation}{section}
\newcommand{\hhh}{\textnormal{\texttt{h}}}
\newcommand{\ssf}{\mathfrak{s}}
\newcommand{\sst}{\mathfrak{t}}
\newcommand{\ppp}{\mathfrak p}
\newcommand{\kk}{\kappa}
\def\en{\mathbb N}
\def\er{\mathbb R}
\def\FF{\mathbb{F}}
\newcommand{\gggg}{\textnormal{\texttt{g}}}
\newcommand{\BB}{\mathcal{B}_1}
\newcommand{\BBB}{\mathcal{B}_{1/2}}
\newcommand\eps\varepsilon
\def\eqn#1$$#2$${\begin{equation}\label#1#2\end{equation}}
\newcommand{\data}{\textnormal{\texttt{data}}}
\newcommand{\datae}{\textnormal{\texttt{data}}_{\textnormal{e}}}
\newcommand{\be}{\begin{equation}}
\newcommand{\alpham}{\alpha_{\rm m}}
\newcommand{\pppm}{\ppp_{\rm m}}
\newcommand{\ee}{\end{equation}}
\newcommand{\rr}{\varrho}
\newcommand{\snr}[1]{\lvert #1\rvert}
\newcommand{\nr}[1]{\lVert #1 \rVert}
\newcommand{\una}{\mathds{1}_p}
\newcommand{\N}{\mathbb{N}}
\def\name[#1, #2]{#1 #2}
\newcommand{\rif}[1]{(\ref{#1})}
\newcommand{\trif}[1] {\textnormal{\rif{#1}}}
\newcommand{\stackleq}[1]{\stackrel{\rif{#1}}{ \leq}}
\title[Nonuniformly elliptic Schauder theory]{Nonuniformly elliptic Schauder theory}
\author[De Filippis]{Cristiana De Filippis}  \address{Cristiana De Filippis\\Dipartimento SMFI, Universit\`a di Parma, Viale delle Scienze 53/a, Campus, 43124 Parma, Italy} \email{\url{cristiana.defilippis@unipr.it}}
\author[Mingione]{Giuseppe Mingione}  \address{Giuseppe Mingione\\Dipartimento SMFI, Universit\`a di Parma, Viale delle Scienze 53/a, Campus, 43124 Parma, Italy} \email{\url{giuseppe.mingione@unipr.it}}
\begin{document}

\subjclass[2010]{49N60, 35J60\vspace{1mm}} 

\keywords{Regularity, nonuniform ellipticity, Schauder estimates\vspace{1mm}}

\thanks{{\it Acknowledgements.}\ This work is supported by the University of Turin via the project "Regolarit\'a e propriet\'a qualitative delle soluzioni di equazioni alle derivate parziali" and by the University of Parma via the project "Regularity, Nonlinear Potential Theory and related topics".
\vspace{1mm}}

\maketitle

\centerline{To Arrigo Cellina, with admiration for his pioneering}
\centerline{ work in the Calculus of Variations}

\begin{abstract}
Local Schauder estimates hold in the nonuniformly elliptic setting. Specifically, first derivatives of solutions to nonuniformly elliptic variational problems and elliptic equations are locally H\"older continuous, provided coefficients are locally H\"older continuous. 
\end{abstract}
\vspace{3mm}
\setcounter{tocdepth}{1}
{\small \tableofcontents}

\setlinespacing{1.08}
\section{Introduction}\label{si}
In this paper we give the first general solution to two different, but yet connected, longstanding and classical open problems in the regularity theory of variational integrals and elliptic equations. To begin with, we prove the first results concerning local gradient H\"older regularity of minimizers of nonuniformly elliptic integrals, that are not necessarily equipped with a Euler-Lagrange equation. In fact, in the cases we are going to consider here, such equations might not exist. Such type of results are classical in the uniformly elliptic case \cite{gg1, gg2, gg3}, whilst nothing is known in the general nonuniformly elliptic one. Second, and most importantly, we prove Schauder estimates for nonuniformly elliptic problems. Both for variational problems and for elliptic equations, the gradient of solutions is locally H\"older continuous provided coefficients are locally H\"older continuous. Again, while this is classical in the uniformly elliptic case -- see again \cite{gg2, gg3}, Manfredi's \cite{manth1, manth2} and Lieberman's \cite{liebe} papers for full generality -- no analog is recorded in the nonuniformly elliptic case. The crucial point in this setting is to obtain $L^\infty$-gradient bounds, after which, more classical perturbation methods can be combined with certain specific forms of the a priori estimates obtained, to prove gradient H\"older continuity. See for instance the comments in Lieberman's review of Giaquinta \& Giusti's paper \cite{gg3}\footnote{Indeed, in the MR review of \cite{gg3}, Lieberman states: ``A comment needs to be made concerning their [i.e.,\,of Giaquinta \& Giusti's methods] brief application to equations when their growth properties fail. As they point out, such equations fall under their considerations provided a global gradient bound has been established; however, this gradient bound has only been proved when $A$ [i.e.,\,the operator or functional considered in \cite{gg3}] is differentiable with respect to all its arguments, and in many cases more smoothness of the coefficients is needed. The results of this paper are thus much more striking when applied to uniformly elliptic equations than to nonuniformly elliptic ones" \cite{lieberev}. The missing growth properties forcing smoothness of coefficients Lieberman is pointing at, correspond to nonuniform ellipticity. This can be therefore treated, when coefficients are H\"older continuous, only upon assuming that solutions are a priori Lipschitz (and under certain additional assumptions, like non-degeneracy). In this paper we overcome these points.}. The central role of gradient bounds is also remarked by Ivanov \cite[page 7]{ivanov2}\footnote{Ivanov remarks: ``In view of the results of Ladyzhenskaya and Uraltseva, the problem of solvability of boundary value problems for a nonuniformly elliptic or parabolic equation reduces to the question of constructing a priori estimates of the maximum moduli of the gradients of solutions for a suitable one-parameter family of similar equations". This means finding uniform a priori gradient estimates for regularized problems. This is shown to be possible in this paper without the unnatural assumptions considered before, i.e., without differentiability and smoothness of coefficients, in turn ruling out the Schauder setting. See the proof of Theorem \ref{t6}.} and was exploited by Ladyzhenskaya \& Uraltseva \cite{LU}, as described in \cite[page 15]{ivanov2} too. To achieve our results, we employ a novel hybrid perturbation approach, suited for nonuniformly elliptic problems. This is aimed at replacing the classical ones used in the uniformly elliptic setting, that  are ultimately based on plain freezing arguments. We believe that this approach has potential for applications in several other places. In fact, in this paper we present the main bulk of the technique and apply it in a certain number of different settings. Others are still possible. In particular, the boundary case, as well as the evolutionary one, will be treated in forthcoming papers. Obstacle problems can also be treated. 

So-called Schauder estimates for linear elliptic equations are actually a classic achievement of Hopf \cite{hopf}, Caccioppoli \cite{cacc1} and Schauder \cite{js, js2}. See also \cite{camp, gt, trusc, simon2} for modern proofs. The nonlinear story goes back to the classical papers by Frehse \cite{Fre}, Giaquinta \& Giusti \cite{gg1, gg2, gg3, gg4}, Ivert \cite{ivert1, ivert2} and Manfredi \cite{manth1, manth2}. There the first Schauder type results for nonlinear equations and nondifferentiable integral functionals,  asserting local H\"older continuity of the gradient for some exponent, were proved. For the sake of simplicity, let us consider the following classical model example, \cite{Ce, Ce1, gg2, KM, HS, phil, stamp, tromba}:
\begin{flalign}\label{ggg}
w\mapsto 
\int_{\Omega}[F(Dw)+\hhh(x,w)] \dx\,.
\end{flalign}
Here $F(\cdot)\geq 0$ is a sufficiently regular, uniformly elliptic integrand with $p$-growth - take for instance $F(Dw)\equiv |Dw|^p$, $p>1$. Instead, $\hhh\colon \Omega \times \er\to \er$ is a bounded, merely H\"older continuous function; $\Omega \subset \er^n$ denotes a bounded open subset, $n\geq 2$. By uniform ellipticity of $F(\cdot)$ here we mean that 
the ellipticity ratio $\mathcal R_{F} (z)$ remains bounded for $\snr{z}$ large \cite{LU, tru1967, ivanov2}, i.e., 
\eqn{ellratio}
$$
\sup_{\snr{z}\geq 1}\, \mathcal R_{F} (z)< \infty, \quad  \mathcal R_{F} (z):=  \frac{\mbox{highest eigenvalue of}\ \partial_{zz}  F(z)}{\mbox{lowest eigenvalue of}\  \partial_{zz} F(z)} \,.
$$
This happens for instance in the $p$-Laplacean case $F(z)=\snr{z}^p$, i.e., when 
\eqn{pcase}
$$
\partial_{zz}F(z) \approx \snr{z}^{p-2} \mathbb{I}_{\textnormal{d}}
$$
holds for $\snr{z}$ large. 
As $\hhh(\cdot)$ is not assumed to be differentiable, the Euler-Lagrange equation 
\eqn{eueu}
$$
-\diver\, \partial_zF(Du)+ \partial_u\hhh(x,u)=0
$$
of the functional in \rif{ggg} just cannot be derived. Yet, in \cite{gg2} a method is devised to get local gradient H\"older continuity of minima only using minimality, and without passing through \rif{eueu}. This goes roughly as follows. Given a minimizer $u$ of the functional in \rif{ggg}, one defines its lifting $v$ on the ball $B \Subset \Omega$ by solving
\eqn{lift}
$$
v \to \min_{w\in u+W^{1,p}_0(B)}\, \int_{B}F(Dw) \dx\,.
$$ 
Solutions to corresponding Euler-Lagrange equation 
$\diver\, \partial_zF(Dv)=0$, are $C^{1,\alpha}$-regular and enjoy good {\em homogeneous} decay estimates. This is a direct consequence of the uniform ellipticity \rif{ellratio}. Such estimates can then be matched with comparison ones between $u$ and $v$ on $B$ as the enjoy {\em the same degree of homogeneity} of the reference ones and this is another consequence of \rif{ellratio}. Thanks to this common homogeneity, combining the two ingredients finally leads to transfer, at all scales, the $C^{1,\alpha}$-estimates available for solutions to \rif{lift}, to the original minimizer $u$. This comparison scheme, based on minimality, is in spirit close to the one used to derive classical Schauder estimates and relying on Korn's argument. 

Such classical perturbation schemes uniformly fail in the nonuniformly elliptic setting. In this paper we show a different route aimed at bypassing the lack of homogeneous estimates typical of nonuniformly elliptic problems. For this, we will consider a general class of integrands for which \rif{ellratio} fails, and for which $\mathcal R_{F} (z)$ grows at most polynomially
\eqn{polygro}
$$
\mathcal R_{F} (z) \lesssim \snr{z}^{\delta}+1\,, \qquad \delta>0\,.
$$
In fact, our main ellipticity  assumption, replacing \rif{pcase}, will be of the type
\eqn{nonuni}
$$
\snr{z}^{p-2} \mathbb{I}_{\textnormal{d}} \lesssim \partial_{zz}F(z) \lesssim \snr{z}^{q-2} \mathbb{I}_{\textnormal{d}}
$$
for $\snr{z}\geq 1$, so that it is $\delta=q-p$ in \rif{polygro}; conditions \rif{nonuni} are the most general to describe \rif{polygro}. Polynomial nonuniform ellipticity as in \rif{polygro} is a standard topic since the classical works of Ladyzhenskaya \& Uraltseva \cite{LU, LUcpam}, Hartman \& Stampacchia \cite{hast}, Trudinger \cite{tru1967, tru}, Ivočkina \& A.\,P. Oskolkov \cite{IO}, Oskolkov \cite{osk}, Serrin \cite{serrin}, A.\,V. Ivanov \cite{ivanov0, ivanov1, ivanov2},  Leon Simon \cite{Simongrad}, Uraltseva \& Urdaletova \cite{UU}, Lieberman \cite{liebe0}, just to mention a few. In the variational setting, conditions \rif{nonuni} were systematically studied by Marcellini in a series of pioneering papers \cite{M1, M2, M3}, who introduced functionals with so-called $(p,q)$-growth conditions, referring to the formulation in \rif{nonuni}. Today a huge literature is devoted to such problems. With the current techniques, differentiability of coefficients is unescapable; H\"older regularity is forbidden. 

To describe the methods employed here, we recall that a traditional, {\em non-perturbative and direct} way to get that minima are Lipschitz when equation \rif{eueu} exists, is to first differentiate \rif{eueu}, and then invoking De Giorgi-Nash-Moser theory. Sticking to De Giorgi's method, this means to get first a Caccioppoli inequality involving derivatives of $Du$, and then to run a geometric iteration leading to gradient boundedness. Here we use, in a sense, both the direct and the perturbative approach, and that's where the word hybrid stems from. We also take advantage of various regularity tools and viewpoints developed over the last years in the Calculus of Variations \cite{BM, KM, ciccio} and in Nonlinear Potential Theory \cite{kilp, mi}. The approach proposed here goes along the following bullet points:
\begin{itemize}
\item We still prove $Du \in L^\infty_{\loc}$ via a direct De Giorgi type geometric iteration involving, up to minor corrections, truncations of a certain convex function of $\snr{Du}$ (Bernstein method). The iteration is based on a Caccioppoli type inequality, that this time does not involve full derivatives of $Du$, as in the classical case. This is because the functionals/equations we are dealing with involve nondifferentiable coefficients, and therefore cannot be differentiated. Instead, notwithstanding  the problem is local, the Caccioppoli inequality we use involves fractional derivatives of $Du$; see Section \ref{ibrida}. 
\item To iterate the Caccioppoli inequality, we use a renormalization that makes it homogeneous, as in uniformly elliptic problems. The price we pay is a controlled increase of the involved multiplicative constants. They now incorporate an additional, direct dependence on $\|Du\|_{L^\infty}^{\sigma}$, with $\sigma\equiv \sigma (q/p)$. Keeping track of such constants is a crucial part of the proof. For this, we impose a moderate polynomial growth rate on $\mathcal R_{F} (z)$ as in \rif{polygro}, that implies that $\delta \equiv q-p$ must be small enough, in turn making $\sigma$ small too. This kind of assumption is not technical and it is necessary (see Remark  \ref{gapre}). 

\item The perturbative part, making the approach hybrid. This is in the proof of the Caccioppoli inequality. For this we exploit a delicate atomic like decomposition in Nikolski spaces. The argument still employs comparisons with liftings $v$ as in \rif{lift}, used in a sense as atoms (see Section \ref{hyb2} below). At this stage, crucial use is made of precise a priori Lipschitz estimates for autonomous problems (see Lemma \ref{marcth}). 
\item We rely on nonlinear potentials of the type originally introduced by Havin \& Maz'ya \cite{HM}; see Section \ref{losec}. Indeed, certain quantities apparently uncontrollable in the nonuniformly elliptic setting, are now treated by means of an optimized  splitting between the $L^{\infty}$-norm of $Du$, and integral remainder terms building up nonlinear potentials along iterations (see Propositions \ref{caccin3}-\ref{caccin4}).  As an additional benefit, this nonlinear potential theoretic approach allows to treat cases involving unbounded data. In particular, it leads to discover new borderline conditions for regularity in Lorentz spaces. These extend known ones from standard settings and connect to a very large literature on borderline cases; see Remark \ref{lolorere}.   
\end{itemize}
This approach allows to settle the open problem of proving Schauder estimates in the nonuniformly elliptic setting. To fix the ideas, consider the model functional
\eqn{modello}
$$
\begin{cases}
\, \displaystyle w \mapsto \mathcal S_{\texttt{x}}(w, \Omega ):=\int_{\Omega}\ccc(x)F(Dw) \dx, \quad 0< \nu \leq \ccc(\cdot) \leq L\\
\,  |\ccc(x_1)-\ccc(x_2)| \leq L|x_1-x_2|^{\alpha}, \quad  \alpha \in (0,1],
 \end{cases}
$$
for every choice of $ x_1, x_2\in \Omega$, where $F(\cdot)$ satisfies \rif{nonuni}. When $p=q$, gradient H\"older regularity of minima can be found in \cite{gg1, gg2, liebe, manth1, manth2}. When $p\not= q$, the first result is in Theorem \ref{t2} below. 
Further developing our approach, 
we treat also nondifferentiable functionals of the type 
\eqn{modellou}
$$
\begin{cases}
\, \displaystyle w \mapsto \mathcal S (w, \Omega ):=\int_{\Omega}\left[\ccc(x,w)F(Dw)+\hhh(x, w) \right] \dx, \quad 0< \nu \leq \ccc(\cdot) \leq L\\
\, |\ccc(x_1, y_1)-\ccc(x_2, y_2)| \leq L|x_1-x_2|^{\alpha}+L|y_1-y_2|^{\alpha}, \quad    \alpha \in (0,1],
 \end{cases}
$$
for every choice of $x_1, x_2\in \Omega, y_1, y_2\in \er$, and thereby falling outside the realm of traditional Schauder estimates. 
In this case we assume the additional lower bound $p>n$, that we suspect to be necessary; see Theorem \ref{t3} and subsequent Remark \ref{gapremark}. We note that Theorem \ref{t3} is completely new already when $y \mapsto \ccc(\cdot, y)$ is smooth.

Our techniques apply to general nonuniformly elliptic equations in divergence forms of the type considered in \cite{LU, gg3, liebe, manth1, manth2} in the uniformly elliptic case. In order to present the main ideas and to keep presentation at a reasonable length, we confine ourselves to equations of the type 
\eqn{equazioneM}
$$
-\diver\, A (x,Du)=0\,,
$$
but not necessarily stemming from variational integrals; see Section \ref{equazioni}. Again, more general cases, for instance involving non-zero right-hand sides, can be treated by our means.

\section{Results}\label{risultati}
\subsection{Basic notation}\label{basicn}
We deal with integral functionals of the form
\eqn{Fx}
$$
W^{1,1}_{\loc}(\Omega) \ni w\mapsto \mathcal{F}(w,\Omega):=\int_{\Omega}\left[\mathbb{F}(x,w,Dw)+\hhh(x, w) \right]  \dx\,.
$$
Here, as in the rest of the paper, $\Omega\subset \er^n$, $n\geq 2$, denotes a fixed open and bounded subset, $\FF\colon \Omega \times \er \times \er^n \to [0, \infty)$ and $\hhh \colon \Omega \times \er \to\er$ are Carath\'eodory regular functions, and $\hhh(\cdot)$ is such that $\hhh(\cdot, w)\in L^1_{\loc}(\Omega)$, whenever $w \in W^{1,1}_{\loc}(\Omega)$. Under such premises, we adopt the following
\begin{definition}\label{defi-min} A function $u \in W^{1,1}_{\loc}(\Omega)$ is a \emph{(local) minimizer} of the functional $\mathcal F$ in \eqref{Fx} if, for every ball $B\Subset \Omega$, we have $\FF(\cdot, u, Du) \in L^1(B)$ and $\mathcal F(u;B)\leq \mathcal F(w;B)$ holds for every competitor $w \in u + W^{1,1}_0(B)$. 
\end{definition}
For the rest of the paper, we denote by $c, \chi_1, \chi_2$ general constants such that $c, \chi_1, \chi_2\geq 1$. Different occurrences from line to line will be still denoted using the same letters. Special occurrences of $c$ will be denoted by $c_*,  \tilde c$ or likewise. Relevant dependencies on parameters will be as usual emphasized by putting them in parentheses; for instance $c\equiv c (n,p,q)$ means that $c$ depends on $n,p,q$. Next, we fix a set of real parameters denoted by $
\data \equiv (n, p, q,\alpha, \nu, L)$, where $n\geq 2$ is an integer, $1<p\leq q$, $0 < \nu \leq  L$, 
and $\alpha \in (0,1]$, and also set $\datae \equiv (n, p, q,\alpha)$. With $\nu, L$ being fixed, 
we denote by $\tilde \nu \equiv \tilde \nu(n,p,\nu)$ and $\tilde L \equiv \tilde L(n,q, L)$ two quantities such that $0 < \tilde \nu \leq \nu\leq L  \leq \tilde L$. While $\nu, L$ are fixed here, the exact value of the quantities $\tilde \nu, \tilde L$ might vary in different occurrences, but still keeping the dependence on the constants specified above. For this reason, dependence on $\tilde \nu, \tilde L$ will be often be incorporated in the dependence on $\data$ or on $\nu, L$. Unless otherwise specified, $\mu$ denotes a fixed constant such that $\mu\in [0,1]$; we also denote 
\eqn{defiH}
$$\mbox{$H_{s}(z):=\snr{z}^{2}+s^{2}$, \ \  for $z\in \er^n$,  \quad $\mu_s:= \mu+s$ \ \ and $s\geq0$\,.}
$$ 
Further notation can be found in Section \ref{further}. 
\subsection{Functionals without the Euler-Lagrange equation}\label{sezione}
Here we concentrate on nondifferentiable functionals of the form
\begin{flalign}\label{ggg2}
w\mapsto \mathcal{G}(w,\Omega):=\int_{\Omega}[F(Dw)+\gggg(x,w, Dw)+ \hhh(x, w)] \dx\,.
\end{flalign}
The integrand $F\colon \mathbb{R}^{n}\to [0, \infty)$ satisfies the growth and (nonuniform) ellipticity conditions
\begin{flalign}\label{assFF}
\qquad\begin{cases}
\,F(\cdot) \in C^1(\er^n)\cap C^2(\er^n\setminus\{0\})\\
\, \nu [H_{\mu}(z)]^{p/2}\le F(z)\le L[H_{\mu}(z)]^{q/2}+L[H_{\mu}(z)]^{p/2}\\
\, \nu [H_{\mu}(z)]^{(p-2)/2}\snr{\xi}^{2}\le \partial_{zz}F(z)\xi\cdot \xi \\
\, \snr{\partial_{zz}F(z)}\le L[H_{\mu}(z)]^{(q-2)/2}+L[H_{\mu}(z)]^{(p-2)/2},
\end{cases}
\end{flalign}
for all $z\in \mathbb{R}^{n}\setminus \{0\}$, $\xi\in \mathbb{R}^{n}$ \footnote{Assumption \rif{assFF}$_2$, in conjunction with Definition \ref{defi-min}, implies that any minimizer $u$ of $\mathcal{G}$ belongs to $W^{1,p}_{\loc}(\Omega)$. It will always be the case for the rest of this paper, also when dealing with different functionals.}. The function $ \gggg\colon \Omega\times \mathbb{R}\times \mathbb{R}^{n}\to [0, \infty)$ satisfies
\begin{flalign}\label{assgg}
\begin{cases}
\, z\mapsto \gggg(x,y,z) \  \mbox{is convex and of class $C^1(\er^n)\cap C^2(\er^n\setminus\{0\})$}\\
\, \gggg(x,y,z)+H_{\mu}(z)\snr{\partial_{zz}\gggg(x,y,z)}\leq L[H_{\mu}(z)]^{p/2} \\
\, \snr{\gggg(x_{1},y_{1},z)-\gggg(x_{2},y_{2},z)}\le L\left(\snr{x_{1}-x_{2}}^{\alpha}+\snr{y_{1}-y_{2}}^{\alpha}\right)(\snr{z}^2+1)^{\gamma/2}\\
\, \alpha +\gamma <p, \ \ \gamma \geq 0
\end{cases}
\end{flalign}
for all $x,x_{1},x_{2}\in \Omega$, $y,y_{1},y_{2}\in \mathbb{R}$, $z\in \mathbb{R}^{n}\setminus \{0\}$\footnote{The function $\gggg(\cdot)$ is continuous. This follows from \rif{assgg} and Lemma \ref{marclemma}.}.
The function $\gggg(\cdot)$ is therefore only H\"older continuous with respect to $(x,y)$; a typical example can be $\gggg(x,y,z)=\ccc(x,y)[H_{\mu}(z)]^{p/2}[H_{1}(z)]^{(\gamma-p)/2}$, where $\ccc(\cdot)$ is as in \rif{modellou}. The function 
$\hhh\colon \Omega\times \mathbb{R}\to \mathbb{R}$ is assumed to be Carath\'eodory regular and to satisfy conditions involving Lorentz spaces (see Section \ref{losec})
\eqn{asshh}
$$
\begin{cases}
\, \snr{\hhh(x,y_{1})-\hhh(x,y_{2})}\le f(x)\snr{y_{1}-y_{2}}^{\alpha}\\
\,  f\in L\left(n/\alpha,1/2\right)(\Omega)\\
\, \hhh (\cdot, 0)\in L^1(\Omega)
\end{cases}
$$
for a.e.\,$x\in\Omega$ and every $y_{1}, y_{2}\in \er$ \footnote{Assumptions \rif{asshh} imply that $\hhh(\cdot, w)\in L^1_{\loc}(\Omega)$, whenever $w \in W^{1,1}_{\loc}(\Omega)$, as required by the discussion before Definition \ref{defi-min}. This follows using Sobolev embedding theorem and \rif{crescehh} below.}. This time $\hhh(\cdot)$ is only measurable with respect to the $x$-variable; an example is $\hhh(x,y)\approx f(x) h(y)$, where $  h(\cdot)$ is any H\"older continuous function. 
\begin{theorem}\label{t1}
Let $u\in W^{1,1}_{\loc}(\Omega)$ be a minimizer of the functional $\mathcal{G}$ in \trif{ggg2}, under assumptions \eqref{assFF}-\eqref{asshh} and 
\begin{eqnarray}\label{bound1}
\frac{q}{p}\leq  1+\frac 15\left(1-\frac{\alpha+\gamma}{p}\right)\frac{\alpha}{n}\,.
\end{eqnarray}
Then $Du\in L^{\infty}_{\loc}(\Omega,\er^n)$. Moreover, 
\eqn{stima1}
$$
\nr{Du}_{L^{\infty}(B_{t})}\le \frac{c}{(r-t)^{\chi_1}}\left[ \mathcal{G}(u, B_{r}) +\|\hhh(\cdot,u)\|_{L^1(B_{r})}+\|f\|_{n/\alpha,1/2;B_{r}}+1\right]^{\chi_2}
$$
holds whenever $B_{t}\Subset B_{r} \Subset \Omega$ are concentric balls with $r\leq 1$, where $c\equiv c(\data, \gamma)$ and $ \chi_1, \chi_2\equiv \chi_1, \chi_2(\datae, \gamma)$. If $f \in L^{\mathfrak{q}}_{\loc}(\Omega)$ for some $\mathfrak{q}>n/\alpha$, then $Du$ is locally H\"older continuous in $\Omega$. 
\end{theorem}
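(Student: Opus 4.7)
The plan is to follow the hybrid strategy sketched in the introduction, splitting the proof into an a priori $L^\infty$-bound for the gradient and a subsequent perturbative step for H\"older continuity. Since the functional is nondifferentiable and $Du$ is not a priori in $L^\infty$, I would first regularize on several levels: replace $F$ by an integrand of balanced $p$-growth (adding a small $\varepsilon$-multiple of $[H_\mu(z)]^{q/2}$ is the standard device), mollify $\gggg$ and $\hhh$ in $(x,y)$, and truncate $f$. The resulting minimizers $u_\varepsilon$ satisfy Euler--Lagrange equations with bounded right-hand sides and are a priori $C^{1,\beta}_{\loc}$-regular, so all later manipulations involving truncations and difference quotients of $Du_\varepsilon$ are legitimate. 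The target is to prove \eqref{stima1} uniformly in $\varepsilon$ and then pass to the limit by lower semicontinuity and standard weak convergence arguments, exploiting the convexity of $F$ and of $\gggg(x,y,\cdot)$.

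The heart of the matter is a fractional Caccioppoli inequality for a convex function $G(|Du_\varepsilon|)$ tailored to $F$, on super-level sets. Since $\gggg$ and $\hhh$ are only H\"older in $(x,y)$, differentiating the Euler--Lagrange equation is not possible; instead, for a small vector $h \in \er^n$ and a small ball $B \Subset \Omega$, I would compare $u_\varepsilon$ with its lifting $v \in u_\varepsilon + W^{1,p}_0(B)$ minimizing the autonomous integrand $\int_B F(Dw)\,\dx$. The key inputs are (i) the a priori Lipschitz and second-order difference-quotient estimates for $v$ provided by Lemma \ref{marcth}, which survive in the nonuniformly elliptic regime; and (ii) a comparison estimate bounding $\|Du_\varepsilon - Dv\|_{L^p(B)}$ in terms of the H\"older seminorm of $\gggg$, a Havin--Maz'ya-type nonlinear potential of $f$, and a controlled power $\|Du_\varepsilon\|_{L^\infty}^\sigma$ with $\sigma = \sigma(q/p)$. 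Summing the contributions of the liftings $v$, which play the r\^ole of \emph{atoms} in a Nikolski decomposition, across dyadic scales and vectors $h$ produces fractional control of $\tau_h Du_\varepsilon$, which via Nikolski-to-fractional-Sobolev embedding and the chain rule upgrades to a Caccioppoli inequality for $G(|Du_\varepsilon|)$ of the type indicated in Propositions \ref{caccin3}--\ref{caccin4}.

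This Caccioppoli inequality carries on its right-hand side a multiplicative factor of the form $c\,M^\sigma$ with $M := \|Du_\varepsilon\|_{L^\infty(B_r)}$, plus tail terms dominated by the nonlinear potential of $f$, i.e.\ by its $L(n/\alpha,1/2)$-norm. A scaling/renormalization argument divides out the $M^\sigma$ factor and returns a homogeneous estimate to which a standard De Giorgi geometric iteration applies on the super-level sets of $G(|Du_\varepsilon|)$, yielding an $L^\infty$-bound for $G(|Du_\varepsilon|)$ and hence for $|Du_\varepsilon|$. The bookkeeping of the exponents produced by the iteration is delicate: the gap condition \eqref{bound1} is exactly the quantitative constraint that forces $\sigma$ to be small enough for $M$ to be reabsorbed on the left-hand side and for the iteration to close with a bound depending only on $\mathcal G(u,B_r)$, $\|\hhh(\cdot,u)\|_{L^1(B_r)}$, $\|f\|_{n/\alpha,1/2;B_r}$ and the annular radii, exactly as in \eqref{stima1}. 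The exponents $\chi_1,\chi_2$ arise from this interpolation and depend only on $\datae$ and $\gamma$.

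For the final H\"older-continuity assertion, once $Du \in L^\infty_{\loc}(\Omega)$ is established the problem is effectively uniformly elliptic on bounded sub-level sets of $Du$, and one can revert to a classical perturbative scheme: comparing $u$ locally with the minimizer of $\int F(Dw)\,\dx$ and exploiting the improved integrability $f \in L^{\mathfrak q}_{\loc}$ with $\mathfrak q > n/\alpha$ gives a Campanato-type excess decay for $Du$, which by Campanato's characterization produces local H\"older continuity. I expect the main obstacle to be the fractional Caccioppoli step itself: one has to simultaneously localize via comparison with the autonomous liftings, keep the exponent $\sigma$ in the factor $M^\sigma$ fully explicit, and arrange the comparison remainders so that they are controlled by the nonlinear Havin--Maz'ya potential of $f$ in $L(n/\alpha,1/2)$. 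Matching all these pieces while preserving the self-improving structure of the De Giorgi iteration is precisely what makes the gap bound \eqref{bound1} unavoidable and pins down the dependence of $\chi_1,\chi_2$ on $\datae,\gamma$.
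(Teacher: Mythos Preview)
Your outline is essentially the paper's proof: regularize to obtain $C^{1,\beta}$ approximants, derive the hybrid fractional Caccioppoli inequality via comparison with frozen liftings on small balls and a Nikolski-type covering (the relevant statements are Propositions \ref{caccin1}--\ref{caccin2}, not \ref{caccin3}--\ref{caccin4}), feed this into the Kilpel\"ainen--Mal\'y pointwise iteration of Lemma \ref{revlem} to produce the a priori bound of Proposition \ref{priori1} with the nonlinear potential $\mathbf{P}^{1,1}_{2,\alpha/2}(f;\cdot)$ controlled by $\|f\|_{n/\alpha,1/2}$, and then reabsorb the $M^\sigma$ factor via Young's inequality and Lemma \ref{l5} before passing to the limit as in Sections \ref{generalapp}--\ref{generalapp2}. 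The only discrepancies are minor and technical: the paper mollifies $F+\gggg$ in $z$ only (not in $(x,y)$), freezes the full $F_0=F+\gggg(x_{\rm c},(u)_B,\cdot)$ rather than $F$ alone, and, crucially, adds the penalization $|\tilde u_\varepsilon-y|^\alpha$ to $\hhh_\varepsilon$ in \eqref{defigg} so that the final limit identification $u\equiv v$ in \eqref{comein} goes through.
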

\begin{remark}[Gap bounds]\label{gapre}{\em Gap bounds of the form $
q/p < 1 + {\rm o}(n)$ for $ {\rm o}(n) \approx 1/n$, as in \rif{bound1}, 
are known to be necessary already for boundedness of minimizers, as shown in \cite{M1, M2}, and in the case $w \mapsto \int F(Dw)\dx$. Already in such a plain situation, the optimal bound on $q/p$ implying $Du \in L^\infty_{\loc}$ remains unknown. See \cite{BS0, BS, BB, cammello, HS, sha} for recent work in this direction. Counterexamples in \cite{sharp}, involving non-homogeneous integrands show that the condition
\eqn{sharpbound}
$$
\frac{q}{p}\leq  1+\frac{\alpha}{n}
$$
is necessary already for continuity of minima. This is crucial in our setting. Specifically, it implies that, when departing from standard ellipticity conditions, Schauder estimates do not hold in general. A subtle balance between regularity of $x\mapsto F(x, \cdot)$ and ellipticity of $z\mapsto F( \cdot, z)$ is needed. The bound in \rif{bound1} reflects this fact and exhibits the same sharp asymptotic with respect to $\alpha/n$ of \rif{sharpbound}. A more delicate interaction occurs with the regularity of $y \mapsto \gggg(\cdot, y, \cdot)$, and reflects in conditions $\alpha + \gamma <p$ and \rif{bound1}, a condition that disappears in the case of functionals as in \eqref{ggg}, where we can take $\gamma=0$. See Remark \ref{gapremark} for more discussion. The bound in \rif{bound1} can be further (slightly) improved by introducing a correction function $\kappa_{1}(\cdot)$. For this see Proposition \ref{priori1} and subsequent Remark \ref{raffina}. Anyway this does change the asymptotic of \rif{bound1} with respect to $\alpha/n$ and $p-(\alpha+\gamma)$, that is what we are mainly interested in at this stage}. 
\end{remark}
\begin{remark}[New borderline conditions]\label{lolorere} {\em When $p=q$ (uniformly elliptic case), the assumptions of Theorem \ref{t1} are standard \cite{gg1, gg2, gg3, gg4, KM, manth1, manth2}, but the one on $f$, which is usually taken in $L^\infty$. Assuming $f\in L^\infty$ would  not improve the gap bound in \rif{bound1}. Passing from $L^\infty$ to Lebesgue spaces, and eventually to Lorentz, is an automatic side benefit of our approach. By scaling arguments, we do not expect H\"older continuity of $Du$ without assuming that $f \in L^{\mathfrak{q}}(\Omega)$ for $\mathfrak{q}>n/\alpha$, and do not expect that $Du \in L^\infty$ when $f \in L^{n/\alpha}$. The new Lorentz condition \rif{asshh} connects Theorem \ref{t1}  to a large literature devoted to find optimal conditions on data $f$ implying regularity. For differentiable, uniformly elliptic functionals like
$
w\mapsto\int_{\Omega} [F(Dw)+fw] \dx, 
$
and equations $\diver\, a(Du)=f$, the $L(n,1)$-regularity of $f$ implies that solutions are locally Lipschitz; $L^n$ is not sufficient. This can be considered as a nonlinear version of a classical result of Stein \cite{St} on solutions to $\Delta u=f$. See \cite{KMstein} for local estimates, and \cite{CM0, CM1} for global ones. The same holds when assuming that $F(\cdot)$ is non-uniformly elliptic in the sense of \rif{nonuni} and \rif{assFF}, as in \cite{BM, cammello, ciccio}. The remarkable fact is that the $L(n,1)$-condition is independent of the integrand/operator considered, exactly as the condition $f \in L(n/\alpha, 1/2)$ considered here for \rif{ggg2}. Anyway, this can be still improved in certain situations, see Theorem \ref{t5} below. As far as we know, due to the presence of the Lorentz condition, Theorem \ref{t1} is completely new already when $p=q$.}
\end{remark}

\subsection{Nonuniformly elliptic Schauder and more nondifferentiable functionals.}\label{sezionemodello}
\begin{theorem}\label{t2}
Let $u\in W^{1,1}_{\loc}(\Omega)$ be a minimizer of the functional $\mathcal{S}_{\texttt{x}}$ in \trif{modello}, under assumptions \eqref{assFF}; in particular, $\ccc(\cdot) \in C^{0, \alpha}(\Omega)$. If 
\eqn{bound3}
$$
\frac{q}{p}\leq  1+  \frac{\alpha^2}{5n^2}\,,
$$
then $Du$ is locally H\"older continuous in $\Omega$. Moreover
\eqn{stima2m}
$$
\nr{Du}_{L^{\infty}(B_{t})}\le \frac{c}{(r-t)^{\chi_1}}\left[ \mathcal{S}_{\texttt{x}}(u, B_{r}) +1\right]^{\chi_2}
$$
holds whenever $B_{t}\Subset B_{r} \Subset \Omega$ are concentric balls with $r\leq 1$, and $c\equiv c(\data)$, $ \chi_1, \chi_2\equiv \chi_1, \chi_2(\datae)$. 
\end{theorem}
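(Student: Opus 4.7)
The strategy is to treat Theorem \ref{t2} as an independent application of the same hybrid perturbation scheme used to prove Theorem \ref{t1}, rather than as its corollary. The reason is structural: in $\mathcal{S}_{\texttt{x}}$ the H\"older coefficient $\ccc(x)$ multiplies the \emph{entire} integrand $F(z)$, including its $q$-growth upper part, so the splitting $\ccc(x)F(z)=F(z)+(\ccc(x)-1)F(z)$ places the H\"older remainder at $q$-growth, incompatible with the $p$-growth hypothesis \eqref{assgg}$_{2}$ required of $\gggg$ in Theorem \ref{t1}. This very mismatch is what the stricter quadratic bound \eqref{bound3} is paying for.

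I would first regularize by mollifying $\ccc$ to $\ccc_\varepsilon\in C^\infty$ and adding a vanishing perturbation $\varepsilon_j[H_1(z)]^{q/2}$ to $F$, reducing matters to approximating minimizers $u_{\varepsilon,j}\in C^{1,\beta}_{\loc}\cap W^{2,2}_{\loc}$ that satisfy genuine Euler--Lagrange equations. All a priori bounds must be stable as $\varepsilon,\varepsilon_j\to 0$, and in particular cannot depend on $\|D\ccc_\varepsilon\|_{L^\infty}$; this is exactly what forces the use of \emph{fractional} Nikolski-type difference quotients of $Du_{\varepsilon,j}$ instead of a pointwise differentiated equation when deriving the key Caccioppoli inequality, paralleling Section \ref{ibrida}.

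The main step is the a priori $L^\infty$-gradient estimate \eqref{stima2m}. On a ball $B_r\Subset\Omega$ I would decompose the integrand as
\[
\ccc(x)F(Dw)=\ccc(x_0)F(Dw)+(\ccc(x)-\ccc(x_0))F(Dw),
\]
and treat the first summand as a frozen atomic functional: comparison liftings $v$ solving $\min_{w}\int_{B_r}\ccc(x_0)F(Dw)\dx$ with $w=u_{\varepsilon,j}$ on $\partial B_r$ enjoy the autonomous a priori Lipschitz estimate of Lemma \ref{marcth}, and serve as the atoms in a Nikolski-space decomposition of $Du_{\varepsilon,j}$, as in Section \ref{hyb2}. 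The $\alpha$-H\"older continuity of $\ccc$ makes the remainder controllable by $r^{\alpha}\int_{B_r}[H_{\mu}(Du_{\varepsilon,j})]^{q/2}\dx$. Since this error is at $q$-growth rather than the $p$-growth of Theorem \ref{t1}, the renormalization required to close a De Giorgi-type geometric iteration picks up an additional multiplicative factor $\|Du_{\varepsilon,j}\|_{L^\infty}^{\sigma}$ with $\sigma\equiv\sigma(q/p,\alpha,n)$. Its absorption via interpolation between dyadic subballs and Sobolev embedding is what quantitatively forces the \emph{quadratic} smallness condition $q/p-1\leq\alpha^2/(5n^2)$ instead of the linear $\alpha/(5n)$-type bound of \eqref{bound1}.

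Once \eqref{stima2m} is in hand, gradient H\"older continuity follows by a classical freezing perturbation in the now effectively uniformly elliptic regime: the ratio \eqref{ellratio} for $\ccc(x)F$ is bounded on $\{|z|\leq\|Du\|_{L^\infty(B)}\}$ by a constant depending only on $\|Du\|_{L^\infty(B)}$ through \eqref{assFF}$_{3,4}$. Comparing $u$ on $B_r$ with the minimizer of $w\mapsto\int_{B_r}\ccc(x_0)F(Dw)\dx$ sharing its boundary datum, the comparison error is $O(r^{\alpha})$ by the $\alpha$-H\"older continuity of $\ccc$, while the frozen minimizer satisfies the uniformly elliptic $C^{1,\beta}$-decay estimates of Giaquinta--Giusti, Manfredi and Lieberman \cite{gg2,manth1,manth2,liebe}; a standard Campanato two-tier decay iteration then yields the claim. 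The decisive obstacle throughout is the Caccioppoli/De Giorgi step: the $q$-growth of the H\"older remainder (absent from Theorem \ref{t1}), the renormalization exponent $\sigma$ and the Sobolev embedding exponent $\alpha/n$ must be balanced simultaneously, and this balance is precisely what produces the sharper quadratic-in-$\alpha/n$ bound \eqref{bound3}.
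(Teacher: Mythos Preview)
Your overall architecture is along the paper's lines, but two points deserve comment.

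First, the paper does \emph{not} prove Theorem~\ref{t2} directly. It observes that $\ccc(x)F(z)$ is an integrand of the general type \eqref{FMx}--\eqref{xx.0} (Lemma~\ref{marclemma} gives the required bound on $\partial_z F$), and that the Lavrentiev gap $\mathcal{L}_{\mathcal{S}_{\texttt{x}}}(u,B)$ vanishes on every ball because convolution approximants $\tilde u_\eps=u*\phi_\eps$ satisfy $\ccc(\cdot)F(D\tilde u_\eps)\to\ccc(\cdot)F(Du)$ in $L^1$ by Jensen's inequality. Theorem~\ref{t2} then follows immediately from Corollary~\ref{c3}. What you are sketching is essentially a direct specialization of the machinery behind Theorem~\ref{t4} (Propositions~\ref{caccin4} and \ref{priori3}) to the product integrand; that is fine, but it re-derives rather than invokes.

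Second, and more substantively, your handling of the $q$-growth remainder has a genuine gap. You write that the comparison error is ``controllable by $r^{\alpha}\int_{B_r}[H_{\mu}(Du_{\varepsilon,j})]^{q/2}\dx$'' and that its absorption ``via interpolation between dyadic subballs and Sobolev embedding'' forces the quadratic bound. But if you simply bound $\int|Du|^q\leq M^q|B_r|$ in the Caccioppoli inequality, the De~Giorgi iteration does \emph{not} close: after Lemma~\ref{revlem} and the $p$-th root, the exponent of $M$ in the error term exceeds $1$ for any $q>p$. The paper's decisive device (Proposition~\ref{caccin4}) is to split the error with a free parameter $\mathrm{b}\in[0,p]$,
\[
cM^{\ssf q+p-\mathrm{b}}\,\rr^{2\alpha}\mint_{B_\rr}(|Du|+1)^{2q-2p+\mathrm{b}}\dx,
\]
and then, along the iteration, let the integral build into a nonlinear potential $\mathbf{P}^{2q-2p+\mathrm{b},1}_{2,\alpha}(|Du|+1;\cdot,\cdot)$ controlled via Lemma~\ref{crit} by $\|Du\|_{L^p}$ precisely when $n(2q-2p+\mathrm{b})/(2\alpha)<p$. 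This yields the second condition \eqref{csisi3}; equalizing it with \eqref{csisi22} over $\mathrm{b}$ is what produces the product of two $\alpha/n$ factors in \eqref{bound3vero}, hence the quadratic bound \eqref{bound3}. Your outline does not contain this splitting-plus-potential step, and without it the argument does not reach \eqref{stima2m}.
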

\begin{corollary}[Hopf-Schauder-Caccioppoli, reloaded - I]\label{c1}
In the setting of Theorem \ref{t2}, assume also that $p\geq 2$, $\mu>0$, and that $\partial_{zz}F(\cdot)$ is continuous. Then $
u\in C^{1, \alpha}_{\loc}(\Omega).
$ 
\end{corollary}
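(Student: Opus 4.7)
The plan is to combine Theorem \ref{t2} with classical uniformly elliptic Schauder theory. First, Theorem \ref{t2} provides $Du \in L^{\infty}_{\loc}(\Omega)$ together with local H\"older continuity of $Du$ with some exponent $\beta>0$. Fix a ball $B_R \Subset \Omega$ and set $M := \|Du\|_{L^{\infty}(B_R)} + 1$. With $\mu > 0$ and $p \geq 2$, the lower bound in \eqref{assFF} yields $\partial_{zz} F(z) \geq \nu \mu^{p-2} \mathbb{I}_{\textnormal{d}}$ for every $z\in \er^n\setminus\{0\}$, while the upper bound combined with $|z|\le M$ gives $|\partial_{zz}F(z)| \leq c(\mu, M, q, L)$. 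Together with the assumed continuity of $\partial_{zz}F(\cdot)$, this shows that on the range of $Du$ the integrand $F$ is genuinely uniformly elliptic in the sense of \eqref{ellratio}.

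Next, I would pass to the Euler-Lagrange equation: writing $a_{ij}(x) := \ccc(x)\,\partial_{z_iz_j}F(Du(x))$ and using that $u$ is a minimizer with $Du \in L^\infty_{\loc}$ and $\partial_z F$ of class $C^1$ on the relevant range, $u$ solves in weak sense
\[
\int_{B_R}\ccc(x)\,\partial_z F(Du)\cdot D\varphi\,\dx = 0, \qquad \forall\, \varphi \in C_c^{\infty}(B_R).
\]
Since $Du$ is locally H\"older continuous and $\partial_{zz}F(\cdot)$ is continuous, the matrix $a_{ij}(\cdot)$ is continuous, uniformly elliptic, and bounded on $B_R$. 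Thus $u$ is a $C^1$ solution of a uniformly elliptic divergence-form quasilinear equation whose coefficient structure is H\"older in $x$ of exponent $\alpha$, inherited from $\ccc$, and smooth in $z$.

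Finally, I would invoke the classical nonlinear Schauder results of Giaquinta \& Giusti \cite{gg2, gg3}, Manfredi \cite{manth1, manth2}, and Lieberman \cite{liebe}: one freezes $\ccc(x_0)$ on small balls $B_r(x_0) \subset B_R$, compares $u$ with the solution $v$ of the frozen minimization $v \mapsto \int \ccc(x_0) F(Dv)\dx$ subject to $v = u$ on $\partial B_r(x_0)$, uses the $C^{1,\gamma}$-decay estimates available for this now uniformly elliptic and smooth homogeneous problem, and exploits $\ccc \in C^{0,\alpha}$ to bound the comparison error $u - v$ with the sharp scaling $r^{\alpha}$. A standard Campanato-type iteration then gives $Du \in C^{0,\alpha}_{\loc}(B_R)$, and since $B_R$ was arbitrary, $u \in C^{1,\alpha}_{\loc}(\Omega)$. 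The main obstacle is essentially administrative: one must verify that, under the extra hypotheses $\mu > 0$, $p \geq 2$, and continuity of $\partial_{zz}F(\cdot)$, the $L^{\infty}$-bound on $Du$ furnished by Theorem \ref{t2} reduces the nonuniformly elliptic problem \emph{locally} to the classical uniformly elliptic Schauder setting. Once this reduction is secured, no further nonuniformly elliptic machinery is needed; the genuine novelty --- the a priori gradient bound under nonuniform ellipticity --- has already been expended in Theorem \ref{t2}, and the corollary simply cashes it in through the classical machinery.
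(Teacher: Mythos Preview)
Your proposal is correct and follows the same strategy as the paper's proof in Section \ref{S1}: use Theorem \ref{t2} for the a priori gradient bound and local H\"older continuity of $Du$, freeze the coefficient $\ccc$ at the center of a small ball, compare with the frozen solution, and run a Campanato iteration exploiting the $r^{\alpha}$ comparison error coming from $\ccc\in C^{0,\alpha}$. The paper carries this out in a self-contained manner---adding a small regularizing term $r^{\alpha}[H_{\mu}(z)]^{(q-2)/2}z$ to the frozen operator so that the comparison map $v$ lives in $W^{1,q}$ and Lemma \ref{marcth} applies directly, and then spelling out the De Giorgi--Nash--Moser/Campanato bootstrap for the linearized frozen equation---rather than invoking \cite{gg2,gg3,manth1,liebe} as a black box, but the scheme is exactly the one you describe.
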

In the linear case $F(Dw)\equiv |Dw|^2$, Corollary \ref{c1} is nothing but the content of classical (interior) Schauder estimates, where local $C^{0, \alpha}$-regularity of coefficients sharply reflects in local $C^{1, \alpha}$-regularity of solutions. Assuming the non-degeneracy condition $\mu>0$ is necessary, as otherwise shown by counterexamples occurring in the $p$-Laplacean setting \cite{le1, Ur}. In the uniformly elliptic case $p=q=2$, Corollary \ref{c1} is a classical result of Giaquinta \& Giusti \cite{gg2,gg3}. 
\begin{theorem}\label{t3}
Let $u\in W^{1,1}_{\loc}(\Omega)$ be a minimizer of the functional $\mathcal{S}$ in \trif{modellou}, under assumptions \eqref{assFF}, with $\hhh(\cdot)$ as in \trif{asshh} and $p>n$. If 
\eqn{bound4}
$$
 \frac{q}{p} \leq   1 +\frac 15\left(1-\frac {n}{p}\right)
\frac{\alpha^2}{n^2},$$
then $Du$ is locally H\"older continuous in $\Omega$. Moreover, 
\eqn{stima2mm}
$$
\nr{Du}_{L^{\infty}(B_{t})}\le \frac{c}{(r-t)^{\chi_1}}\left[ \mathcal{S}(u, B_{r})+\|\hhh(\cdot,u)\|_{L^1(B_{r})}+\|f\|_{n/\alpha,1/2;B_{r}}+1\right]^{\chi_2}
$$
holds with the notation relative to \trif{stima2m}. 
\end{theorem}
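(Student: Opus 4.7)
The strategy will be a two-stage bootstrap that leverages the hypothesis $p>n$ to reduce Theorem \ref{t3} to Theorems \ref{t1} and \ref{t2}. First, I would observe that, by \eqref{assFF}$_{2}$ and Definition \ref{defi-min}, any local minimizer $u$ belongs to $W^{1,p}_{\loc}(\Omega)$, and since $p>n$, Morrey's embedding gives $u\in C^{0,\beta_{0}}_{\loc}(\Omega)$ with $\beta_{0}:=1-n/p$, together with a quantitative control on the H\"older seminorm of $u$ on balls in terms of $\mathcal S(u,\cdot)$.

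Second, I would freeze the second argument of $\ccc$ along $u$ by defining $\bar{\ccc}(x):=\ccc(x,u(x))$. Combining Step 1 with the joint H\"older assumption \eqref{modellou}$_{2}$ on $\ccc$ gives
\[
|\bar{\ccc}(x_{1})-\bar{\ccc}(x_{2})|\leq L|x_{1}-x_{2}|^{\alpha}+L|u(x_{1})-u(x_{2})|^{\alpha}\leq c\,|x_{1}-x_{2}|^{\alpha\beta_{0}},
\]
so $\bar{\ccc}\in C^{0,\alpha\beta_{0}}_{\loc}(\Omega)$. This step converts the $u$-dependence into genuine $x$-dependence, trading the exponent $\alpha$ for $\alpha\beta_{0}=\alpha(1-n/p)$, and means that $u$ minimizes a small perturbation of the Schauder-type functional $w\mapsto \int[\bar{\ccc}(x)F(Dw)+\hhh(x,w)]\dx$, the perturbation being the lower-order term $\int[\ccc(x,w)-\bar{\ccc}(x)]F(Dw)\dx$ controlled by $\|w-u\|_{\infty}^{\alpha}\int F(Dw)\dx$.

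Third, I would run the De Giorgi--Bernstein iteration of Theorem \ref{t2} directly on $\mathcal S$, rather than merely invoking its statement, so as to recover the sharper gap \eqref{bound4} (a plain substitution $\alpha\leadsto \alpha\beta_{0}$ in \eqref{bound3} would only yield $q/p\leq 1+\alpha^{2}\beta_{0}^{2}/(5n^{2})$, not \eqref{bound4}). The idea is to use only the weak loss $\beta_{0}$ coming from the Morrey-controlled difference $|u(x_{1})-u(x_{2})|^{\alpha}$, while keeping the original H\"older exponent $\alpha$ whenever the $x$-regularity of $\ccc$ is used; the interaction of these two contributions with the polynomial nonuniform ellipticity rate produces the factor $(1-n/p)$ to the first power. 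The $\hhh$ term is absorbed exactly as in the proof of Theorem \ref{t1}, through the Havin--Maz'ya nonlinear potentials and the Lorentz condition \eqref{asshh}$_{2}$; this yields \eqref{stima2mm} and in particular $Du\in L^{\infty}_{\loc}(\Omega,\er^{n})$.

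Finally, once $u$ is locally Lipschitz, $\bar{\ccc}$ is actually $C^{0,\alpha}_{\loc}(\Omega)$ with the \emph{full} exponent $\alpha$, and the perturbation term $[\ccc(x,w)-\bar{\ccc}(x)]F(Dw)$ has controlled $L^{\infty}$-bounded-gradient behaviour near $u$. Local H\"older continuity of $Du$ then follows from the H\"older part of Theorem \ref{t2} (or equivalently the perturbation scheme behind Theorem \ref{t1}) applied to the frozen functional. I expect the main obstacle to lie in Step 3: the freezing generates a perturbation of $q$-growth in $Du$, and closing the fractional Caccioppoli iteration of \S\ref{ibrida} requires reabsorbing such terms against the leading elliptic contribution; this reabsorption is possible only when the triple $(\alpha,\beta_{0},q/p)$ satisfies the balance encoded in \eqref{bound4}, and obtaining the precise asymptotic with $(1-n/p)$ to the first power is the delicate computational point.
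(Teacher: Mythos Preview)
Your heuristic matches Remark~\ref{gapremark}, but the paper's proof follows a different technical path and your Step~3 contains a real gap. The paper never freezes $\bar\ccc(x)=\ccc(x,u(x))$ and never uses Morrey embedding on the original minimizer. It works through a priori estimates on approximating minimizers $u_{\eps,\delta}$ that are already $C^{1,\alpha_1}$ by~\eqref{33.1bis}; the oscillation of $u_{\eps,\delta}$ on the comparison balls $8B_h$ is controlled via the \emph{Lipschitz} bound $M=\|Du_{\eps,\delta}\|_{L^\infty(B_{\tau_2})}$, yielding $|u_{\eps,\delta}-(u_{\eps,\delta})_{8B_h}|^\alpha\le cM^\alpha|h|^{\alpha\beta_0}$ with \emph{no loss} in the $|h|$-exponent. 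This enters the dedicated Caccioppoli inequality~\eqref{xx.10xu} of Proposition~\ref{caccin3} only as an extra factor $M^\alpha$; the fractional scale stays at $\alpham=\alpha$. The factor $(1-n/p)$ appears only afterwards, and purely algebraically, in Proposition~\ref{priori5}: applying Lemma~\ref{revlem} to~\eqref{xx.10xu} with the potential $\mathbf{P}^{q-p+\mathrm b,1}_{2,\alpha/2}(|Du|+1;\cdot)$ produces the pair of constraints~\eqref{csisi2} parametrized by a free $\mathrm b\in[0,p]$; equalizing them gives the identity~\eqref{usingss0}, from which $(1-n/p)$ drops out, and $p>n$ is precisely what makes the common right-hand side positive.

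By contrast, your Morrey route loses a factor $(1-n/p)$ already at the Caccioppoli level, in the $|h|$-exponent, which is exactly why the naive substitution gives $(1-n/p)^2$ as you note; you provide no mechanism to recover the missing power, and ``separating the $x$ and $u$ contributions'' does not do it, because in the paper both are handled with the same $|h|^{\alpha\beta_0}$ gain and what matters is the reabsorption of the $M^\alpha$ factor via the $\mathrm b$-optimization, which you do not mention. Your perturbation viewpoint also creates a spurious $q$-growth remainder $\int[\ccc(x,w)-\bar\ccc(x)]F(Dw)\dx$ that the paper sidesteps entirely by using minimality of $u_{\eps,\delta}$ with respect to the \emph{full} functional in~\eqref{nonved}, not a frozen one. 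Your Step~4, once Lipschitz regularity is secured, is correct and aligns with the paper's gradient-H\"older argument.
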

\begin{remark}\label{gapremark} {\em 
In Theorem \ref{t3}, assuming $p>n$ ensures, via Sobolev embedding, some degree of H\"older continuity of $x \mapsto \ccc(x, u(x))$, and rebalances the otherwise measurable interaction between coefficients and gradient terms. When $p=q$, this is not necessary since De Giorgi-Nash-Moser theory ensures that minimizers are a priori H\"older continuous, also when $x\mapsto \ccc(x, \cdot)$ is measurable. This is in general false when $p\not=q$. The situation shares  similarities with the uniformly elliptic vectorial theory, where De Giorgi type results are not available, and indeed singularities occur for minimizers no matter dependence on $u$-coefficients is smooth. In this respect, we note that some counterexamples of irregular minimizers in the scalar, nonuniformly elliptic case \cite{sharp} resemble those occurring in the uniformly elliptic vectorial one \cite{giumi}. By such a potential analogy, we tend to believe that the assuming $p>n$ is unavoidable. For the same reasons, \rif{bound4} connects to \rif{bound1}, where $\gamma$ quantifies the interaction between coefficients and gradient terms.} 
\end{remark}
\begin{corollary}\label{c2}
In the setting of Theorem \ref{t3}, assume also that $p\geq 2$, $\mu>0$, and that $\partial_{zz}F(\cdot)$ is continuous; finally,  assume that $f\in L^{\infty}_{\loc}(\Omega)$. Then $
u\in C^{1, \alpha/2}_{\loc}(\Omega).
$
\end{corollary}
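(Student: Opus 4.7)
The plan is to upgrade the unspecified H\"older continuity of $Du$ provided by Theorem \ref{t3} to the sharp Schauder exponent $\alpha/2$ via a classical freezing/Campanato perturbation argument. This becomes available here because the extra assumptions $p\ge 2$, $\mu>0$, $\partial_{zz}F(\cdot)$ continuous and $f\in L^\infty_\loc$ turn the problem into a uniformly elliptic one on the gradient-bounded set that is the natural ambient set after Theorem \ref{t3}. Fix $B_{2R}\Subset\Omega$ and set $M:=\|Du\|_{L^\infty(B_{2R})}+1<\infty$; then \eqref{assFF} implies that $F$ is uniformly elliptic on $\{|z|\le 2M\}$ with continuous Hessian, with ellipticity constants depending only on $\data$ and $M$.

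For every $B_\rho(x_0)\Subset B_R$ with $\rho$ small, introduce the frozen comparison map
\[
v\in u+W^{1,p}_0(B_\rho(x_0)),\qquad \int_{B_\rho(x_0)}\ccc(x_0,u(x_0))F(Dv)\dx \to \min.
\]
As $\ccc(x_0,u(x_0))$ is a positive constant, $v$ satisfies the constant-coefficient Euler--Lagrange equation $\dv\,\partial_zF(Dv)=0$; combining Lemma \ref{marcth} with the maximum principle $\min_{\partial B_\rho}u\le v\le\max_{\partial B_\rho}u$ and the Lipschitz continuity of $u$ yields $\|Dv\|_{L^\infty(B_{\rho/2}(x_0))}\le c(\data, M)$. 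Standard constant-coefficient regularity then gives $Dv\in C^{0,\beta}_\loc$ for every $\beta<1$, together with the excess decay estimate $\int_{B_\tau}|Dv-(Dv)_{B_\tau}|^2\dx \le c(\tau/\rho)^{n+2}\int_{B_{\rho/2}}|Dv-(Dv)_{B_{\rho/2}}|^2\dx$, valid for $\tau\le\rho/4$.

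The core step is the comparison inequality. Subtracting the minimality of $v$ tested against $u$ (frozen functional) from the minimality of $u$ tested against $v$ (original functional), and using the Euler--Lagrange equation for $v$ to eliminate $\int_{B_\rho}\ccc(x_0,u(x_0))\partial_zF(Dv)\cdot(Du-Dv)\dx$, one obtains
\begin{flalign*}
&\int_{B_\rho(x_0)}\!\!\ccc(x_0,u(x_0))\bigl[F(Du)-F(Dv)-\partial_zF(Dv)\!\cdot\!(Du-Dv)\bigr]\dx\\
&\quad \le \int_{B_\rho(x_0)}\!\!\bigl\{|\ccc(x,u)-\ccc(x_0,u(x_0))|F(Du)+|\ccc(x,v)-\ccc(x_0,u(x_0))|F(Dv)\\
&\qquad\qquad +|\hhh(x,v)-\hhh(x,u)|\bigr\}\dx.
\end{flalign*}
Using $\alpha$-H\"older continuity of $\ccc$ in both entries, the bounds $|u(x)-u(x_0)|\le M\rho$ (Lipschitz continuity) and $|v(x)-u(x_0)|\le M\rho$ on $B_\rho(x_0)$ (maximum principle), $F(Du)+F(Dv)\le c(M)$, together with $f\in L^\infty_\loc$ and \eqref{asshh}$_1$ to estimate $|\hhh(x,v)-\hhh(x,u)|\le \|f\|_{L^\infty}|v-u|^\alpha$, the right-hand side is controlled by $c(\data, M)\rho^{n+\alpha}$. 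The uniform convexity of $F$ on $\{|z|\le 2M\}$ (from $p\ge 2$, $\mu>0$ and \eqref{assFF}$_3$) bounds the left-hand side below by $c^{-1}\int_{B_\rho(x_0)}|Du-Dv|^2\dx$, yielding $\int_{B_\rho(x_0)}|Du-Dv|^2\dx\le c(\data, M)\rho^{n+\alpha}$.

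Splitting $Du=(Du-Dv)+Dv$ and combining the comparison inequality with the $Dv$-excess decay yields the Campanato-type hole-filling estimate
\[
\int_{B_\tau(x_0)}|Du-(Du)_{B_\tau(x_0)}|^2\dx \le c\Bigl(\frac\tau\rho\Bigr)^{\!n+2}\!\!\int_{B_\rho(x_0)}|Du-(Du)_{B_\rho(x_0)}|^2\dx + c\,\rho^{n+\alpha},
\]
for $\tau\le\rho/4$. Since $n+\alpha<n+2$, a standard iteration lemma implies $\int_{B_\tau(x_0)}|Du-(Du)_{B_\tau(x_0)}|^2\dx\le c\tau^{n+\alpha}$ uniformly for $x_0\in B_{R/2}$; Campanato's characterization then gives $Du\in C^{0,\alpha/2}_\loc(B_{R/2})$, whence $u\in C^{1,\alpha/2}_\loc(\Omega)$ by the arbitrariness of $B_{2R}\Subset \Omega$. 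The main obstacle is the comparison inequality $\int|Du-Dv|^2\dx\le c\rho^{n+\alpha}$: since $\ccc(x,\cdot)$ and $\hhh(x,\cdot)$ are only H\"older in $y$, no Euler--Lagrange equation is available for $u$, forcing a purely variational argument in which the $\ccc(\cdot,y)$-type and $\hhh$-type errors must be handled carefully to produce exactly the scaling $\rho^{n+\alpha}$ and not a weaker power; the hypothesis $f\in L^\infty_\loc$ is precisely what makes the $\hhh$-error scale correctly.
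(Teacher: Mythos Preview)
Your approach is the same as the paper's: freeze the coefficient $\ccc$ at $(x_0,u(x_0))$, define a comparison minimizer $v$, obtain the energy estimate $\int_{B_\rho}|Du-Dv|^2\dx\le c\rho^{n+\alpha}$ from minimality, combine with an excess decay for $Dv$, and conclude by Campanato. The comparison step is handled correctly.

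Two points where the paper is more careful. First, the paper adds a vanishing regularization $r^\alpha[H_\mu(z)]^{q/2}$ to the frozen integrand, so that the Dirichlet problem for $v$ is coercive in $W^{1,q}$ and Lemma~\ref{marcth} applies directly; you minimize the bare frozen functional in $W^{1,p}$ and must justify that $v\in W^{1,q}$ (no Lavrentiev gap for autonomous integrands) before invoking that lemma. Also, your pointwise bound ``$F(Dv)\le c(M)$'' on all of $B_\rho$ is not available; use instead $\int_{B_\rho}F(Dv)\dx\le\int_{B_\rho}F(Du)\dx\le c(M)\rho^n$ from minimality of $v$, as in \eqref{qui}.

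Second, and more substantively, the clean decay $\int_{B_\tau}|Dv-(Dv)_{B_\tau}|^2\dx\le c(\tau/\rho)^{n+2}\int_{B_{\rho/2}}|Dv-(Dv)_{B_{\rho/2}}|^2\dx$ does not follow from mere continuity of $\partial_{zz}F$. What you actually get (after linearizing, applying De~Giorgi to obtain $Dv\in C^{0,\beta_0}$ with $\beta_0$ depending on $\data,\mu,M$, and then Campanato perturbation for the linear equation with continuous coefficients $\partial_{zz}F(Dv)$) is
\[
\int_{B_\tau}|Dv-(Dv)_{B_\tau}|^2\dx\le c\Bigl[\Bigl(\frac{\tau}{\rho}\Bigr)^{n+2}+[\tilde\omega(\rho)]^2\Bigr]\int_{B_{\rho}}|Dv-(Du)_{B_{\rho}}|^2\dx,
\]
with $\tilde\omega$ a modulus built from that of $\partial_{zz}F$ on $\mathcal B_{c(M)}$. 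The extra term forces you to use Lemma~\ref{l5bis} rather than a plain iteration, and the threshold radius $r_0$ below which the conclusion holds depends on $\tilde\omega$. This is exactly how the paper proceeds; your version asserts a decay that would require $\partial_{zz}F$ to be Dini.
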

In comparison to Corollary \ref{c1}, Corollary \ref{c2} exhibits a loss in the H\"older exponent of $Du$. This is typical when dealing with nondifferentiable functionals. It is not a technical fact, as $C^{1,\alpha}$-regularity cannot be reached in general, as shown in \cite{phil}. In the uniformly elliptic case $p=q=2$, Corollary \ref{c2} is another classical result of Giaquinta \& Giusti \cite{gg2,gg3,gg4}. 

\subsection{General functionals, relaxation and the Lavrentiev phenomenon}\label{completasec}
In order to deal with cases more general than \rif{modello}, that is with integrals of the type
\eqn{FMx}
$$
 w\mapsto \mathcal{F}_{\texttt{x}}(w,\Omega):=\int_{\Omega}F(x,Dw) \dx\,,
$$
we need to recast a few basic facts concerning relaxed functionals and Lavrentiev phenomenon. 
Here, we assume that the integrand $F\colon \Omega \times \er^n \to [0, \infty)$ is Carath\'eodory regular and such that 
\eqn{xx.0}
$$
\begin{cases}
\, z \mapsto F(x, z) \ \mbox{satisfies \rif{assFF} uniformly with respect to $x \in \Omega$}\\
\, \snr{\partial_{z}F(x_{1},z)-\partial_{z}F(x_{2},z)}\le L\snr{x_{1}-x_{2}}^{\alpha}([H_{\mu}(z)]^{(q-1)/2}+[H_{\mu}(z)]^{(p-1)/2})
\end{cases}
$$
for every $x_1, x_2 \in \Omega$ and $z\in \er^n$ \footnote{$\partial_z F(\cdot)$ is continuous. This follows from \rif{xx.0}$_2$ and the upper bound on $\partial_{zz}F(\cdot)$ in \rif{assFF}$_4$ via \rif{xx.0}$_1$. In fact $\partial_z F(\cdot)$ is uniformly continuous on $\Omega\times \mathcal{B}_M$, for every $M < \infty$. In fact, there is no loss of generality is assuming that $F(\cdot)$ is continuous. For this, it is sufficient to observe that we can assume $F(x, 0_{\er^n})=0$ (eventually passing to the new integrand $(x, z) \mapsto F(x, z)-F(x, 0_{\er^n})$). Then we can use \rif{xx.0} in combination with Lemma \ref{marclemma}.}. In this case, a natural obstruction to regularity of minimizers is the potential occurrence of the Lavrentiev phenomenon. For instance, it might happen that 
\eqn{lav}
$$
\inf_{w \in  u_0+W^{1,p}_0(B)}\, \mathcal{F}_{\texttt{x}}(w, B ) <  \inf_{w \in  u_0+W^{1,p}_0(B) \cap W^{1,q} (B)}\, \mathcal{F}_{\texttt{x}}(w, B)
$$
even when $u_0$ is a Lipschitz regular function and $B \Subset \Omega$ is a ball \cite{sharp}. Examples of  Lavrentiev phenomenon related to our setting, were given by Zhikov \cite{Z0, Z1, Z2}; see also \cite{sharp}. In the case energy gaps as \rif{lav} occur, one is led to consider the so-called relaxed functional
\eqn{rilassato}
$$
\overline{\mathcal F_{\texttt{x}}}(w,U)  \, := \inf_{\{w_k\}\subset W^{1,q}(U)}  \left\{ \liminf_k  \mathcal F_{\texttt{x}}(w_k,U)  \, \colon \,  w_k \deb w\  \mbox{in} \ W^{1,p}(U)  \right\} \footnote{By the very definition in \rif{rilassato}, it follows that $w \in W^{1, p}(U)$ whenever $ \overline{\mathcal F_{\texttt{x}}}(w,U)$ is finite. The idea of considering this type of lower semicontinuous envelope goes back to Lebesgue, Caccioppoli, Serrin and De Giorgi. In the nonuniformly elliptic setting, it appears for the first time in the work of Marcellini \cite{M0, ma5}.}
$$
for every $w\in W^{1,1}(U)$ and every open subset $U\subset \Omega$. Accordingly, the Lavrentiev gap functional is defined by
\eqn{lav1}
$$
\mathcal {L}_{\mathcal{F}_{\texttt{x}}}(w,U):= \overline{\mathcal F_{\texttt{x}}}(w,U)- \mathcal{F}_{\texttt{x}}(w,U)
$$
for every $w\in W^{1,1}(U)$ such that $\mathcal{F}_{\texttt{x}}(w,U)< \infty$; we set $\mathcal {L}_{\mathcal{F}_{\texttt{x}}}(w,U)=0$ otherwise. The functional $\mathcal {L}_{\mathcal{F}_{\texttt{x}}}(\cdot, U)$ provides a possible way to quantify phenomena like \rif{lav}.   
Note that by $W^{1,p}$-weak lower semicontinuity of $\mathcal{F}_{\texttt{x}}(\cdot,U)$ \footnote{Lower semicontinuity, when $z \mapsto F(\cdot, z)$ is convex, follows by results of De Giorgi and Ioffe, see \cite[Theorem 4.5]{giu}.}, we have $\mathcal{F}_{\texttt{x}}(\cdot,U)\leq \overline{\mathcal{F}_{\texttt{x}}}(\cdot,U)$ so that $\mathcal {L}_{\mathcal{F}_{\texttt{x}}}(\cdot,U)\geq 0$. A minimizer of $\overline{\mathcal F_{\texttt{x}}}(\cdot, \Omega)$ is a function $u \in W^{1, p}(\Omega)$ such that $\overline{\mathcal F_{\texttt{x}}}(u, \Omega)$ is finite and $\overline{\mathcal F_{\texttt{x}}}(u, \Omega) \leq \overline{\mathcal F_{\texttt{x}}}(w, \Omega)$ holds whenever $w \in u+ W^{1,1}_0(\Omega)$. 
\begin{theorem}\label{t4}
Let $u\in W^{1,p}(\Omega)$ be a minimizer of the functional $\overline{\mathcal F_{\texttt{x}}}(\cdot, \Omega)$, where $\Omega$ is a Lipschitz regular domain and $\mathcal F_{\texttt{x}}$ is defined in \trif{FMx}. Assume \trif{bound3} and \eqref{xx.0}. Then $Du$ is locally H\"older continuous in $\Omega$. Moreover
\eqn{stima3}
$$
\nr{Du}_{L^{\infty}(\Omega_0)}\le \frac{c}{[\dist(\Omega_0, \partial \Omega)]^{\chi_1}} \left[ \overline{\mathcal F_{\texttt{x}}}(u, \Omega) +1\right]^{\chi_2}
$$
holds whenever $\Omega_0\Subset \Omega $ is an open subset, where $c, \chi_1, \chi_2$ are as in Theorem \ref{t2}. 
\end{theorem}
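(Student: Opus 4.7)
The plan is to combine the a priori estimate of Theorem \ref{t2}, upgraded to integrands with full $(x,z)$-dependence, with a double approximation scheme that bypasses the potential Lavrentiev gap \rif{lav}. First, I claim that the proof of \rif{stima2m} extends with no essential change to integrands $F(x,z)$ satisfying \rif{xx.0}. Indeed, the H\"older continuity of $\ccc(\cdot)$ in \rif{modello} enters the argument only through a difference quotient estimate on the vector field $\partial_z F(\cdot,z)$, which is precisely the content of \rif{xx.0}$_2$. Denote this generalization by $(\ast)$; it gives \rif{stima2m} with $\mathcal S_{\texttt{x}}$ replaced by $\mathcal F_{\texttt{x}}$, for minimizers in $W^{1,q}$.

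Fix concentric balls $B_t \Subset B_r \Subset \Omega$ with $r\leq 1$. Since $\overline{\mathcal F_{\texttt{x}}}(u,\Omega)$ is finite, the definition \rif{rilassato} provides a sequence $\{w_j\}\subset W^{1,q}(B_r)$ with $w_j\deb u$ in $W^{1,p}(B_r)$ and $\mathcal F_{\texttt{x}}(w_j,B_r)\to \overline{\mathcal F_{\texttt{x}}}(u,B_r)$. For each $j$, let $u_j\in w_j+W^{1,q}_0(B_r)$ minimize $\mathcal F_{\texttt{x}}(\cdot,B_r)$ in this Dirichlet class; existence is standard via the direct method in $W^{1,q}$ using \rif{xx.0}$_1$ and \rif{assFF}. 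Since $u_j\in W^{1,q}(B_r)$ no gap occurs for $u_j$, that is $\mathcal L_{\mathcal F_{\texttt{x}}}(u_j,B_r)=0$, and applying $(\ast)$ together with minimality $\mathcal F_{\texttt{x}}(u_j,B_r)\leq \mathcal F_{\texttt{x}}(w_j,B_r)$ delivers
\[
\nr{Du_j}_{L^{\infty}(B_t)}\leq \frac{c}{(r-t)^{\chi_1}}\bigl[\mathcal F_{\texttt{x}}(u_j,B_r)+1\bigr]^{\chi_2}\leq \frac{c}{(r-t)^{\chi_1}}\bigl[\overline{\mathcal F_{\texttt{x}}}(u,\Omega)+1+\varepsilon_j\bigr]^{\chi_2},
\]
with $\varepsilon_j\to 0$ and $c,\chi_1,\chi_2$ independent of $j$. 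Coercivity in \rif{assFF} additionally provides uniform $W^{1,p}(B_r)$-bounds, so up to subsequences $u_j\deb v$ in $W^{1,p}(B_r)$ with $v-u\in W^{1,p}_0(B_r)$ (since $u_j-w_j\in W^{1,q}_0(B_r)\subset W^{1,p}_0(B_r)$ and $w_j\deb u$), and weak-$\ast$ lower semicontinuity of the Lipschitz norm forces $v|_{B_t}\in W^{1,\infty}(B_t)$ with the same bound.

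The identification step is $v=u$ a.e.\ in $B_r$, which then yields \rif{stima3} after a standard covering argument to reach arbitrary $\Omega_0\Subset\Omega$. By the $W^{1,p}$-weak lower semicontinuity built into the definition of $\overline{\mathcal F_{\texttt{x}}}$, and using $\overline{\mathcal F_{\texttt{x}}}(u_j,B_r)=\mathcal F_{\texttt{x}}(u_j,B_r)$,
\[
\overline{\mathcal F_{\texttt{x}}}(v,B_r)\leq \liminf_{j\to\infty}\mathcal F_{\texttt{x}}(u_j,B_r)\leq \lim_{j\to\infty}\mathcal F_{\texttt{x}}(w_j,B_r)=\overline{\mathcal F_{\texttt{x}}}(u,B_r).
\]
Extending $v$ by $u$ on $\Omega\setminus B_r$ produces an admissible competitor for $\overline{\mathcal F_{\texttt{x}}}(\cdot,\Omega)$. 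The minimality of $u$ together with additivity of $\overline{\mathcal F_{\texttt{x}}}(u,\cdot)$ across $B_r$ and $\Omega\setminus B_r$ reverses the inequality, forcing equality in the display above. The strict convexity in $z$ of $F(x,\cdot)$ inherited from \rif{assFF}$_3$ then yields $v=u$ in $B_r$. Local H\"older continuity of $Du$ finally follows by a freezing argument: on balls where $|Du|$ is bounded, $F(x,\cdot)$ is uniformly elliptic after truncation at the gradient level, so the classical nonlinear Schauder theory \cite{manth1, manth2, liebe} applies.

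The main obstacle is the identification $v=u$. It hinges on two structural properties of the relaxed functional: (i) a locality/additivity statement for $\overline{\mathcal F_{\texttt{x}}}(u,\cdot)$ across $B_r$ and its complement, permitting the gluing of admissible test functions along $\partial B_r$; and (ii) uniqueness of the relaxed minimizer with prescribed $W^{1,p}$-trace, which comes from the strict convexity \rif{assFF}$_3$. Both are standard consequences of the convexity framework in \rif{assFF}, but are essential here because they are precisely what prevents the Lavrentiev gap \rif{lav} from re-emerging in the limit. The deep work of Theorem \ref{t4} is packed into $(\ast)$, i.e.\ into the Theorem \ref{t2} machinery itself.
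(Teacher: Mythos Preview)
Your construction of $u_j$ has a genuine gap. The direct method in $w_j+W^{1,q}_0(B_r)$ does not produce a minimizer in $W^{1,q}$: the integrand in \rif{xx.0} is only $p$-coercive (via \rif{assFF}$_2$), so minimizing sequences are bounded merely in $W^{1,p}$, and the weak limit need not lie in $W^{1,q}$. This is precisely the Lavrentiev obstruction resurfacing at the approximate level. The paper cures it by regularizing the integrand rather than only the boundary datum: setting $\FF_{\eps,\delta}(x,z):=(F(x,\cdot)*\phi_\delta)(z)+\sigma_\eps[H_{\mu_\delta}(z)]^{q/2}$, the added $q$-term restores $q$-coercivity, so the auxiliary Dirichlet problem genuinely yields $u_{\eps,\delta}\in W^{1,q}$.

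A related gap lies in your appeal to $(\ast)$. The a priori estimate behind \rif{stima2m} is Proposition \ref{priori3}, which is proved under \rif{xx.3}, not \rif{xx.0}: it needs $\mu>0$, $F(x,\cdot)\in C^2(\er^n)$, and the lower $q$-ellipticity $\nu_0[H_\mu(z)]^{(q-2)/2}|\xi|^2$ in $\partial_{zz}F$. These enter both through \rif{33.1bis} (the qualitative local boundedness of the gradient that allows one to set $M=\|Du\|_{L^\infty}$ in the iteration) and through Lemmas \ref{cacc-class1}--\ref{marcth}. The original $F$ may have $\mu=0$ and be only $C^1\cap C^2(\er^n\setminus\{0\})$, so $(\ast)$ is unavailable for your $u_j$; the paper's $\delta$-mollification and $\sigma_\eps$-term are exactly what supply \rif{xx.3}. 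Finally, your identification $v=u$ leans on additivity of $\overline{\mathcal F_{\texttt{x}}}(u,\cdot)$ across $B_r$ and $\Omega\setminus B_r$, which is not automatic for this type of relaxation; the paper sidesteps the issue by running the whole approximation on $\Omega$ (the Lipschitz regularity of $\Omega$ is used to extract the recovery sequence $\tilde u_\eps\in W^{1,\infty}(\Omega)$) and invoking strict convexity of $w\mapsto\overline{\mathcal F_{\texttt{x}}}(w,\Omega)$ directly.
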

If $u\in W^{1,1}_{\loc}(\Omega)$ is a minimizer of the original functional $\mathcal F$ in \rif{FMx} such that $\mathcal {L}_{\mathcal{F}_{\texttt{x}}}(\cdot,B)\equiv 0$ for every ball $B\Subset \Omega$, then
$
 \overline{\mathcal F_{\texttt{x}}}(u,B) =\mathcal{F}_{\texttt{x}}(u,B)  \leq \mathcal{F}_{\texttt{x}}(w,B) \leq \overline{\mathcal F_{\texttt{x}}}(w,B)  
$ holds whenever $w \in u+W^{1,1}_0(B)$. 
Therefore $u$ is also a minimizer of $w\mapsto \overline{\mathcal F_{\texttt{x}}}(w, B)$, for every ball $B\Subset \Omega$. It follows
 \begin{corollary}\label{c3}
Let $u\in W^{1,1}_{\loc}(\Omega)$ be a minimizer of the functional $\mathcal F_{\texttt{x}}$, under assumptions \trif{bound3} and \eqref{xx.0}. Assume that $\mathcal {L}_{\mathcal{F}_{\texttt{x}}}(w,B)=0$ for every ball $B$. Then $Du$ is locally H\"older continuous in $\Omega$. Moreover, \eqn{stima3m}
$$
\nr{Du}_{L^{\infty}(B_{t})}\le \frac{c}{(r-t)^{\chi_1}}\left[ \mathcal F_{\texttt{x}}(u, B_{r})+1\right]^{\chi_2}
$$ holds with the notation relative to \trif{stima2m}. 
\end{corollary}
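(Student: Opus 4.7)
The plan is to deduce Corollary \ref{c3} directly from Theorem \ref{t4} by localizing to arbitrary subballs $B_r = B_r(x_0) \Subset \Omega$ and using the vanishing of the Lavrentiev gap to upgrade $u$ into a minimizer of the relaxed functional on each such ball. The proof is essentially bookkeeping: all the hard analysis is packaged inside Theorem \ref{t4}, and the only thing to arrange is its hypothesis.

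First I would fix an arbitrary ball $B_r \Subset \Omega$; being a ball, it is a Lipschitz regular domain, as required by Theorem \ref{t4}. The local minimality of $u$ for $\mathcal{F}_{\texttt{x}}$ yields $\mathcal{F}_{\texttt{x}}(u, B_r) \leq \mathcal{F}_{\texttt{x}}(w, B_r)$ for every $w \in u + W^{1,1}_0(B_r)$. Combining this with the universal bound $\mathcal{F}_{\texttt{x}}(\cdot, B_r) \leq \overline{\mathcal{F}_{\texttt{x}}}(\cdot, B_r)$ (from $W^{1,p}$-weak lower semicontinuity, as recalled in the paragraph containing footnote 10) and with the hypothesis $\mathcal{L}_{\mathcal{F}_{\texttt{x}}}(u, B_r) = 0$, which reads $\overline{\mathcal{F}_{\texttt{x}}}(u, B_r) = \mathcal{F}_{\texttt{x}}(u, B_r)$, I obtain the chain
\[
\overline{\mathcal{F}_{\texttt{x}}}(u, B_r) = \mathcal{F}_{\texttt{x}}(u, B_r) \leq \mathcal{F}_{\texttt{x}}(w, B_r) \leq \overline{\mathcal{F}_{\texttt{x}}}(w, B_r)
\]
for every such $w$. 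Thus $u$ is a minimizer of $\overline{\mathcal{F}_{\texttt{x}}}(\cdot, B_r)$ in the sense demanded by Theorem \ref{t4}. This is exactly the reasoning outlined in the paper just before the corollary statement.

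With this in hand, I would apply Theorem \ref{t4} on the Lipschitz domain $B_r$, with open subset $\Omega_0 = B_t$ concentric to $B_r$ and $t < r$. Since $\dist(B_t, \partial B_r) = r - t$, the estimate \rif{stima3} gives
\[
\|Du\|_{L^{\infty}(B_t)} \leq \frac{c}{(r-t)^{\chi_1}} \left[\overline{\mathcal{F}_{\texttt{x}}}(u, B_r) + 1\right]^{\chi_2} = \frac{c}{(r-t)^{\chi_1}} \left[\mathcal{F}_{\texttt{x}}(u, B_r) + 1\right]^{\chi_2},
\]
where the final equality uses $\mathcal{L}_{\mathcal{F}_{\texttt{x}}}(u, B_r) = 0$ once more. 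The constants $c, \chi_1, \chi_2$ carry the same dependencies as in Theorem \ref{t2}, inherited through Theorem \ref{t4}; this is exactly \rif{stima3m}. Local Hölder continuity of $Du$ in $\Omega$ follows by covering any compact $K \Subset \Omega$ by finitely many such balls $B_r$ and invoking the local Hölder continuity clause of Theorem \ref{t4} on each one.

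There is no genuine obstacle here: the vanishing of the Lavrentiev gap is precisely the structural hypothesis that identifies local minimizers of $\mathcal{F}_{\texttt{x}}$ with local minimizers of $\overline{\mathcal{F}_{\texttt{x}}}$ on subballs, and Theorem \ref{t4} supplies everything else. The only mild subtlety worth noting is the silent use of the convention that $\mathcal{L}_{\mathcal{F}_{\texttt{x}}}(u, B_r) = 0$ really carries information only when $\mathcal{F}_{\texttt{x}}(u, B_r) < \infty$, which however is automatic from Definition \ref{defi-min}.
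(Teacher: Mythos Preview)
Your proposal is correct and matches the paper's own argument essentially verbatim: the paper also deduces the corollary by observing (via the discussion immediately preceding the statement) that the vanishing Lavrentiev gap makes $u$ a minimizer of $\overline{\mathcal F_{\texttt{x}}}(\cdot, B_r)$ on each ball, and then applies Theorem \ref{t4} with $\Omega \equiv B_r$ and $\Omega_0 \equiv B_t$, concluding local H\"older continuity by a covering argument.
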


The Lavrentiev gap \rif{lav1} vanishes in several common situations. For instance, if there exists a convex function $G\colon \er^n \to [0, \infty)$ such that 
$
G(z) \lesssim F(x,z) \lesssim G(z) +1$, then $\mathcal {L}_{\mathcal{F}_{\texttt{x}}}(\cdot,B)\equiv 0$ holds for every ball $B \Subset \Omega$. This is the case of Theorem \ref{t2}, that in fact follows from Corollary \ref{c3}. See  \cite{sharp, Z0, Z1} for cases where the Lavrentiev gap is zero.

\subsection{More Lorentz conditions}\label{nonlimitati} In some cases, \rif{asshh}$_2$ can still be improved in 
 \eqn{lo}
 $$
 f\in L(n/\alpha,\textnormal{\texttt{l}})(\Omega), \ \ \mbox{where} \ \ 
 \textnormal{\texttt{l}}
 := \min\left\{ \frac{p}{2(p-\alpha)}, \frac{1}{2-\alpha} \right\}\,.
 $$
\begin{theorem}\label{t5}
Let $u\in W^{1,1}_{\loc}(\Omega)$ be a minimizer of the functional $\mathcal{G} $ in \trif{ggg2}, under assumptions \eqref{assFF}-\eqref{asshh}, and replace \trif{asshh}$_2$ by the weaker \trif{lo}. 
If \eqn{bound10}
$$
\frac qp \leq 1 +\frac 15\left(1-\frac{\alpha+\gamma}{p}\right)\min\left\{\frac \alpha n, \frac{2(p-\alpha)}{p(2-\alpha)}\right\}\,,
$$
then $Du\in L^{\infty}_{\loc}(\Omega,\er^n)$ and moreover 
\eqn{stima11}
$$
\nr{Du}_{L^{\infty}(B_{t})}\le \frac{c}{(r-t)^{\chi_1}}\left[\mathcal{G}(u, B_{r}) +\|\hhh(\cdot,u)\|_{L^1(B_{r})}+\|f\|_{n/\alpha, \textnormal{\texttt{l}};B_{r}}+1\right]^{\chi_2}
$$
holds whenever $B_{t}\Subset B_{r} \Subset \Omega$ are concentric balls with $r\leq 1$, where $c\equiv c(\data, \gamma)$ and $ \chi_1, \chi_2\equiv \chi_1, \chi_2(\datae, \gamma)$. \end{theorem}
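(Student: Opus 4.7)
The plan is to repeat the proof of Theorem \ref{t1} essentially verbatim, modifying only the single step in which the Lorentz regularity of $f$ enters, namely the estimate of the contribution coming from the $\hhh$-term in the hybrid Caccioppoli inequality built in Section \ref{ibrida}. The coupling $L(n/\alpha,1/2)$ used in Theorem \ref{t1} originates from controlling a quantity of the form $\int f(x)|\tau_h u(x)|^{\alpha}\varphi_h(x)\,dx$, where $\varphi_h$ is a nonnegative test function built from truncations of a convex function of $|Du|$ and the fractional increment $|\tau_h u|^\alpha$ is bounded in a rather crude pointwise way. The refinement in Theorem \ref{t5} is obtained by estimating this factor in two sharper, complementary ways and taking the better of the two.

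First I would revisit the proof of Proposition \ref{caccin3} (and its siblings) and isolate the step producing the integral
\[
\mathcal I_h := \int_{B_r} f(x)\, |\tau_h u(x)|^{\alpha}\, \varphi_h(x) \, dx\,.
\]
By the Lorentz-Hölder inequality,
\[
\mathcal I_h \le c\, \|f\|_{n/\alpha,\textnormal{\texttt{l}};B_r}\, \bigl\| |\tau_h u|^{\alpha}\, \varphi_h\bigr\|_{L(n/(n-\alpha),\textnormal{\texttt{l}}^{\ast});B_r},
\]
where $\textnormal{\texttt{l}}^{\ast}$ is the secondary exponent dual to $\textnormal{\texttt{l}}$. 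For $|\tau_h u|^{\alpha}$ two natural bounds are available: a Sobolev-type estimate controlling $|\tau_h u|$ through the $L^p$-norm of $Du$ and Hölder's inequality on the scale $p/\alpha$, which is compatible with secondary exponent $p/(2(p-\alpha))$; and a pointwise Hölder-difference bound using the boundedness information on $u$ afforded by the Bernstein step, which is compatible with $1/(2-\alpha)$. The definition of $\textnormal{\texttt{l}}$ in \rif{lo} is precisely the minimum of the two, and makes both estimates simultaneously usable.

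Next I would plug this refined estimate back into the atomic-type decomposition in Nikolski spaces carried out in Section \ref{hyb2}, obtaining the analogue of the key renormalized Caccioppoli inequality underlying Theorem \ref{t1}, now with $\|f\|_{n/\alpha,\textnormal{\texttt{l}}}$ in place of $\|f\|_{n/\alpha,1/2}$. The remaining machinery is unchanged: the renormalization still produces a multiplicative factor $\|Du\|_{L^{\infty}}^{\sigma}$ with the same $\sigma\equiv\sigma(q/p)$, and the De Giorgi-type geometric iteration in the truncated variables closes under a gap bound large enough to absorb $\sigma$. The explicit form of \rif{bound10}, with the inner $\min\{\alpha/n,\,2(p-\alpha)/(p(2-\alpha))\}$, mirrors exactly the improvement in the Lorentz secondary exponent just obtained, while the prefactor $1-(\alpha+\gamma)/p$ comes from the interplay between $F(\cdot)$ and $\gggg(\cdot)$ and is inherited untouched from \rif{bound1}. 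Finally, \rif{stima11} is derived by unrolling the iteration as in \rif{stima1}, with the new Lorentz norm replacing the old one throughout.

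The main obstacle will be verifying that the Lorentz-Hölder pairing above remains compatible with the truncation and level-set manipulations proper of the De Giorgi iteration: at every step one must check that the test function $\varphi_h$ can be coupled with $|\tau_h u|^{\alpha}$ so as to produce a Lorentz norm on the dual scale $L(n/(n-\alpha),\textnormal{\texttt{l}}^{\ast})$ that is uniformly controlled by the iterative quantities. A secondary technical point is the interpolation between the two competing bounds on $|\tau_h u|^{\alpha}$, each of which is natural in a different region of the $(p,\alpha)$-plane, so that they must be combined consistently along the iteration. Once these points are settled, the conclusion follows by a direct adaptation of the argument of Theorem \ref{t1}.
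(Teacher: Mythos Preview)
Your high-level plan --- modify only the handling of the $\hhh$-contribution in the proof of Theorem \ref{t1} --- is correct, but the concrete mechanism you describe does not match the paper's and contains a structural misidentification. The $f$-term does not arise as an integral of the form $\int f\,|\tau_h u|^{\alpha}\varphi_h\,dx$ against a test function in a differentiated equation; it appears in the \emph{comparison estimate} between the rescaled minimizer $u_\rr$ and its frozen lifting $v$ on the small ball $8B_h$, specifically as term (III) $=\int_{8B_h} f_\rr\,|u_\rr-v|^{\alpha}\,dx$ in \rif{mmaa}. (Also, Proposition \ref{caccin3} is not the relevant one here; it concerns the functional $\mathcal S$ in \rif{modellou}. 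The pertinent results are Propositions \ref{caccin1} and \ref{caccin2}.)

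The paper does not use a Lorentz--H\"older duality pairing with $\texttt{l}^{\ast}$. Instead, Proposition \ref{caccin2} estimates (III) via Sobolev embedding \rif{banalina} with a free exponent $\ppp\in[p,p^*)$, H\"older's inequality against $f_\rr\in L^{\ppp/(\ppp-\alpha)}$, and \rif{sopradue}; this produces two terms with exponents $\theta(\ppp),\sigma(\ppp)$ and a fractional gain governed by $\alpham(\ppp)$ in \rif{alphagen}. After the De Giorgi--type iteration (Lemma \ref{revlem}), these terms become nonlinear potentials $\mathbf{P}^{\ppp/(\ppp-\alpha),\theta(\ppp)}_{2,\,p\alpha/[2(p-\alpha)]}$ and $\mathbf{P}^{\ppp/(\ppp-\alpha),\sigma(\ppp)}_{2,\,\alpha/(2-\alpha)}$, which Lemma \ref{crit} then bounds by the Lorentz quasinorms $\|f\|_{n/\alpha,\,p/[2(p-\alpha)]}$ and $\|f\|_{n/\alpha,\,1/(2-\alpha)}$ respectively --- exactly the two numbers whose minimum defines $\texttt{l}$ in \rif{lo}. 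Applicability of Lemma \ref{crit} forces $\ppp>\pppm:=n\alpha/(n-\alpha)$, and the a priori argument (Proposition \ref{priori2}) then splits into three cases according to the position of $p$ relative to $\pppm$ and $2n\alpha/(2n-2\alpha+\alpha^2)$: in the first two one keeps $\alpham=\alpha$ and recovers the same gap bound as in Theorem \ref{t1}; in the third case $\alpham<\alpha$ is forced and the bound becomes \rif{bound10dopo}, giving the second branch of the minimum in \rif{bound10}. So the ``two competing bounds'' you sensed are real, but they emerge from the choice of $\ppp$ in the Sobolev step and are cashed out through nonlinear potentials, not through a direct Lorentz duality on a single integral.
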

Just note that, when $p>2n\alpha/[2n-2\alpha+\alpha^2]$, the upper bounds in \rif{bound1} and \rif{bound10} coincide; in particular, this happens when $p\geq 2$. 

\subsection{Equations}\label{equazioni} Here we deal with equations of the type \rif{equazioneM}. 
The vector field $A\colon \Omega \times \er^n \to \er^n$ is of class $C^1(\er^n\setminus\{0\})$ with respect to gradient variable, and satisfies
\begin{flalign}\label{assAA}
\qquad\begin{cases}
 \,|A(x,  z)|+ [H_{\mu}(z)]^{1/2}\snr{\partial_z A(x,  z)} \le L[H_{\mu}(z)]^{(q-1)/2}+L[H_{\mu}(z)]^{(p-1)/2}\\
\, \nu [H_{\mu}(z)]^{(p-2)/2}\snr{\xi}^{2}\le \partial_{z}A(x,z)\xi\cdot \xi \\
\, \snr{A(x_{1},z)- A(x_{2}, z)}  \le L \snr{x_{1}-x_{2}}^{\alpha }([H_{\mu}(z)]^{(q-1)/2}+[H_{\mu}(z)]^{(p-1)/2})
\end{cases}
\end{flalign}
whenever $x_1, x_2 \in \Omega$ and $z\in \er^n\setminus\{0\}$, $\xi \in \er^n$ \footnote{Assumptions \rif{assAA} imply that $A(\cdot)$ is uniformly continuous on $\Omega\times \mathcal{B}_M$, for every $M < \infty$.}. In the nonuniformly elliptic setting, passing from minimizers to functionals raises additional issues. There are essentially two approaches available in the literature. The former prescribes to get a priori estimates for more regular, i.e. $W^{1,q}$-solutions \cite{LU, M2, simon2}. The latter is to prove, simultaneously, the existence of regular solutions for assigned boundary value problems, say for instance Dirichlet problems \cite{BM, ivanov2, M2}. These alternatives are ultimately linked to growth conditions \rif{assAA}$_1$, that imply that the distributional form of \rif{equazioneM} can be tested only by $W^{1,q}$-regular functions. Therefore an ambiguity arises concerning the space where to initially pick the solution from, and on the same concept of energy solution. See Remark \ref{coti} below. Such an ambiguity does not exist when $p=q$. Of the two approaches mentioned above, we follow the second, and consider the Dirichlet problem
\eqn{dir1}
$$
\left\{
\begin{array}{c}
-\diver\, A (x,Du)=0\quad \mbox{in}\ \Omega\\ [3 pt]
u \equiv u_0  \quad \mbox{on}\ \partial \Omega\;,
 \end{array}\right.
\qquad \qquad 
u_0 \in W^{1,\frac{p(q-1)}{p-1}}(\Omega)\;,
$$
where $\Omega\subset \er^n$ is a bounded and Lipschitz domain. 
\begin{theorem}\label{t6}
Assume that the vector field $A(\cdot)$ satisfies \eqref{assAA}. 
If 
\eqn{bound5}
$$
\frac{q}{p}\leq  1+  \frac{p-1}{10 p}\frac{\alpha^2}{n^2}\,,
$$
then there exists a solution $u\in W^{1,p}(\Omega)$ to the Dirichlet problem  \eqref{dir1}, such that $Du$ is locally H\"older continuous in $\Omega$.
 Moreover, the estimate
\eqn{stima}
$$
\nr{Du}_{L^{\infty}(\Omega_0)}\le \frac{c}{[\dist(\Omega_0, \partial \Omega)]^{\chi_1}} \left(\int_{\Omega} (|Du_0|+1)^{\frac{p(q-1)}{p-1}}\dx +1  \right)^{ \chi_2}$$
holds whenever $\Omega_0\Subset \Omega $ is an open subset, where $c\equiv c(\data)$ and $ \chi_1, \chi_2\equiv \chi_1, \chi_2(\datae)$.
\end{theorem}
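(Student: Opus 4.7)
The plan is to produce a solution of \rif{dir1} as the limit of regularized problems that are uniformly $q$-elliptic, and then upgrade its regularity through a uniform Lipschitz bound obtained by adapting the hybrid Caccioppoli--Bernstein scheme behind Theorem \ref{t2} to the equation setting. First, I would approximate $u_0$ by $u_{0,\eps}\in W^{1,\infty}(\Omega)$ with $u_{0,\eps}\to u_0$ in $W^{1,p(q-1)/(p-1)}(\Omega)$, and regularize the vector field by setting, for instance, $A_\eps(x,z):=A(x,z)+\eps(1+\snr{z}^2)^{(q-2)/2}z$. The new field satisfies \rif{assAA} uniformly in $\eps$ with the same $p,q$, plus genuine $q$-coercivity, so by standard Leray--Lions type monotone operator theory there exists $u_\eps\in W^{1,q}(\Omega)$ with $u_\eps-u_{0,\eps}\in W^{1,q}_0(\Omega)$ solving $-\diver\, A_\eps(x,Du_\eps)=0$. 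Testing with $u_\eps-u_{0,\eps}$, using \rif{assAA}, and a suitable Young's inequality (that makes $A(x,Du_{0,\eps})$ visible in the dual space $L^{p/(p-1)}$) yields the $\eps$-uniform energy bound
\[
\int_\Omega [H_\mu(Du_\eps)]^{p/2}\,\mathrm{d}x+\eps\int_\Omega [H_\mu(Du_\eps)]^{q/2}\,\mathrm{d}x\le c\int_\Omega (\snr{Du_0}+1)^{p(q-1)/(p-1)}\,\mathrm{d}x+c,
\]
which ultimately feeds the right-hand side of \rif{stima}.

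The core of the argument is a uniform interior Lipschitz estimate for the family $\{u_\eps\}$. Since $u_\eps\in W^{1,q}_{\loc}(\Omega)$, the regularized equation may be legitimately tested by $W^{1,q}$-functions, so second-variation-type Caccioppoli inequalities involving fractional derivatives of $Du_\eps$ can be derived in full analogy with the variational setting. Renormalization recasts these in homogeneous form, at the price of multiplicative constants depending polynomially on $\nr{Du_\eps}_{L^\infty}^\sigma$, with $\sigma\equiv\sigma(q/p)$. The atomic-type decomposition in Nikolski spaces still applies, using as atoms the liftings $v$ that solve the autonomous, frozen-coefficient equation $-\diver\, A(x_0,Dv)=0$ on small balls, where $x_0$ denotes the freezing point; the comparison between $u_\eps$ and $v$ is driven by the monotonicity \rif{assAA}$_2$ and the H\"older estimate \rif{assAA}$_3$, which together replace minimality. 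The De Giorgi-type geometric iteration closes provided $\sigma$ is small enough, and \rif{bound5} is precisely what guarantees this. The extra factor $(p-1)/p$ compared with \rif{bound3} enters because the preliminary energy estimate above transfers regularity of $u_{0,\eps}$ to $u_\eps$ only via the exponent $p(q-1)/(p-1)$ instead of $p$, and this loss has to be absorbed into the admissible range of $q/p$.

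With the uniform bound $\nr{Du_\eps}_{L^\infty(\Omega_0)}\le C$ at hand, I would pass to the limit: along a subsequence $u_\eps\to u$ locally uniformly and $Du_\eps\deb Du$ weakly in $W^{1,p}_{\loc}$, while the monotonicity of $\partial_z A$ combined with the vanishing of the $\eps$-term delivers strong $L^p_{\loc}$ convergence of the gradients. The limit $u\in W^{1,p}(\Omega)$ therefore solves $-\diver\, A(x,Du)=0$ in the sense of distributions, inherits the boundary condition $u-u_0\in W^{1,p}_0(\Omega)$ by stability of traces under weak convergence, and satisfies \rif{stima}. Local H\"older continuity of $Du$ then follows by a classical freezing/perturbation argument: on balls where $\snr{Du}$ is now bounded, the equation is effectively uniformly elliptic on the relevant range of $\snr{z}$, so standard decay estimates for the frozen autonomous problem matched with the $\alpha$-H\"older comparison from \rif{assAA}$_3$ yield $Du\in C^{0,\beta}_{\loc}$ for some $\beta>0$.

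The main obstacle is the second paragraph: the $\eps$-regularization can inflate the $q$-growth upper bound of $A_\eps$ as $\eps\to 0$, so every constant along the hybrid Caccioppoli iteration must be tracked to depend only on the $L^p$-energy of $Du_\eps$, controlled uniformly by the boundary datum, and not on $\eps$ directly. Simultaneously keeping $\sigma$ small through the renormalization, and controlling the nonlinear potential remainders that absorb the $\eps$-dependent tails, is where \rif{bound5} becomes indispensable.
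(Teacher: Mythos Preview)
Your global architecture matches the paper's: regularize, derive uniform interior Lipschitz bounds via the hybrid fractional Caccioppoli scheme adapted to equations (Proposition~\ref{caccin5}), pass to the limit, and finish H\"older continuity by perturbation. Two points, however, are off.

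First, your regularization $A_\eps(x,z)=A(x,z)+\eps(1+\snr{z}^2)^{(q-2)/2}z$ does not remove the possible singularity of $\partial_z A$ at $z=0$: under \eqref{assAA} the field is only $C^1(\er^n\setminus\{0\})$ in $z$, and when $\mu=0$ and $p<2$ this singularity survives in $A_\eps$. The machinery of Proposition~\ref{caccin5} (and behind it Lemmas~\ref{cacc-class1}--\ref{marcth}) explicitly requires the nondegenerate setting $0<\mu\le 2$; without it the a~priori $C^{1,\alpha_1}$ regularity of the approximants and the derivation of the fractional Caccioppoli inequality fail. The paper fixes this by convolving in the gradient variable, $A_\eps(x,z):=(A(x,\cdot)*\phi_\eps)(z)+\eps[H_{\mu_\eps}(z)]^{(q-2)/2}z$ with $\mu_\eps=\mu+\eps>0$, and checks via Lemma~\ref{converlemma} that $A_\eps\to A$ uniformly on bounded $z$-sets.

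Second, your explanation of the extra factor $(p-1)/p$ in \eqref{bound5} versus \eqref{bound3} is incomplete. The energy bound via $p(q-1)/(p-1)$ is one ingredient, but the decisive difference is structural: for a general $A$ the matrix $\partial_z A$ need not be symmetric, so in the comparison step one can only invoke \eqref{stimaprimissima} (valid under $q/p<1+1/n$) and the weaker Caccioppoli inequality \eqref{pri-cacc-2}, rather than \eqref{stimaprimissima2} and \eqref{pri-cacc} used for functionals. This forces the iteration exponent $\sst$ of \eqref{marcexpeq} in place of $\ssf$, and the algebra \eqref{alge11} then produces exactly the $(p-1)/p$ loss. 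Finally, for the passage to the limit the paper does not rely on a monotonicity/Minty device but instead uses a uniform fractional bound (Proposition~\ref{apxsharp}) to obtain compactness of $\{Du_\eps\}$ in $L^\gamma_{\loc}$, then interpolates with the $L^\infty_{\loc}$ bound; your monotonicity route is plausible but would need to be spelled out carefully given that the limit $u$ is only known to lie in $W^{1,p}$.
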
 
\begin{corollary}[Hopf-Schauder-Caccioppoli, reloaded - II]\label{c4} 
In the setting of Theorem \ref{t6}, assume also that $p\geq 2$, $\mu>0$, and that $\partial_z A(\cdot)$ is continuous on $\Omega \times \er^n$. Then $
u\in C^{1, \alpha}_{\loc}(\Omega).
$\end{corollary}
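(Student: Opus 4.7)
The plan is to combine the a priori $L^{\infty}$-gradient bound from Theorem \ref{t6} with a classical freezing/comparison scheme, in order to upgrade the H\"older exponent of $Du$ to the sharp value $\alpha$. Fix an open subset $\Omega_0\Subset \Omega$ and a ball $B_{2r_0}\subset \Omega_0$, and set $M:=\nr{Du}_{L^{\infty}(\Omega_0)}$, finite by \eqref{stima}. Since $\mu>0$, $p\geq 2$ and $\partial_z A(\cdot)$ is continuous on $\Omega\times \er^n$, on the compact set $\overline{\Omega}_0\times \{\snr{z}\leq 2M\}$ the matrix $\partial_z A(x,z)$ has eigenvalues uniformly bounded above and below by positive constants depending only on $\data$, $\mu$, $M$ and on the modulus of continuity of $\partial_z A(\cdot)$. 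In other words, restricted to gradients bounded by $2M$, the operator $A(\cdot)$ is uniformly elliptic with bounded, continuous coefficients, and \eqref{assAA}$_3$ provides H\"older continuity in $x$ of $A(x,z)$, uniformly on that range of $z$.

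For every $x_0\in B_{r_0}$ and $r\leq r_0/2$, I would freeze, letting $v\in u+W^{1,p}_0(B_{r}(x_0))$ solve the autonomous Dirichlet problem
\begin{equation*}
-\diver A(x_0, Dv)=0 \quad \mbox{in}\ B_{r}(x_0), \qquad v\equiv u \ \mbox{on}\ \partial B_{r}(x_0).
\end{equation*}
A maximum principle/energy comparison using $u$ as competitor yields $\nr{Dv}_{L^{\infty}(B_r(x_0))}\leq c(\data,\mu,M)$ (after a small enlargement of $r_0$ to keep $v$ in the uniform ellipticity regime). The frozen equation is now autonomous and uniformly elliptic on the range of $Dv$, with $C^1$-regular vector field thanks to the continuity of $\partial_z A(\cdot)$; therefore classical Uhlenbeck-DiBenedetto-Lieberman regularity applies and gives, for every $\beta_0\in(\alpha,1)$, the Campanato-type decay
\begin{equation*}
\int_{B_\rho(x_0)}|Dv-(Dv)_{B_\rho(x_0)}|^2\dx \leq c\left(\frac{\rho}{r}\right)^{n+2\beta_0}\int_{B_r(x_0)}|Dv-(Dv)_{B_r(x_0)}|^2\dx
\end{equation*}
for every $\rho\leq r$, with $c\equiv c(\data,\mu,M,\beta_0)$.

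Next I would estimate the difference $u-v\in W^{1,p}_0(B_r(x_0))$: testing the equations satisfied by $u$ and $v$ with this difference and exploiting monotonicity from \eqref{assAA}$_2$, uniform ellipticity on gradients bounded by $c(M)$, and the H\"older continuity \eqref{assAA}$_3$, one arrives at
\begin{equation*}
\int_{B_r(x_0)}|Du-Dv|^2\dx \leq c\, r^{2\alpha}\int_{B_r(x_0)}(1+M)^{2(q-1)}\dx \leq c(\data,\mu,M,\alpha)\, r^{n+2\alpha}.
\end{equation*}
Combining via triangle inequality yields, for $\rho\leq r$,
\begin{equation*}
\int_{B_\rho(x_0)}|Du-(Du)_{B_\rho(x_0)}|^2\dx\leq c\left(\frac{\rho}{r}\right)^{n+2\beta_0}\!\!\int_{B_r(x_0)}|Du-(Du)_{B_r(x_0)}|^2\dx+c\, r^{n+2\alpha}.
\end{equation*}
A standard iteration lemma (\cite[Lemma 7.3]{giu}) with $\beta_0>\alpha$ then produces $\int_{B_\rho(x_0)}|Du-(Du)_{B_\rho(x_0)}|^2\dx\leq c\rho^{n+2\alpha}$ for every $\rho\leq r_0$, uniformly in $x_0\in B_{r_0}$. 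Campanato's characterization delivers $Du\in C^{0,\alpha}(B_{r_0},\er^n)$, and a covering argument concludes $u\in C^{1,\alpha}_{\loc}(\Omega)$.

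The hard part is the bookkeeping of constants in the comparison estimate, where the $(p,q)$-upper bound in \eqref{assAA}$_1$ and the H\"older condition \eqref{assAA}$_3$ generate the factor $(1+M)^{2(q-1)}$: this is harmless here because $M$ is a fixed constant produced by Theorem \ref{t6}, but it must be absorbed in the final $c$. The second delicate point is the Uhlenbeck-type $C^{1,\beta_0}$-regularity with $\beta_0$ arbitrarily close to $1$ for the frozen autonomous equation; this is where $\mu>0$, $p\geq 2$ and continuity of $\partial_z A(\cdot)$ enter in an essential way, reducing the frozen problem to a smooth uniformly elliptic one on the relevant range of gradients, so that the classical theory gives the needed decay with any exponent $\beta_0<1$.
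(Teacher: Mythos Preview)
Your strategy is the same as the paper's: freeze the $x$-dependence, compare, transfer the Campanato decay of the frozen solution to $u$, and iterate. The paper in fact proves Corollary~\ref{c4} by saying that the proof of Corollary~\ref{c1} applies verbatim, and Corollary~\ref{c1} is precisely the scheme you outline. There are, however, three technical points where your write-up diverges from what the paper actually does, and two of them are genuine gaps.

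First, the frozen operator $z\mapsto A(x_0,z)$ is only $p$-coercive but has $q$-growth, so the Dirichlet problem for $v$ in $u+W^{1,p}_0(B_r)$ is not covered by standard monotone operator theory, and testing the equation with $u-v$ is not justified unless $Dv\in L^q(B_r)$. The paper fixes this by freezing not $A(x_0,\cdot)$ but $A_r(z):=A(x_0,z)+r^{\alpha}[H_{\mu}(z)]^{(q-2)/2}z$, which is $q$-monotone; the extra term contributes an additional $cr^{n+2\alpha}$ in the comparison estimate and is therefore harmless. Second, the claim $\nr{Dv}_{L^\infty(B_r(x_0))}\le c(\data,\mu,M)$ on the \emph{full} ball, obtained ``by maximum principle/energy comparison using $u$ as competitor'', is not correct: there is no variational structure here to use $u$ as competitor, and maximum principles bound $v$, not $Dv$. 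What one gets is the interior bound $\nr{Dv}_{L^\infty(B_{r/2})}\le c(M)$ via the a priori Lipschitz estimate \eqref{stimaprimissima} combined with the coercivity inequality \eqref{coerpq}; this is enough for the rest of the argument but forces the decay to be stated on $B_{r/2}$.

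Third, and this is a difference rather than a gap: to obtain the Campanato decay for $Dv$ with exponent $\beta_0$ arbitrarily close to $1$, the paper explicitly uses the a priori local H\"older continuity of $Du$ supplied by Theorem~\ref{t6} (this is \eqref{starty}). Concretely, the paper first gets $\mint_{B_r}|Dv-(Du)_{B_r}|^2\dx\le c r^{2\beta}$, feeds this into the De Giorgi estimate for the linearized equation to obtain $[Dv]_{0,\beta_0;B_{r/4}}\le c_*$ \emph{uniformly in $r$}, and only then does the continuity of $\partial_z A$ make the coefficients of the linearized equation uniformly continuous with a modulus $\tilde\omega(r)\to 0$, so that Lemma~\ref{l5bis} applies. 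Your shortcut---invoking ``classical Uhlenbeck--DiBenedetto--Lieberman regularity'' to get the homogeneous decay directly---can be made to work via a scaling-plus-bootstrap argument on the unit ball, but you should be aware that this is not an off-the-shelf statement and requires exactly the De Giorgi/Campanato steps the paper spells out.
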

When $p=q=2$, Corollary \ref{c1} is classical in the uniformly elliptic theory \cite{cacc1, hopf, gg2,gg3, js}. 
\begin{remark}\label{coti} {\em Already in the case of the classical $p$-Laplace equation $\diver\, (|Du|^{p-2}Du)=0$, starting from distributional solutions that only belong to $W^{1,s}$ for $s<p$, prevents higher regularity, and even $W^{1,p}$-regularity. This is a recent, striking achievement of Colombo \& Tione \cite{colombotione}.}
\end{remark}
\subsection{Recent, related cases available in the literature}\label{casispeciali} Schauder type estimates in the nonuniformly elliptic case attracted a lot of attention over decades, especially in the last years. For functionals with nonstandard polynomial growth of the type in \rif{FMx}, and connected  equations, recent related results hold under certain special structure assumptions \cite{AM, BCM, BO, BO0, HHT, HO, HO2}. 
These include for instance the variable exponent case $ F(x,Dw) \equiv  |Dw|^{p(x)}$, $p(x)>1$ as in \cite{AM}, and the double phase case $F(x,Dw) \equiv |Dw|^{p}+a(x)|Dw|^q$, $a(x)\geq 0$, as in \cite{BCM}. The common point of all these papers is that the frozen integrand $z\mapsto F(x_0,z)$ is still uniformly elliptic, for every fixed point $x_0 \in \Omega$. This means that \rif{ellratio} is satisfied upon taking $F(z)\equiv F(x_0,z)$. On the contrary, in this paper we deal with real, pointwise nonuniform ellipticity, allowing that
$$
\sup_{\snr{z}\geq 1}\,  \frac{\mbox{highest eigenvalue of}\ \partial_{zz}  F(x_0,z)}{\mbox{lowest eigenvalue of}\  \partial_{zz} F(x_0,z)} = \infty \,.
$$
In \rif{modello}, note that $z\mapsto  \ccc(x_0)F(z)$ is still nonuniformly elliptic in the sense of \rif{polygro} if so is $F(\cdot)$. We mention that, when applying the techniques of this paper in the known settings mentioned above, we come up with the same sharp results available in the literature \cite{DMforth}. Another approach was described in \cite{EMM}, where the authors considered integrands depending on $\snr{z}$ and with Sobolev-regular coefficients. This means that $x \mapsto F(x,\cdot)$ belongs to $W^{1,d}$, with $d\equiv d(p,q)>n$. In this case H\"older continuity of coefficients follows by Sobolev-Morrey embedding, but, again, differentiability of coefficients must be assumed; see also \cite{ciccio}. 

\section{Preliminary facts and notation} \label{notsec}
\subsection{Further notation}\label{further}
We denote $\N_{0}:= \N \cup \{0\}$, and by 
$B_{r}(x_0):= \{x \in \er^n  :   |x-x_0|< r\}$ the open ball with center $x_0$ and radius $r>0$; we omit denoting the center when it is not necessary, i.e., $B \equiv B_{r} \equiv B_{r}(x_0)$; this especially happens when various balls in the same context will share the same center. We also denote
\eqn{notfont}
$$
\mathcal B_{r} \equiv B_{r}(0):= \{x \in \er^n  :   |x|< r\}\,.
$$
Finally, with $B$ being a given ball with radius $r$ and $\gamma$ being a positive number, we denote by $\gamma B$ the concentric ball with radius $\gamma r$ and by $B/\gamma \equiv (1/\gamma)B$. Given a number $s\geq 1$, its Sobolev conjugate exponent is denoted by
\eqn{ilconiugato}
$$
s^*:= \begin{cases}
\frac{ns}{n-s} \ \mbox{if $s<n$}\\
\infty \ \mbox{if $s\geq n$}\,.
\end{cases}
$$
In denoting several function spaces like $L^s(\Omega), W^{1,s}(\Omega)$, we will denote the vector valued version by $L^s(\Omega,\er^k), W^{1,p}(\Omega,\er^k)$ in the case the maps considered take values in $\er^k$, $k\in \en$. When clear from the contest, we will also abbreviate $L^s(\Omega,\er^k) , W^{1,s}(\Omega,\er^k)\equiv 
L^s(\Omega) , W^{1,s}(\Omega)$ and so on.
With $\mathcal U \subset \er^{n}$ being a measurable subset with bounded positive measure $0<|\mathcal U|<\infty$, and with $g \colon \mathcal U \to \er^{k}$, $k\geq 1$, being a measurable map, we denote  
$$
   (g)_{\mathcal U} \equiv \mint_{\mathcal U}  g(x) \dx  :=  \frac{1}{|\mathcal U|}\int_{\mathcal U}  g(x) \dx\;,
$$
and also 
$
\|g\|_{L^{\gamma}(\mathcal U)}^\gamma= \int_{\mathcal U}|g|^\gamma \dx$, for every $\gamma \geq 0$. Given a function $h\colon A \to \er$ we define its oscillation on $A$
\eqn{defiosc}
$$
{\rm osc}_{A}\, h:= \textnormal{ess sup}_{x, y\in A} \, \snr{h(x)-h(y)}\,.
$$ 
With $\beta \in (0,1]$ and $A \subset \er^n$, we use the standard notation
$$
[w]_{0, \beta;A}:=\sup_{x, y \in A, x\not = y} \frac{|w(x)-w(y)|}{|x-y|^{
\beta}} \,.
$$ 
Given a ball $B\subset \er^n$, we denote by $Q_{\textnormal{inn}}\equiv Q_{\textnormal{inn}}(B)$ and $Q_{\textnormal{out}}\equiv Q_{\textnormal{out}}(B)$ the inner and outer hypercubes of $B$. These are defined as the largest and the smallest hypercubes, with sides parallel to the coordinate axes and concentric to $B$, that are contained and containing $B$, respectively: 
\eqn{nestate}
$$Q_{\textnormal{inn}}(B)\subset B \subset Q_{\textnormal{out}}(B)\,.$$ 
If $B$ has radius $r$, then the sidelength of $Q_{\textnormal{inn}}(B)$ is $2r/\sqrt{n}$ while that of $Q_{\textnormal{out}}(B)$ is $2r$. 

\subsection{Fractional Sobolev spaces} Classical fractional Sobolev-Slobodevski spaces are defined via Gagliardo norms as follows: 
\begin{definition}\label{fra1def}
Let $\alpha_{0} \in (0,1)$, $s \in [1, \infty)$, $k \in \en$, $n \geq 2$, and let $\Omega \subset \er^n$ be an open subset. The space $W^{\alpha_{0} ,s}(\Omega,\er^k )$ consists of maps $w \colon \Omega \to \er^k$ such that
\begin{eqnarray}
\notag
\| w \|_{W^{\alpha_{0} ,s}(\Omega )} & := &\|w\|_{L^s(\Omega)}+ \left(\int_{\Omega} \int_{\Omega}  
\frac{\snr{w(x)
- w(y)}^{s}}{|x-y|^{n+\alpha_{0} s}} \dx \dy \right)^{1/s}\\
&=:& \|w\|_{L^s(\Omega)} + [w]_{\alpha_{0}, s;\Omega} < \infty\,.\label{gaglia}
\end{eqnarray}
The local variant $W^{\alpha_{0} ,s}_{\loc}(\Omega,\er^k )$ is defined by requiring that $w \in W^{\alpha_{0} ,s}_{\loc}(\Omega,\er^k )$ iff $w \in W^{\alpha_{0} ,s}(\tilde{\Omega},\er^k)$ for every open subset $\tilde{\Omega} \Subset \Omega$. 
\end{definition}
Given $w \colon \Omega \to \mathbb{R}^{k}$, $k\ge 1$, an open subset $\Omega\subset \er^n$, and a vector $h \in \mathbb{R}^n$, we denote by $\tau_{h}\colon L^1(\Omega,\mathbb{\er}^{k}) \to L^{1}(\Omega_{|h|},\mathbb{R}^{k})$ the standard finite difference operator
$
\tau_{h}w(x):=w(x+h)-w(x)$, $\mbox{for } \ x\in \Omega_{\snr{h}},
$
where $\Omega_{|h|}:=\{x \in \Omega \, : \, 
\dist(x, \partial \Omega) > |h|\}$. We will several times use the following elementary properties of finite difference quotients (with $B_{r}(x_0)\subset \er^n$ being a fixed ball):
\eqn{diffbasic}
$$
\begin{cases}
\displaystyle \|\tau_{h} w\|_{L^s(B_{r}(x_0))}  \leq c(s) \|w\|_{L^s(B_{r+|h|}(x_0))}   & \quad  \forall \, w \in L^s(B_{r+|h|}(x_0)), \ s \geq 1\\
\displaystyle \|\tau_{h} w\|_{L^s(B_{r}(x_0))}   \leq c(n,s)|h| \|Dw\|_{L^s(B_{r+|h|}(x_0))}& \quad \forall \, w \in W^{1,s}(B_{r+|h|}(x_0)), \ s \geq 1\,.
\end{cases}
$$
Finite difference operators can be used to detect maps from fractional Sobolev spaces, as described in the following Lemma, that in fact quantifies, locally, the imbedding properties of Nikolski spaces into Sobolev-Slobodevski spaces $W^{\alpha_0,s}$ (see \cite{dm1}):
\begin{lemma}\label{l4}
Let $B_{\varrho} \Subset B_{r}\subset \er^n$ be concentric balls with $r\leq 1$, $w\in L^{s}(B_{r},\mathbb{R}^{k})$, $s\geq 1$ and assume that, for $\beta \in (0,1]$, $\mathcal H\ge 1$, there holds
\eqn{cru1}
$$
\nr{\tau_{h}w}_{L^{s}(B_{\rr})}\le \mathcal H\snr{h}^{\beta } \quad \mbox{
for every $h\in \mathbb{R}^{n}$ with $0<\snr{h}\le \frac{r-\rr}{K}$, where $K \geq 1$}\;.
$$
Then, for $c\equiv c(n,s)$, it holds that 
\eqn{cru2}
$$
\nr{w}_{W^{\alpha_{0},s}(B_{\rr})}\le\frac{c}{(\beta -\alpha_{0})^{1/s}}
\left(\frac{r-\rr}{K}\right)^{\beta -\alpha_{0}}\mathcal H+c\left(\frac{K}{r-\rr}\right)^{n/s+\alpha_{0}} \nr{w}_{L^{s}(B_{r})}\,.
$$
\end{lemma}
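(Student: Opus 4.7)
The plan is to reduce the Gagliardo seminorm to an integral over difference quotients and then split into ``small'' and ``large'' translations, using the Nikolski-type hypothesis \eqref{cru1} in the small-translation regime and the trivial bound from \eqref{diffbasic}$_1$ in the large-translation regime. The $L^s$-piece of $\|w\|_{W^{\alpha_0,s}(B_{\rr})}$ is immediately controlled by $\|w\|_{L^s(B_r)}$, so the whole job concerns the seminorm $[w]_{\alpha_0,s;B_{\rr}}$.

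First, I would perform the change of variables $y=x+h$ in the definition \eqref{gaglia}, writing
\[
[w]_{\alpha_0,s;B_{\rr}}^{s}
= \int_{\{|h|<2\rr\}} \frac{1}{|h|^{n+\alpha_0 s}}\int_{B_{\rr}\cap(B_{\rr}-h)} |\tau_{h}w(x)|^{s}\dx\,\dy(h),
\]
and then split the outer integration into $\{|h|\le (r-\rr)/K\}$ and its complement inside $\{|h|<2\rr\}$. Call these contributions $\mathrm{I}$ and $\mathrm{II}$.

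For $\mathrm{I}$, hypothesis \eqref{cru1} gives $\|\tau_{h}w\|_{L^{s}(B_{\rr})}^{s}\le \mathcal H^{s}|h|^{\beta s}$, so that, passing to polar coordinates in $h$,
\[
\mathrm{I}\le c\,\mathcal H^{s}\int_{0}^{(r-\rr)/K} \rho^{s(\beta-\alpha_0)-1}\,\mathrm d\rho
= \frac{c\,\mathcal H^{s}}{s(\beta-\alpha_0)}\left(\frac{r-\rr}{K}\right)^{s(\beta-\alpha_0)}.
\]
This is exactly where the factor $(\beta-\alpha_0)^{-1/s}$ and the exponent $\beta-\alpha_0$ in \eqref{cru2} come from; it is also the only place where the condition $\alpha_0<\beta$ is used (the integral converges only because $s(\beta-\alpha_0)>0$).

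For $\mathrm{II}$, I would use the uniform bound $\|\tau_{h}w\|_{L^{s}(B_{\rr})}\le 2\|w\|_{L^{s}(B_r)}$ coming from \eqref{diffbasic}$_1$ together with $B_{\rr}\subset B_r$. Again in polar coordinates,
\[
\mathrm{II}\le c\,\|w\|_{L^{s}(B_r)}^{s}\int_{(r-\rr)/K}^{\infty}\rho^{-\alpha_0 s-1}\,\mathrm d\rho
= \frac{c}{\alpha_0 s}\left(\frac{K}{r-\rr}\right)^{\alpha_0 s}\|w\|_{L^{s}(B_r)}^{s}.
\]
Adding $\mathrm{I}$ and $\mathrm{II}$, extracting $s$-th roots, and combining with the trivial estimate on $\|w\|_{L^{s}(B_{\rr})}$ yields \eqref{cru2}; using $r\le 1$ (so that $K/(r-\rr)\ge 1$) allows to absorb the $\alpha_0$-power into the sloppier $(K/(r-\rr))^{n/s+\alpha_0}$ displayed in the statement. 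No step is really hard here; the main point to be careful with is the bookkeeping of the polar integrals and the explicit blow-up rate $1/(\beta-\alpha_0)$, which is what makes the lemma useful when one later wants to push $\alpha_0\nearrow\beta$.
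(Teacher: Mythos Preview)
The paper does not supply a proof of this lemma; it simply cites \cite{dm1}. Your argument is the standard one and is correct in outline. One small point worth tightening: in your treatment of $\mathrm{II}$ you extend the outer integral to $\infty$, which produces a prefactor $1/(\alpha_0 s)$ that cannot be absorbed into a constant $c(n,s)$ as claimed in \eqref{cru2} (nothing prevents $\alpha_0\to 0$). The easy fix is to keep the finite upper limit $2\rr\le 2$ and bound the decreasing integrand by its value at the lower endpoint:
\[
\int_{(r-\rr)/K}^{2\rr}\rho^{-\alpha_0 s-1}\,\mathrm d\rho
\;\le\; 2\rr\cdot\Big(\tfrac{r-\rr}{K}\Big)^{-\alpha_0 s-1}
\;\le\; 2\Big(\tfrac{K}{r-\rr}\Big)^{\alpha_0 s+1}.
\]
Taking $s$-th roots and using $1/s\le n/s$ (recall $n\ge 2$) together with $K/(r-\rr)\ge 1$ yields exactly the exponent $n/s+\alpha_0$ in \eqref{cru2}, now with a constant genuinely depending only on $n,s$. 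This also explains the otherwise mysterious extra $n/s$ in the stated exponent.
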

In the case the domain considered is the ball $\BBB$ (which is the only one needed here) the fractional Sobolev embedding reads as
\eqn{immersione}
$$
\nr{w}_{L^{\frac{ns}{n-s\alpha_{0}}}(\BBB)}\leq c\nr{w}_{W^{\alpha_{0},s}(\BBB)}\,,
$$
that holds provided $s\geq 1, \alpha_{0} \in (0,1)$ and $s\alpha_{0}<n$, where $c\equiv c (n,p)$. 
\subsection{Additional preliminary results} We will use the vector field $V_{\mu}\colon \er^n \to \er^n$ defined by 
\eqn{defiV}
$$
V_{\mu}(z):=(\snr{z}^{2}+\mu^2)^{(p-2)/4}z 
$$
where $\mu \in [0, 2]$ and $p>1$ is defined in Section \ref{basicn}. Whenever 
$z_{1},z_{2}\in \mathbb{R}^{n}$, there holds that
\eqn{ineV}
$$
\snr{V_{\mu}(z_{1})-V_{\mu}(z_{2})}^{2}\approx_p (\snr{z_{1}}^{2}+\snr{z_{2}}^{2}+\mu^{2})^{(p-2)/2}\snr{z_{1}-z_{2}}^{2},
$$
see \cite[Lemma 2.1]{ha}. As a consequence we find
\eqn{sopradue}
$$
\snr{z_{1}-z_{2}}^{p}\lesssim_p  \snr{V_{\mu}(z_{1})-V_{\mu}(z_{2})}^{2} +  \una\snr{V_{\mu}(z_{1})-V_{\mu}(z_{2})}^p(\snr{z_1}+\mu)^{p(2-p)/2}\,,
$$
where 
\eqn{unap}
$$
\una:=\begin{cases}
0 \  \mbox{if} \   p\geq 2 \\
1 \ \mbox{if $p<2$}\,.
\end{cases}
$$ 
When $p\geq 2$,  \rif{sopradue} follows directly from \rif{ineV}. Instead, for $1<p<2$ inequality in \rif{sopradue} follows mimicking the proof of \cite[Lemma 2]{KMstein}, which is given in the case $\mu=0$. We also record the following inequality, which is a direct consequence of mean value theorem:
\eqn{disH}
$$
\snr{[H_{\mu}(z_2)]^{p/2}-[H_{\mu}(z_1)]^{p/2}} \leq p (\snr{z_{1}}^{2}+\snr{z_{2}}^{2}+\mu^{2})^{(p-1)/2}|z_2-z_1|\,.
$$
See \rif{defiH} for the definition of $H_{\mu}(\cdot)$. Combining this last inequality with \rif{ineV} yields
\eqn{diffH}
$$
\snr{[H_{\mu}(z_2)]^{p/2}-[H_{\mu}(z_1)]^{p/2}} \lesssim_p
(\snr{z_1}^{2}+\snr{z_2}^{2}+\mu^2)^{p/4}\snr{V_{\mu}(z_1)-V_{\mu}(z_2)}\,.
$$
The last algebraic result of elementary nature we include is the following:
\eqn{elemint}
$$
\int_{0}^1 [H_{\mu}(tz_1+(1-t)z_2)]^{s}\dt \approx_{s} (|z_1|^2+|z_2|^2+\mu^2)^{s/2}  
$$
that holds whenever $s>-1$, $\mu \in [0,2]$ and $z_1, z_2\in \er^n$; see for instance \cite{ha}. Next, two classical iteration lemmas from Campanato, and Giaquinta \& Giusti.
\begin{lemma} \label{l5}\, \hspace{-2.5mm} {\em \cite[Lemma 1.1]{gg1}}
\, Let $h\colon [t,s]\to \mathbb{R}$ be a non-negative and bounded function, and let $a,b, \gamma$ be non-negative numbers. Assume that the inequality 
$ 
h(\tau_1)\le  (1/2) h(\tau_2)+(\tau_2-\tau_1)^{-\gamma}a+b,
$
holds whenever $t\le \tau_1<\tau_2\le s$. Then $
h(t)\le c( \gamma)[a(s-t)^{-\gamma}+b]
$, holds too. 
\end{lemma}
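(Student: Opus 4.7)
The plan is to iterate the assumed inequality along a geometric sequence of radii, exploiting the factor $1/2$ in front of $h(\tau_{2})$ to produce a convergent series. Concretely, fix a parameter $\lambda\in(0,1)$ (to be chosen shortly) and set $\tau_{0}:=t$ and $\tau_{i+1}-\tau_{i}:=(1-\lambda)\lambda^{i}(s-t)$ for $i\in\en_{0}$, so that $\tau_{i}=t+(1-\lambda^{i})(s-t)$ is an increasing sequence in $[t,s]$ with $\tau_{i}\uparrow s$.

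Applying the standing hypothesis with $\tau_{1}=\tau_{i}$ and $\tau_{2}=\tau_{i+1}$ gives
\[
h(\tau_{i})\le \frac{1}{2}h(\tau_{i+1})+(1-\lambda)^{-\gamma}\lambda^{-i\gamma}(s-t)^{-\gamma}a+b\,.
\]
Iterating this bound $N$ times (i.e.\ substituting the inequality for $h(\tau_{i+1})$ into the one for $h(\tau_{i})$, then that for $h(\tau_{i+2})$ and so on), I would obtain
\[
h(t)=h(\tau_{0})\le 2^{-N}h(\tau_{N})+\sum_{i=0}^{N-1}2^{-i}\bigl[(1-\lambda)^{-\gamma}\lambda^{-i\gamma}(s-t)^{-\gamma}a+b\bigr]\,.
\]
Since $h$ is bounded on $[t,s]$, the leading term $2^{-N}h(\tau_{N})$ vanishes as $N\to\infty$. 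The hard (and only delicate) point is choosing $\lambda$ so that the geometric sum in $i$ converges: we need $2^{-1}\lambda^{-\gamma}<1$, i.e.\ $\lambda>2^{-1/\gamma}$ (if $\gamma=0$ the statement is trivial, so I may assume $\gamma>0$). Fixing for instance $\lambda:=(3/4)^{1/\gamma}$ makes the common ratio equal to $2/3$, and sending $N\to\infty$ yields
\[
h(t)\le \frac{(1-\lambda)^{-\gamma}}{1-2^{-1}\lambda^{-\gamma}}\,a(s-t)^{-\gamma}+2b\le c(\gamma)\bigl[a(s-t)^{-\gamma}+b\bigr]\,,
\]
which is the desired estimate. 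No additional tool is required beyond boundedness of $h$, summation of a geometric series, and the freedom to choose $\lambda$, so the whole argument is a routine hole-filling iteration.
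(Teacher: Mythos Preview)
Your proof is correct and is precisely the standard hole-filling iteration; the paper does not supply its own proof here but merely cites \cite[Lemma 1.1]{gg1}, where exactly this argument appears.
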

\begin{lemma}\label{l5bis} \hspace{-2.5mm} {\em \cite[Lemma 2.2]{gg2}}
\, Let $h\colon [0,r_0]\to \mathbb{R}$ be a non-negative and non-decreasing function such that the inequality 
$
h(\rr) \leq a[ (\rr/r )^{\beta_*} +\eps]h(r) + br^{\beta},
$
holds whenever $0 \leq \rr \leq r\leq r_0$, where $a,b$ are positive constants, and $0 < \beta < \beta_*$. There exists $\eps_0 \equiv \eps_0(a,\beta_*, \beta)$ such that, if $\eps\leq \eps_0$, then 
$
h(\rr) \leq c(\rr/r )^{\beta}  [h(r) + br^{\beta}]
$
holds too, whenever $0 \leq \rr \leq r\leq r_0$, where $c \equiv c (a,\beta_*, \beta)$. 
\end{lemma}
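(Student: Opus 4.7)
The proof is the standard decay iteration on a geometric progression of radii. The plan is to first absorb the additive perturbation $\eps$ by choosing the scale factor $\tau$ correctly, then iterate the resulting clean inequality on the discrete sequence $\{\tau^{k}r\}$, and finally pass to arbitrary $\rr$ using monotonicity of $h$.

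I would begin by fixing an intermediate exponent $\gamma$ with $\beta<\gamma<\beta_{*}$, for instance $\gamma:=(\beta+\beta_{*})/2$. Next I would pick $\tau\in(0,1)$ so small that $a\tau^{\beta_{*}-\gamma}\leq 1/4$, and then set $\eps_{0}:=\tau^{\gamma}/(4a)$, so that both $a\tau^{\beta_{*}}\leq\tau^{\gamma}/4$ and $a\eps_{0}\leq\tau^{\gamma}/4$. For every $\eps\leq\eps_{0}$, applying the hypothesis with the pair $(\tau r,r)$ in place of $(\rr,r)$ yields
\[
h(\tau r)\leq \frac{\tau^{\gamma}}{2}\,h(r)+br^{\beta},\qquad 0<r\leq r_{0}.
\]

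The next step is a straightforward induction on $k\in\N_{0}$, iterating the last inequality on the sequence $\tau^{k}r$. I would verify that, for some constant $C\equiv C(a,\beta_{*},\beta)$ (arising as the sum of a convergent geometric series with ratio $\tau^{\gamma-\beta}<1$), one has
\[
h(\tau^{k}r)\leq\tau^{k\gamma}h(r)+C\,b\,(\tau^{k}r)^{\beta},\qquad k\in\N_{0}.
\]
The inductive step only uses $(\tau^{\gamma}/2)\cdot C(\tau^{k}r)^{\beta}+b(\tau^{k}r)^{\beta}\leq C(\tau^{k+1}r)^{\beta}$, which is implied by $C\geq 2/\tau^{\beta}$; such a $C$ can be fixed once and for all in terms of $(a,\beta_{*},\beta)$.

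Finally, given an arbitrary $\rr\in(0,r]$, I would choose $k\in\N_{0}$ with $\tau^{k+1}r<\rr\leq\tau^{k}r$, and invoke monotonicity: $h(\rr)\leq h(\tau^{k}r)$. The elementary bounds $\tau^{k\gamma}\leq\tau^{-\gamma}(\rr/r)^{\gamma}\leq\tau^{-\gamma}(\rr/r)^{\beta}$ and $(\tau^{k}r)^{\beta}\leq\tau^{-\beta}\rr^{\beta}=\tau^{-\beta}(\rr/r)^{\beta}r^{\beta}$ combine with the previous display to give $h(\rr)\leq c(\rr/r)^{\beta}[h(r)+br^{\beta}]$ with $c\equiv c(a,\beta_{*},\beta)$, as desired. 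There is no conceptual obstacle here; the whole argument is a careful bookkeeping exercise. The only point requiring attention is the choice of the auxiliary exponent $\gamma$ strictly between $\beta$ and $\beta_{*}$ \emph{before} fixing $\tau$ and $\eps_{0}$: this is what makes the geometric series in the inductive step converge and, via the trivial monotonic passage $(\rr/r)^{\gamma}\leq(\rr/r)^{\beta}$ allowed by $\rr\leq r$, ensures that the exponent $\beta$ (and not $\gamma$) appears in the final estimate.
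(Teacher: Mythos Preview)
Your proof is correct and is exactly the standard argument for this classical iteration lemma. The paper does not supply its own proof but merely cites \cite[Lemma 2.2]{gg2}; your write-up reproduces the original Giaquinta--Giusti reasoning (choice of intermediate exponent $\gamma\in(\beta,\beta_*)$, absorption of $\eps$ via the scale factor $\tau$, discrete iteration on $\{\tau^k r\}$, and passage to arbitrary $\varrho$ by monotonicity), so there is nothing to compare.
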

Finally, a convexity lemma of Marcellini, that follows as in \cite[Lemma 2.1]{M1}. 
\begin{lemma}\label{marclemma} Let $\FF\colon \Omega\times \er\times \er^n\to [0, \infty)$ be a Carath\'eory function such that $z \mapsto \FF(x,y, z)$ is convex for every $(x,y) \in  \Omega\times \er$ and that $\FF(x,y,z) \leq c_*[H_{\mu}(z)]^{q/2}+c_*[H_{\mu}(z)]^{p/2}$, for some $c_*\geq 1$. Then 
$| \partial_z\FF(x,y, z)| \leq c(n,p,q,c_*)([H_{\mu}(z)]^{(q-1)/2}+[H_{\mu}(z)]^{(p-1)/2})$ holds for every $(x,y,z)\in \Omega \times \er \times \er^n$. 
\end{lemma}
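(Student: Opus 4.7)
\medskip
\noindent
\textbf{Proof plan for Lemma \ref{marclemma}.}
The statement is the standard bound for the derivative of a convex integrand of $(p,q)$-growth, and the only ingredient is convexity plus nonnegativity plus the pointwise upper bound. The plan is to exploit the classical convex subgradient inequality, then to make an optimal radial choice of the test increment so that $H_{\mu}(z+\cdot)$ can be controlled by a constant multiple of $H_{\mu}(z)$.

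First I fix $(x,y)\in \Omega\times \er$ and set $\varphi(\cdot) := \FF(x,y,\cdot)$, which is convex and nonnegative on $\er^n$ and satisfies $\varphi(z)\le c_*[H_{\mu}(z)]^{q/2}+c_*[H_{\mu}(z)]^{p/2}$. Since $\varphi$ is finite and convex on all of $\er^n$, it is locally Lipschitz, hence differentiable a.e., and at every point of differentiability its gradient coincides with the (unique) subgradient; more precisely, for every $z\in\er^n$ at which $\partial_z\FF(x,y,z)$ exists and every $w\in\er^n$, the convexity inequality
\[
\varphi(z+w)\ge \varphi(z)+\partial_z\FF(x,y,z)\cdot w
\]
holds. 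Using $\varphi(z)\ge 0$ and the upper bound on $\varphi(z+w)$, this yields
\[
\partial_z\FF(x,y,z)\cdot w \;\le\; c_*[H_{\mu}(z+w)]^{q/2}+c_*[H_{\mu}(z+w)]^{p/2}
\]
for all $w\in\er^n$.

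Second, assuming $\partial_z\FF(x,y,z)\ne 0$ (otherwise the conclusion is trivial), I take
\[
w\;:=\;r\,\frac{\partial_z\FF(x,y,z)}{|\partial_z\FF(x,y,z)|},\qquad r:=[H_{\mu}(z)]^{1/2},
\]
so that the left-hand side equals $r|\partial_z\FF(x,y,z)|$. Since $|w|=r=[H_{\mu}(z)]^{1/2}$, the triangle inequality gives $|z+w|\le |z|+[H_{\mu}(z)]^{1/2}\le 2[H_{\mu}(z)]^{1/2}$, whence
\[
H_{\mu}(z+w)=|z+w|^2+\mu^2\;\le\;4H_{\mu}(z)+\mu^2\;\le\;5H_{\mu}(z),
\]
because $\mu^2\le H_{\mu}(z)$. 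Plugging this back and dividing by $r=[H_{\mu}(z)]^{1/2}$ produces
\[
|\partial_z\FF(x,y,z)|\;\le\;c(p,q,c_*)\bigl([H_{\mu}(z)]^{(q-1)/2}+[H_{\mu}(z)]^{(p-1)/2}\bigr),
\]
which is the desired estimate at every differentiability point of $\varphi$.

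Finally, since convexity implies the one-sided directional derivatives $\partial^{+}_z\FF(x,y,z)[w]$ exist everywhere and are majorized by the same right-hand side via the identical argument (the inequality $\varphi(z+w)\ge\varphi(z)+\partial^{+}_z\FF(x,y,z)[w]$ still holds), and since wherever $\partial_z\FF(x,y,z)$ is meant in the classical sense it coincides with the gradient of $\varphi$, the bound extends to every $z\in\er^n$ at which $\partial_z\FF(x,y,z)$ is defined. I do not expect any serious obstacle here: the only subtle point is the scale choice $r=[H_{\mu}(z)]^{1/2}$, which is forced by the fact that both $H_{\mu}(z)$ and the constant $\mu^2$ must be absorbed into $H_{\mu}(z+w)\le cH_{\mu}(z)$; any smaller radius would not give the loss of one power of $H_{\mu}(z)^{1/2}$ needed to pass from growth exponents $p,q$ to $p-1,q-1$.
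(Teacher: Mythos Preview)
Your argument is correct and is essentially the same as the one the paper refers to, namely \cite[Lemma 2.1]{M1}: one uses the subgradient inequality $\partial_z\FF(x,y,z)\cdot w\le \FF(x,y,z+w)$ together with nonnegativity, and then tests with $w$ of length $[H_\mu(z)]^{1/2}$ in the gradient direction so that $H_\mu(z+w)\le cH_\mu(z)$. The only cosmetic point is that the degenerate case $H_\mu(z)=0$ (i.e., $\mu=0$, $z=0$) forces $\FF(x,y,0)=0$ by the upper bound and hence $\partial_z\FF(x,y,0)=0$, which falls under your ``otherwise trivial'' clause.
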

\section{Nonlinear potentials, Lorentz spaces, and iterations}\label{losec}
Let us fix $t,\delta>0$, $m, \theta\geq 0$, $f\in L^{1}(B_{r}(x_{0}))$ such that $\snr{f}^{m} \in L^{1}(B_{r}(x_0))$ with $B_{r}(x_0)\subset \er^n$. The following quantity will play a crucial role in this paper:
\eqn{defi-P} 
$$
{\bf P}_{t,\delta}^{m,\theta}(f;x_0,r) := \int_0^r \varrho^{\delta} \left(  \mint_{B_{\varrho}(x_0)} \snr{f}^{m} \dx \right)^{\theta/t} \frac{\d\varrho}{\varrho} \;.
$$
This is a nonlinear potential of Havin-Mazya-Wolff type \cite{HM}, used for instance in \cite{BM, cammello, ciccio,KMstein}. ${\bf P}_{t,\delta}^{m,\theta}(f, \cdot)$ can be seen as a nonlinear generalization of the classical Riesz potential ${\bf I}_1(f, \cdot)$, that in its so-called truncated form is in turn defined as
$$
\int_{B_{r/2}(x_0)}\frac{|f(x)|}{|x-x_0|^{n-1}}\dx \lesssim {\bf I}_1(f;x_0,r) := \int_0^r \frac{1}{\varrho^{n-1}}\int_{B_{\rr}(x_0)} \snr{f} \dx\,   \frac{\d\varrho}{\varrho} \approx {\bf P}_{1,1}^{1,1}(f;x_0,r) \,.
$$
The variety of parameters considered in \rif{defi-P} makes ${\bf P}_{t,\delta}^{m,\theta}$ useful in settings where ${\bf I}_1$ cannot be directly employed, as we will see via its use in this paper. We refer to \cite[Section 2]{BM} for more details and references.
Note that 
\eqn{trivialpot}
$$
{\bf P}_{t,\delta}^{m,\theta}(f;x_0,r) =r^\delta/\delta\,, \  \mbox{when $m=0$ or when $\snr{f}\equiv 1$ or when $\theta =0$}\,.
$$
The usual definition of Lorentz space $L(s,\gamma)(\Omega)$, with $\gamma,s\in (0,\infty)$, prescribes that the quantity \begin{flalign*}
\|f\|_{s, \gamma;\Omega} &= \left(\int_0^{\infty} \left(f^*(\varrho)\varrho^{1/s}\right)^\gamma\, \frac{d\varrho}{\varrho}\right)^{1/\gamma}= \left(s\int_0^\infty (\lambda^s|\{x \in \Omega \, : \, |f(x)|> \lambda\}|)^{\gamma/s}\, \frac{d\lambda}{\lambda}\right)^{1/\gamma}
\end{flalign*}
is finite, see \cite[1.4.2]{gra}; other basic references in this setting are \cite{oneil, sw}. Here  $f^*\colon [0,\infty]\to \er$ denotes the non-increasing rearrangement of $f$, i.e.,  
$f^*(\tau):=\inf\, \{v> 0 \, : \, |\{x \in \Omega \, : \, |f(x)|> v\}|\leq \tau\}$. 
 In Lorentz spaces, the latter index tunes the former in the sense that, when $\Omega\subset \mathbb{R}^{n}$ has finite measure, it then holds that
 \eqn{lorentzbasic}
$$
\begin{cases}
\, \mbox{$L(s_{1}, \gamma_{1})(\Omega)\subset L(s_{2}, \gamma_{2})(\Omega)$ for all $0<s_{2}<s_{1}<\infty$, $\gamma_{1},\gamma_{2}\in (0,\infty]$}\\
\, \mbox{$L(s,\gamma_{1})(\Omega)\subset L(s,\gamma_{2})(\Omega)$ for all $s\in (0,\infty)$, $0<\gamma_{1}\leq \gamma_{2}\le\infty$}\\
\,  \mbox{$L(s,s)(\Omega)=L^{s}(\Omega)$ for all $s >0$\,,}
 \end{cases}
$$
with continuous inclusions. When $s>1$, it is possible to define another quantity, which is equivalent to $\|f\|_{s, \gamma;\Omega}$, by considering the maximal operator of $f^{*}$, that is
\eqn{massimale} 
$$
f^{**}(\rr):=\frac{1}{\rr}\int_{0}^{\rr}f^{*}(\tau) \d\tau \quad \mbox{for $\rr>0$}\,.$$ 
It turns out that
\begin{flalign}\label{lo.2}
\|f\|_{s, \gamma;\Omega}^\gamma\le \int_{0}^{\infty}\left(f^{**}(\varrho)\varrho^{1/s}\right)^{\gamma} \ \frac{\d\varrho}{\varrho}\le c\|f\|_{s, \gamma;\Omega}^\gamma\,,
\end{flalign}
holds provided $s>1$, where $c\equiv c(\gamma,s)$; see \cite[(6.8)]{oneil}. 
Nonlinear potentials and Lorentz spaces naturally connect, as for instance shown  in the next lemma; see \cite{BM, ciccio, KMstein} for similar results. 
\begin{lemma}\label{crit}
Let $n \geq 2$, $t,\delta,m,\theta>0$ be numbers such that
\begin{flalign}\label{lo.2.1}
\frac{n\theta }{t\delta}>1\,.
\end{flalign}
Let $B_{\rr}\Subset B_{\rr+r_{0}}\subset \mathbb{R}^{n}$ be two concentric balls with $\varrho, r_0\leq 1$, and let $f\in L^{1}(B_{\rr+r_{0}})$ be such that $\snr{f}^{m}\in L^{1}(B_{\rr+r_{0}})$. Then
\eqn{11}
$$
\nr{\mathbf{P}^{m,\theta}_{t,\delta}(\cdot,r_{0})}_{L^{\infty}(B_{\rr})}\le \tilde c\|f\|_{\frac{mn\theta}{t\delta},\frac{m\theta}{t};B_{\rr+r_{0}}}^{\frac{m\theta}{t}} \leq c(\eps)\tilde c\|f\|_{L^{\frac{(1+\eps)mn\theta}{t\delta}}(B_{\rr+r_{0}})}^{\frac{m\theta}{t}}
$$
holds for every $\eps>0$, with $\tilde c\equiv \tilde c(n,t,\delta,\theta)$. 
\end{lemma}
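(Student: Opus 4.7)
The plan is to exploit the nonincreasing rearrangement to control each ball-average by a single decreasing function of the radius, and then reinterpret the resulting one-dimensional integral as a Lorentz seminorm via \eqref{lo.2}.

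First, I would observe that for any $x_0 \in B_\varrho$ and any $\varrho \leq r_0$ we have $B_\varrho(x_0) \subset B_{\rr+r_{0}}$. Applying the Hardy--Littlewood rearrangement inequality to $|f|^m$ regarded as a function on $B_{\rr+r_{0}}$, and writing $\omega_n = |\mathcal B_1|$, I obtain the pointwise bound
\begin{equation*}
\mint_{B_{\varrho}(x_0)} \snr{f}^{m} \dx \;\leq\; \bigl(\snr{f}^m\bigr)^{**}(\omega_n \varrho^{n})\,,
\end{equation*}
uniformly in $x_0\in B_\rr$. Substituting this into \eqref{defi-P} and performing the change of variable $\tau = \omega_n \varrho^n$, so that $\varrho^\delta \, d\varrho/\varrho = \omega_n^{-\delta/n} n^{-1} \tau^{\delta/n}\, d\tau/\tau$, yields
\begin{equation*}
\mathbf{P}^{m,\theta}_{t,\delta}(f;x_0,r_{0}) \;\leq\; c(n,\delta) \int_0^{\omega_n r_0^n} \bigl[\bigl(\snr{f}^m\bigr)^{**}(\tau)\bigr]^{\theta/t} \tau^{\delta/n}\,\frac{d\tau}{\tau}\,,
\end{equation*}
with the right-hand side \emph{independent} of $x_0 \in B_\rr$.

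Next I would match this last integral with the $f^{**}$-characterization of the Lorentz seminorm in \eqref{lo.2}. Choosing parameters $s = n\theta/(t\delta)$ and $\gamma=\theta/t$, one checks $(1/s)\gamma = \delta/n$, so
\begin{equation*}
\int_0^\infty \bigl[\bigl(\snr{f}^m\bigr)^{**}(\tau) \tau^{1/s}\bigr]^{\gamma}\,\frac{d\tau}{\tau} \;=\; \int_0^\infty \bigl[\bigl(\snr{f}^m\bigr)^{**}(\tau)\bigr]^{\theta/t} \tau^{\delta/n}\,\frac{d\tau}{\tau}\,.
\end{equation*}
The assumption $n\theta/(t\delta) > 1$ in \eqref{lo.2.1} is precisely what is needed to invoke \eqref{lo.2} (which requires $s>1$), yielding equivalence with $\|\snr{f}^m\|_{n\theta/(t\delta),\,\theta/t;B_{\rr+r_{0}}}^{\theta/t}$. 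Finally, using the identity $(\snr{f}^m)^* = (f^*)^m$ and a change of variable in the one-dimensional definition of the Lorentz (quasi-)norm gives $\|\snr{f}^m\|_{s,\gamma}^\gamma = \|f\|_{ms, m\gamma}^{m\gamma}$, which with $(s,\gamma) = (n\theta/(t\delta), \theta/t)$ produces exactly the first inequality in \eqref{11}.

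For the second inequality in \eqref{11}, I would apply the standard Lorentz embedding \eqref{lorentzbasic}$_1$: enlarging the first index from $mn\theta/(t\delta)$ to $(1+\eps)mn\theta/(t\delta)$ on a set of finite measure $B_{\rr+r_{0}}$ allows the second index to be raised to infinity, and then \eqref{lorentzbasic}$_3$ identifies $L(s,s) = L^s$, absorbing the dependence on the Lorentz exponent into a constant $c(\eps)$. The main point to keep track of is the bookkeeping of the scaling with $m$ between $f^{**}$ and $(|f|^m)^{**}$, together with the (somewhat subtle) fact that the $f^{**}$-characterization is not valid in general for $s\leq 1$; the hypothesis \eqref{lo.2.1} is tailored exactly to bypass this obstacle.
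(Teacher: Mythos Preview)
Your argument is correct and follows essentially the same route as the paper: bound the ball average by $(\snr{f}^m)^{**}(\omega_n\varrho^n)$ via Hardy--Littlewood, change variables to recognize the $f^{**}$-characterization \eqref{lo.2} of the Lorentz quasinorm with $s=n\theta/(t\delta)>1$ and $\gamma=\theta/t$, then rescale from $\snr{f}^m$ to $f$ and finish with the Lorentz embeddings in \eqref{lorentzbasic}. The only cosmetic point is your wording for the second inequality (``raised to infinity'' is not quite what happens); what you actually need is \eqref{lorentzbasic}$_1$ with $s_1=(1+\eps)mn\theta/(t\delta)$, $\gamma_1=s_1$, $s_2=mn\theta/(t\delta)$, $\gamma_2=m\theta/t$, together with \eqref{lorentzbasic}$_3$.
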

\begin{proof}
Basic properties of rearrangements give
\begin{eqnarray}\label{10}
\varrho^{t\delta/\theta}\mint_{B_{\varrho}(x_{0})}\snr{f}^{m}  \dx \leq \frac{\varrho^{t\delta/\theta}}{|\BB|\varrho^{n}}\int_{0}^{|\BB|\varrho^{n}}(\snr{f}^{m})^{*}(\tau) \ \d\tau\stackleq{massimale} \varrho^{t\delta/\theta}(\snr{f}^{m})^{**}(|\BB|\varrho^{n})\,.
\end{eqnarray}
We further estimate
\begin{eqnarray*}
\mathbf{P}_{t,\delta}^{m,\theta}(x_{0},r_{0})& \stackrel{\eqref{10}}{\le}& \int_{0}^{r_{0}}[\varrho^{t\delta/\theta}(\snr{f}^{m})^{**}(|\BB|\varrho^{n})]^{\theta/t} \ \frac{\d\varrho}{\varrho}\le c\int_{0}^{\infty}[\varrho^{\frac{t\delta}{n\theta}}(\snr{f}^{m})^{**}(\varrho)]^{\theta/t} \ \frac{\d\varrho}{\varrho} \\
&\stackrel{\eqref{lo.2}}{\le} &c\|\snr{f}^{m}\|_{\frac{n\theta}{t\delta},\frac{\theta}{t};B_{\varrho+r_{0}}(x_{0})}^{\theta/t}=c(n,t,\delta,\theta)\|f\|_{\frac{mn\theta}{t\delta},\frac{m\theta}{t};B_{\varrho+r_{0}}(x_{0})}^{\frac{m\theta}{t}}
\end{eqnarray*}
so that the first inequality in \eqref{11} follows (recall \rif{lorentzbasic} and see \cite{sw} for the second). 
\end{proof}
Next lemma extends \cite[Lemma 3.1]{BM}. It features a pointwise version of classical De Giorgi's iteration that finds its origins in the work in Nonlinear Potential Theory of Kilpel\"ainen \& Mal\'y \cite{kilp}. We report the full proof as the crucial point here is the explicit dependence on the constants. 
\begin{lemma}\label{revlem}
Let $B_{r_{0}}(x_{0})\subset \mathbb{R}^{n}$ be a ball, $n\ge 2$, and $v\in L^t$ and consider functions $f_i$, 
$
|f_i|^{m_i} \in L^1(B_{2r_0}(x_{0}))$, and constants $\chi >1$, $t \geq 1$, $\delta_i, m_i, \theta_i>0$ and $c_*,M_0 >0$,  $\kappa_0, M_i\geq 0$, 
for $i \in \{1, \ldots, h\}$, $h \in \en$. Assume that $v \in L^t(B_{r_0}(x_0))$ is such that for all $\kk\ge \kk_{0}$, and for every ball $B_{\rr}(x_{0})\subset B_{r_{0}}(x_{0})$, the inequality
\begin{flalign}\label{rev}
\notag \left(\mint_{B_{\rr/2}(x_{0})}(v-\kk)_{+}^{t\chi}  \dx\right)^{1/\chi}&\le c_{*}M_{0}^{t}\mint_{B_{\rr}(x_{0})}(v-\kk)_{+}^{t}  \dx\\  &
\qquad +c_{*}\sum_{i=1}^{h} M_{i}^{t}\rr^{t\delta_{i}}\left(\mint_{B_{\rr}(x_{0})}\snr{f_i}^{m_i}  \dx\right)^{\theta_i}
\end{flalign}
holds, where we denote, as usual, $ (v-\kk)_{+}:=\max\{v-\kk,0\}$. If $x_{0}$ is a Lebesgue point of $v$, then
\eqn{3388}
$$
 v(x_{0})  \le\kk_{0}+cM_{0}^{\frac{\chi}{\chi-1}}\left(\mint_{B_{r_{0}}(x_{0})}(v-\kk_{0})_{+}^{t}  \dx\right)^{1/t}
+cM_{0}^{\frac{1}{\chi-1}}\sum_{i=1}^{h} M_{i}\mathbf{P}^{m_i,\theta_i}_{t,\delta_{i}}(f_i;x_{0},2r_{0})
$$
holds with $c\equiv c(n,\chi,\delta_i,\theta_i,c_{*})$.  
\end{lemma}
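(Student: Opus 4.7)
I would prove this by a pointwise Kilpel\"ainen--Mal\'y iteration; the main point is to carry all constants through with the exact dependencies stated. Set $r_j:=r_0/2^j$, $B_j:=B_{r_j}(x_0)$, and build an increasing level sequence $\kappa_0\le\kappa_1\le\cdots\nearrow\kappa_\infty$ with increments $\Delta_j:=\kappa_{j+1}-\kappa_j\ge 0$ to be fixed momentarily. Define
$$A_j:=\left(\mint_{B_j}(v-\kappa_j)_+^{t}\dx\right)^{1/t},\qquad \mathcal{P}_{i,j}:=r_j^{\delta_i}\left(\mint_{B_j}|f_i|^{m_i}\dx\right)^{\theta_i/t}.$$
The goal is to arrange things so that $A_j\to 0$ (which forces $v(x_0)\le\kappa_\infty$ at a Lebesgue point) while $\kappa_\infty-\kappa_0$ is controlled by the right-hand side of \eqref{3388}.

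\textbf{Basic recursion.} Applying \eqref{rev} on the pair $(B_{j+1},B_j)$ at the level $\kappa=\kappa_{j+1}\ge\kappa_0$ and using $(v-\kappa_{j+1})_+\le(v-\kappa_j)_+$ gives
$$\left(\mint_{B_{j+1}}(v-\kappa_{j+1})_+^{t\chi}\dx\right)^{1/\chi}\le c_{*}2^{n}M_0^{t}A_j^{t}+c_{*}\sum_i M_i^{t}\mathcal{P}_{i,j}^{t}.$$
Combining this with H\"older's inequality on the superlevel set $B_{j+1}\cap\{v>\kappa_{j+1}\}$ and Chebyshev's inequality, which yields $|B_{j+1}\cap\{v>\kappa_{j+1}\}|/|B_{j+1}|\le 2^{n}(A_j/\Delta_j)^{t}$, one obtains the basic recursion
$$A_{j+1}^{t}\,\Delta_j^{t(\chi-1)/\chi}\le c\Bigl[M_0^{t}A_j^{t}+\sum_i M_i^{t}\mathcal{P}_{i,j}^{t}\Bigr]A_j^{t(\chi-1)/\chi}$$
with $c\equiv c(n,\chi,c_{*})$.

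\textbf{Choice of levels.} Taking $1/t$-th roots (using subadditivity of $x\mapsto x^{1/t}$, valid since $t\ge 1$) and setting $\Delta_j:=c'M_0^{\chi/(\chi-1)}A_j$ with $c'\equiv c'(n,\chi,c_{*},t)$ absorbs the $M_0^{t}A_j^{t}$-contribution and produces the contraction
$$A_{j+1}\le \tfrac{1}{2}A_j+c''M_0^{-1}\sum_i M_i\mathcal{P}_{i,j}\,.$$
Iterating and summing yields $\sum_{j\ge 0}A_j\le 2A_0+c''M_0^{-1}\sum_i M_i\sum_j\mathcal{P}_{i,j}$, so that
$$\kappa_\infty-\kappa_0=\sum_j\Delta_j\le c\,M_0^{\chi/(\chi-1)}A_0+c\,M_0^{1/(\chi-1)}\sum_i M_i\sum_j\mathcal{P}_{i,j}.$$
A standard dyadic-to-continuous comparison (bounding each $\mathcal{P}_{i,j}$ by a constant multiple of $\int_{r_{j+1}}^{r_j}\varrho^{\delta_i}(\mint_{B_\varrho(x_0)}|f_i|^{m_i}\dx)^{\theta_i/t}\,\d\varrho/\varrho$ via monotonicity of averages across concentric balls) then estimates $\sum_j\mathcal{P}_{i,j}$ by a constant times $\mathbf{P}^{m_i,\theta_i}_{t,\delta_i}(f_i;x_0,2r_0)$, with constant depending only on $n,\delta_i,\theta_i$.

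\textbf{Conclusion and main obstacle.} Assuming, without loss of generality, that the right-hand side of \eqref{3388} is finite, the above gives $\sum_j A_j<\infty$, hence $A_j\to 0$. At the Lebesgue point $x_0$ of $v$ (in the $L^{t}$ sense, which holds almost everywhere for $v\in L^{t}_{\loc}$), we have $\mint_{B_j}(v-\kappa_\infty)_+^t\dx\to(v(x_0)-\kappa_\infty)_+^{t}$; since $\kappa_\infty\ge\kappa_j$ this is dominated by $A_j^t\to 0$, so $v(x_0)\le\kappa_\infty$ and the bound on $\kappa_\infty-\kappa_0$ yields \eqref{3388}. The delicate point is the precise calibration of $\Delta_j$: balancing $M_0$-powers in the recursion produces exactly the exponents $\chi/(\chi-1)$ on the excess term $A_0$ and $1/(\chi-1)$ in front of the nonlinear potentials, and secures independence of the constants from $M_0,M_i,\kappa_0$. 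Everything else is routine.
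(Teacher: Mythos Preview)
Your proposal is correct and follows essentially the same Kilpel\"ainen--Mal\'y iteration as the paper: identical dyadic radii $r_j=r_0/2^j$, the same level choice $\Delta_j\propto M_0^{\chi/(\chi-1)}A_j$ (the paper writes this as $\kappa_{j+1}=\kappa_j+V_j/\tau$ with $\tau\propto M_0^{-\chi/(\chi-1)}$), and the same dyadic-to-continuous comparison for $\sum_j\mathcal{P}_{i,j}\lesssim\mathbf{P}^{m_i,\theta_i}_{t,\delta_i}$. The only cosmetic difference is that you apply \eqref{rev} at the level $\kappa_{j+1}$ and then use H\"older plus Chebyshev to get a direct linear contraction $A_{j+1}\le\tfrac12 A_j+\ldots$, whereas the paper applies \eqref{rev} at level $\kappa_j$, uses the pointwise bound $(\kappa_{j+1}-\kappa_j)^{t(\chi-1)}(v-\kappa_{j+1})_+^{t}\le(v-\kappa_j)_+^{t\chi}$, and handles the resulting nonlinear recursion via a case split on whether $\Delta_{j+1}\ge\tfrac12\Delta_j$; your route is slightly cleaner but not substantively different.
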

\begin{proof} We can assume that the right-hand side in \rif{3388} is finite, otherwise, there is nothing to prove. In the following all the balls will be centered at $x_0$. 
We define radii $\{\rr_{j}\}_{j\in \N_{0}}$, where $\rr_{j}:=r_{0}/2^{j}$ integer $j\geq 0$ (so that $\rr_{0}=r_{0}$), and, for every $i \leq h$, numbers $\{W_{i,j}\}_{j\in \N_{0}}$, via
\eqn{reww}
$$
W_{i,j}:=\rr_{j}^{\delta_{i}}\left(\mint_{B_{\rr_{j}}}\snr{f_i}^{m_i}  \dx\right)^{\theta_i/t}\,.
$$
The next two sequences of numbers $\{\kk_{j}\}_{j\in \N_{0}}$ and $\{V_{j}\}_{j\in \N_{0}}$ are defined inductively, with $\kk_{0}$ given by the statement. With $\kk_j$ having been defined, we set $V_{j}$ and then $\kk_{j+1}$ as follows:
\begin{flalign}\label{3}
V_{j}:=\left(\mint_{B_{\rr_{j}}}(v-\kk_{j})_{+}^{t}  \dx\right)^{1/t}\,, \qquad \kk_{j+1}:=\kk_{j}+V_{j}/\tau,
\end{flalign}
where $\tau>0$ is going to determined in due course of the proof as a function of $n,c_{*},\chi,t$; see \rif{tautau} below. It follows that $\{\kappa_j\}_j$ is non-decreasing, and  $V_{j+1}\le 2^{n/t}V_{j}$; therefore
\begin{flalign}\label{4}
\kk_{j+2}-\kk_{j+1}\le 2^{n/t}(\kk_{j+1}-\kk_{j}),
\end{flalign}
holds for every $j \geq 0$. Using \eqref{rev} and the definitions in \rif{reww}, yields, for every $j \geq 0$
$$
\left(\mint_{B_{\rr_{j+1}}}(v-\kk_{j})_{+}^{t\chi}  \dx\right)^{1/\chi}\le c_{*}M_{0}^{t}V_{j}^{t}  \nonumber +c_{*}\sum_{i=1}^{h} M_{i}^{t}W_{i,j}^{t}
\,.
$$
By $\kk_{j+1}\geq \kk_j$ for every $j$ and \eqref{3}, we estimate
\begin{flalign*}
&(\kk_{j+1}-\kk_{j})^{(\chi-1)/\chi}V_{j+1}^{1/\chi}=(\kk_{j+1}-\kk_{j})^{(\chi-1)/\chi}\left(\mint_{B_{\rr_{j+1}}}(v-\kk_{j+1})_{+}^{t}  \dx\right)^{\frac{1}{t\chi}}\nonumber \\
&\qquad \qquad \leq \left(\mint_{B_{\rr_{j+1}}}(v-\kk_{j})_{+}^{t\chi-t}(v-\kk_{j+1})_{+}^{t}  \dx\right)^{\frac{1}{t\chi}}\nonumber \leq\left(\mint_{B_{\rr_{j+1}}}(v-\kk_{j})^{t\chi}_{+}  \dx\right)^{\frac{1}{t\chi}}\,.
\end{flalign*}
Recalling the definition in \rif{reww}, the last two displays combine in 
\begin{flalign}\label{5}
(\kk_{j+1}-\kk_{j})^{(\chi-1)/\chi}V_{j+1}^{1/\chi}\le c_{*}^{1/t}M_{0}V_{j}+c_{*}^{1/t}\sum_{i=1}^{h} M_{i}W_{i,j}\,.
\end{flalign}
Now, in the case it is
\begin{flalign}\label{6}
\kk_{j+2}-\kk_{j+1}\ge \frac{1}{2}(\kk_{j+1}-\kk_{j})
\end{flalign}
we deduce
\begin{flalign}
 2^{-1/\chi}\tau^{1/\chi}(\kk_{j+1}-\kk_{j})&\stackrel{\eqref{6}}{\le} \tau^{1/\chi}(\kk_{j+1}-\kk_{j})^{(\chi-1)/\chi}(\kk_{j+2}-\kk_{j+1})^{1/\chi}\nonumber \\
& \stackrel{\eqref{3}}{=}(\kk_{j+1}-\kk_{j})^{(\chi-1)/\chi}V_{j+1}^{1/\chi}\nonumber\\
&
\stackrel{\eqref{5}}{\le}c_{*}^{1/t}M_{0}V_{j}+c_{*}^{1/t}\sum_{i=1}^{h} M_{i}W_{i,j} \nonumber \\
&\stackrel{\eqref{3}}{=}c_{*}^{1/t}M_{0}\tau(\kk_{j+1}-\kk_{j})+c_{*}^{1/t}\sum_{i=1}^{h} M_{i}W_{i,j}\,. \label{8}
\end{flalign}
We now take  
\eqn{tautau}
$$\tau=2^{-\frac{1+\chi}{\chi-1}}M_{0}^{-\frac{\chi}{\chi-1}}c_*^{-\frac{\chi}{t(\chi-1)}}\Longrightarrow  
2^{1/\chi}c_*^{1/t}M_{0} \tau^{\frac{\chi-1}{\chi}} =\frac 12
$$ 
so that, reabsorbing the first term in the right-hand side in the left-hand side of \rif{8}, and recalling \rif{4}, we get
\begin{flalign*}
\kk_{j+2}-\kk_{j+1}\le c^{*}M_{0}^{\frac{1}{\chi-1}}\sum_{i=1}^{h} M_{i}W_{i,j}, \quad   c^{*}:= 2^{\frac nt+\frac{\chi+1}{\chi-1}}c_*^{\frac{\chi}{t(\chi-1)}}\,.
\end{flalign*}
As this last inequality holds under condition \eqref{6}, we can work in any case with
\begin{eqnarray*}
\kk_{j+2}-\kk_{j+1}\le \frac{\kk_{j+1}-\kk_{j}}{2}+c^{*}M_{0}^{\frac{1}{\chi-1}}\sum_{i=1}^{h} M_{i}W_{i,j}
\end{eqnarray*}
for all integers  $j \geq 0$. Summing up such inequalities for $0\leq j \leq N$, re-absorbing terms, and then letting $N \to \infty$, gives
\begin{flalign*}
\sum_{j=0}^{\infty}(\kk_{j+2}-\kk_{j+1})\le \kk_{1}-\kk_{0}+2c^*M_{0}^{\frac{1}{\chi-1}}
\sum_{i=1}^{h} \sum_{j=0}^{\infty}M_{i}W_{i,j}\,,
\end{flalign*}
so that, recalling also the definitions of $V_0$ and $\tau$, in \rif{3} and \rif{tautau}, respectively, 
\begin{flalign}
\lim_{j\to \infty}\kk_{j}&=\sum_{j=0}^{\infty}(\kk_{j+2}-\kk_{j+1})+\kk_{1}\nonumber \le\kk_{0}+2(\kk_{1}-\kk_{0})+2c^*M_{0}^{\frac{1}{\chi-1}}
\sum_{i=1}^{h} \sum_{j=0}^{\infty}M_{i}W_{i,j}\nonumber \\
&\le\kk_{0}+\frac{2V_{0}}{\tau}+2c^*M_{0}^{\frac{1}{\chi-1}}
\sum_{i=1}^{h} \sum_{j=0}^{\infty}M_{i}W_{i,j}\le\kk_{0}+2c^*M_{0}^{\frac{\chi}{\chi-1}}V_{0}+2c^*M_{0}^{\frac{1}{\chi-1}}
\sum_{i=1}^{h} \sum_{j=0}^{\infty}M_{i}W_{i,j}\,.\label{7788}
\end{flalign}
Setting $\rr_{-1}:=2r_{0}$, we have, for every $i \leq h$, 
\begin{flalign}
\sum_{j=0}^{\infty}W_{i,j}&=\sum_{j=0}^{\infty}\rr_{j}^{\delta_{i}}\left(\mint_{B_{\rr_{j}}}\snr{f_i}^{m_i}  \dx\right)^{\theta_i/t}=\frac{\delta_{i}}{2^{\delta_{i}}-1}\sum_{j=0}^{\infty}\int_{\rr_{j}}^{\rr_{j-1}}\rr^{\delta_{i}}\frac{\d\rr}{\rr}\left(\mint_{B_{\rr_{j}}}\snr{f_i}^{m_i}  \dx\right)^{\theta_i/t}\nonumber \\
&\le\frac{2^{n\theta_{i}/t}\delta_{i}}{2^{\delta_{i}}-1}\sum_{j=1}^{\infty}\int_{\rr_{j}}^{\rr_{j-1}}\varrho^{\delta_{i}}\left(\mint_{B_{\varrho}(x_{0})}\snr{f_i}^{m_i}  \dx\right)^{\theta_i/t}\frac{\d\varrho}{\varrho}\nonumber \\
&\le\frac{2^{n\theta_{i}/t}\delta_{i}}{2^{\delta_{i}}-1}\int_{0}^{2r_{0}}
\varrho^{\delta_{i}}\left(\mint_{B_{\varrho}(x_{0})}\snr{f_i}^{m_i}  \dx\right)^{\theta_i/t}\frac{\d\varrho}{\varrho}=\frac{2^{n\theta_{i}/t}\delta_{i}}{2^{\delta_{i}}-1}\mathbf{P}^{m_i,\theta_{i}}_{t,\delta_{i}}(f_i;x_{0},2r_{0})\,.\label{9}
\end{flalign}
The content of the last three displays implies that $\{\kappa_j\}_j$ converges to a finite limit and therefore $V_j\to 0$ by its definition in \rif{3}. Using this fact and 
$x_0$ is a Lebesgue point of $v$, we have 
$$
v(x_{0})
\le\limsup_{j\to \infty}\left(\mint_{B_{\rr_{j}}}(v-\kk_{j})_{+}^{t}  \dx\right)^{1/t}+\limsup_{j\to \infty}\kk_{j}\nonumber\leq \lim_{j\to \infty}V_{j}+\lim_{j\to \infty}\kk_{j}=\lim_{j\to \infty}\kk_{j}
$$
and \rif{3388} follows by using \rif{7788}-\eqref{9} to estimate the right-hand side in the above display. 
\end{proof}

\section{Hybrid Fractional Caccioppoli inequalities}\label{ibrida}
In this section we provide various Caccioppoli type inequalities for minima of variational integrals and solutions to nonlinear equations. These are basic tools in order to prove Lipschitz estimates.  Although the underlying principle is common to all cases, the specific shape of these inequalities varies according to the setting considered. The basic prototype is provided in Propositions \ref{caccin1}-\ref{caccin2}. These contain fractional Caccioppoli type estimates of hybrid type. The word hybrid accounts this time for the fact that on the right-hand hand sides of \rif{40prima} and \rif{40}, there still appears the $L^\infty$-norm of $Du$, (implicit in the presence of $M$). Inequalities \rif{40prima} and \rif{40} will be proved under the form of a priori estimates for minima of more regular, uniformly elliptic integrals with standard polynomial growth. They will be then incorporated in a suitable approximation scheme in order to cover the case of nonuniformly elliptic integrals. Propositions \ref{caccin1}-\ref{caccin2} refer to functionals of the type in \rif{ggg2}. Later on, in Propositions \ref{caccin3}-\ref{caccin5}, we will present additional Caccioppoli inequalities, valid also for functionals of the type in \rif{modellou} and \rif{FMx}, and for general elliptic equations of the type in \rif{equazioneM}. Although the basic scheme of proofs is the same, the estimation of the various terms must be different in each case, as every particular type of structure considered needs a specific treatment, eventually leading to different bounds on $q/p$. In the rest of the whole Section \ref{ibrida}, $B_{r}\Subset \Omega$ will always denote a ball such that $r\leq1$. Moreover, when dealing with minimizers, we will always assume that $p$ and $q$ satisfy at least the bound
\eqn{marcbound}
$$
\frac qp < 1+\frac{2}{n}\,.
$$
Accordingly, we consider the number 
\eqn{marcexp}
$$
\mathfrak {s} := \frac{2q}{(n+2)p-nq}\geq 1\,,
$$
which in fact is well-defined by \rif{marcbound}. Note that $\mathfrak{s}=1$ if and only if $p=q$. 
\subsection{Model Caccioppoli estimates}\label{laprima}
In this section we consider the functional $\mathcal{G} (\cdot, B_{r})$ in \rif{ggg2}. The integrand $F\colon \mathbb{R}^{n}\to \mathbb{R}$ is assumed to satisfy stronger conditions that those considered in \rif{assFF}, namely
\begin{flalign}\label{32.2}
\qquad\begin{cases}
\, F(\cdot) \in C^{2}(\er^n)\\
\, \nu_0 [H_{\mu}(z)]^{q/2}+\tilde \nu[H_{\mu}(z)]^{p/2}\le F(z)\le \tilde L[H_{\mu}(z)]^{q/2}+\tilde L[H_{\mu}(z)]^{p/2} \qquad \\
\,   \nu_0 [H_{\mu}(z)]^{(q-2)/2}\snr{\xi}^{2}+\tilde \nu [H_{\mu}(z)]^{(p-2)/2}\snr{\xi}^{2}\le \partial_{zz}F(z)\xi\cdot \xi \\
\, \snr{\partial_{zz}F(z)}\le \tilde L[H_{\mu}(z)]^{(q-2)/2}+\tilde L[H_{\mu}(z)]^{(p-2)/2}, \qquad 0< \mu \leq 2\,,
\end{cases}
\end{flalign}
for all $z,\xi\in \mathbb{R}^{n}$ and where $\nu_0>0$ and the numbers $0< \tilde \nu \leq \tilde L$ are as in Section \ref{basicn}. The function $\gggg\colon B_{r} \times \mathbb{R}\times \er^n \mapsto \mathbb{R}$ satisfies the following reinforcement of \rif{assgg}:
\eqn{assgg2}
$$
\begin{cases}
\, \mbox{$\gggg(\cdot)$ satisfies \trif{assgg} with $\mu$ as in \rif{32.2}$_4$ and $L$ replaced by $\tilde L$ in \rif{32.2}
}\\
\, \mbox{$z \mapsto \gggg(x,y,z) \in C^{2}(\er^n)$ for every $(x,y)\in \Omega \times \er$} \,.
 \end{cases}
$$
We will denote by $c_{\gggg}$ a generic constant, depending on parameters that will be specified, but such that $c_{\gggg}=0$ when $\gggg$ is identically zero. Finally, the function $\hhh\colon B_{r}\times \mathbb{R}\mapsto \mathbb{R}$ is such that
\eqn{asshh22}
$$
\begin{cases}
\, \snr{\hhh(x,y_{1})-\hhh(x,y_{2})}\le f(x)\snr{y_{1}-y_{2}}^{\alpha},
\,  |f(x)| \leq L_0, \ \ \alpha \in (0,1]\\
\, \snr{\hhh(x,y)} \leq L_0(|y|+1)
\end{cases}
$$
hold for all $x\in B_{r}$, $y,y_{1},y_{2}\in \mathbb{R}$, where $ L_{0}\geq 1$ is a fixed constant. Denoting $\tilde F (x,y,z):=F(z)+\gggg(x,y,z)$, using also that $\mu>0$, this new integrand is seen to satisfy
\begin{flalign}\label{F_0}
\qquad\begin{cases}
\, \frac1{c_*} [H_{1}(z)]^{q/2}\le \tilde F(x,y,z)\le c_*[H_{1}(z)]^{q/2}\\
\,  \frac1{c_*} [H_{1}(z)]^{(q-2)/2}\snr{\xi}^{2}\le \partial_{zz}\tilde  F(x,y,z)\xi\cdot \xi\,,  \quad \snr{\partial_{zz}\tilde  F(x,y,z)}\le c_*[H_{1}(z)]^{(q-2)/2}\\
\, \snr{\tilde F(x_{1},y_{1},z)-\tilde F(x_{2},y_{2},z)}\le c_*\left(\snr{x_{1}-x_{2}}^{\alpha}+\snr{y_{1}-y_{2}}^{\alpha}\right)[H_{1}(z)]^{\gamma/2}
\end{cases}
\end{flalign}
with the same meaning of \rif{32.2}-\rif{asshh22}, but this time we have $c_*\equiv c_*(n,q,L,\mu,\nu_0)$.  
In \rif{F_0} the constant $c$ is such that $c_* \to \infty$ when either $\mu \to 0$ or $ \nu_0 \to 0$. Therefore we have that $\tilde F(\cdot)$ is a regular and non-degenerate integrand with $q$-growth. It follows that if $u\in W^{1,q}(B_{r})$ is a minimzer of the functional  $\mathcal{G} (\cdot, B_{r})$ in \rif{ggg2} and \rif{32.2}-\rif{asshh22} are in force, then standard regularity arguments give 
\begin{flalign}\label{33.1bis}
u\in C^{1,\alpha_{1}}_{\loc}(B_{r})\quad \mbox{for some}  \ \alpha_{1}\equiv \alpha_{1}(n,q,c_*, \alpha, L_0,\mu )\in (0,1)\,.
\end{flalign} 
This is shortly detailed in Section \ref{giustifica}. 
We shall also use a couple of parameters $(\beta, \chi)$ such that
\eqn{bechi}
$$
0<\beta<\frac{\alpham}{2+\alpham} \qquad  \mbox{and} \qquad \chi\equiv \chi (\beta):=\frac{n}{n-2\beta}>1\,,
$$
where $\alpham\leq 1$ is a positive number that will be later specified. 
By denoting 
\eqn{Hnot}
$$
E_{\mu}(z):=\frac{1}{p}\left[(\snr{z}^{2}+\mu^{2})^{p/2}-\mu^{p}\right]= \frac{1}{p}\left[[H_{
\mu}(z)]^{p/2}-\mu^{p}\right], \qquad z\in \er^n
$$
we now have 
\begin{proposition}[Hybrid Fractional Caccioppoli]\label{caccin1}
Let $u\in W^{1,q}(B_{r})$ be a minimizer of the functional in $\mathcal G(\cdot, B_{r})$ in \trif{ggg2}, under assumptions \eqref{marcbound} and \trif{32.2}-\eqref{asshh22}. Let $B_{\rr}(x_{0}) \Subset B_{r}$ and let $M\geq 1$ be a constant such that $ \nr{Du}_{L^{\infty}(B_{\rr}(x_{0}))}\leq M$. Then, for every number $\kk \geq 0$
\begin{eqnarray}\label{40prima}
\notag &&\varrho^{2\beta-n}[(E_{\mu}(Du)-\kk)_{+}]_{\beta,2;B_{\varrho/2}(x_0)}^2+ \left(\mint_{B_{\rr/2}(x_{0})}(E_{\mu}(Du)-\kk)_{+}^{2\chi}  \dx\right)^{1/\chi}\\
&&\qquad    \le cM^{\ssf(q-p)}\mint_{B_{\rr}(x_{0})}(E_{\mu}(Du)-\kk)_{+}^{2}  \dx +c_{\gggg}M^{\ssf q+ \alpha+\gamma q/p}\rr^{\alpha}+cM^{\ssf q+\alpha}\rr^{\alpha} \mint_{B_{\rr}(x_{0})}f \dx
 \end{eqnarray}
holds whenever $(\beta, \chi)$ are as in \trif{bechi} with $\alpham:=\alpha$. 
The constants $c, c_{\gggg}$ in \eqref{40prima} depend on $\data, \gamma$ and $\beta$, but are otherwise independent of $\nu_0$; dependence on $\gamma$ only occurs when $\gggg(\cdot)\not \equiv 0$. The number $\ssf$ is defined in \trif{marcexp}.\end{proposition}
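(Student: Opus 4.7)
The overall strategy is to derive a Nikolski-type finite-difference Caccioppoli inequality for the truncated quantity $(E_\mu(Du)-\kappa)_+$ and then convert it to the fractional seminorm bound in \eqref{40prima} via Lemma~\ref{l4}, with the $L^{2\chi}$ summand following from the embedding \eqref{immersione} applied to $\alpha_{0}=\beta$, $s=2$, so that $2\chi=2n/(n-2\beta)$. Since assumptions \eqref{32.2}--\eqref{asshh22} place us in the regular regime \eqref{33.1bis}, all manipulations are classically justified and we may use $|Du|\leq M$ pointwise on $B_\rho(x_0)$. The bridge from $V_\mu$-differences (natural from ellipticity \eqref{32.2}$_3$) to $E_\mu$-differences (what enters \eqref{40prima}) will be provided by \eqref{diffH} combined with $|Du|\leq M$, which gives the pointwise bound $|\tau_h(E_\mu(Du)-\kappa)_+| \lesssim M^{p/2}\,|V_\mu(Du(\cdot+h))-V_\mu(Du(\cdot))|$ on the level set $\{E_\mu(Du)>\kappa\}\cup\{E_\mu(D\tilde u)>\kappa\}$, and vanishing elsewhere.

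The core finite-difference step proceeds as follows. Fix a cutoff $\eta\in C_{c}^{\infty}(B_\rho(x_0))$ with $\eta\equiv1$ on $B_{\rho/2}(x_0)$ and $|D\eta|\lesssim\rho^{-1}$; for $h\in\er^n$ with $|h|\leq \rho/16$ set $\tilde u(x):=u(x+h)$, which minimizes the $h$-translated functional on the $h$-translated ball. I test minimality of $u$ against $w_{1}:=u-\eta^2(\tilde u-u)$ and minimality of $\tilde u$ against $w_{2}:=\tilde u+\eta^2(\tilde u-u)$; adding the two minimality inequalities, the uniform convexity in \eqref{32.2}$_3$ produces a $V_\mu$-coercive term on the left, while the Hölder bounds \eqref{assgg}$_3$ and \eqref{asshh22}$_1$, together with the crude estimate $|\tau_h u|\leq M|h|$, produce three perturbative contributions on the right:
\[
\int_{B_\rho(x_0)} \eta^2 |V_\mu(Du)-V_\mu(D\tilde u)|^2\dx \;\leq\; c\,|h|^{\alpha}\Bigl[ \mathcal A+c_{\gggg}\mathcal B+ \mathcal C\Bigr],
\]
where $\mathcal A$ collects the $(p,q)$-growth energy on $B_\rho(x_0)$ (bounded by $M^{q-p}\int H_\mu(Du)^{p/2}$), $\mathcal B$ collects the $\gggg$-contribution (with the $[H_{1}(z)]^{\gamma/2}$ factor pushing one extra $M^{\gamma}$), and $\mathcal C$ comes from $\hhh$ and carries the integral of $f$.

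The truncation step is now routine: using the pointwise bridge displayed above together with the support condition, one obtains
\[
\|\tau_h(E_\mu(Du)-\kappa)_+\|_{L^2(B_{3\rho/4}(x_0))}^{2} \;\leq\; c\,M^{p}\,|h|^{\alpha}\bigl[\mathcal A+c_{\gggg}\mathcal B+\mathcal C\bigr];
\]
Lemma~\ref{l4} (applied with $s=2$ and the finite-difference exponent $\alpha/2$, so that $\beta<\alpha/(2+\alpha)$ leaves the margin $\alpha/2-\beta>0$ required to absorb the $K$-factor) converts this into the fractional seminorm bound appearing as the first summand of \eqref{40prima}, and \eqref{immersione} delivers the $L^{2\chi}$-bound. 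The final exponent bookkeeping is where the Marcellini exponent $\mathfrak s$ from \eqref{marcexp} enters: one splits off $M^{q-p}$ from the $q$-growth integrand via $[H_\mu(z)]^{q/2}\leq M^{q-p}[H_\mu(z)]^{p/2}$ and then performs a Gagliardo--Nirenberg interpolation between $L^p(B_\rho)$ and $L^\infty(B_\rho)$ on $Du$ in the spirit of Marcellini's original $(p,q)$-bounds, the interpolation exponent being precisely $\mathfrak s$ under \eqref{marcbound}. This redistributes the $M$-powers so that only $M^{\mathfrak s(q-p)}$ multiplies the coercive $(E_\mu(Du)-\kappa)_+^{2}$ integral, while the harmless $\rho^{\alpha}$-tails absorb the higher powers $M^{\mathfrak s q+\alpha(+\gamma q/p)}$.

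The main obstacle is precisely this last exponent bookkeeping. Naively one would land with $M^{q-p}$ (or worse) in front of the coercive integral, which would be fatal for the subsequent De~Giorgi iteration driving the Lipschitz bound \eqref{stima1}, since that iteration requires the coefficient of the self-similar term to be controlled by an $M$-power small enough to be reabsorbed under \eqref{bound1}. Forcing the exponent down to $M^{\mathfrak s(q-p)}$ requires carrying out the Gagliardo--Nirenberg step inside, rather than after, the finite-difference comparison, and it is where assumption \eqref{marcbound} (ensuring $\mathfrak s<\infty$) is critically used; keeping the $\gggg$- and $\hhh$-perturbations with exponents $M^{\mathfrak s q+\alpha+\gamma q/p}$ and $M^{\mathfrak s q+\alpha}$ respectively then falls out of the same splitting, upon using the Hölder structure of \eqref{assgg}$_3$ and \eqref{asshh22}$_1$ together with the growth $|\tau_h u|\leq M|h|$.
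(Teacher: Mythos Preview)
Your approach has a genuine structural gap: the direct finite-difference comparison you describe cannot produce the \emph{truncated} quantity $\int (E_\mu(Du)-\kappa)_+^{2}\dx$ on the right-hand side. Testing minimality of $u$ and $\tilde u$ against $w_1,w_2$ yields, at best, an estimate whose right-hand side carries full energies such as $\int [H_\mu(Du)]^{p/2}\dx$ or $\int [H_\mu(Du)]^{q/2}\dx$ (compare the non-truncated fractional estimate in Proposition~\ref{apxsharp}, which is exactly what such a direct argument delivers). No Gagliardo--Nirenberg step will convert these into $(E_\mu(Du)-\kappa)_+^{2}$ terms after the fact, and without the truncation the inequality is useless for the De Giorgi-type iteration in Lemma~\ref{revlem}. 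Moreover, the exponent $\mathfrak s$ does \emph{not} arise from interpolation here: it comes from the a priori Lipschitz bound \eqref{stimaprimissima2} applied to a \emph{comparison function} $v$ solving a frozen autonomous problem on a small ball of radius $|h|^{\beta_0}$.

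The paper's mechanism (Sections~\ref{hyb0}--\ref{hyb2}) is genuinely perturbative, hence the word ``hybrid''. On each ball $8B_h=B_{8|h|^{\beta_0}}(x_{\rm c})$ one defines $v$ minimizing the frozen integrand $F_0(z)=F(z)+\gggg_\rho(x_{\rm c},(u_\rho)_{8B_h},z)$. The classical Caccioppoli inequality \eqref{pri-cacc} for the autonomous equation solved by $v$ (obtained by differentiating that equation, which is legitimate because the frozen problem is smooth) is what supplies the truncated structure $\int (E(Dv)-\kappa)_+^{2}\dx$; the Lipschitz estimate \eqref{stimaprimissima2} gives $\|Dv\|_{L^\infty(4B_h)}\leq \tilde c M^{\mathfrak s}$, and this is the sole origin of the factor $M^{\mathfrak s(q-p)}$. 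The comparison bound $\int_{8B_h}|V(Du_\rho)-V(Dv)|^{2}\dx \lesssim |h|^{\beta_0\alpha}(\cdots)$, split into terms (I)--(III) via minimality, then transfers everything back to $u_\rho$ (see \eqref{1}--\eqref{1bispre}); equalizing the $|h|^{2(1-\beta_0)}$ and $|h|^{\beta_0\alpha}$ exponents via the choice \eqref{beta00} and summing over a covering by such balls completes the argument. Your direct scheme misses this freeze-compare-and-use-classical-Caccioppoli structure entirely, which is precisely the new idea needed in the nonuniformly elliptic, nondifferentiable setting.
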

An alternative, and more flexible version of \rif{40prima}, is in the next
\begin{proposition}\label{caccin2}
Under the same assumptions of Proposition \ref{caccin1}, for every number $\kk \geq 0$ 
\begin{eqnarray}\label{40}
\notag &&\varrho^{2\beta-n}[(E_{\mu}(Du)-\kk)_{+}]_{\beta,2;B_{\varrho/2}(x_0)}^2+\left(\mint_{B_{\rr/2}(x_{0})}(E_{\mu}(Du)-\kk)_{+}^{2\chi}  \dx\right)^{1/\chi}\\&& \notag
\qquad  \le cM^{\mathfrak{s}(q-p)}\mint_{B_{\rr}(x_{0})}(E_{\mu}(Du)-\kk)_{+}^{2}  \dx  +c_{\gggg}M^{\ssf  q+\alpha+\gamma q/p}\rr^{\alpha} \\&&\qquad  \quad+cM^{\ssf  q}\rr^{\frac{p\alpha}{p-\alpha}}\left(\mint_{B_{\rr}(x_{0})}f^{\frac{\ppp}{\ppp-\alpha}}  \dx\right)^{\theta(\ppp)}
+c\una M^{\ssf  q+\frac{\alpha(2-p)}{2-\alpha}}\rr^{\frac{2\alpha}{2-\alpha}}\left(\mint_{B_{\rr}(x_{0})}f^{\frac{\ppp}{\ppp-\alpha}} \dx\right)^{\sigma(\ppp)} \end{eqnarray}
holds whenever $\ppp\in [p, p^*)$,  
where
\eqn{tetesisi}
$$ 
\theta(\ppp)
:= \frac{\ppp-\alpha}{\ppp}\frac{p}{p-\alpha}\geq 1 \qquad  \mbox{and} \qquad   \sigma(\ppp)
:= \frac{\ppp-\alpha}{\ppp}\frac{2}{2-\alpha}\,,
$$ 
and whenever $(\beta, \chi)$ are as in \trif{bechi}, with this time
\eqn{alphagen}
$$
 \alpham \equiv  \alpham(\ppp) := \alpha 
\min\left\{1,\frac{p\mathfrak{a}(\ppp)}{p-\alpha}, \frac{2\mathfrak{a}(\ppp)}{2-\alpha}\right\} \quad \mbox{and}
\quad \mathfrak{a}(\ppp) :=\frac n\ppp- \frac np+1\,.
$$
The constants $c, c_{\gggg}, \ssf$ are as in Proposition \ref{caccin1} and $\una$ is defined in \trif{unap}.
\end{proposition}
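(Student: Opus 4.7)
The plan is to follow the same finite-difference scheme that yields Proposition \ref{caccin1}, i.e.\ to exploit the $W^{1,q}\cap C^{1,\alpha_1}_{\loc}$ regularity of $u$ guaranteed by \rif{33.1bis} in order to test the Euler--Lagrange equation of $\mathcal{G}$ with difference-quotient type functions, producing a fractional Caccioppoli inequality for $V_\mu(Du)$ and, after squaring and rearranging, for $E_\mu(Du)$. Everything up to the treatment of the term carrying $\hhh$ is identical to the derivation of \rif{40prima}; the only modification is in how one estimates the finite-difference bound $|\hhh(x,u(x+h))-\hhh(x,u(x))|\leq f(x)|\tau_h u|^\alpha$ coming from \rif{asshh22}.

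In the proof of \rif{40prima} one uses $f\leq L_0$ pointwise together with $|\tau_h u|\leq |h|M$ to gain a factor $M^\alpha |h|^\alpha$ against an $L^1$-norm of $f$. For \rif{40} I would instead apply H\"older's inequality at the conjugate exponents $\ppp/(\ppp-\alpha)$ and $\ppp/\alpha$, which produces the $L^{\ppp/(\ppp-\alpha)}$-norm of $f$ and an $\alpha$-power of $\|\tau_h u\|_{L^\ppp}$. To recover a fractional $|h|$-power when $\ppp\in(p,p^*)$, I would interpolate $\|\tau_h u\|_{L^\ppp}$ between $\|\tau_h u\|_{L^p}\leq c|h|\,\|Du\|_{L^p}$ (from \rif{diffbasic}) and $\|\tau_h u\|_{L^{p^*}}\leq c\|Du\|_{L^p}$ (from Sobolev embedding). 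The resulting interpolation parameter is precisely $\mathfrak{a}(\ppp)=n/\ppp-n/p+1$, and $\|Du\|_{L^\infty}\leq M$ then converts $\|Du\|_{L^p}$ into $M\,\rr^{n/p}$, producing the exponent $\theta(\ppp)$ of \rif{tetesisi} together with a factor $|h|^{\alpha\mathfrak{a}(\ppp)}$; this is exactly the origin of the second entry of the $\min$ defining $\alpham(\ppp)$ in \rif{alphagen}.

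The third entry of that $\min$, and the $\una$-dependent second line of \rif{40}, are forced by the case $p<2$. In that regime, inverting $V_\mu$ to recover $\tau_h Du$ via \rif{sopradue} generates an additional $|\tau_h V_\mu(Du)|^p$ contribution weighted by a power of $M+\mu$, whose natural H\"older pairing with $f(x)|\tau_h u|^\alpha$ uses the exponent $2/(2-\alpha)$ in place of $p/(p-\alpha)$. Running the same interpolation produces the third entry $2\mathfrak{a}(\ppp)/(2-\alpha)$ of the $\min$, together with the conjugation factor $\sigma(\ppp)$ and the extra power $M^{\alpha(2-p)/(2-\alpha)}$. With these two H\"older pairings in place, the remainder of the argument proceeds as for Proposition \ref{caccin1}: collect all terms into the hybrid Caccioppoli inequality, apply Lemma \ref{l4} with $\beta$ slightly smaller than $\alpham(\ppp)/(2+\alpham(\ppp))$ to translate the Nikolski-type bound into the $W^{\beta,2}$-seminorm of $(E_\mu(Du)-\kappa)_+$ appearing on the left of \rif{40}, and then use the fractional Sobolev embedding \rif{immersione} to upgrade it to the stated $L^{2\chi}$-bound.

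The main technical obstacle I expect is the bookkeeping of the powers of $M$. The value $\ssf=2q/[(n+2)p-nq]$ in \rif{marcexp} is engineered so that, after all finite-difference manipulations, the $L^\infty$-dependence on the homogeneous part collapses to $M^{\ssf(q-p)}$ and on the inhomogeneous parts to $M^{\ssf q+\cdots}$; sustaining this balance forces the consistent use of the convex renormalization $E_\mu(Du)$ of \rif{Hnot} rather than $[H_\mu(Du)]^{p/2}$ throughout. A secondary, more routine point is to verify that for every admissible $\ppp\in[p,p^*)$ the resulting $\alpham(\ppp)$ from \rif{alphagen} satisfies $0<\alpham(\ppp)\leq 1$, so that the choice of $\beta$ and $\chi$ in \rif{bechi} is legitimate and the final Sobolev step goes through; this is transparent from the form of the $\min$.
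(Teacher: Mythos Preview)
Your proposal misidentifies the core mechanism of the proof. There is no Euler--Lagrange equation for $\mathcal G$ to test: the term $\hhh(x,\cdot)$ is only $\alpha$-H\"older in $y$ by \rif{asshh22}, so $\partial_y\hhh$ need not exist, and this is precisely the obstruction the paper is built to overcome. Consequently the quantity $f(x)\snr{\tau_h u}^{\alpha}$ never arises. What actually happens (already in the proof of \rif{40prima}) is a comparison on each small ball $8B_h=B_{8\snr{h}^{\beta_0}}(x_{\rm c})$: one defines the lifting $v\in u_{\varrho}+W^{1,q}_0(8B_h)$ minimizing the frozen integrand $F_0(z)=F(z)+\gggg_{\varrho}(x_{\rm c},(u_{\varrho})_{8B_h},z)$, uses Lemma~\ref{marcth} to get $\nr{Dv}_{L^\infty(4B_h)}\le \tilde c M^{\ssf}$, and then bounds $\int_{8B_h}\snr{V(Du_{\varrho})-V(Dv)}^2$ by the energy difference via minimality and \rif{hamine}. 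The $\hhh$-contribution is the term
\[
\mbox{(III)}=\int_{8B_h}\bigl[\hhh_{\varrho}(x,v)-\hhh_{\varrho}(x,u_{\varrho})\bigr]\dx
\ \le\ c\int_{8B_h}f_{\varrho}(x)\snr{u_{\varrho}-v}^{\alpha}\dx\,,
\]
so the relevant object is $\snr{u_{\varrho}-v}$, not $\snr{\tau_h u}$. For \rif{40prima} this is handled by the $L^\infty$-bound $\nr{u_{\varrho}-v}_{L^\infty(8B_h)}\le 32M\snr{h}^{\beta_0}$ from \rif{conosc2}.

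The passage to \rif{40} replaces this $L^\infty$-estimate by H\"older in $L^{\ppp/(\ppp-\alpha)}\times L^{\ppp/\alpha}$ followed by the Sobolev--Poincar\'e inequality on $W^{1,p}_0(8B_h)$, namely $\nr{u_{\varrho}-v}_{L^{\ppp}(8B_h)}\le c\snr{h}^{\beta_0\mathfrak a(\ppp)}\nr{Du_{\varrho}-Dv}_{L^p(8B_h)}$ (this is \rif{banalina} and is the source of $\mathfrak a(\ppp)$). One then uses \rif{sopradue} to control $\snr{Du_{\varrho}-Dv}^p$ by $\snr{V(Du_{\varrho})-V(Dv)}^2$ plus, when $p<2$, the extra $\snr{V(Du_{\varrho})-V(Dv)}^p(\snr{Du_{\varrho}}+\mu)^{p(2-p)/2}$; Young's inequality with exponents $p/(p-\alpha)$ and $2/(2-\alpha)$ respectively absorbs the $V$-difference back into the left-hand side, producing $\theta(\ppp)$, $\sigma(\ppp)$ and the three entries of the $\min$ in \rif{alphagen}. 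Your interpolation heuristic for $\tau_h u$ happens to yield the same exponent $\mathfrak a(\ppp)$, but the argument cannot begin without the lifting $v$, and the absorption step (Young against $\int\snr{V(Du_{\varrho})-V(Dv)}^2$) is what makes the scheme close; there is no analogue of this if you work with $\tau_h u$ alone.
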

\begin{remark}\label{differente}{\em 
From the proof of Proposition \ref{caccin2} it follows that in the case $\gggg(\cdot)\equiv 0$, so that $c_{\gggg}=0$ in \rif{40}, then the definition of $\alpham$ in \rif{alphagen} can be changed into
\eqn{alphagen2}
$$
\alpham = \alpha 
\min\left\{\frac{p\mathfrak{a}(\ppp)}{p-\alpha}, \frac{2\mathfrak{a}(\ppp)}{2-\alpha}\right\}\,.$$ }
\end{remark}
\subsection{Preliminaries on equations.}
For the proof of Propositions  \ref{caccin1}-\ref{caccin2}, we need a few preliminary results concerning {\em non-degenerate} elliptic equations of the type
\eqn{solvina}
$$
-\diver\, A_0(Dv)=0 \qquad \mbox{in $B\subset \er^n$}\,,
$$
where $B$ is a ball. 
Here the vector field $A_0\colon \er^{n} \mapsto \er^n$ satisfies
\eqn{bebe}
$$
\begin{cases}
\, A_0(\cdot) \ \mbox{is $C^1$-regular} \\
\,  | A_0(z)|+[H_{\mu}(z)]^{1/2}| \partial_{z}A_0(z)| \leq \tilde L[H_{\mu}(z)]^{(q-1)/2}+\tilde L[H_{\mu}(z)]^{(p-1)/2}\\
\,\nu_0 [H_{\mu}(z)]^{(q-2)/2}|\xi|^2+\tilde \nu[H_{\mu}(z)]^{(p-2)/2}|\xi|^2\leq \partial_{z}A_0(z)\xi\cdot \xi,  \quad 0 < \mu \leq 2\,,
\end{cases} 
$$
whenever $z, \xi\in \er^n$, and where $0 < \tilde \nu \leq  \tilde L$ are as in Section \ref{basicn}. The assumption $\mu>0$ implies that $A_0(\cdot)$ is non-degenerate elliptic. 
\begin{lemma}\label{cacc-class1}
Let $v\in W^{1,q}(B)$ be a weak solution to \eqref{solvina} under assumptions \trif{bebe}, then $
 [H_{\mu}(Dv)]^{p/2}, E_{\mu}(Dv)\in W^{1,2}_{\loc}(B)$, and $Dv$ is locally H\"older continuous in $B$. Moreover, assume also that there exists $\mathfrak{M}\geq 1$ such that $ \nr{Dv}_{L^{\infty}(B)}\leq\mathfrak{M}$. 
\begin{itemize}
\item Then
\eqn{pri-cacc-2}
$$\int_{B/2}  \snr{D(E_{\mu}(Dv)-\kk)_{+}}^{2}  \dx \leq \frac{c\mathfrak{M}^{2(q-p)}}{|B|^{2/n}}
\int_{B}  (E_{\mu}(Dv)-\kk)_{+}^{2}  \dx
$$ 
holds for every $\kk \geq 0$, where $c\equiv c (n,p, q, \nu, L)$, and is otherwise independent of $\nu_0$.
\item If, in addition, $\partial A_0(\cdot)$ is symmetric, then
\eqn{pri-cacc}
$$\int_{B/2}  \snr{D(E_{\mu}(Dv)-\kk)_{+}}^{2}  \dx \leq \frac{c\mathfrak{M}^{q-p}}{|B|^{2/n}}
\int_{B}  (E_{\mu}(Dv)-\kk)_{+}^{2}  \dx
$$ 
holds with $c, \kappa$ as in \trif{pri-cacc-2}. 
\end{itemize}
\end{lemma}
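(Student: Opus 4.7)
The plan is to deduce both Caccioppoli inequalities from a single divergence-form identity obtained by differentiating \eqref{solvina}. The regularity part comes first: since $\mu>0$, assumptions \eqref{bebe}$_{2,3}$ make \eqref{solvina} a non-degenerate quasilinear elliptic equation with $q$-growth, so Nirenberg difference quotients on $\int A_0(Dv)\cdot D\phi\,\dx=0$ yield $v\in W^{2,2}_{\loc}(B)$. Combined with the Moser-type $L^\infty_{\loc}$ bound on $Dv$ and the chain-rule estimate $|D[H_\mu(Dv)]^{p/2}|+|DE_\mu(Dv)|\lesssim[H_\mu(Dv)]^{(p-1)/2}|D^2v|$, this produces the two $W^{1,2}_{\loc}$ assertions. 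Local H\"older continuity of $Dv$ is then classical in this non-degenerate setting (Tolksdorf, or De Giorgi--Nash applied to the differentiated equation viewed as linear uniformly elliptic with bounded measurable coefficients).

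Next I derive the key identity. Differentiating \eqref{solvina} in the direction $x_s$ gives $\int\partial_z A_0(Dv)Dv_s\cdot D\tilde\phi\,\dx=0$ for every $\tilde\phi\in W^{1,2}_0$, and testing with $\tilde\phi=v_s\psi$, summing over $s$, and using the algebraic identity $\sum_s v_s Dv_s=V:=\tfrac12 D|Dv|^2$, I obtain
\begin{equation}\label{keyplan}
\int\partial_z A_0(Dv)V\cdot D\psi\,\dx=-\int Q\,\psi\,\dx\,,\qquad Q:=\sum_s\partial_z A_0(Dv)Dv_s\cdot Dv_s\ge 0.
\end{equation}
Crucially, \eqref{keyplan} does \emph{not} require $\partial_z A_0$ to be symmetric. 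Setting $w:=(E_\mu(Dv)-\kk)_+$, the chain rule yields the cancellation identity
\[
Dw=[H_\mu(Dv)]^{(p-2)/2}V\ \text{on }\{w>0\}\qquad\Longleftrightarrow\qquad V=[H_\mu(Dv)]^{(2-p)/2}Dw\,,
\]
which will turn the naturally weighted identity \eqref{keyplan} into a weight-free inequality for $|Dw|^2$.

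To prove \eqref{pri-cacc-2} I will insert $\psi=\eta^2 w$ into \eqref{keyplan} with $\eta$ a standard cutoff satisfying $\eta\equiv 1$ on $B/2$ and $|D\eta|\lesssim|B|^{-1/n}$. Since $Q\psi\ge 0$, one has $\int\eta^2\partial_z A_0(Dv)V\cdot Dw\,\dx\le 2\int\eta w|D\eta|\,|\partial_z A_0(Dv)V|\,\dx$. After substituting $V=[H_\mu]^{(2-p)/2}Dw$, the $(p-2)/2$ and $(2-p)/2$ exponents cancel exactly and \eqref{bebe}$_3$ forces the LHS to be $\ge\tilde\nu\int\eta^2|Dw|^2\,\dx$; meanwhile \eqref{bebe}$_2$, combined with $|Dv|\le\mathfrak{M}$ and $\mu\le 2$, gives $|\partial_z A_0(Dv)|\le c\mathfrak{M}^{q-p}[H_\mu(Dv)]^{(p-2)/2}$ and hence $|\partial_z A_0 V|\le c\mathfrak{M}^{q-p}|Dw|$ on $\{w>0\}$. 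A standard Young inequality absorbing $\epsilon\int\eta^2|Dw|^2$ into the LHS then delivers \eqref{pri-cacc-2}, with quadratic $\mathfrak{M}^{2(q-p)}$ dependence.

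For \eqref{pri-cacc} the only change is the way I bound $|\partial_z A_0 V\cdot D\eta|$: when $\partial_z A_0$ is symmetric, the Cauchy--Schwarz inequality for its positive-definite quadratic form yields $|\partial_z A_0 V\cdot D\eta|\le[H_\mu]^{(2-p)/2}\sqrt{\partial_z A_0 Dw\cdot Dw}\cdot\sqrt{\partial_z A_0 D\eta\cdot D\eta}$. Young's inequality then splits the boundary contribution into an absorbable $\epsilon\int\eta^2[H_\mu]^{(2-p)/2}\partial_z A_0 Dw\cdot Dw$ plus a data term which, using $\partial_z A_0 D\eta\cdot D\eta\le c\mathfrak{M}^{q-p}[H_\mu]^{(p-2)/2}|D\eta|^2$, collapses to $c\mathfrak{M}^{q-p}\int w^2|D\eta|^2\,\dx$ --- now linear, rather than quadratic, in $\mathfrak{M}^{q-p}$. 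The main technical obstacle throughout is to spot and exploit the cancellation $[H_\mu]^{(p-2)/2}\cdot[H_\mu]^{(2-p)/2}=1$ along the correspondence $V\leftrightarrow Dw$ on $\{w>0\}$, which is precisely what transforms the naturally weighted $(p,q)$-growth Caccioppoli into a weight-free $L^2$ Caccioppoli, suitable for the De Giorgi iteration of Lemma \ref{revlem}.
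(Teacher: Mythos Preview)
Your argument is correct and is, at the technical level, the same Bernstein-type computation that underlies the result the paper imports from \cite[Lemma 4.5, (4.29), (4.37)]{BM}: differentiate \eqref{solvina}, test with $D_sv\,\psi$, sum in $s$, and exploit the chain-rule identity $Dw=[H_\mu(Dv)]^{(p-2)/2}V$ on $\{w>0\}$ to collapse all weights. The distinction between \eqref{pri-cacc-2} and \eqref{pri-cacc} you obtain is exactly the one in the paper (and in \cite{BM}): in the non-symmetric case one can only bound $|\partial_zA_0\,V\cdot D\eta|$ by $|\partial_zA_0||V||D\eta|$, forcing Young's inequality to produce $\mathfrak{M}^{2(q-p)}$, whereas symmetry allows the Cauchy--Schwarz inequality in the form $\partial_zA_0$, which makes the cross term absorbable and leaves only one factor of $\mathfrak{M}^{q-p}$. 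The difference between your write-up and the paper's proof is purely expository: the paper outsources the Caccioppoli step to \cite{BM} and merely matches parameters ($g_1,g_2$, $G_{\bar T}=\tilde\nu E_\mu$, $g_2(\mathfrak{M})/g_1(\mathfrak{M})\le c\mathfrak{M}^{q-p}$), while you give the computation directly. Your version has the advantage of being self-contained and of making transparent exactly where symmetry enters.
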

\begin{proof} The standard regularity theory gives that $Dv$ is locally H\"older continuous in $\Omega$, as well as the differentiability of $ H_{\mu}(Dv)$, and therefore of $E_{\mu}(Dv)$. For the first result  see for instance \cite{manth1} and Lemma \ref{manflemma}. As for the second, let us outline the argument for completeness. Standard difference quotients techniques give that $V_{\mu}(Du)\in W^{1,2}_{\loc}(B,\er^n)$; see for instance \cite[Chapter 8]{giu}, and recall that $\mu>0$ to adapt from there (still observe that by approximation $\mu>0$ is not really needed at this stage). By \rif{ineV} we have 
$\snr{\tau_h Dv}^{2} \lesssim (\snr{Dv(\cdot+h)}^{2}+\snr{Dv}^{2}+\mu^{2})^{(2-p)/2}\snr{\tau_hV_{\mu}(Dv)}^{2}$ in $\{x \in B \, : \, 
\dist(x, \partial B) > |h|\}$, so that $\mu>0$ and $Dv\in L^{\infty}_{\loc}(B,\er^n)$ imply $Dv\in W^{1,2}_{\loc}(B,\er^n)$ via difference quotients. In turn, \rif{disH} implies
$
\snr{\tau_{h}[H_{\mu}(Dv)]^{p/2}}^2 \lesssim (\snr{Dv(\cdot+h)}^2+\snr{Dv}^2+1)^{p-1}\snr{\tau_{h}Dv}^2
$, from which $[H_{\mu}(Dv)]^{p/2}\in W^{1,2}_{\loc}(B)$ again follows via difference quotients.  
We go for the proof of \rif{pri-cacc-2}-\rif{pri-cacc} and we first consider \rif{pri-cacc}. By \rif{bebe} we are in the setting of  \cite[Lemma 4.5]{BM}; in particular, by $\mu>0$, $A_0(\cdot)$ satisfies \cite[(4.26)]{BM} with $\bar T=0$ and 
\eqn{adatta}
$$
\begin{cases}
\, g_{2,\eps}(t)\equiv g_{2}(t)\equiv \tilde L(t^2+\mu^2)^{(q-2)/2}+\tilde L(t^2+\mu^2)^{(p-2)/2} \\
\, g_1(t)\equiv  \tilde \nu(t^2+\mu^2)^{(p-2)/2}\,.
\end{cases}
$$
Note that, with respect to \cite{BM}, we have that $g_{2,\eps}(\cdot)$ is actually independent of $\varepsilon$ as indicated in \rif{adatta} (that in fact appears only in the context of \cite{BM}, where a family of vector fields is considered). Moreover, thanks to \rif{bebe},  we can in fact take $\bar T=0$ (alternatively, take any $\bar T\in (0,\mathfrak{M})$ in \cite{BM} and then let $\bar T\to 0$ in \cite{BM}). 
Again by \rif{bebe}, $v$ enjoys the regularity in \rif{33.1bis} and we can use  \cite[(4.29)]{BM} with $f\equiv 0$, that gives, with the notation of \cite{BM}
\eqn{lavecchia}
$$
 \int_{B/2} \snr{D(G_{\bar T}(\snr{Dv}) - k)_+}^2 \dx
  \leq \frac{c(n)}{|B|^{2/n}} \frac{g_{2}(\mathfrak{M})}{g_{1}(\mathfrak{M})} \int_B (G_{\bar T}(\snr{Dv}) - k)_+^2  \dx\,.$$
In this setting (recall $\bar T=0$) it is
  $$G_{\bar T}(\snr{z}):=\int_{0}^{\snr{z}}g_{1}(s)s \d s =\frac{\tilde \nu}{p} \left[(\snr{z}^{2}+\mu^{2})^{p/2}-\mu^{p}\right]=\tilde \nu E_{\mu}(\snr{z})$$
for $z\in \er^n$. Recalling \rif{adatta} and that $\mathfrak{M}\geq 1$, we estimate
\eqn{adatta2}
$$\frac{g_{2}(\mathfrak{M})}{g_{1}(\mathfrak{M})}\leq c(\mathfrak{M}^{q-p}+1)\leq c \mathfrak{M}^{q-p}\,,$$  
with $c \equiv c (n,p,q, \tilde \nu, \tilde L) $, so that \rif{pri-cacc} follows from \rif{lavecchia}. For \rif{pri-cacc-2}, we use \cite[(4.37)]{BM}, that is 
$$
 \int_{B/2} \snr{D(G_{\bar T}(\snr{Dv}) - k)_+}^2 \dx
  \leq \frac{c}{|B|^{2/n}}\left[\frac{g_{2}(\mathfrak{M})}{g_{1}(\mathfrak{M})}\right]^2  \int_B  (G_{\bar T}(\snr{Dv}) - k)_+^2  \dx  \,,$$
and \rif{pri-cacc-2} follows via \rif{adatta2}. For the constant dependence, recall that $\tilde \nu, \tilde L$ depend on $n,p,q,\nu, L$. 
 \end{proof}
\begin{lemma}\label{oscilemma}Let $v\in W^{1,q}(B)$ be a weak solution to $\diver\, A(x,Dv)=0$,  such that $v \in u+ W^{1,q}_0(B)$, with $u \in W^{1,q}(B)\cap L^\infty(B)$. Assume that $z \mapsto A(x, z)\equiv A_0(z)$ satisfies \trif{bebe} uniformly with respect to $x\in B$. Then 
$
\osc (v,B) \leq \osc (u,B).
$
\end{lemma}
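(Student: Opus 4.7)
The plan is to reduce the claim to the two-sided comparison $m \le v \le M$ a.e.\ in $B$, where $M$ and $m$ denote the essential supremum and essential infimum of $u$ over $B$ (both finite by $u \in L^\infty(B)$); this immediately yields $\osc(v, B) \le M - m = \osc(u, B)$. The tool is a weak maximum principle for strictly monotone quasilinear operators, tailored to the non-degenerate ellipticity structure \rif{bebe}.

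For the upper bound $v \le M$, I first note that $v - u \in W^{1,q}_0(B)$, combined with $u \le M$ a.e., forces the trace of $v$ on $\partial B$ to equal that of $u$ and hence to be $\le M$. Consequently, the test function $\varphi := (v - M)_+$ lies in $W^{1,q}_0(B)$. Testing the weak formulation of $\diver\, A_0(Dv) = 0$ against $\varphi$, and observing that $D\varphi = Dv \, \chi_{\{v > M\}}$, gives
$$
0 \ =\ \int_{B} A_0(Dv) \cdot D\varphi \dx \ =\ \int_{\{v > M\}} A_0(Dv) \cdot Dv \dx\,.
$$
Decompose $A_0(Dv) \cdot Dv = A_0(0) \cdot Dv + (A_0(Dv) - A_0(0)) \cdot Dv$. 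The constant vector $A_0(0)$ contributes $\int_{B} A_0(0) \cdot D\varphi \dx = 0$ since $\varphi \in W^{1,q}_0(B)$ (approximate by $C^\infty_c(B)$ functions and use the divergence theorem). For the remaining piece, integrating the ellipticity lower bound \rif{bebe}$_3$ for $\partial_z A_0$ along the segment from $0$ to $Dv$ and invoking \rif{ineV} yields $(A_0(Dv) - A_0(0)) \cdot Dv \ge c\,|V_\mu(Dv)|^2$ with $c \equiv c(p, \tilde\nu) > 0$. Inserting this bound forces $\int_{\{v > M\}} |V_\mu(Dv)|^2 \dx = 0$, so $Dv = 0$ a.e.\ on $\{v > M\}$; hence $D\varphi \equiv 0$ on the connected ball $B$, and Poincar\'e's inequality then gives $\varphi \equiv 0$, that is $v \le M$ a.e.

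The lower bound $v \ge m$ is proved by the symmetric argument, testing with $\psi := (m - v)_+ \in W^{1,q}_0(B)$ and concluding $Dv = 0$ a.e.\ on $\{v < m\}$, whence $\psi \equiv 0$. The one subtle point is the vanishing of the linear contribution $\int_B A_0(0) \cdot D\varphi \dx$: it depends only on $\varphi$ having null trace, consumes no quantitative ellipticity constant (in particular no dependence on $\nu_0$), and makes the argument stable as $\mu \to 0$. This stability is important for the approximation schemes appearing in the later sections, where non-degeneracy must be removed.
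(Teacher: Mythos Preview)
Your argument is correct and follows essentially the same route as the paper's proof: both test the weak formulation with $(v-M)_+$ and $(m-v)_+$, use the strict monotonicity of $A_0$ inherited from \rif{bebe} to force the gradients of these truncations to vanish, and conclude via Poincar\'e. The only cosmetic differences are that the paper normalizes $A_0(0_{\er^n})=0_{\er^n}$ at the outset (equivalent to your explicit handling of $\int_B A_0(0)\cdot D\varphi\dx=0$) and records the coercivity via the $q$-ellipticity part of \rif{bebe}$_3$ rather than the $p$-part through $V_\mu$.
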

\begin{proof}
This is a small variant of the classical maximum principle, see for instance \cite[Lemma 3.1]{manth1}. We sketch the proof for completeness. By \rif{bebe} it follows that
$(\snr{z_{1}}^{2}+\snr{z_{2}}^{2}+\mu^{2})^{(q-2)/2}\snr{z_{1}-z_{2}}^{2} \lesssim  [A(x,z_2)-A(x,z_1)]\cdot (z_2-z_1)$. 
Note that we can assume $A_0(0_{\er^n})=0_{\er^n}$, so that  
$(\snr{z}^{2}+\mu^{2})^{(q-2)/2}\snr{z}^{2} \lesssim A(x,z)\cdot z$. Let $M, m$ be such that $m\leq u\leq M$ a.e. in $B$, and use $\varphi := (v-M)_+$ as test function in $\int_B A_0(Dv)\cdot D \varphi\dx=0$, which is valid whenever $\varphi \in W^{1,q}_0(B)$. It follows that $D(v-M)_+\equiv 0$ a.e. and therefore $v \leq M$ a.e. follows via Poincaré inequality. Similarly, testing with $\varphi := (v-m)_-:=\max\{m-v,0\}$, with $m :=\inf_B u$, we get $v \geq m$ a.e., and the proof is complete.   
\end{proof} 
We proceed with a few a priori estimates that will play a central role in our analysis. These are suitable modifications of estimates that can be found in the literature since \cite{M1, M2}. To get the full statements we need, we will appeal to \cite{BM}.  
\begin{lemma}\label{marcth}
Let $v\in W^{1,q}(B)$ be a weak solution to \eqref{solvina}, under assumptions \trif{bebe}. \begin{itemize}
\item If $q/p < 1+ 1/n$, then
\eqn{stimaprimissima}
$$
\|Dv\|_{L^{\infty}(B/2)} \leq c \left(\mint_{B}(\snr{Dv}+1)^p\dx\right)^{ \frac{1}{(n+1)p-nq}}
$$
holds whenever $B\Subset \Omega \subset \er^n$ is a ball, where $c\equiv c(n,p,q,\nu, L)$ is  independent of $\nu_0$.
\item If $q/p < 1+ 2/n$, that is \trif{marcbound} holds, and in addition $\partial A_0(\cdot)$ is symmetric, then 
\eqn{stimaprimissima2}
$$
\|Dv\|_{L^{\infty}(B/2)} \leq c \left(\mint_{B}(\snr{Dv}+1)^p\dx\right)^{ \frac{2}{(n+2)p-nq}}
$$
holds in place of \trif{stimaprimissima}. 
\end{itemize}
\end{lemma}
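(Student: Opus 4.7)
I would prove both estimates in parallel by combining the hybrid Caccioppoli inequalities \eqref{pri-cacc-2}--\eqref{pri-cacc} with a De Giorgi iteration (via Lemma \ref{revlem}) and a Marcellini-type absorption on concentric balls \cite{M1,M2,BM}. Let $\mathfrak{M} := \|Dv\|_{L^\infty(B_\rho(x_0))}$ and set $\sigma_\star := q-p$ in the symmetric setting and $\sigma_\star := 2(q-p)$ in the general one, so that Lemma \ref{cacc-class1} provides the Caccioppoli estimate with multiplicative constant $c \mathfrak{M}^{\sigma_\star}/\rho^{2}$, uniformly in $\nu_0$. Combining with Sobolev--Poincar\'e applied to $(E_\mu(Dv)-\kappa)_+$ times a cutoff between $B_{\rho/4}(x_0)$ and $B_{\rho/2}(x_0)$ yields, for every $\kappa\geq 0$,
\[
\Bigl(\mint_{B_{\rho/4}(x_0)} (E_\mu(Dv)-\kappa)_+^{2\chi} \dx\Bigr)^{1/\chi} \leq c \mathfrak{M}^{\sigma_\star} \mint_{B_\rho(x_0)} (E_\mu(Dv)-\kappa)_+^{2} \dx,
\]
with $\chi = n/(n-2)$ if $n\geq 3$ and any fixed $\chi>1$ if $n=2$. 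This is precisely the hypothesis of Lemma \ref{revlem} with $t = 2$, $M_0 = \mathfrak{M}^{\sigma_\star/2}$, $\kappa_0 = 0$, and no source terms; since $\chi/(\chi-1) = n/2$, the lemma gives at any Lebesgue point $x_0$
\[
E_\mu(Dv)(x_0) \leq c \mathfrak{M}^{\sigma_\star n/4} \Bigl(\mint_{B_{r_0}(x_0)} E_\mu(Dv)^{2} \dx \Bigr)^{1/2}.
\]

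Using the crude bound $E_\mu(Dv)^2 \leq c \mathfrak{M}^p (|Dv|+1)^p$ converts the right-hand side to $c\mathfrak{M}^{\sigma_\star n/4 + p/2} (\mint (|Dv|+1)^p \dx)^{1/2}$, and working on two concentric balls $B_\sigma \Subset B_\tau \subset B$ (so that $\mathfrak{M}$ is the sup over $B_\tau$) and taking the $L^\infty$-norm on $B_\sigma$ yields
\[
\|Dv\|_{L^\infty(B_\sigma)}^{p} \leq c(\tau-\sigma)^{-n/2} \|Dv\|_{L^\infty(B_\tau)}^{\sigma_\star n/4 + p/2} \Bigl(\int_{B}(|Dv|+1)^p \dx \Bigr)^{1/2} + c.
\]
Under $q/p < 1 + 2/n$ (symmetric case, $\sigma_\star = q-p$) or $q/p < 1 + 1/n$ (general case, $\sigma_\star = 2(q-p)$), the exponent $\beta := \sigma_\star n/4 + p/2$ is strictly less than $p$, so Young's inequality allows me to absorb $c A \|Dv\|_{L^\infty(B_\tau)}^{\beta}$ into $\tfrac12 \|Dv\|_{L^\infty(B_\tau)}^{p}$ plus a term proportional to $A^{p/(p-\beta)}$. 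Lemma \ref{l5} applied on radii in $[r/2, r]$ eliminates the $\|Dv\|_{L^\infty(B_\tau)}$-term, and the radius powers cancel exactly when one expresses everything in terms of averages; a direct computation identifies the resulting exponent of $\mint_{B_r}(|Dv|+1)^p \dx$ as $2/((n+2)p-nq)$ in the symmetric case and $1/((n+1)p-nq)$ otherwise, giving \eqref{stimaprimissima2} and \eqref{stimaprimissima}.

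The main obstacle is the careful bookkeeping of powers of $\mathfrak{M}$ through the De Giorgi pipeline: the Young absorption threshold must match the Marcellini gap exactly, and this is only possible because the symmetric-case Caccioppoli of Lemma \ref{cacc-class1} keeps $\sigma_\star$ linear (rather than quadratic) in $q-p$, which accounts for the $1/n \mapsto 2/n$ improvement. Equally important is that all constants in Lemma \ref{cacc-class1} are independent of $\nu_0$, since any such dependence would re-enter through Young and destroy the uniformity of the final bound; an alternative route, which the authors signal, is to directly invoke the streamlined a priori estimates of \cite{BM}.
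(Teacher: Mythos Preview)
Your proposal is correct and follows essentially the same route as the paper: a Caccioppoli inequality with an $\mathfrak{M}^{\sigma_\star}$-constant (Lemma~\ref{cacc-class1}), Sobolev embedding, a De Giorgi iteration tracking the $\mathfrak{M}$-power, and then Young plus Lemma~\ref{l5} to reabsorb. The only difference is presentational---the paper outsources the intermediate step \eqref{stimaux1} to \cite[(4.38)]{BM}, whereas you rederive it self-containedly from Lemmas~\ref{cacc-class1} and~\ref{revlem}; the exponent bookkeeping and the role of symmetry in halving $\sigma_\star$ match exactly.
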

\begin{proof} We start proving \rif{stimaprimissima}-\rif{stimaprimissima2}. The information concerning the local boundedness of $Dv$ can be found for instance in \cite{BM}, but it is also a direct consequence of the arguments developed to prove \rif{stimaprimissima}-\rif{stimaprimissima2} in the following lines, when using arbitrary balls $\tilde B\Subset B$ rather than $B/2$. So, we concentrate on the proof of \rif{stimaprimissima}-\rif{stimaprimissima2}. 
First, note that we can reduce to the case $B\equiv \BB$ by a standard scaling argument (which is for instance detailed in Section \ref{riscalasezione} a few lines below). The proof of \rif{stimaprimissima} can be obtained following the estimates in \cite[Lemma 7.2]{BM}, where we must take $f\equiv 0$ and $\Omega \equiv \BB$, and the choices are those in \rif{adatta}; as in \cite{BM}, we first treat the case $n>2$. Specifically, using \cite[(4.38)]{BM} with $\vartheta=1$ , we arrive at
\eqn{stimaux1}
$$
\|Dv\|_{L^{\infty}(\mathcal B_{\tau_1})}^p  \leq \frac{c}{(\tau_2-\tau_1)^{n/2}} \left[
\|Dv\|_{L^\infty(\mathcal B_{\tau_2})}^{\frac{(q-p)n+p}{2}}+1\right]\left[\|Dv\|_{L^p(\mathcal B_1)}^{p/2}+1 \right]\,,
$$
which holds whenever $1/2 \leq \tau_1 < \tau_2 \leq 1$. 
This is nothing but the estimate in the second display at \cite[Page 1026]{BM}, where $\mathcal D$ is replaced by $\|Dv\|_{L^p(\mathcal B_1)}+1$. Using $q/p < 1+ 1/n$ - implying $[(q-p)n+p]/2<p$ - allows to apply Young's inequality in \rif{stimaux1} and 
$$
\|Dv\|_{L^{\infty}(\mathcal B_{\tau_1})} \leq \frac 12 \|Dv\|_{L^{\infty}(\mathcal B_{\tau_2})}  +  \frac{c}{(\tau_2-\tau_1)^{ \frac{n}{(n+1)p-nq}}} \left[\|Dv\|_{L^p(\mathcal B_1)}+1 \right]^{ \frac{p}{(n+1)p-nq}}
$$
from which \rif{stimaprimissima} follows - modulo rescaling back to $B$ - applying Lemma \ref{l5} with the choice $h(\tau)\equiv  \|Du\|_{L^{\infty}(B_{\tau})}$, $1/2 \leq \tau \leq 1$. For the case $n=2$, again from \cite[(4.38)]{BM} we find that \rif{stimaux1} still holds replacing the exponent $[(q-p)n+p]/2= q-p/2$ with any larger number, chosen to still be smaller that one, and the conclusion follows as in the case $n>2$. For \rif{stimaprimissima2} the argument is similar (and we outline it when $n>2$).
We again go back to \cite[Lemma 7.2]{BM} and apply
\cite[(4.38)]{BM} with $\vartheta=0$, that this time gives the following analog of \rif{stimaux1}:
$$
\|Dv\|_{L^{\infty}(\mathcal B_{\tau_1})}^p  \leq \frac{c}{(\tau_2-\tau_1)^{n/2}} \left[
\|Dv\|_{L^\infty(\mathcal B_{\tau_2})}^{\frac{(q-p)n+2p}{4}}+1\right]\left[\|Dv\|_{L^p(\mathcal B_1)}^{p/2}+1 \right]\,.
$$ 
Applying Young's inequality as in \rif{stimaux1}, thanks to \trif{marcbound}, we arrive at \rif{stimaprimissima2}. The case $n=2$ can again be dealt with as for \rif{stimaprimissima}.\end{proof}
\subsection{Proof of Propositions \ref{caccin1}-\ref{caccin2}}\label{primacaccio} The proof goes in five different steps; in the first four, given in Sections \ref{riscalasezione}-\ref{hyb2}, we complete the proof of \rif{40}. The last one, in Section \ref{hybprima}, is instead devoted to the proof of \rif{40prima}. 
\subsubsection{Scaling}\label{riscalasezione}
In order to prove Proposition \ref{caccin1}, we can reduce to the case $B_{\varrho}(x_0)\equiv B_1(0)\equiv \BB$ (recall the notation in \rif{notfont}). Indeed, take $u_{\rr}\in W^{1,q}(\BB)$ defined as 
\eqn{riscalata}
$$u_{\rr}(x):=u(x_{0}+\rr x)/\rr\,, \qquad x \in \BB\,.$$
It follows that 
\eqn{ereditato}
$$
\|Du_\varrho\|_{L^{\infty}(\BB)} \leq M
$$
and that $u_{\rr}$ is a minimizer of the functional (defined on $W^{1,q}(\BB)$)
\eqn{definito}
$$
w\mapsto
 \mathcal{G}_{\varrho}(w, \mathcal{B}_1) :=
\int_{\BB} [F(Dw)+\gggg_{\rr}(x,w,Dw) +\hhh_{\rr}(x,w)]  \dx\,,
$$
where $\gggg_{\rr}(x,y,z):=\gggg(x_0+\rr x,\rr y,z)$ and $\hhh_{\rr}(x,y):=\hhh(x_0+\rr x,\rr y)$. By \eqref{assgg2} we have
\begin{flalign}\label{assaar}
\begin{cases}
\, z\mapsto \gggg_{\rr}(x,y,z) \  \mbox{is convex, non-negative and $z\mapsto \gggg_{\rr}(x,y,z)\in C^{2}(\mathbb{R}^{n})$}\\
\, \gggg_{\rr}(x,y,z)+H_{\mu}(z)\snr{\partial_{zz}\gggg_{\rr}(x,y,z)}\le \tilde L[H_{\mu}(z)]^{p/2}\\
\, \snr{\gggg_{\rr}(x_{1},y_{1},z)-\gggg_{\rr}(x_{2},y_{2},z)}\le \tilde L\varrho^{\alpha}\left(\snr{x_{1}-x_{2}}^{\alpha}+\snr{y_{1}-y_{2}}^{\alpha}\right)[H_{1}(z)]^{\gamma/2}
\end{cases}
\end{flalign}
for all $x,x_{1},x_{2}\in \BB$, $y,y_{1},y_{2}\in \mathbb{R}$, $z\in \mathbb{R}^{n}$. As for $\hhh_{\rr}(\cdot)$, by \rif{asshh22} we instead have
\eqn{assgr}
$$
\ \snr{\hhh_{\rr}(x,y_{1})-\hhh_{\rr}(x,y_{2})}\le f_{\rr}(x)\snr{y_{1}-y_{2}}^{\alpha},\qquad f_{\rr}(x):=\rr^{\alpha}f(x_0+\rr x)
$$
hold for all $x \in \BB$, $y_{1},y_{2}\in \mathbb{R}$. As $B_{\rr}(x_0) \Subset B_{r}$, \rif{33.1bis} implies $u_{\varrho}\in C^{1,\alpha_1}(\overline{\BB})$. Therefore we only need to prove that
\begin{flalign}\label{40-res}
&\notag [(E_{\mu}(Du_{\varrho})-\kk)_{+}]_{\beta,2;\BBB}+ \|(E_{\mu}(Du_{\varrho})-\kk)_{+}\|_{L^{2\chi}(\BBB)}\\
\notag & \qquad  \le cM^{\ssf (q-p)/2}\|E_{\mu}(Du_{\varrho})-\kk)_{+}\|_{L^2(\BB)} +c_{\gggg}M^{(\ssf q+ \alpha+\gamma q/p)/2}\varrho^{\alpha/2}\\  \qquad & \qquad \qquad +cM^{\frac {\ssf q}2}\|f_{\rr}\|_{L^{\frac{\ppp}{\ppp-\alpha}}(\BB)}^{\frac{\ppp\theta(\ppp)}{2(\ppp-\alpha)}} +c\una M^{\frac {\ssf q}2+\frac{\alpha(2-p)}{2(2-\alpha)}}
\|f_{\rr}\|_{L^{\frac{\ppp}{\ppp-\alpha}}(\BB)}^{\frac{\ppp\sigma(\ppp)}{2(\ppp-\alpha)}} 
\end{flalign}
holds for $c, c_{\gggg}$ as in Proposition \ref{caccin1}, and then \rif{40} follows scaling back \rif{40-res} from $u_{\varrho}$ to $u$. 
\subsubsection{Estimates on balls}\label{hyb0} Consider a ball $B \subset \BB$, centered at $x_{\rm c}$, and define the functional 
\eqn{congela}
$$
w \mapsto \int_{B} F_{0}(Dw)\dx \qquad \mbox{where} \qquad  F_{0}(z):=F(z)+\gggg_{\rr}(x_{\rm c},(u_{\rr})_{B},z) \,.
$$
As a consequence of \rif{32.2},\rif{assaar} and Lemma \ref{marclemma}, and eventually choosing new $\tilde \nu \leq \tilde L$ still as in Section \ref{basicn}, the vector field $\partial_zF_0(\cdot)\equiv A_0(\cdot)$ satisfies \rif{bebe}. Therefore, following \cite{ha, KM}, we have that 
\eqn{hamine}
$$
\frac{1}{\tilde c}\snr{V(z_2)-V(z_1)}^{2} \leq  F_{0}(z_2)- F_{0}(z_1)-\partial_z F_{0}(z_1)\cdot(z_2-z_1)
$$
holds whenever $z_1, z_2 \in \er^n$, for $\tilde c\equiv \tilde c(n,p, \tilde \nu)\geq 1$. 
From now on, and in the rest of Section \ref{appi}, we will abbreviate $E_{\mu}(\cdot)\equiv E(\cdot)$, $V_{\mu}(\cdot)\equiv V(\cdot)$ and $H_{\mu}(\cdot)\equiv H(\cdot)$. To proceed with the proof \rif{40-res}, we fix $\beta_{0}\in (0,1)$, and $h\in \mathbb{R}^{n}$, such that \begin{flalign}\label{hhh}
0< \snr{h}\leq  \frac{1}{2^{8/\beta_{0}}}\,.
\end{flalign}
We take 
\eqn{sceltacentro}
$$x_{\rm c}\in \mathcal B_{1/2+ 2|h|^{\beta_0}}$$ 
and fix a ball centered at $x_{\rm c}$ with radius $\snr{h}^{\beta_{0}}$, denoted by $B_{h}:= B_{\snr{h}^{\beta_{0}}}(x_{\rm c})$. By \eqref{hhh} we have $8B_{h}\Subset \BB$. We then define $v\equiv v_{B_{h}}\in u_{\rr}+W^{1,q}_0(8B_{h})$ as the solution to
\begin{flalign}\label{pd}
 v\mapsto \min_{w \in u_{\rr}+W^{1,q}_0(8B_{h})} \int_{8B_{h}} F_{0}(Dw)  \dx
\end{flalign}
where the integrand $ F_{0}(\cdot)$ has been fixed in \rif{congela}, with $B \equiv B_{h}$, and verifies \rif{F_0}. 
It follows that $v$ solves the Euler-Lagrange equation
\eqn{euleri}
$$\int_{8B_{h}}\partial_z F_{0}(Dv)\cdot D\varphi  \dx=0 \qquad \mbox{for every  $\varphi \in W^{1,q}_0(8B_{h})$\,.}
$$
By \rif{32.2} and \rif{assgg2}, the vector field $\partial_zF_0(\cdot)\equiv A_0(\cdot)$ satisfies \rif{bebe} for suitable constants $0 < \tilde \nu \leq \tilde L$ depending as described in Section \ref{basicn}, and, needless to say, $\partial_zA_0(\cdot)\equiv\partial_{zz}F_0(\cdot)$ is symmetric. Therefore equation \rif{euleri} is of the type considered in \rif{solvina} and, via \rif{marcbound}, Lemmas \ref{cacc-class1}-\ref{marcth} apply.  
In particular, Lemma \ref{oscilemma} implies
\eqn{conosc}
$$
\osc (v,8B_{h}) \leq \osc (u_{\rr},8B_{h}) \,.
$$
As a consequence of \rif{ereditato} and \rif{conosc}, we find
\eqn{conosc2}
$$
\begin{cases}
\, 
\snr{v -(v)_{8B_{h}}} \leq \osc (u_{\rr}, 8B_{h}) \leq 16M|h|^{\beta_0}\\ 
\,  \|u_{\rr}-v\|_{L^{\infty}(8B_h)}\leq  2\osc (u_{\rr}, 8B_{h}) \leq 32M|h|^{\beta_0}\,.
\end{cases}
$$
Note that, in order to obtain \rif{conosc2}$_2$, we also use that $v$ is continuous up to the boundary $\partial (8B_h)$. In fact, since $u_{\rr}$ is Lipschitz regular in  $\BB$ by \rif{ereditato}, it follows that $v$ is H\"older continuous (for some exponent) up to the boundary, by classical Maz'ya-Wiener theory for equations with standard $q$-growth and degeneracy. See for instance \cite{kilp} or \cite[Theorem 7.8]{giu}, in order to obtain the result from the theory of Quasi Minima, as indicated in \cite[Chapter 6]{giu}; see also \cite{li}. Also note that such a result is here used only in a qualitative form, not in any quantitative one. Next, applying Lemma \ref{marcth}, estimate \rif{stimaprimissima2}, yields
\eqn{globale0}
$$
\|Dv\|_{L^{\infty}(4B_{h})} \leq c \left(\mint_{8B_{h}}(\snr{Dv}+1)^p\dx\right)^{\frac{2}{(n+2)p-nq}}\,.
$$
On the other hand, by minimality of $v$, \rif{32.2}$_2$ and \rif{assgg2}, we have 
\eqn{globale}
$$
\mint_{8B_{h}}\snr{Dv}^p\dx \leq c\mint_{8B_{h}}F_0(Dv)\dx \leq c\mint_{8B_{h}}F_0(Du_{\rr})\dx \stackleq{ereditato} c(M^q+1)\leq c M^q\,.
$$
Matching the content of the last two inequalities, yields
\eqn{fuckbsc}
$$
\|Dv\|_{L^{\infty}(4B_{h})} \leq \tilde cM^{\frac{2q}{(n+2)p-nq}}\stackrel{\rif{marcexp}}{=} \tilde c M^{\mathfrak {s}}\,, \qquad \tilde c \equiv \tilde c (n,p,q,\nu, L)\,.
$$
Finally, by \rif{fuckbsc} we apply Lemma \ref{cacc-class1} with $\mathfrak{M}\equiv \tilde cM^{\mathfrak{s}}$ as follows: 
\begin{eqnarray}\label{cacc}
\nonumber \int_{B_{h}}\snr{\tau_{h}(E(Dv)-\kk)_{+}}^{2}&  \stackrel{\eqref{diffbasic}_2}{\leq} &c |h|^2\int_{2B_h}  \snr{D(E(Dv)-\kk)_{+}}^{2}  \dx
\\
&\stackleq{pri-cacc}  & c\snr{h}^{2(1-\beta_{0})}M^{\mathfrak {s}(q-p)}\int_{4B_{h}}(E(Dv)-\kk)_{+}^{2}  \dx\,,
\end{eqnarray}
for $c\equiv c(n,p,q,\nu,L)$. Note that in the first line we have used \rif{hhh}, that ensures
\eqn{includi}
$$B_h\equiv B_{|h|^{\beta_0}}(x_{\rm c}) \subset B_{|h|^{\beta_0}+|h|}(x_{\rm c})\subset B_{2|h|^{\beta_0}}(x_{\rm c})=2B_h\,.$$ 
Let us now quantify the $L^{2}$-distance between $V(Du_{\rr})$ and $V(Dv)$. Recalling that $u_{\rr}$ minimizes $ \mathcal{G}_{\varrho}(\cdot, \BB)$ defined in \rif{definito}, we have
\begin{flalign}
\notag& \hspace{-5mm}\frac 1{\tilde c}\int_{8B_{h}}\snr{V(Du_{\rr})-V(Dv)}^{2}  \dx\\
& \stackleq{hamine} \int_{8B_{h}}\left[ F_{0}(Du_{\rr})- F_{0}(Dv)-\partial_z F_{0}(Dv)\cdot(Du_{\rr}-Dv)\right]  \dx \nonumber \\
\notag& \stackrel{\rif{euleri}}{=} \int_{8B_{h}}\left[ F_{0}(Du_{\rr})- F_{0}(Dv)\right]  \dx\\
\notag&  \ \,= \mathcal{G}_{\varrho}(u_{\rr}, 8B_{h})- \mathcal{G}_{\varrho}(v, 8B_{h}) \nonumber
+\int_{8B_{h}}[\gggg_{\rr}(x_{\rm c},(u_{\rr})_{8B_{h}},Du_{\rr})-\gggg_{\rr}(x,u_{\rr},Du_{\rr})]  \dx\\ 
\notag& \quad \quad  +\int_{8B_{h}}[\gggg_{\rr}(x,v,Dv)-\gggg_{\rr}(x_{\rm c},(u_{\rr})_{8B_{h}},Dv)]\dx +\int_{8B_{h}}[\hhh_{\rr}(x,v)-\hhh_{\rr}(x,u_{\rr})]\dx\\ 
&\, \ \,  =:\stackrel{\leq 0}{\left[ \mathcal{G}_{\varrho}(u_{\rr}, 8B_{h})- \mathcal{G}_{\varrho}(v, 8B_{h})\right]} +\mbox{(I)}+\mbox{(II)}+\mbox{(III)}\leq \mbox{(I)}+\mbox{(II)}+\mbox{(III)}\,. \label{mmaa}
\end{flalign}
We proceed estimating (I), using \eqref{assaar}$_{3}$ as follows:
\begin{flalign*}
\mbox{(I)}& \leq c_{\gggg}\varrho^{\alpha}\int_{8B_{h}}\left(\snr{x-x_{\rm c}}^{\alpha}+\snr{u_{\rr}-(u_{\rr})_{8B_{h}}}^{\alpha}\right)(\snr{Du_{\rr}} +1)^{\gamma}  \dx \\
& \leq c_{\gggg}\snr{h}^{\beta_{0}\alpha}M^{\gamma}\varrho^{\alpha}\snr{B_{h}} +c_{\gggg} \snr{h}^{\beta_{0}\alpha}M^{\alpha+\gamma}\varrho^{\alpha}\snr{B_{h}}
\\ & \leq c_{\gggg}\snr{h}^{\beta_{0}\alpha}M^{\alpha+\gamma}\varrho^{\alpha}\snr{B_{h}}\leq 
c_{\gggg}\snr{h}^{\beta_{0}\alpha}M^{\alpha+\gamma q/p}\varrho^{\alpha}\snr{B_{h}}\,.
\end{flalign*}
For (II) we have
\begin{eqnarray*}
\mbox{(II)} & \stackrel{\eqref{assaar}_{3}}{\leq} & c_{\gggg}\varrho^{\alpha}\int_{8B_{h}}\left(\snr{x-x_{\rm c}}^{\alpha}+\snr{v-(v)_{8B_{h}}}^{\alpha}+\snr{(v)_{8B_{h}}-(u_{\rr})_{8B_{h}}}^{\alpha}\right)(\snr{Dv}+1)^{\gamma}  \dx\\
& \stackrel{\eqref{conosc2}}{\leq} &c_{\gggg} |h|^{\alpha\beta_0}M^{\alpha} \rr^{\alpha}\mint_{8B_h}(\snr{Dv}+1)^{\gamma}  \dx\, |B_h|\\
& \leq &c_{\gggg} |h|^{\alpha\beta_0}M^{\alpha} \rr^{\alpha}\left(\mint_{8B_h}(\snr{Dv}+1)^{p}  \dx\right)^{\gamma/p}|B_h|\\
&  \stackleq{globale}  &  c_{\gggg}\snr{h}^{\beta_{0}\alpha}M^{\alpha+\gamma q/p}\varrho^{\alpha}\snr{B_{h}}\,.
\end{eqnarray*}
For (III) we use Sobolev and Morrey embeddings in the form
\eqn{banalina}
$$
\|u_{\rr}-v\|_{L^{\ppp}(8B_{h})} \leq c|h|^{\beta_0\mathfrak{a}(\ppp)}\|Du_{\rr}-Dv\|_{L^{p}(8B_{h})} \,,
$$
where $\mathfrak{a}(\ppp)$ has been defined in \rif{alphagen}. Note that, according to the definition \rif{ilconiugato}, inequality \rif{banalina} holds for every $\ppp\geq p$ when $p \geq n$. 
Using also \rif{sopradue}, \rif{assgr} and \rif{banalina}, we find
\begin{flalign}
\notag \mbox{(III)} &
\le c\int_{8B_{h}}f_{\rr}(x)\snr{u_{\rr}-v}^{\alpha}  \dx\\ 
& \notag  \leq  c\|f_{\rr}\|_{L^{\frac{\ppp}{\ppp-\alpha}}(8B_{h})}\|u_{\rr}-v
\|_{L^{\ppp}(8B_{h})}^{\alpha} \\
\notag&\le c \snr{h}^{\beta_{0}\alpha\mathfrak{a}(\ppp)}\|f_{\rr}\|_{L^{\frac{\ppp}{\ppp-\alpha}}(8B_{h})}\|Du_{\rr}-Dv
\|_{L^{p}(8B_{h})}^{\alpha} \\
&\le c\snr{h}^{\beta_{0}\alpha\mathfrak{a}(\ppp)}\|f_{\rr}\|_{L^{\frac{\ppp}{\ppp-\alpha}}(8B_{h})}\left(\int_{8B_{h}}\snr{V(Du_{\rr})-V(Dv)}^{2}  \dx\right)^{\alpha/p}\notag \\
& \nonumber \quad +c\una \snr{h}^{\beta_{0}\alpha\mathfrak{a}(\ppp)}\|f_{\rr}\|_{L^{\frac{\ppp}{\ppp-\alpha}}(8B_{h})}\left(\int_{8B_{h}}\snr{V(Du_{\rr})-V(Dv)}^{p} (\snr{Du_{\rr}}+\mu)^{p(2-p)/2} \dx\right)^{\alpha/p}\\
&\le c\snr{h}^{\beta_{0}\alpha\mathfrak{a}(\ppp)}\|f_{\rr}\|_{L^{\frac{\ppp}{\ppp-\alpha}}(8B_{h})}\left(\int_{8B_{h}}\snr{V(Du_{\rr})-V(Dv)}^{2}  \dx\right)^{\alpha/p}\notag \\
& \ \ \  +c\una \snr{h}^{\beta_{0}\alpha\mathfrak{a}(\ppp)}M^{\frac{\alpha(2-p)}{2}} |h|^{\frac{\beta_0n}{p}\frac{\alpha(2-p)}{2}}\|f_{\rr}\|_{L^{\frac{\ppp}{\ppp-\alpha}}(8B_{h})} \left(\int_{8B_{h}}\snr{V(Du_{\rr})-V(Dv)}^{2}\dx\right)^{\alpha/2}\notag\\
& \leq \frac{1}{2\tilde c} \int_{8B_{h}}\snr{V(Du_{\rr})-V(Dv)}^{2}\dx+c\snr{h}^{\frac{\beta_{0}\alpha p\mathfrak{a}(\ppp)}{p-\alpha}}\|f_{\rr}\|_{L^{\frac{\ppp}{\ppp-\alpha}}(8B_{h})}^{\frac{\ppp\theta(\ppp)}{\ppp-\alpha}} \notag 
\\ & \quad +c\una\snr{h}^{\frac{\beta_{0}\alpha2\mathfrak{a}(\ppp)}{2-\alpha}}M^{\frac{\alpha(2-p)}{2-\alpha}}|h|^{\frac{\beta_0n}{p}\frac{\alpha(2-p)}{2-\alpha}}\|f_{\rr}\|_{L^{\frac{\ppp}{\ppp-\alpha}}(8B_{h})}^{\frac{\ppp\sigma(\ppp)}{\ppp-\alpha}}\,,
\label{stimaIII}
\end{flalign}
where $\theta(\ppp), \sigma(\ppp)$ are defined in \rif{tetesisi}; in the last line we have used Young's inequality twice.  
Using the estimates found for (I),(II),(III) in \rif{mmaa}, and reabsorbing terms, we come up with
\begin{flalign}
\notag \int_{8B_{h}}\snr{V(Du_{\rr})-V(Dv)}^{2}  \dx & \le c_{\gggg}\snr{h}^{\beta_{0}\alpha}M^{\alpha+\gamma q/p}\varrho^{\alpha}\snr{B_{h}}
+c\snr{h}^{\frac{\beta_{0}\alpha p\mathfrak{a}(\ppp)}{p-\alpha}}\|f_{\rr}\|_{L^{\frac{\ppp}{\ppp-\alpha}}(8B_{h})}^{\frac{\ppp\theta(\ppp)}{\ppp-\alpha}}
\\ & \qquad +c\una\snr{h}^{\frac{\beta_{0}\alpha2\mathfrak{a}(\ppp)}{2-\alpha}}M^{\frac{\alpha(2-p)}{2-\alpha}}|h|^{\frac{\beta_0n}{p}\frac{\alpha(2-p)}{2-\alpha}}\|f_{\rr}\|_{L^{\frac{\ppp}{\ppp-\alpha}}(8B_{h})}^{\frac{\ppp\sigma(\ppp)}{\ppp-\alpha}}\label{37}
\end{flalign}
where $c, c_{\gggg}\equiv c, c_{\gggg}(n,p, q,\nu, L)$. We now use this last inequality to bound $E(Du_{\rr})-E(Dv)$ in $L^2$. Recalling \rif{defiH} and \rif{Hnot}, and that $t \mapsto (t-k)_{+}$ is $1$-Lipschitz regular, we find
\begin{flalign*}\label{1}
\notag& \hspace{-11mm}\snr{(E(Du_{\rr})-\kk)_{+}-(E(Dv)-\kk)_{+}}^{2}   \leq 
\snr{E(Du_{\rr})-E(Dv)}^{2}  
\\ &\ \, \qquad \leq  \snr{[H(Du_{\rr})]^{p/2}-[H(Dv)]^{p/2}}^{2}  \nonumber \\
&  \quad \ \ \, \stackrel{\rif{diffH}}{\lesssim_p}(\snr{Du_{\rr}}^{2}+\snr{Dv}^{2}+\mu^2)^{p/2}\snr{V(Du_{\rr})-V(Dv)}^{2}\,,\quad \forall\ x \in 8B_{h}\,.
\end{flalign*}
By finally using \rif{fuckbsc}, we conclude with 
\eqn{1}
$$
\snr{(E(Du_{\rr})-\kk)_{+}-(E(Dv)-\kk)_{+}}^{2}\leq cM^{\mathfrak{s}p}\snr{V(Du_{\rr})-V(Dv)}^{2}\,, \quad \forall\ x \in 4B_{h}
$$
for $ c \equiv  c (n,p,q,\nu, L)$. 
We then estimate using, in order, \rif{diffbasic}$_1$, \eqref{1} (twice) and \eqref{cacc}
\begin{eqnarray}
\notag &&\int_{B_{h}}\snr{\tau_{h}(E(Du_{\rr})-\kk)_{+}}^{2}  \dx \\
\notag  && \qquad  \leq 2 \int_{B_{h}}\snr{\tau_{h}(E(Dv)-\kk)_{+}}^{2}  \dx  
+2 \int_{B_{h}}\snr{\tau_{h}\left((E(Du_{\rr})-\kk)_{+}-(E(Dv)-\kk)_{+}\right)}^{2}  \dx
\\
\notag && \qquad  \leq 2 \int_{B_{h}}\snr{\tau_{h}(E(Dv)-\kk)_{+}}^{2}  \dx  
+2 \int_{2B_{h}}\snr{(E(Du_{\rr})-\kk)_{+}-(E(Dv)-\kk)_{+}}^{2}  \dx
\\
\notag&&\qquad \le 2\int_{B_{h}}\snr{\tau_{h}(E(Dv)-\kk)_{+}}^{2}  \dx\nonumber +cM^{\mathfrak {s}p}\int_{2B_{h}}\snr{V(Du_{\rr})-V(Dv)}^{2}  \dx\nonumber \\
\notag&&\qquad \leq c\snr{h}^{2(1-\beta_{0})}M^{\mathfrak{s}(q-p)}\int_{4B_{h}}(E(Dv)-\kk)_{+}^{2}  \dx+cM^{\mathfrak{s}p}\int_{2B_{h}}\snr{V(Du_{\rr})-V(Dv)}^{2}  \dx\nonumber \\
&&\qquad \leq c\snr{h}^{2(1-\beta_{0})}M^{\mathfrak{s}(q-p)}\int_{4B_{h}}(E(Du_{\rr})-\kk)_{+}^{2}  \dx+cM^{\mathfrak{s}q}\int_{4B_{h}}\snr{V(Du_{\rr})-V(Dv)}^{2}  \dx\,.\label{1bispre}
\end{eqnarray}
We have again used  \rif{includi}. 
Matching this last inequality with \eqref{37} we get 
\begin{eqnarray}\label{1bis}
\notag &&\int_{B_{h}}\snr{\tau_{h}(E(Du_{\rr})-\kk)_{+}}^{2}  \dx \le c\snr{h}^{2(1-\beta_{0})}M^{\mathfrak{s}(q-p)}\int_{8B_{h}}(E(Du_{\rr})-\kk)_{+}^{2}  \dx \nonumber\\ 
&& \hspace{25mm} +c_{\gggg}\snr{h}^{\beta_{0}\alpha}M^{\mathfrak{s}q+\alpha+\gamma q/p}\varrho^{\alpha}\snr{B_{h}}+c\snr{h}^{\frac{\beta_{0}\alpha p\mathfrak{a}(\ppp)}{p-\alpha}}M^{\ssf q}\|f_{\rr}\|_{L^{\frac{\ppp}{\ppp-\alpha}}(8B_{h})}^{\frac{\ppp\theta(\ppp)}{\ppp-\alpha}}\notag\\
&& \hspace{32mm} +c\una\snr{h}^{\frac{\beta_{0}\alpha2\mathfrak{a}(\ppp)}{2-\alpha}}M^{\mathfrak{s}q+\frac{\alpha(2-p)}{2-\alpha}}|h|^{\frac{\beta_0n}{p}\frac{\alpha(2-p)}{2-\alpha}}\|f_{\rr}\|_{L^{\frac{\ppp}{\ppp-\alpha}}(8B_{h})}^{\frac{\ppp\sigma(\ppp)}{\ppp-\alpha}}
\end{eqnarray}
with $c,c_{\gggg}\equiv c,c_{\gggg}(n,p,q,\nu, L)$. Recalling that $|h|\leq 1$, 
we first estimate
\eqn{asasas}
$$\snr{h}^{\beta_{0}\alpha}+\snr{h}^{\beta_{0}\alpha p\mathfrak{a}(\ppp)/(p-\alpha)}+\snr{h}^{\beta_{0}\alpha2\mathfrak{a}(\ppp)/(2-\alpha)} \leq 3\snr{h}^{\beta_{0}\alpham}$$ 
in \rif{1bis}, with $\alpham$ defined in \rif{alphagen}, and then equalize the resulting exponents by taking
\eqn{beta00} 
$$\beta_{0}=\frac{2}{2+\alpham} \Longleftrightarrow \beta_{0}\alpham= 2(1-\beta_{0})\,.$$ We conclude with 
\begin{flalign}\label{38}
\int_{B_{h}}\snr{\tau_{h}(E(Du_{\rr})-\kk)_{+}}^{2}  \dx&\le c\snr{h}^{\frac{2\alpham}{2+\alpham}}M^{\ssf(q-p)}\int_{8B_{h}}(E(Du_{\rr})-\kk)_{+}^{2}  \dx\nonumber \\
&\qquad  +c_{\gggg}\snr{h}^{\frac{2\alpham}{2+\alpham}}M^{\ssf q+ \alpha+\gamma q/p}\varrho^{\alpha}\snr{B_{h}}+c\snr{h}^{\frac{2\alpham}{2+\alpham}}M^{\ssf q}\|f_{\rr}\|_{L^{\frac{\ppp}{\ppp-\alpha}}(8B_{h})}^{\frac{\ppp\theta(\ppp)}{\ppp-\alpha}}\nonumber\\
&\qquad +c\una\snr{h}^{\frac{2\alpham}{2+\alpham}}M^{\ssf q+\frac{\alpha(2-p)}{2-\alpha}}|h|^{\frac{\beta_0n}{p}\frac{\alpha(2-p)}{2-\alpha}}\|f_{\rr}\|_{L^{\frac{\ppp}{\ppp-\alpha}}(8B_{h})}^{\frac{\ppp\sigma(\ppp)}{\ppp-\alpha}}\,. 
\end{flalign}
\subsubsection{Covering}\label{hyb} We consider a fixed, standard lattice $\mathcal L_{|h|}$ of hypercubes of $\er^n$, with sidelength equal to $2 |h|^{\beta_0}/\sqrt{n}$ (these are mutually disjoint open hypercubes with sides parallel to the coordinate axes, and whose union of closures covers $\er^n$). We consider $\tilde{ \mathcal L}_{|h|} := \{Q \in  \mathcal L_{|h|}\, \colon\,  Q\cap \BBB \not = \emptyset\}$. With $\mathfrak{n}$ denoting the cardinality of $\tilde{ \mathcal L}_{|h|}$, we write $\tilde{ \mathcal L}_{|h|}= \{Q_{k}\equiv Q_k(x_k)\}_{k \leq \mathfrak{n}}$, with $x_k$ being the center of $Q_k$. Obviously, it is 
\begin{flalign}\label{11.1}
\Big| \ \BBB\setminus \bigcup_{k\le \mathfrak{n}}Q_{k} \ \Big|=0,\qquad Q_{i}\cap Q_{j}=\emptyset \ \Leftrightarrow \ i\not =j\,.
\end{flalign}
This family of hypercubes corresponds to a family of balls $\mathcal L_{|h|}:=\{B_{k}\equiv B_k(x_k)\}_{k \leq \mathfrak{n}}$ in the sense of \rif{nestate}, that is, such cubes can be identified as inner cubes of balls, i.e., 
\eqn{innnout}
$$Q_{k}\equiv Q_{k}(x_k)\equiv  Q_{\textnormal{inn}}(B_{k}) \subset B_{k}:= B_{\snr{h}^{\beta_{0}}}(x_{k})\,.$$ 
Note that the centers $x_k \equiv x_{\rm c}$ satisfy \rif{sceltacentro} as the diameter of such cubes is equal to $2|h|^{\beta_0}$. Summarizing, we find
\begin{flalign}\label{safe}
\begin{cases}
8B_{k}\Subset \BB\,, \ x_{ k}\in \mathcal B_{1/2+ 2|h|^{\beta_0}} \ \ \mbox{for all} \ \ k\le \mathfrak{n}\\
\mathfrak{n}\lesssim n^{n/2}2^{-n}\snr{h}^{-\beta_{0}n} \equiv c(n)\snr{h}^{-\beta_{0}n} \,.
\end{cases}
\end{flalign}
Each of the dilated balls $8B_{k}$ intersects the similar ones $8B_{i}$ (including itself) less than a finite number $c_{\rm t}(n)$, depending only on $n$ (uniform finite intersection property). This can be easily seen by observing that the outer cubes of the dilated balls $Q_{\textnormal{out}}(8B_{k})$ in the sense of \rif{nestate}, whose sidelength is $16|h|^{\beta_0}$, touch similar ones $c_{\rm t}(n)$ times, as they are obtained by dilating of a factor $8\sqrt{n}$ the original ones $Q_k \equiv Q_{\textnormal{inn}}(B_{k})$, which are mutually disjoint and have sides parallel to the coordinate axes. By considering the family of enlarged balls $8\mathcal L_{|h|}:=\{8B_{k}\, \colon\, B_k \in \mathcal L_{|h|} \}$, we can therefore write 
$$
\begin{cases}
\, \displaystyle 8\mathcal L_{|h|} = \bigcup_{i\leq \tilde c_{\rm t}(n)} 8\mathcal L_{|h|}^i\,,  \ \   \mbox{with $\tilde c_{\rm t}(n) \leq c_{\rm t}(n)$ and $ 8\mathcal L_{|h|}^i\cap  8\mathcal L_{|h|}^j\not= \emptyset \Longrightarrow i=j$}\\
\, \mbox{$ 8\mathcal L_{|h|}^i$ is made of mutually disjoint balls, for every $i \leq c_{\rm t}(n)$\,.}
\end{cases}
$$
As a consequence, given a Radon measure $\lambda$ defined on $\BB$, recalling \rif{safe}, we find
\eqn{finite}
$$
\sum_{k\leq  \mathfrak n}\lambda(8B_k) = \sum_{i=1}^{\tilde c_{\rm t}(n)} \sum_{8B \in \mathcal 8 \mathcal L_{|h|}^i}
\lambda(8B) \leq  \sum_{i=1}^{\tilde c_{\rm t}(n)} 
\lambda(\BB)  = \tilde c_{\rm t}(n)\lambda(\BB)\,.
$$
Another inequality we will often use is, for $t \in (0,1]$ 
\eqn{discrete}
$$
\sum_{k\leq \mathfrak{n}  }a_k^t \leq \mathfrak{n}^{1-t} \big(\sum_{k\leq \mathfrak{n}  }a_k\big)^t\stackrel{\rif{safe}}{\leq}c(n)\snr{h}^{-\beta_{0}n(1-t)}\big(\sum_{k\leq \mathfrak{n}  }a_k\big)^t\,,
$$
that holds whenever $\{a_k\}_{k\leq \mathfrak{n}  }$ are non-negative numbers. 
This is a simple consequence of the discrete H\"older's inequality. 

\subsubsection{Proof of \eqref{40}}\label{hyb2} By \rif{safe} we can now consider the minimization problems in \rif{pd} for each one of the enlarged balls $8B_k$, i.e., we take $B_{h}\equiv B_{k}$, thereby getting \rif{38}. Inequalities \eqref{38} can be summed over $k\le \mathfrak{n}$, and this yields
\begin{eqnarray}
&&
\notag \hspace{-5mm}\int_{\BBB}\snr{\tau_{h}(E(Du_{\rr})-\kk)_{+}}^{2}  \dx \stackrel{\eqref{11.1}}{\le}\sum_{k\le \mathfrak{n}}\int_{Q_{k}}\snr{\tau_{h}(E(Du_{\rr})-\kk)_{+})}^{2}  \dx\nonumber \\ && \stackleq{innnout} \sum_{k\le \mathfrak{n}}\int_{B_{k}}\snr{\tau_{h}(E(Du_{\rr})-\kk)_{+})}^{2}  \dx\nonumber \\
&&\notag    \stackrel{\eqref{38}}{\le}c\snr{h}^{\frac{2\alpham}{2+\alpham}}M^{\ssf (q-p)}\sum_{k\le \mathfrak{n}}\int_{8B_{k}}(E(Du_{\rr})-\kk)_{+}^{2}  \dx \nonumber +c_{\gggg}\snr{h}^{\frac{2\alpham}{2+\alpham}}M^{\ssf q+ \alpha+\gamma q/p}\varrho^{\alpha}\sum_{k\leq  \mathfrak n}\snr{B_{k}}\nonumber \\
&&   \qquad  \quad   +c\snr{h}^{\frac{2\alpham}{2+\alpham}}M^{\ssf q}\sum_{k\leq  \mathfrak n}\|f_{\rr}\|_{L^{\frac{\ppp}{\ppp-\alpha}}(8B_{k})}^{\frac{\ppp\theta(\ppp)}{\ppp-\alpha}}\notag \\&&
\qquad \quad   +c\una\snr{h}^{\frac{2\alpham}{2+\alpham}}M^{\ssf q+\frac{\alpha(2-p)}{2-\alpha}}|h|^{\frac{\beta_0n}{p}\frac{\alpha(2-p)}{2-\alpha}}\sum_{k\leq  \mathfrak n}\|f_{\rr}\|_{L^{\frac{\ppp}{\ppp-\alpha}}(8B_{k})}^{\frac{\ppp\sigma(\ppp)}{\ppp-\alpha}}.\label{ricopri}
\end{eqnarray}
To estimate the first two sums appearing in \rif{ricopri} we can use \rif{finite} in an obvious way
\eqn{proas0}
$$
\sum_{k\le \mathfrak{n}}\int_{8B_{k}}(E(Du_{\rr})-\kk)_{+}^{2}  \dx \leq \tilde c_{\rm t}(n)
\int_{\BB}(E(Du_{\rr})-\kk)_{+}^{2}  \dx \ \ \mbox{and} \ \  \sum_{k\leq  \mathfrak n}\snr{B_{k}}  \leq \tilde c_{\rm t}(n)|\BB|\,.
$$
For the third we proceed similarly, using that $\theta(\ppp)\geq 1$
\eqn{proas}
$$\sum_{k\leq  \mathfrak n}\|f_{\rr}\|_{L^{\frac{\ppp}{\ppp-\alpha}}(8B_{k})}^{\frac{\ppp\theta(\ppp)}{\ppp-\alpha}} \leq 
\|f_{\rr}\|_{L^{\frac{\ppp}{\ppp-\alpha}}(\BB)}^{\frac{\ppp[\theta(\ppp)-1]}{\ppp-\alpha}}\sum_{k\leq  \mathfrak n}\int_{8B_{k}}\snr{f_{\rr}}^{\frac{\ppp}{\ppp-\alpha}}  \dx \stackleq{finite} \tilde c_{\rm t}(n) \|f_{\rr}\|_{L^{\frac{\ppp}{\ppp-\alpha}}(\BB)}^{\frac{\ppp\theta(\ppp)}{\ppp-\alpha}}\,.
$$
As for the last sum in \rif{ricopri}, this appears only when $p<2$ and here we distinguish two cases. The first is when $\sigma (\ppp)\geq 1$. 
In this case we again argue as in \rif{proas}, using \rif{finite}, and we get (recall $|h|\leq 1$ by \rif{hhh})
\eqn{proas2}
$$
|h|^{\frac{\beta_0n}{p}\frac{\alpha(2-p)}{2-\alpha}}\sum_{k\leq  \mathfrak n}
\|f_{\rr}\|_{L^{\frac{\ppp}{\ppp-\alpha}}(8B_{k})}^{\frac{\ppp\sigma(\ppp)}{\ppp-\alpha}}\leq  \tilde c_{\rm t}(n) \|f_{\rr}\|_{L^{\frac{\ppp}{\ppp-\alpha}}(\BB)}^{\frac{\ppp\sigma(\ppp)}{\ppp-\alpha}}\,. 
$$
The second case is when, instead, $\sigma(\ppp)<1$ and we are led to use \rif{discrete}, with $t\equiv \sigma(\ppp)$ and $a_k \equiv 
\|f_{\rr}\|_{L^{\ppp/(\ppp-\alpha)}(8B_{k})}^{\ppp/(\ppp-\alpha)}$, thereby getting
\begin{flalign}
\notag |h|^{\frac{\beta_0n}{p}\frac{\alpha(2-p)}{2-\alpha}}\sum_{k\leq  \mathfrak n}\|f_{\rr}\|_{L^{\frac{\ppp}{\ppp-\alpha}}(8B_{k})}^{\frac{\ppp\sigma(\ppp)}{\ppp-\alpha}} & \leq c |h|^{\frac{\beta_0n}{p}\frac{\alpha(2-p)}{2-\alpha}-\beta_0n(1-\sigma(\ppp))}\|f_{\rr}\|_{L^{\frac{\ppp}{\ppp-\alpha}}(\BB)}^{\frac{\ppp\sigma(\ppp)}{\ppp-\alpha}}\\
& \leq c\|f_{\rr}\|_{L^{\frac{\ppp}{\ppp-\alpha}}(\BB)}^{\frac{\ppp\sigma(\ppp)}{\ppp-\alpha}}\label{proas3}\,.
\end{flalign}
We have used that $|h|\leq 1$ and the identity
$$
\frac{\beta_0n}{p}\frac{\alpha(2-p)}{2-\alpha}-\beta_0n(1-\sigma(\ppp)) = \frac{2\beta_0n\alpha}{p\ppp(2-\alpha)}(\ppp-p)\stackrel{\ppp\geq p}{\geq} 0\,.
$$
Connecting the content \rif{proas0}-\rif{proas3} to \rif{ricopri}, we finally arrive at
\begin{align*}
 &  \int_{\BBB}\snr{\tau_{h}(E(Du_{\rr})-\kk)_{+}}^{2}  \dx  \\  & \quad \leq c\snr{h}^{\frac{2\alpham}{2+\alpham}}M^{\ssf (q-p)}\int_{\BB}(E(Du_{\rr})-\kk)_{+}^{2}  \dx+c_{\gggg}\snr{h}^{\frac{2\alpham}{2+\alpham}}M^{\ssf q+ \alpha+\gamma q/p}\varrho^{\alpha}\\
 & \qquad \quad    +c\snr{h}^{\frac{2\alpham}{2+\alpham}}M^{\ssf q}\|f_{\rr}\|_{L^{\frac{\ppp}{\ppp-\alpha}}(\BB)}^{\frac{\ppp\theta(\ppp)}{\ppp-\alpha}}+c\una\snr{h}^{\frac{2\alpham}{2+\alpham}}M^{\ssf q+\frac{\alpha(2-p)}{2-\alpha}}\|f_{\rr}\|_{L^{\frac{\ppp}{\ppp-\alpha}}(\BB)}^{\frac{\ppp\sigma(\ppp)}{\ppp-\alpha}}\nonumber
\end{align*}
for $c\equiv c(n,p,q,\nu, L)$. From this last inequality and Lemma \ref{l4}, we deduce that $(E(Du_{\rr})-\kk)_{+}\in W^{\beta,2}(\BBB)$ for all $\beta\in (0,\alpham/(2+\alpham))$ and that the inequality 
\begin{flalign*}
\|(E(Du_{\varrho})-\kk)_{+}\|_{W^{\beta,2}(\BBB)} &\le cM^{\frac{\ssf(q-p)}{2}}\nr{(E(Du_{\varrho})-\kk)_{+}}_{L^{2}(\BB)}+c_{\gggg}M^{\frac{\ssf q+ \alpha+\gamma q/p}{2}}\varrho^{\frac{\alpha}{2}}\\
& \qquad +c M^{\frac {\ssf q}2}\|f_{\rr}\|_{L^{\frac{\ppp}{\ppp-\alpha}}(\BB)}^{\frac{\ppp\theta(\ppp)}{2(\ppp-\alpha)}}+c\una M^{\frac{\ssf q}{2}+\frac{\alpha(2-p)}{2(2-\alpha)}}\|f_{\rr}\|_{L^{\frac{\ppp}{\ppp-\alpha}}(8B_{h})}^{\frac{\ppp\sigma(\ppp)}{2(\ppp-\alpha)}}
 \end{flalign*}
holds with $c\equiv c(\data, \beta)$. From this the full form of \rif{40-res} follows via \rif{immersione}. This brings to a conclusion the proof of Proposition \ref{caccin1}. 
\subsubsection{Proof of \eqref{40prima}}\label{hybprima}
The only modification occurs in the estimate of the term (III) in \rif{stimaIII}, that we can now replace as follows:
\begin{eqnarray}
\notag \mbox{(III)} &\le & c\int_{8B_{h}}f_{\rr}(x)\snr{u_{\rr}-v}^{\alpha} \dx \leq \|u_{\rr}-v\|_{L^{\infty}(8B_{h})}^{\alpha}\|f_{\rr}\|_{L^{1}(8B_{h})}\\ 
&\stackleq{conosc2} & c |h|^{\beta_0\alpha } M^{\alpha} \|f_{\rr}\|_{L^{1}(8B_{h})}\,. \label{civuole}
\end{eqnarray}
Using this estimate in place of \rif{stimaIII}, we can replace \rif{37} by
\eqn{37bbb}
$$
 \int_{8B_{h}}\snr{V(Du_{\rr})-V(Dv)}^{2}  \dx  \le c_{\gggg}\snr{h}^{\beta_{0}\alpha}M^{\alpha+\gamma q/p}\varrho^{\alpha}\snr{B_{h}}
+c |h|^{\beta_0\alpha } M^{\alpha} \|f_{\rr}\|_{L^{1}(8B_{h})}
$$
with $c,c_{\gggg}\equiv c,c_{\gggg}(n,p,q,\nu, L)$. Proceeding as for the proof of \rif{40}, choosing $\beta_0$ as in \rif{beta00} with $\alpham=\alpha$, we arrive at \rif{40prima} and Proposition \ref{caccin1} is proved.  
\subsection{Functionals of the type in \trif{modellou}}\label{variante3} 
We consider functionals $\mathcal{S}(\cdot, B_{r})$ as in \rif{modellou}, 
assuming 
\eqn{assu1}
$$
 F(\cdot) \   \mbox{satisfies \rif{32.2}, $\ccc(\cdot)$ is as in \trif{modellou}$_2$, and $\hhh(\cdot)$ satisfies \rif{asshh22}\,.}
$$
\begin{proposition}\label{caccin3}
Let $u\in W^{1,q}(B_{r})$ be a minimizer of the functional $\mathcal S(\cdot, B_{r})$ in \trif{modellou}, under assumptions \eqref{marcbound} and \eqref{assu1}. Let $B_{\rr}(x_{0}) \Subset B_{r}$ and let $M\geq 1$ be a constant such that $ \nr{Du}_{L^{\infty}(B_{\rr}(x_{0}))}\leq M$. Then, for every number $\kk \geq 0$
\begin{eqnarray}\label{xx.10xu}
&&\notag  \left(\mint_{B_{\rr/2}(x_{0})}(E_{\mu}(Du)-\kk)_{+}^{2\chi}  \dx\right)^{1/\chi} \le cM^{\ssf(q-p)} \mint_{B_{\rr}(x_{0})}(E_{\mu}(Du)-\kk)_{+}^{2} \dx\\
&&\qquad\qquad \qquad  \qquad+cM^{\ssf q+p+\alpha-\mathrm{b}}\rr^{\alpha}\mint_{B_{\rr}(x_{0})}(\snr{Du}+1)^{q-p+\mathrm{b}} \dx+cM^{\ssf q+\alpha}\rr^{\alpha} \mint_{B_{\rr}(x_{0})}f \dx
\end{eqnarray}
holds for every $\mathrm{b}  \in [0, p]$, and $(\beta, \chi)$ as in \trif{bechi} with $\alpham:=\alpha$, where $c\equiv c(\data, \beta)$. 
\end{proposition}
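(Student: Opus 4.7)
\medskip
\noindent\textbf{Proof proposal.} The plan is to mimic the proof of Proposition \ref{caccin1}, handling the new coupling $\ccc(x, w)$ via a suitable freezing argument. After rescaling $u \mapsto u_{\varrho}$ to the unit ball as in Section \ref{riscalasezione}, I would fix $h \in \er^n$ and $\beta_0 \in (0,1)$ satisfying \rif{hhh}, cover $\BBB$ by the standard lattice of balls $\{B_{k}\}_{k\leq \mathfrak n}$ of radius $|h|^{\beta_0}$ as in Section \ref{hyb}, and on each enlarged ball $8B_h \equiv 8B_k$ define the comparison function $v \in u_{\varrho} + W^{1,q}_0(8B_h)$ as the minimizer of the frozen autonomous functional
\[
w \mapsto \int_{8B_h} F_0(Dw)\,\dx, \qquad F_0(z) := \ccc_\varrho(x_{\rm c}, (u_\varrho)_{8B_h}) F(z),
\]
where $\ccc_\varrho(x, y) := \ccc(x_0+\varrho x, \varrho y)$. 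Since $\nu \leq \ccc_\varrho \leq L$ and $F$ satisfies \rif{32.2}, the integrand $F_0$ still verifies \rif{32.2} (with re-labelled $\tilde\nu, \tilde L$), so Lemmas \ref{cacc-class1}, \ref{oscilemma} and \ref{marcth} all apply to $v$. In particular one gets $\|Dv\|_{L^\infty(4B_h)} \leq \tilde c M^{\mathfrak s}$ and $\osc(v, 8B_h) \leq 16 M |h|^{\beta_0}$, exactly as in \rif{conosc2} and \rif{fuckbsc}.

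Next, I would estimate the energy discrepancy $\int_{8B_h} |V_\mu(Du_\varrho) - V_\mu(Dv)|^2\,\dx$ by subtracting the Euler-Lagrange equation of $v$ and exploiting the minimality of $u_\varrho$ for $\mathcal{S}_\varrho$, which produces the terms
\begin{align*}
\mathrm{(I)} &= \int_{8B_h}[\ccc_\varrho(x_{\rm c},(u_\varrho)_{8B_h}) - \ccc_\varrho(x, u_\varrho)] F(Du_\varrho)\,\dx,\\
\mathrm{(II)} &= \int_{8B_h}[\ccc_\varrho(x, v) - \ccc_\varrho(x_{\rm c},(u_\varrho)_{8B_h})] F(Dv)\,\dx,\\
\mathrm{(III)} &= \int_{8B_h}[\hhh_\varrho(x,v) - \hhh_\varrho(x,u_\varrho)]\,\dx,
\end{align*}
in complete analogy with \rif{mmaa}. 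Here the key new ingredient is the splitting
\[
F(Du_\varrho) \;\lesssim\; [H_\mu(Du_\varrho)]^{q/2}+[H_\mu(Du_\varrho)]^{p/2} \;\lesssim\; M^{p-\mathrm b}(|Du_\varrho|+1)^{q-p+\mathrm b},
\]
valid for every $\mathrm b \in [0,p]$ since $\|Du_\varrho\|_{L^\infty(\BB)} \leq M$. Using this together with the H\"older bound $|\ccc_\varrho(x,u_\varrho) - \ccc_\varrho(x_{\rm c}, (u_\varrho)_{8B_h})| \leq c \varrho^\alpha |h|^{\alpha\beta_0}(1 + M^\alpha)$ (which incorporates the oscillation of $u_\varrho$), term $\mathrm{(I)}$ is bounded by $c\varrho^\alpha |h|^{\alpha\beta_0} M^{\alpha+p-\mathrm b}\int_{8B_h}(|Du_\varrho|+1)^{q-p+\mathrm b}\,\dx$. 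For $\mathrm{(II)}$ I would use minimality of $v$ to replace $\int F(Dv)$ by $\int F(Du_\varrho)$ (up to a factor $L/\nu$) and then apply the same splitting. For $\mathrm{(III)}$ I would follow \rif{civuole} verbatim, so that after reabsorption we obtain the analogue of \rif{37bbb}:
\begin{align*}
\int_{8B_h}|V_\mu(Du_\varrho) - V_\mu(Dv)|^2\,\dx
&\leq c \varrho^\alpha |h|^{\alpha\beta_0} M^{\alpha+p-\mathrm b}\int_{8B_h}(|Du_\varrho|+1)^{q-p+\mathrm b}\,\dx\\
&\qquad + c \varrho^\alpha |h|^{\alpha\beta_0} M^\alpha \|f_\varrho\|_{L^1(8B_h)}.
\end{align*}

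Passing to $E_\mu$ via \rif{1} introduces the extra factor $M^{\mathfrak s p}$ and produces, as in \rif{1bis}, a fractional-difference inequality on $B_h$ for $(E_\mu(Du_\varrho) - \kappa)_+$. Choosing $\beta_0 = 2/(2+\alpha)$ exactly as in \rif{beta00} (so $\alpham=\alpha$), summing over the covering and applying \rif{finite} to the three sums on the right-hand side (this works cleanly here because each term is already an additive integral over $8B_k$, so no discrete H\"older inequality is needed), I would arrive at a global Nikolski estimate on $\BBB$. Applying Lemma \ref{l4} with $\beta$ as in \rif{bechi} for $\alpham=\alpha$ and then the embedding \rif{immersione} yields \rif{xx.10xu} after unscaling. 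The only real subtlety is the bookkeeping of the exponents of $M$ and $|h|$: the passage through $V_\mu \to E_\mu$ contributes an additional $M^{\mathfrak s q}$ (as in \rif{1bispre}), which combines with the factor $M^{p-\mathrm b+\alpha}$ from the $\ccc$-oscillation to yield the final exponent $\mathfrak s q+p+\alpha-\mathrm b$ in \rif{xx.10xu}. The $\mathrm b$-parameter is never fixed during the proof and appears only through the splitting of $F(Du_\varrho)$, giving the stated flexibility.
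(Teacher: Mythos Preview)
Your proposal is correct and follows essentially the same route as the paper's proof: the same rescaling, the same frozen comparison problem with $F_0(z)=\ccc_\varrho(x_{\rm c},(u_\varrho)_{8B_h})F(z)$, the same decomposition into terms $\mathrm{(I)},\mathrm{(II)},\mathrm{(III)}$, the same splitting $(|Du_\varrho|+1)^q\leq M^{p-\mathrm b}(|Du_\varrho|+1)^{q-p+\mathrm b}$, the same treatment of $\mathrm{(III)}$ via \rif{civuole}, the same choice $\beta_0=2/(2+\alpha)$, and the same covering/Nikolski/embedding conclusion. The only cosmetic difference is that for $\mathrm{(II)}$ the paper writes out explicitly the chain $\int_{8B_h}F(Dv)\leq \nu^{-1}\int_{8B_h}\ccc_\varrho(x_{\rm c},(u_\varrho)_{8B_h})F(Dv)\leq \nu^{-1}\int_{8B_h}\ccc_\varrho(x_{\rm c},(u_\varrho)_{8B_h})F(Du_\varrho)$ and then invokes \rif{conosc2} to control the $\ccc_\varrho$-oscillation in the $v$-variable, which is exactly what you sketched.
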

\begin{proof} We keep the notation introduced in Proposition \ref{caccin1}. First we rescale $u$ as in \rif{riscalata}, thereby passing to $u_{\varrho} \in W^{1,q}(\BB)$. This is a minimizer of the functional
\eqn{definito2}
$$
w\mapsto
 \mathcal{S}_{\varrho}(w, \BB) :=
 \int_{\BB}[\ccc_{\rr}(x,w)F(Dw)+\hhh_{\rr}(x,w)] \dx,
$$
which is defined on $W^{1,q}(\BB)$, where $\ccc_{\rr}(x,y):=\ccc(x_0+\rr x,\varrho y)$ and $\hhh_{\rr}(x,y):=\hhh(x_0+\rr x,\rr y)$. $F_{\rr}(\cdot)$ satisfies \rif{assu1} and, as a consequence of \rif{assu1} (and therefore of \trif{modellou}$_2$)
\eqn{riririri}
$$
\snr{\ccc_{\rr}(x_{1},y_{1})-\ccc_{\varrho}(x_{2},y_{2})}\le L\varrho^{\alpha} \left(\snr{x_{1}-x_{2}}^{\alpha }+\snr{y_{1}-y_{2}}^{\alpha}\right), \quad \nu \leq \ccc_{\rr}(\cdot) \leq L
$$
holds for all $x_1, x_2 \in \BB$, $y_1, y_2\in \er$, and $\hhh_{\rr}(\cdot)$ satisfies \rif{assgr}. It is sufficient to prove that
\begin{flalign}
&\notag \nr{(E(Du_{\rr})-\kk)_{+}}_{L^{2\chi}(\BBB)}^2  \le cM^{\ssf(q-p)}\nr{(E(Du_{\rr})-\kk)_{+}}_{L^{2}(\BB)}^2\\ &
\qquad \qquad \qquad \qquad +cM^{\ssf q+p+\alpha-\mathrm{b}}
\varrho^{\alpha}\int_{\BB}(\snr{Du_{\rr}}+1)^{q-p+\mathrm{b}}  \dx+cM^{\ssf q+\alpha}\|f_{\rr}\|_{L^1(\BB)}\label{dimodimo}
\end{flalign}
holds for $c \equiv c(\data, \beta)$, so that \rif{xx.10xu} follows scaling back to $u$. 
We next consider $v\in u_{\rr}+W^{1,q}_0(8B_{h})$ as the solution to
\rif{pd}, 
where this time it is $F_{0}(z):=\ccc_{\varrho}(x_{\rm c},(u_{\rr})_{8B_{h}})F(z)$, and $x_{\rm c}$ is the center of $B_{h}$. Note that, thanks to \rif{assu1}, inequalities \rif{conosc}-\rif{fuckbsc} apply to $v$. 
Recalling \rif{definito2}, we have, similarly to \rif{mmaa}
\begin{eqnarray}
\notag&&\hspace{-3mm} \frac 1{\tilde c}\int_{8B_{h}}\snr{V(Du_{\rr})-V(Dv)}^{2}  \dx \leq  \int_{8B_{h}}\left[ F_{0}(Du_{\rr})- F_{0}(Dv)\right]  \dx\\
\notag&&  \ \,=  \mathcal{S}_{\varrho}(u_{\rr}, \BB) - \mathcal{S}_{\varrho}(v, \BB) 
+\int_{8B_{h}}[\ccc_{\rr}(x_{\rm c},(u_{\rr})_{8B_{h}})F(Du_{\rr})-\ccc_{\rr}(x,u_{\rr})F(Du_{\rr})]  \dx\\ 
\notag&& \qquad   +\int_{8B_{h}}[\ccc_{\rr}(x,v)F(Dv)-\ccc_{\rr}(x_{\rm c},(u_{\rr})_{8B_{h}})F(Dv)]\dx +\int_{8B_{h}}[\hhh_{\rr}(x,v)-\hhh_{\rr}(x,u_{\rr})]\dx\\
&& \ \ =: \mathcal{S}_{\varrho}(u_{\rr}, 8B_{h}) - \mathcal{S}_{\varrho}(v, 8B_{h}) +\mbox{(I)}+\mbox{(II)}+\mbox{(III)}\leq \mbox{(I)}+\mbox{(II)}+\mbox{(III)}\,.\label{nonved}
\end{eqnarray}
In the last line we have used that $u_{\rr}$ minimizes the functional in \rif{definito2}. 
Using \rif{riririri} yields
\begin{flalign}
\notag \mbox{(I)}& \leq c\rr^{\alpha}\int_{8B_{h}}\left(\snr{x-x_{\rm c}}^{\alpha}+\snr{u_{\rr}-(u_{\rr})_{8B_{h}}}^{\alpha}\right)(\snr{Du_{\rr}}+1)^{q}  \dx\notag\\
\notag & \leq c\snr{h}^{\beta_{0}\alpha}(M^\alpha+1) \rr^{\alpha}\int_{8B_{h}}(|Du_{\varrho}|+1)^{q}\dx   \\
& \leq
c\snr{h}^{\beta_{0}\alpha}M^{p+\alpha-\mathrm{b}} \rr^{\alpha}\int_{8B_{h}}(|Du_{\varrho}|+1)^{q-p+\mathrm{b}}\,  dx  \,.\label{nonved2}
\end{flalign}
For $\mbox{(II)}$, note that minimality of $v$ and $\nu \leq \ccc_{\rr}(\cdot) \leq L$ give
\begin{flalign}
\notag \int_{8B_{h}}F(Dv)  \dx &\leq \frac{1}{\nu}\int_{8B_{h}}\ccc_{\rr}(x_{\rm c},(u_{\rr})_{8B_{h}})F(Dv)  \dx \\
&\leq \frac{1}{\nu}\int_{8B_{h}}\ccc_{\rr}(x_{\rm c},(u_{\rr})_{8B_{h}})F(Du_{\rr})  \dx \leq c(\nu, L)  \int_{8B_{h}}(\snr{Du_{\rr}}+1)^q \dx\,.\label{qui}
\end{flalign}
Using the content of the last display, \rif{riririri} and then \rif{conosc2}, we get
\begin{flalign}
\notag \mbox{(II)} & \leq c\rr^{\alpha}\int_{8B_{h}}\left(\snr{x-x_{\rm c}}^{\alpha}+\snr{v-(v)_{8B_{h}}}^{\alpha}+\snr{(v)_{8B_{h}}-(u_{\rr})_{8B_{h}}}^{\alpha}\right)F(Dv)  \dx\\
\notag & \le c \snr{h}^{\beta_{0}\alpha}(M^{\alpha}+1)
\rr^{\alpha}\int_{8B_{h}}F(Dv)   \dx \le c  \snr{h}^{\beta_{0}\alpha} M^{\alpha}
\rr^{\alpha}\int_{8B_{h}}(\snr{Du_{\rr}}+1)^q   \dx\\
& \leq c\snr{h}^{\beta_{0}\alpha}M^{p+\alpha -{\rm b}}\rr^{\alpha}\int_{8B_{h}}(\snr{Du_{\rr}}+1)^{q-p+{\rm b}}  \dx \,.\label{nonved222}
\end{flalign}
The term $\mbox{(III)}$ can be estimated exactly as in \rif{civuole}. Using this together with \rif{nonved}-\rif{nonved222} yields 
\begin{flalign}\label{xx.6}
\notag\int_{8B_{h}}\snr{V(Du_{\rr})-V(Dv)}^{2}\dx &\le c\snr{h}^{\beta_{0}\alpha}M^{p+\alpha-\mathrm{b}}\rr^{\alpha}
\int_{8B_{h}}(\snr{Du_{\rr}}+1)^{q-p+\mathrm{b}} \dx\\
&\quad + c |h|^{\beta_0\alpha } M^{\alpha} \|f_{\rr}\|_{L^{1}(8B_{h})}
\,.
\end{flalign}
Using \rif{1}-\rif{1bispre}, employing \rif{xx.6} in \rif{1bispre}, and choosing $\beta_0:=2/(2+\alpha)$, we arrive at
\begin{flalign*}
&\int_{B_h}\snr{\tau_{h}(E(Du_{\rr})-\kk)_{+}}^{2} \dx
\leq  c\snr{h}^{\frac{2\alpha}{2+\alpha}}M^{\ssf(q-p)}\int_{8B_{h}}(E(Du_{\rr})-\kk)_{+}^{2} \dx\nonumber\\
&\qquad\qquad  \qquad +c\snr{h}^{\frac{2\alpha}{2+\alpha}}M^{\ssf q+p+\alpha-\mathrm{b}}\rr^{\alpha}\int_{8B_{h}}(\snr{Du_{\rr}}+1)^{q-p+\mathrm{b}} \dx+c\snr{h}^{\frac{2\alpha}{2+\alpha}}M^{\ssf q+\alpha} \|f_{\rr}\|_{L^{1}(8B_{h})}
\end{flalign*}
for $c\equiv c(\data)$.
This estimate can be used as a replacement of \rif{38} and proceeding as in the proofs of Proposition \ref{caccin1} we conclude with 
\begin{flalign*}
 & \int_{\BBB}\snr{\tau_{h}(E(Du_{\rr})-\kk)_{+}}^{2}  \dx \leq c\snr{h}^{\frac{2\alpha}{2+\alpha}}M^{\ssf(q-p)}\int_{\BB}(E(Du_{\rr})-\kk)_{+}^{2}  \dx\\
 & \qquad \qquad\qquad+c\snr{h}^{\frac{2\alpha}{2+\alpha}}M^{\ssf q+p+\alpha-\mathrm{b}}\rr^{\alpha}\int_{\BB}(\snr{Du_{\rr}}+1)^{q-p+\mathrm{b}} \dx+c\snr{h}^{\frac{2\alpha}{2+\alpha}}M^{\ssf q+\alpha} \|f_{\rr}\|_{L^{1}(\BB)}\nonumber\,.
\end{flalign*}
As for Proposition \ref{caccin1}, this last inequality, Lemma \ref{l4} and \rif{immersione}, we deduce that $(E(Du_{\rr})-\kk)_{+}\in W^{\beta,2}(\BBB)$ for all $\beta < \alpha / (2+\alpha)$, with \rif{dimodimo} that follows accordingly.
\end{proof}

\subsection{Functionals of the type in \rif{FMx}}\label{variante2f} The assumptions on the integrand $F\colon B_{r} \times \er^n\to [0, \infty)$ we consider here are
\eqn{xx.3}
$$
\begin{cases}
\, z \mapsto F(x, z) \ \mbox{satisfies \rif{32.2} uniformly with respect to $x \in B_{r}$}\\
\, \snr{\partial_{z}F(x_{1},z)-\partial_{z}F(x_{2},z)}\le \tilde L\snr{x_{1}-x_{2}}^{\alpha}([H_{\mu}(z)]^{(q-1)/2}+[H_{\mu}(z)]^{(p-1)/2})
\end{cases}
$$
whenever $x_1, x_2 \in B_{r}$, $z\in \er^n$, and where $0< \mu \leq 2$. In the following, we denote $A(\cdot):= \partial_z F(\cdot)$, so that the Euler-Lagrange equation of $\mathcal{F}_{\texttt{x}}$ reads as $\diver\,  A(x, Du)=0$ and any $W^{1,q}$-regular minimizer is an energy solution.  Moreover, this time $(\beta, \chi)$ might also be such that 
\eqn{bechi2}
$$
\beta< \frac{\alpha}{1+\alpha} \qquad \mbox{and} \qquad \chi:= \frac{n}{n-2\beta}\,.
$$
\begin{proposition}\label{caccin4}
Let $u\in W^{1,q}(B_{r})$ be a minimizer of the functional $\mathcal F_{\textnormal{\texttt{x}}}(\cdot, B_{r})$ in \trif{FMx}, under assumptions \eqref{marcbound} and \eqref{xx.3}. Let $B_{\rr}(x_{0}) \Subset B_{r}$ and let $M\geq 1$ be a constant such that $ \nr{Du}_{L^{\infty}(B_{\rr}(x_{0}))}\leq M$. Let $\kk \geq 0$ be a number. 
\begin{itemize}
\item If  $p\geq 2$, then
\begin{flalign}\label{xx.10}
\notag \left(\mint_{B_{\rr/2}(x_{0})}(E_{\mu}(Du)-\kk)_{+}^{2\chi}  \dx\right)^{1/\chi} & \le cM^{\ssf (q-p)} \mint_{B_{\rr}(x_{0})}(E_{\mu}(Du)-\kk)_{+}^{2} \dx \\ & \qquad  +cM^{\ssf q+p-\mathrm{b}}\rr^{2\alpha}\mint_{B_{\rr}(x_{0})}(\snr{Du}+1)^{2q-2p+\mathrm{b}} \dx
\end{flalign}
holds for every $\mathrm{b} \in [0, p]$ and $(\beta, \chi)$ as in \trif{bechi2}, where $c \equiv c(\data, \beta)$. 
\item If $1<p<2$, then 
\begin{flalign}\label{xx.10-sotto2}
\notag  \left(\mint_{B_{\rr/2}(x_{0})}(E_{\mu}(Du)-\kk)_{+}^{2\chi}  \dx\right)^{1/\chi}  &\le cM^{\ssf(q-p)} \mint_{B_{\rr}(x_{0})}(E_{\mu}(Du)-\kk)_{+}^{2} \dx \\
&\quad  +cM^{(\ssf+1)q-{\rm b}/p}\rr^{\alpha}\left(\mint_{B_{\rr}(x_{0})} (\snr{Du}+1)^{q-p+{\rm b}}\dx\right)^{1/p}
\end{flalign}
holds for every $\mathrm{b} \in [0, p]$, and $(\beta, \chi)$ as in \trif{bechi} with $\alpham:=\alpha$, where $c \equiv c(\data, \beta)$. 
\end{itemize}
\end{proposition}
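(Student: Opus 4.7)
\smallskip

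\noindent\textbf{Proof plan for Proposition \ref{caccin4}.} The overall architecture mirrors Propositions \ref{caccin1} and \ref{caccin3}, with the crucial modifications dictated by the fact that the coefficients now sit inside $\partial_z F$ rather than as a multiplicative factor. The first step is the scaling reduction from \eqref{riscalata}, so we may work with $u_\rr$ on $\BB$, which minimizes $w\mapsto \int_{\BB} F_\rr(x,Dw)\dx$, where $F_\rr(x,z):=F(x_0+\rr x,z)$ inherits \eqref{xx.3} with the Hölder seminorm in \eqref{xx.3}$_2$ multiplied by $\rr^{\alpha}$. For $h$ and $B_h \equiv B_{|h|^{\beta_0}}(x_{\rm c})$ as in Section \ref{hyb0}, the competitor $v\in u_\rr+W^{1,q}_0(8B_h)$ will now solve the frozen problem
\[
v\mapsto \min \left\{ \int_{8B_h} F_\rr(x_{\rm c},Dw)\dx \, : \, w \in u_\rr+W^{1,q}_0(8B_h) \right\},
\]
whose integrand satisfies \eqref{bebe}. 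Hence Lemmas \ref{cacc-class1}, \ref{oscilemma} and \ref{marcth} apply verbatim, yielding \eqref{conosc2}, \eqref{fuckbsc} and the Caccioppoli--De Giorgi inequality \eqref{pri-cacc} for $v$ with $\mathfrak{M}=\tilde c M^{\ssf}$.

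\smallskip

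The heart of the argument is the comparison estimate. By \eqref{hamine} applied to $F_0(z):=F_\rr(x_{\rm c},z)$, the Euler--Lagrange equation for $v$, and the $W^{1,q}$-minimality of $u_\rr$ with respect to $\int F_\rr(x,\cdot)\dx$,
\[
\frac{1}{\tilde c}\int_{8B_h}\snr{V(Du_\rr)-V(Dv)}^{2}\dx \le \int_{8B_h}\bigl\{[F_\rr(x_{\rm c},Du_\rr)-F_\rr(x,Du_\rr)]-[F_\rr(x_{\rm c},Dv)-F_\rr(x,Dv)]\bigr\}\dx,
\]
where the differences of constants $F_\rr(x,0)-F_\rr(x_{\rm c},0)$ cancel pairwise. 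Writing the integrand via the fundamental theorem of calculus as $\int_0^1[\partial_z F_\rr(x_{\rm c},\zeta_t)-\partial_z F_\rr(x,\zeta_t)]\cdot(Du_\rr-Dv)\dt$ with $\zeta_t:=Dv+t(Du_\rr-Dv)$, and invoking \eqref{xx.3}$_2$ together with \eqref{elemint}, gives
\[
\int_{8B_h}\snr{V(Du_\rr)-V(Dv)}^{2}\dx \le c\rr^{\alpha}\snr{h}^{\beta_0\alpha}\int_{8B_h}(\snr{Du_\rr}+\snr{Dv}+1)^{q-1}\snr{Du_\rr-Dv}\dx.
\]

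\smallskip

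From this point the two cases split. When $p\geq 2$, I would exploit $(|z_1|+|z_2|+\mu)^{(p-2)/2}\snr{z_1-z_2}\lesssim \snr{V(z_1)-V(z_2)}$, splitting $(|Du_\rr|+|Dv|+1)^{q-1}=(|Du_\rr|+|Dv|+1)^{q-p/2}(|Du_\rr|+|Dv|+1)^{p/2-1}$, then Cauchy--Schwarz and Young to reabsorb $\int |V(Du_\rr)-V(Dv)|^2$. Using $\snr{Dv}\leq cM^{\ssf}$ on $4B_h$ and distributing a further power $\mathrm{b}\in[0,p]$ by $(|Du_\rr|+M^{\ssf})^{2q-p}\leq cM^{\ssf(p-\mathrm{b})}(|Du_\rr|+M^{\ssf})^{2q-p-(p-\mathrm{b})}$, one gets
\[
\int_{8B_h}\snr{V(Du_\rr)-V(Dv)}^{2}\dx \le c\rr^{2\alpha}\snr{h}^{2\beta_0\alpha}M^{\ssf q+p-\mathrm{b}}\int_{8B_h}(\snr{Du_\rr}+1)^{2q-2p+\mathrm{b}}\dx,
\]
and balancing $2(1-\beta_0)=2\beta_0\alpha$ gives $\beta_0=1/(1+\alpha)$, so the fractional exponent from \eqref{bechi2}, $\beta<\alpha/(1+\alpha)$, comes out. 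For $1<p<2$, inequality \eqref{sopradue} must be used to trade $\snr{Du_\rr-Dv}$ for $|V(Du_\rr)-V(Dv)|^{2/p}$ and $|V(Du_\rr)-V(Dv)|$, after having applied Hölder's inequality in the comparison term to produce an $L^p$-norm of $\snr{Du_\rr-Dv}$ (and using $\snr{Dv}\lesssim M^{\ssf}$ to pass an $L^\infty$-power outside). Young's inequality then yields the bound
\[
\int_{8B_h}\snr{V(Du_\rr)-V(Dv)}^{2}\dx \le c\rr^{\alpha}\snr{h}^{\beta_0\alpha}M^{\tilde\sigma}\Bigl(\int_{8B_h}(\snr{Du_\rr}+1)^{q-p+\mathrm{b}}\dx\Bigr)^{1/p}\cdot |B_h|^{1-1/p}
\]
with $\tilde\sigma$ depending on $q,p,\mathrm{b},\ssf$; balancing $2(1-\beta_0)=\beta_0\alpha$ returns $\beta_0=2/(2+\alpha)$, so $\beta<\alpha/(2+\alpha)$.

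\smallskip

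The final assembly is routine. Plugging the comparison estimate into \eqref{1bispre} (which is case-free, relying only on \eqref{1} and \eqref{cacc}) delivers a bound on $\int_{B_h}\snr{\tau_h(E(Du_\rr)-\kappa)_+}^2\dx$. Summing over the covering $\{B_k\}$ from Section \ref{hyb} exactly as in \eqref{ricopri}, using the finite intersection property \eqref{finite}, Lemma \ref{l4}, and the Sobolev embedding \eqref{immersione} yield \eqref{xx.10} and \eqref{xx.10-sotto2} after rescaling back to $u$. The main technical obstacle is the careful bookkeeping of $M$-exponents, since the free parameter $\mathrm{b}$ must be inserted through a Young-type splitting that simultaneously matches the power on $(\snr{Du}+1)$ on the right-hand side and leaves the correct $M^{\ssf q+p-\mathrm{b}}$ (resp.\ $M^{(\ssf+1)q-\mathrm{b}/p}$) prefactor; this is where the $p\geq 2$ vs.\ $p<2$ dichotomy really bites, because \eqref{sopradue} is genuinely weaker in the subquadratic range and forces the $1/p$-root and the extra $M$-exponent appearing in \eqref{xx.10-sotto2}.
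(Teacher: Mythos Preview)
Your comparison step has a genuine gap. By writing the difference via the fundamental theorem of calculus along the segment $\zeta_t=Dv+t(Du_\rr-Dv)$, you pick up the weight $(\snr{Du_\rr}+\snr{Dv}+1)^{q-1}$ on all of $8B_h$. But the Lipschitz bound $\snr{Dv}\leq \tilde cM^{\ssf}$ from \eqref{fuckbsc} is only available on $4B_h$, not on $8B_h$; on the full ball the only uniform (in $\nu_0$) information is $\int_{8B_h}\snr{Dv}^p\dx\leq cM^q\snr{B_h}$ from minimality, which cannot control $\int_{8B_h}(\snr{Dv}+1)^{2q-p}\dx$ (or its $p<2$ analogue) when $q>p$. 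Using the $q$-coercivity in \eqref{32.2} would bring in a constant depending on $\nu_0$, contradicting the statement. The same obstruction hits your $p<2$ route, where you also invoke $\snr{Dv}\lesssim M^{\ssf}$ on $8B_h$.

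The paper avoids this by abandoning the $F_0(Du_\rr)-F_0(Dv)$ identity and working instead with the Euler--Lagrange equations: it uses the monotonicity \eqref{hamine2} of $A_0:=\partial_z F_\rr(x_{\rm c},\cdot)$ and tests both $\diver A_0(Dv)=0$ and $\diver A_\rr(x,Du_\rr)=0$ against $u_\rr-v$, obtaining
\[
\int_{8B_h}\snr{V(Du_\rr)-V(Dv)}^2\dx \leq c\int_{8B_h}\bigl(A_0(Du_\rr)-A_\rr(x,Du_\rr)\bigr)\cdot(Du_\rr-Dv)\dx.
\]
The H\"older oscillation is now evaluated at $Du_\rr$ alone, and since $\snr{Du_\rr}\leq M$ on $8B_h$, the weight $[H(Du_\rr)]^{(q-1)/2}$ is controlled pointwise; the stated $M$-exponents then follow after Young (case $p\geq 2$) or after pulling out $M^{q-1}$ and using H\"older together with $\int_{8B_h}\snr{Du_\rr-Dv}^p\leq c\int_{8B_h}F_0(Du_\rr)$ (case $p<2$). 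Your segment approach would work for Propositions \ref{caccin1} and \ref{caccin3}, where the H\"older hypothesis acts on $F$ (or $\ccc$) directly; it fails here precisely because \eqref{xx.3}$_2$ is stated for $\partial_z F$, forcing the equation-based route.
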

\begin{proof}[Proof of Proposition \ref{caccin4}] We again build on the general arguments settled down in Proposition \ref{caccin1}. 
The rescaled function $u_{\rr}$ defined in \rif{riscalata} minimizes
$
w\mapsto
\int_{\BB} F_{\rr}(x,Dw) \dx,
$ where $F_{\rr}(x,z)  := F(x_{\rm c}+\rr x,z) $, and therefore solves 
\eqn{risolve}
$$-\diver \, A_{\rr}(x, Du_{\rr})=0 \  \  \mbox{ in $\BB$,  where} \ \   
A_{\rr}(x ,z):=A(x_0+\rr x, z)\,.$$ 
Note that 
\eqn{xx.4}
$$
\snr{A_{\rr}(x_{1},z)-A_{\varrho}(x_{2} , z)}\le  \rr^{\alpha}\tilde L \snr{x_{1}-x_{2}}^{\alpha }([H_{\mu}(z)]^{(q-1)/2}+[H_{\mu}(z)]^{(p-1)/2})
$$
holds for every choice of $x_1, x_2 \in \BB$ and $z \in \er^n$, as a consequence of \rif{xx.3}$_2$. We define $A_{0}(z):=A_{\varrho}(x_{\rm c},z)$, where $B_{h}$ is centred at $x_{\rm c}$, which 
is strictly $p$-monotone, in the sense that
\eqn{hamine2}
$$
\snr{V(z_2)-V(z_1)}^{2} \leq  \tilde c \left(A_{0}(z_2)- A_{0}(z_1)\right)\cdot(z_2-z_1)
$$
holds whenever $z_1, z_2 \in \er^n$, where $\tilde c \equiv \tilde c(n,p, \tilde \nu)$. See for instance \cite{ha, KM} and related references. We then define $v$ as the unique solution to the Dirichlet problem
\eqn{eulerieq}
$$
\begin{cases}
 \displaystyle 
\int_{8B_{h}} A_{0}(Dv)\cdot D\varphi  \dx=0 \quad \mbox{for every} \  \varphi \in W^{1,q}_0(8B_{h})
\\
\, v \in u_{\rr}+W^{1,q}_0(8B_{h})\,,
\end{cases}
$$
that coincides with
 \rif{euleri}, where $F_0(z)=F(x_{\rm c}, z)$, so that we can use \rif{conosc}-\rif{fuckbsc}. We have
\begin{eqnarray}
&&\notag \int_{8B_{h}}\snr{V(Du_{\rr})-V(Dv)}^{2}\dx\stackrel{\eqref{hamine2}}{\le} c\int_{8B_{h}}\left(A_{0}(Du_{\rr})-A_{0}(Dv)\right) \cdot \left(Du_{\rr}-Dv\right)\dx\nonumber \\
&&\notag  \stackrel{\eqref{eulerieq}}{=}c\int_{8B_{h}}\left(A_{0}(Du_{\rr})-A_{\rr}(x, Du_{\rr})\right)\cdot \left(Du_{\rr}-Dv\right)\dx\nonumber\\
&&\stackrel{\eqref{xx.4}}{\le}c\rr^{\alpha}\snr{h}^{\beta_{0}\alpha}\int_{8B_{h}}\left([H(Du_{\rr})]^{(q-1)/2}+[H(Du_{\rr})]^{(p-1)/2}\right)\snr{Du_{\rr}-Dv} \dx =:c\mbox{(I)}  \,,\label{torna}
\end{eqnarray}
where $c\equiv c(\data)$. We focus on \rif{xx.10} and therefore on the case $p\geq 2$. Young's inequality and \rif{ineV} give
\begin{flalign*}
\notag c\mbox{(I)} &  \le\frac{1}{2}\int_{8B_{h}}\snr{V(Du_{\rr})-V(Dv)}^{2}\dx\\
& \qquad    +c\rr^{2\alpha}\snr{h}^{2\beta_{0}\alpha}\int_{8B_{h}}
\left([H(Du_{\rr})]^{q-1}+[H(Du_{\rr})]^{p-1}\right)(\snr{Du_{\rr}}^{2}+\snr{Dv}^{2}+\mu^{2})^{(2-p)/2}\dx\nonumber \\
&  \ \, \leq\frac{1}{2}\int_{8B_{h}}\snr{V(Du_{\rr})-V(Dv)}^{2}\dx +c\snr{h}^{2\beta_{0}\alpha}\rr^{2\alpha}\int_{8B_{h}}\left([H(Du_{\rr})]^{(2q-p)/2}+[H(Du_{\rr})]^{p/2}\right)\dx\nonumber \\
&\ \, \leq\frac{1}{2}\int_{8B_{h}}\snr{V(Du_{\rr})-V(Dv)}^{2}\dx+c\snr{h}^{2\beta_{0}\alpha}M^{p-\mathrm{b}}\rr^{2\alpha}\int_{8B_{h}}(\snr{Du_{\rr}}+1)^{2q-2p+\mathrm{b}} \dx\,.
\end{flalign*}
Connecting this last inequality to \rif{torna} yields
\eqn{nonabbiamo}
$$
 \int_{8B_{h}}\snr{V(Du_{\rr})-V(Dv)}^{2}\dx  \leq c\snr{h}^{2\beta_{0}\alpha}M^{p-\mathrm{b}}\rr^{2\alpha}\int_{8B_{h}}(\snr{Du_{\rr}}+1)^{2q-2p+\mathrm{b}} \dx\,.$$
Inserting this last estimate in \rif{1bispre} we can repeat the scheme of proof of Proposition \ref{caccin1}, and this eventually leads to \rif{xx.10}. The only difference is that this time, in order to equalize the exponents of $|h|$, we choose $\beta_0:=1/(1+\alpha)$ instead of making the choice  in \rif{beta00}. We now consider the range $1<p<2$, to which we specialize for the rest of the proof. We estimate the term $c\mbox{(I)} $ appearing in \rif{torna} in a different way, also using the minimality of $v$, as follows:
\begin{flalign}
 \notag c\mbox{(I)}  & \leq c\snr{h}^{\beta_{0}\alpha}(M^{q-1}+M^{p-1}+1)\rr^{\alpha}\int_{8B_{h}}\snr{Du_{\rr}-Dv} \dx\\
 \notag & \leq 
  c\snr{h}^{\beta_{0}\alpha}M^{q-1}|B_{h}|^{(p-1)/p}\rr^{\alpha}\left(\int_{8B_{h}} |Du_{\rr}-Dv|^p\dx\right)^{1/p}\\
  \notag & \leq 
  c\rr^{\alpha}\snr{h}^{\beta_{0}\alpha}M^{q-1}|B_{h}|^{(p-1)/p}\rr^{\alpha}\left(\int_{8B_{h}} [F_0(Du_{\rr})+F_0(Dv)]\dx\right)^{1/p}\\
  \notag   & \leq 
  c\snr{h}^{\beta_{0}\alpha}M^{q-1}|B_{h}|^{(p-1)/p}\rr^{\alpha}\left(\int_{8B_{h}} F_0(Du_{\rr})\dx\right)^{1/p}\\
    \notag   & \leq 
  c\snr{h}^{\beta_{0}\alpha}M^{q-1}|B_{h}|^{(p-1)/p}\rr^{\alpha}\left(\int_{8B_{h}} (\snr{Du_{\rr}}+1)^q\dx\right)^{1/p}\\
        & \leq 
  c\snr{h}^{\beta_{0}\alpha}M^{q-{\rm b}/p}|B_{h}|^{(p-1)/p}\rr^{\alpha}\left(\int_{8B_{h}} (\snr{Du_{\rr}}+1)^{q-p+{\rm b}}\dx\right)^{1/p}\,. \label{confronta}
\end{flalign}
Connecting the content of the last displays to \rif{torna} yields 
\eqn{nonabbiamo2}
$$
\int_{8B_{h}}\snr{V(Du_{\rr})-V(Dv)}^{2}\dx   \leq c\snr{h}^{\beta_{0}\alpha}M^{q-{\rm b}/p}\rr^{\alpha}|h|^{\frac{\beta_0n(p-1)}{p}} \|\snr{Du_{\rr}}+1\|_{L^{q-p+{\rm b}}(8B_{h})}^{\frac{q-p+{\rm b}}{p}}
$$
where $c\equiv c(\data)$. Using \rif{nonabbiamo2} in \rif{1bispre}, and taking $\beta_0=2/(2+\alpha)$, we arrive at the following analog of \rif{38}:
\begin{flalign}
\notag \int_{B_h}\snr{\tau_{h}(E(Du_{\rr})-\kk)_{+}}^{2} \dx
&\leq  c\snr{h}^{\frac{2\alpha}{2+\alpha}}M^{\ssf(q-p)}\int_{8B_{h}}(E(Du_{\rr})-\kk)_{+}^{2} \dx\nonumber\\
&\quad +c\snr{h}^{\frac{2\alpha}{2+\alpha}}M^{(\ssf+1) q-\mathrm{b}/p}\rr^{\alpha}|h|^{\frac{\beta_0n(p-1)}{p}} \|\snr{Du_{\rr}}+1\|_{L^{q-p+{\rm b}}(8B_{h})}^{\frac{q-p+{\rm b}}{p}}\,.
\label{ricopriancora}
\end{flalign}
The last step is to perform the covering argument in Section \ref{hyb}, summing up inequalities in \rif{ricopriancora} over balls $B_{h}\equiv B_{k}$ for $k \leq \mathfrak{n}$ as done in \rif{ricopri}. The sum of the first terms in the right-hand sides can be dealt with as in \rif{proas0}. For the second ones, we use \rif{discrete} with $t=1/p$ and $a_k \equiv 
\|\snr{Du_{\rr}}+1\|_{L^{q-p+{\rm b}}(8B_{k})}^{q-p+{\rm b}}$, arguing as in \rif{proas2} we obtain
\begin{flalign}
\notag |h|^{\frac{\beta_0n(p-1)}{p}}\sum_{k\leq  \mathfrak n}\|\snr{Du_{\rr}}+1\|_{L^{q-p+{\rm b}}(8B_{k})}^{\frac{q-p+{\rm b}}{p}} &\stackleq{discrete} 
c\left(\sum_{k\leq  \mathfrak n}\|\snr{Du_{\rr}}+1\|_{L^{q-p+{\rm b}}(8B_{k})}^{q-p+{\rm b}}\right)^{1/p}\\
&\stackleq{finite}  c\|\snr{Du_{\rr}}+1\|_{L^{q-p+{\rm b}}(\BB)}^{\frac{q-p+{\rm b}}{p}}\,,\label{sumarg}
\end{flalign}
for $c \equiv c (n,p)$. 
From \rif{ricopriancora}, after summation, we arrive at
\begin{flalign*}
\notag \int_{\BBB}\snr{\tau_{h}(E(Du_{\rr})-\kk)_{+}}^{2} \dx
&\leq  c\snr{h}^{\frac{2\alpha}{2+\alpha}}M^{\ssf(q-p)}\int_{\BB}(E(Du_{\rr})-\kk)_{+}^{2} \dx\nonumber\\
&\qquad +c\snr{h}^{\frac{2\alpha}{2+\alpha}}M^{(\ssf+1) q-\mathrm{b}/p}\rr^{\alpha} \|\snr{Du_{\rr}}+1\|_{L^{q-p+{\rm b}}(\BB)}^{\frac{q-p+{\rm b}}{p}}\,,
\end{flalign*}
and the rest of the proof \rif{xx.10-sotto2} can be now obtained as in Proposition \ref{caccin1}. 
\end{proof}

\subsection{Equations} \label{variante2e} The proof of Proposition \ref{caccin4} makes little use of minimality of $u$. This leads to extend its content to solutions to general equations as
\eqn{eulera}
$$-\diver\,  A(x, Du)=0 \qquad \mbox{in $B_{r}$}\,,$$  that are not necessarily arising as Euler-Lagrange equations of any functional. For this, we consider a general vector field 
$A\colon B_{r} \times \er^n\to \er^n$ satisfying \rif{assAA}, with $\nu, L$ replaced by general constants $\tilde \nu, \tilde L$ as in Section \ref{basicn}, with $0 < \mu \leq 2$ and such that
\eqn{xx.3equ}
$$
\nu_0 [H_{\mu}(z)]^{(q-2)/2}\snr{\xi}^{2}+\tilde \nu [H_{\mu}(z)]^{(p-2)/2}\snr{\xi}^{2}\le \partial_{z}A(x,z)\xi\cdot \xi 
$$  
holds with the same notation of \rif{assAA}. Let us record the following coercivity inequality:
\eqn{coerpq}
$$
\snr{z_2}^{p} \leq c [H_{1 }(z_1)]^{p(q-1)/[2(p-1)]}  + c A(x,z_2)\cdot(z_2-z_1) \,,
$$ which is valid under the assumptions satisfied by $A_0(\cdot)$
for every $z_1, z_2 \in \er^n$, $x\in B_{r}$, where $c\equiv c(\data)\geq 1$. The proof of \rif{coerpq} relies on \rif{hamine2} and is a minor variant of the one in \cite[Lemma 4.4]{M1}. We shall argue under the permanent assumption 
\eqn{marcexp2}
$$
\frac qp < 1+\frac 1n\,.
$$
Accordingly, in the present setting a relevant exponent is given by
\eqn{marcexpeq}
$$
\sst:= \frac{p}{(n+1)p-nq}
\frac{q-1}{p-1} \,.
$$
Note that $\sst\geq 1$ and $\sst=1$ when $p=q$; $\sst$ is well defined thanks to \rif{marcexp2}. This exponent plays for equations the same role that $\ssf$ in \rif{marcexp} plays for functionals; note that $\sst \geq \ssf$. 
\begin{proposition}\label{caccin5}
Let $u\in W^{1,q}(B_{r})$ be a weak solution to \trif{eulera}, under assumptions \eqref{assAA} with $0 < \mu \leq 2$  and $\nu, L$ replaced by $\tilde \nu, \tilde L$ (as in Section \ref{basicn}); also assume \eqref{xx.3equ} and \trif{marcexp2}. Let $B_{\rr}(x_{0}) \Subset B_{r}$ and let $M\geq 1$ be a constant such that $ \nr{Du}_{L^{\infty}(B_{\rr}(x_{0}))}\leq M$. Let $\kk \geq 0$ be a number. 
\begin{itemize}
\item If  $p\geq 2$, then
\begin{flalign}\label{xx.10eq}
\notag \left(\mint_{B_{\rr/2}(x_{0})}(E_{\mu}(Du)-\kk)_{+}^{2\chi}  \dx\right)^{1/\chi} & \le cM^{2\sst (q-p)} \mint_{B_{\rr}(x_{0})}(E_{\mu}(Du)-\kk)_{+}^{2} \dx \\ & \qquad  +cM^{\sst (2q-p)+p-\mathrm{b}}\rr^{2\alpha}\mint_{B_{\rr}(x_{0})}(\snr{Du}+1)^{2q-2p+\mathrm{b}} \dx
\end{flalign}
holds for every $\mathrm{b} \in [0, p]$ and $(\beta, \chi)$ as in \trif{bechi2}, where $c \equiv c(\data, \beta)$. 
\item If $1<p<2$, then 
\begin{flalign}\label{xx.10-sotto2eq}
\notag&  \left(\mint_{B_{\rr/2}(x_{0})}(E_{\mu}(Du)-\kk)_{+}^{2\chi}  \dx\right)^{1/\chi}  \le cM^{2\sst(q-p)} \mint_{B_{\rr}(x_{0})}(E_{\mu}(Du)-\kk)_{+}^{2} \dx \\
&\hspace{35mm} +cM^{(\sst+1)q+\sst(q-p)-{\rm b}/p}\rr^{\alpha}\left(\mint_{B_{\rr}(x_{0})}(\snr{Du}+1)^{\frac{p(q-1)}{p-1}-p+{\rm b}}\dx\right)^{1/p}
\end{flalign}
holds for every $\mathrm{b}  \in [0, p]$, and $(\beta, \chi)$ as in \trif{bechi} with $\alpham:=\alpha$, where $c\equiv c(\data, \beta)$.
\end{itemize}
\end{proposition}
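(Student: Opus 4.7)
The strategy mirrors that of Proposition \ref{caccin4}: rescale $u$ to the unit ball via \eqref{riscalata}, freeze the vector field at the center $x_{\rm c}$ of each ball $B_h$ by setting $A_0(z):=A_\rr(x_{\rm c},z)$, define the comparison map $v$ as the solution of the frozen Dirichlet problem \eqref{eulerieq}, control the comparison error via the monotonicity \eqref{hamine2} and the H\"older continuity \eqref{xx.4}, and close the argument through Lemma \ref{l4}, the covering of Section \ref{hyb} and the fractional embedding \eqref{immersione}. Two structural modifications are forced upon us by passing from minimizers of $\mathcal{F}_{\texttt{x}}$ to weak solutions of the general equation \eqref{eulera}.

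First, the frozen vector field $A_0(\cdot)$ is no longer guaranteed to have a symmetric derivative $\partial_z A_0$; hence we cannot invoke the sharper estimates \eqref{pri-cacc} and \eqref{stimaprimissima2} of Lemmas \ref{cacc-class1} and \ref{marcth}, but only their weaker twins \eqref{pri-cacc-2} and \eqref{stimaprimissima}. This forces the gap exponent $\ssf$ of \eqref{marcexp} to be replaced by $\sst$ from \eqref{marcexpeq} throughout, and doubles the $M^{q-p}$ factor in the Caccioppoli estimate for $v$, which explains the exponent $2\sst(q-p)$ appearing on the first term of the right-hand sides of \eqref{xx.10eq}--\eqref{xx.10-sotto2eq}. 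Accordingly we must restrict to the weaker gap bound \eqref{marcexp2}.

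Second, and more importantly, the loss of minimality prevents the shortcut $\int F_0(Dv)\leq \int F_0(Du_\rr)$ exploited in \eqref{qui} during the proof of Proposition \ref{caccin4}. To recover a quantitative control of $\int_{8B_h}|Dv|^p\dx$, the plan is to test the equation \eqref{eulerieq} with the admissible variation $v-u_\rr$ and combine this with the coercivity inequality \eqref{coerpq}, applied pointwise with $z_2=Dv(x)$ and $z_1=Du_\rr(x)$; integrating, the $A_0(Dv)\cdot(Dv-Du_\rr)$ term vanishes by \eqref{eulerieq}, leaving
\[
\int_{8B_h} |Dv|^p \dx \leq c\int_{8B_h} [H_1(Du_\rr)]^{\frac{p(q-1)}{2(p-1)}} \dx \leq cM^{\frac{p(q-1)}{p-1}}|B_h|.
\]
Feeding this into the sup-norm bound \eqref{stimaprimissima} of Lemma \ref{marcth} yields $\|Dv\|_{L^\infty(4B_h)}\leq cM^{\sst}$, which is the equation analog of \eqref{fuckbsc} and clarifies the very definition of $\sst$ in \eqref{marcexpeq}.

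With these preparations the comparison inequality \eqref{torna} is obtained verbatim from \eqref{hamine2}, \eqref{eulerieq} and \eqref{xx.4}. For $p\geq 2$, Young's inequality and reabsorption give the analog of \eqref{nonabbiamo} unchanged, and the covering/embedding argument of Sections \ref{hyb}--\ref{hyb2} delivers \eqref{xx.10eq}, with the substitution $\ssf\rightsquigarrow \sst$ together with the doubled $M^{q-p}$ factor from \eqref{pri-cacc-2} accounting for the exponents displayed there. The main obstacle is the case $1<p<2$: in the analog of \eqref{confronta} the term $\int_{8B_h}|Du_\rr-Dv|^p\dx$ cannot be reduced to an $L^q$-norm of $Du_\rr$ via minimality, and must be split by the triangle inequality into $\int|Du_\rr|^p\dx+\int|Dv|^p\dx$, the latter piece being controlled by the coercivity computation above through $\int(|Du_\rr|+1)^{p(q-1)/(p-1)}\dx$. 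This is precisely the source of the exponent $p(q-1)/(p-1)$ appearing in \eqref{xx.10-sotto2eq}, in contrast with the exponent $q$ of \eqref{xx.10-sotto2}. After choosing $\beta_0=2/(2+\alpha)$, inserting the resulting estimate in the analog of \eqref{1bispre}, and applying the covering construction together with the discrete H\"older inequality \eqref{discrete} with $t=1/p$ as in \eqref{sumarg}, inequality \eqref{xx.10-sotto2eq} follows.
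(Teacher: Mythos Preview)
Your proposal is correct and follows essentially the same route as the paper: the two structural changes you isolate---replacing \eqref{pri-cacc}/\eqref{stimaprimissima2} by \eqref{pri-cacc-2}/\eqref{stimaprimissima} for lack of symmetry of $\partial_z A_0$, and replacing the minimality shortcut by the coercivity inequality \eqref{coerpq} integrated against the test function $v-u_\rr$---are exactly the modifications the paper makes, leading to \eqref{marciana}, \eqref{fuckbsceq} and the displayed analog \eqref{1bispp}. One minor cross-reference slip: the minimality step you refer to is not \eqref{qui} (which lives in the proof of Proposition \ref{caccin3}) but the passage in \eqref{globale} and the third-to-fourth line of \eqref{confronta} in Proposition \ref{caccin4}; the substance of your observation is unaffected.
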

\begin{proof}[Proof of Proposition \ref{caccin5}] We modify the proof of Proposition \ref{caccin4}. Keeping the notation introduced there, we arrive up to \rif{eulerieq}, where again it is $A_{0}(z):=A_{\varrho}(x_{\rm c},z)$.  Now, while we can still use \rif{conosc}-\rif{conosc2}, that hold for solutions to general equations, we have to find replacements for \rif{globale0}-\rif{fuckbsc}, that are linked to minimality. Thanks to \rif{marcexp2}, we can use \rif{stimaprimissima}, that yields
$$
\|Dv\|_{L^{\infty}(4B_{h})} \leq c \left(\mint_{8B_{h}}(\snr{Dv}+1)^p\dx\right)^{ \frac{1}{(n+1)p-nq}}\,.
$$
Using \rif{coerpq} (applied to $A_0(\cdot)$) with $z_2\equiv Dv$, $z_1\equiv Du_{\rr}$, and integrating over $8B_h$, gives
\eqn{marciana}
$$
\mint_{8B_{h}}\snr{Dv}^p\dx \leq c \mint_{8B_{h}}(\snr{Du_{\rr}}+1)^{\frac{p(q-1)}{p-1}}\dx\,.
$$
Matching the inequalities in the last two displays, and recalling the definition in \rif{marcexpeq},  we conclude with
the following analog of
\rif{fuckbsc}:
\eqn{fuckbsceq}
$$
\|Dv\|_{L^{\infty}(4B_{h})} \leq \tilde c M^{\sst}\,, \qquad \tilde c \equiv \tilde c (n,p,q,\nu, L)\,.
$$
This inequality allows to get analogs of \rif{1}-\rif{1bispre}, via this time the use of \rif{pri-cacc-2}, that is 
\begin{eqnarray}
\notag &&\int_{B_{h}}\snr{\tau_{h}(E(Du_{\rr})-\kk)_{+}}^{2}  \dx\\
&&\quad \leq c\snr{h}^{2(1-\beta_{0})}M^{2\mathfrak{t}(q-p)}\int_{4B_{h}}(E(Du_{\rr})-\kk)_{+}^{2}  \dx+cM^{\mathfrak{t}(2q-p)}\int_{4B_{h}}\snr{V(Du_{\rr})-V(Dv)}^{2}  \dx\,.\label{1bispp}
\end{eqnarray}
When $p\geq 2$, combining this last estimate with 
\rif{nonabbiamo}, that holds for general equations too, we finally arrive at \rif{xx.10eq} as in Proposition \ref{caccin4}. It again remains to treat the case $p<2$.  By looking at $c\mbox{(I)} $, defined in \rif{torna}, as in \rif{confronta}, we have
\begin{eqnarray}
 \notag \mbox{(I)}  &  \leq &
  c\snr{h}^{\beta_{0}\alpha}M^{q-1}|B_{h}|^{(p-1)/p}\rr^{\alpha}\left(\int_{8B_{h}} (\snr{Du_{\rr}}^p+\snr{Dv}^p)\dx\right)^{1/p}\\
 & \stackleq{marciana} &
  c\snr{h}^{\beta_{0}\alpha}M^{q-{\rm b}/p}|B_{h}|^{(p-1)/p}
  \rr^{\alpha}\left(\int_{8B_{h}} (\snr{Du_{\rr}}+1)^{\frac{p(q-1)}{p-1}-p+{\rm b}}\dx\right)^{1/p}\,.\label{servizio}
\end{eqnarray}
Connecting this to \rif{torna}, and the resulting inequality to \rif{1bispp}, yields 
\begin{flalign*}
&\notag \int_{B_h}\snr{\tau_{h}(E(Du_{\rr})-\kk)_{+}}^{2} \dx
\leq  c\snr{h}^{\frac{2\alpha}{2+\alpha}}M^{2\mathfrak{t}(q-p)}\int_{8B_{h}}(E(Du_{\rr})-\kk)_{+}^{2} \dx\nonumber\\
&\, \qquad \qquad  \qquad  +c\snr{h}^{\frac{2\alpha}{2+\alpha}}cM^{(\sst+1)q+\sst(q-p)-{\rm b}/p}\rr^{\alpha}|h|^{\frac{\beta_0n(p-1)}{p}}\left(\int_{8B_{h}} (\snr{Du_{\rr}}+1)^{\frac{p(q-1)}{p-1}-p+{\rm b}}\dx\right)^{1/p}
\end{flalign*}
where $c\equiv c(\data)$. After this, we can proceed as after \rif{ricopriancora}, including the summation argument in \rif{sumarg}, that works verbatim also with the new exponents, finally leading to \rif{xx.10-sotto2eq}. 
\end{proof}
\subsection{A priori H\"older}\label{giustifica}
We briefly justify the validity of \rif{33.1bis}, which is in fact completely standard when $\hhh(\cdot)\equiv 0$ \cite{gg2, manth1, manth2}. We report some details of the proof of the result, that, in the form stated here, does not seem to be explicitly mentioned in the literature, although it can be obtained by standard arguments. 
Here we consider a minimizer of $u\in W^{1,q}(B_{r})$ of the functional $\mathcal F(\cdot, B_{r})$ in \trif{Fx}, and we assume \eqref{asshh22} and that $\FF(\cdot)\equiv  \tilde F(\cdot)$ satisfies conditions \eqref{F_0}. Therefore we are covering the functionals described in Sections \ref{laprima}, \ref{variante3} and \ref{variante2f}. With $B_{\tau}(x_{\rm c})\Subset B_{r}$ being a ball, we define $v \in u +W^{1,q}_0(B_{\tau})$ as the unique minimizer of $w \mapsto \int_{B_{\tau}}\FF(x_{\rm c}, (u)_{B_{\tau}}, Dw)\dx$ in its Dirichlet class. This satisfies the following a priori estimates 
\eqn{beep1}
$$
\|Dv\|_{L^{\infty}(B_{\tau/2})} \leq c \left(\mint_{B_\tau}(\snr{Dv}+1)^q\dx \right)^{1/q} \  \mbox{and} \  
\osc\, (Dv, B_\varrho) \leq c \left(\frac{\rr}{\tau}\right)^{\beta_1} \|Dv\|_{L^{\infty}(B_{\tau})}
$$ 
for every $\rr \leq \tau$, where both $c\geq 1$ and $\beta_1\in (0,1)$ depend on $n,q,L,L_0,\mu,\nu_0$; see for instance \cite[(2.4)-(2.5)]{manth2}. Moreover, by Lemma \ref{oscilemma} it is $\osc (v,B_{\tau}) \leq \osc (u,B_{\tau})$ and by minimality it is $\|Dv\|_{L^q(B_\tau)}\lesssim \|\snr{Du}+1\|_{L^q(B_\tau)}$. Next, note that also thanks to \rif{asshh22}$_2$, the integrand $\FF(\cdot)+ \hhh(\cdot)$ satisfies the growth conditions 
$ \snr{z}^q - |y| -1 \lesssim \FF(x,y,z)+ \hhh(x,y)\lesssim \snr{z}^q + |y| +1$, with implied constants depending on $n,q,L,L_0,\mu,\nu_0$. Therefore, by De Giorgi-Nash-Moser theory it follows that $u \in C^{0, \beta_2}_{\loc}(\Omega)$ for some $\beta_2\equiv \beta_2(n,q,L,L_0,\mu,\nu_0)\in (0,1)$. In turn, by this last fact and Maz'ya-Wiener theory, it follows that there exists another positive exponent $\beta_3\equiv \beta_3(n,q,L,L_0,\mu,\nu_0)\leq \beta_2$,  such that 
$v \in C^{0, \beta_3}_{\loc}(\overline{B_\tau})$. 
For these results, see for instance \cite[Theorem 7.6]{giu} and \cite[Section 7.8]{giu}. Using  these last facts, a modification of the comparison arguments in \cite{gg1, manth1}, taking into account the presence of $\hhh(\cdot)$, gives 
\eqn{beep2}
$$
\int_{B_\tau}(\snr{Du}^2+\snr{Dv}^2+1)^{\frac{q-2}{2}}|Du-Dv|^2\dx  \leq c \tau^{\alpha \beta_3} \int_{B_\tau}(\snr{Du}+1)^q\dx\,,
$$
where $c\equiv c (n,q,L,L_0,\mu,\nu_0)$. See for instance \cite[Lemma 4.9]{KM}. 
Estimates \rif{beep1}-\rif{beep2} can be combined in a by now standard way to prove \rif{33.1bis}, as for instance described in \cite{manth2} or in \cite{AM}.

\section{Theorems \ref{t1} and \ref{t5}}\label{1e5}
We first develop a priori estimates in Section \ref{appi}. Those for Theorem \ref{t1} are in Proposition \ref{priori1}, while Proposition \ref{priori2} contains those for Theorem \ref{t5}. In both cases, the setting is that of Section \ref{laprima}. These estimates, obtained for minima of more regular functionals, are then embedded in an approximation argument contained in Section \ref{generalapp}. At that stage we have proved that $Du$ is locally bounded in Theorems \ref{t1} and \ref{t5}, thereby completing the proof of the latter. The proof that $Du$ is locally H\"older continuous when $f \in L^{\mathfrak{q}}$ for $\mathfrak q > n/\alpha$ is given in Section \ref{holdergrad}, and completes the derivation of Theorem \ref{t1}. Some of the arguments developed in Sections \ref{appi} and \ref{generalapp} will be employed also for the proofs of the remaining theorems of this paper.

\subsection{A priori $L^{\infty}$-bounds for Theorems \ref{t1} and \ref{t5}}\label{appi}
\begin{proposition}\label{priori1}
Let $u\in W^{1,q}(B_{r})$ be a minimizer of the functional in $\mathcal G(\cdot, B_{r})$ in \trif{ggg2}, where $B_{r}\Subset \Omega$ and $r\leq 1$, under assumptions \eqref{32.2}-\eqref{asshh22}. There exists an explicitly computable 
function $\kappa_1(n,p, \alpha, \gamma)$, with 
\eqn{soprasotto}
$$1/5< \kappa_1(\cdot) < 1\,,$$
such that, if
\eqn{bound1dopo}
$$
\frac{q}{p}< 1+\kappa_1(n,p,\alpha,\gamma)\left(1-\frac{\alpha+\gamma}{p}\right)\frac{\alpha}{n}\,,
$$
then 
\eqn{stima1pre0}
$$
\nr{Du}_{L^{\infty}(B_{t})}\le \frac{c}{(s-t)^{\chi_1}}\left[\|Du\|_{L^p(B_s)} +\|f\|_{n/\alpha,1/2;B_{s}}+1\right]^{\chi_2}
$$
holds whenever $B_{t}\Subset B_s\Subset B_{r}$ are concentric balls, where $c\equiv c (\data, \gamma)\geq 1$ and $\chi_1, \chi_2\equiv \chi_1, \chi_2\linebreak (\datae, \gamma)\geq 1$. In particular, by \trif{soprasotto} it follows that condition \trif{bound1dopo} implies \trif{marcbound} and that \trif{bound1} implies \trif{bound1dopo}. Ultimately, if \trif{bound1} holds, then \trif{stima1pre0} holds as well.
\end{proposition}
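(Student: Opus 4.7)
\smallskip
\textbf{Proof plan for Proposition \ref{priori1}.} The plan is to combine the hybrid fractional Caccioppoli inequality \eqref{40} with the pointwise De Giorgi--Kilpel\"ainen--Mal\'y iteration of Lemma \ref{revlem}, to bound the resulting Havin--Maz'ya--Wolff potentials by Lorentz norms via Lemma \ref{crit}, and finally to close the emerging recursion in $M_\star:=\nr{Du}_{L^\infty(\cdot)}+1$ through Young's inequality and the iteration Lemma \ref{l5}. The a priori regularity \eqref{33.1bis} ensures that $M_\star$ is finite on every $B_{\tau_2}\Subset B_s$, which legitimates all subsequent manipulations.

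First I would fix concentric balls $B_t\subset B_{\tau_1}\subset B_{\tau_2}\subset B_s$ and apply Proposition \ref{caccin2} on every sub-ball $B_\varrho(x_0)$ with $2B_\varrho(x_0)\subset B_{\tau_2}$, for $\ppp\in [p,p^{*})$ (to be chosen close to $p$) and $\beta<\alpham/(2+\alpham)$ with $\alpham$ as in \eqref{alphagen}. By design, \eqref{40} has the precise form \eqref{rev} required by Lemma \ref{revlem}, with $v:=E_\mu(Du)$, $t=2$, $\chi:=n/(n-2\beta)$, coefficient $M_0^2\approx M_\star^{\ssf(q-p)}$, and three data couples: a trivial atom ($\snr{f_i}\equiv 1$, contributing via \eqref{trivialpot}) carrying the weight $M_\star^{\ssf q+\alpha+\gamma q/p}$, plus one or two genuine atoms of type $\snr{f}^{\ppp/(\ppp-\alpha)}$ with parameters $(\delta_i,\theta_i)$ extracted from \eqref{40}--\eqref{tetesisi}. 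Invoking Lemma \ref{revlem} with $\kappa_0=0$ at every Lebesgue point $x_0\in B_{\tau_1}$ and estimating the two $f$-potentials through Lemma \ref{crit} --- whose hypothesis \eqref{lo.2.1} collapses onto the borderline index $n/\alpha$ of \eqref{asshh}$_2$ precisely when $\ppp$ is taken sufficiently close to $p$ --- produces a pointwise bound of the schematic form
\begin{equation*}
E_\mu(Du(x_0))\leq cM_\star^{a_0}\!\left(\mint_{B_{\tilde r}(x_0)}[E_\mu(Du)]^{2}\dx\right)^{\!\!1/2}+\sum_{i=1}^{3} cM_\star^{a_i}W_i,\qquad \tilde r\approx \tau_2-\tau_1,
\end{equation*}
where the $W_i$ collect a power of $\tilde r^{\alpha}$ and powers of $\nr{f}_{n/\alpha,1/2;B_s}$, and the exponents $a_i$ depend explicitly on $\data$, $\gamma$, $\ppp$ and $\beta$.

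Next I would take $L^\infty$ in $x_0\in B_{\tau_1}$, bound the $L^2$-average by a power of $M_\star$ times $\nr{Du}_{L^p(B_s)}^{p/2}$, and use $E_\mu(z)\gtrsim_p \snr{z}^p-1$ to derive a Campanato-type inequality
\begin{equation*}
M_\star^{p}\leq c\,M_\star^{e_\star(q/p,\,\ppp,\,\beta)}(\tau_2-\tau_1)^{-\chi_1}\mathcal{D}^{\chi_2},\qquad \mathcal{D}:=\nr{Du}_{L^p(B_s)}+\nr{f}_{n/\alpha,1/2;B_s}+1,
\end{equation*}
with $e_\star(\cdot)$ a continuous exponent satisfying $e_\star(1,p,\cdot)<p$. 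The optimization $\inf_{\ppp,\beta}e_\star(q/p,\ppp,\beta)<p$ defines the explicit function $\kappa_1(n,p,\alpha,\gamma)$ in \eqref{bound1dopo}; the two-sided bound \eqref{soprasotto} would then be read off from the worst-case configuration $\ssf\downarrow 1$, $\beta\uparrow \alpha/(2+\alpham)$, $\chi-1\downarrow 0$. Under \eqref{bound1dopo}, Young's inequality splits $M_\star^{e_\star}$ out of $M_\star^{p}$, and Lemma \ref{l5} closes the recursion in $\tau\mapsto \nr{Du}_{L^\infty(B_\tau)}$, yielding \eqref{stima1pre0}.

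The main obstacle is precisely this last step: simultaneously tracking the $M_\star$-exponents coming from the Caccioppoli factor $\ssf(q-p)$, the De Giorgi amplification $\chi/(\chi-1)$ of Lemma \ref{revlem}, the lifting weight $\ssf q+\alpha+\gamma q/p$, and the potential-side exponents $\ssf q$ and $\ssf q+\alpha(2-p)/(2-\alpha)$, and optimizing over both $\ppp$ and $\beta$ so that the combined exponent remains strictly sub-$p$. The factor $1-(\alpha+\gamma)/p$ in \eqref{bound1dopo} reflects precisely the slack generated by the sub-$p$ $\gamma$-growth of $\gggg$ in \eqref{assgg}, which is what makes the strict inequality $e_\star<p$ achievable; this factor collapses to $1$ when $\gggg\equiv 0$, explaining Remark \ref{raffina}.
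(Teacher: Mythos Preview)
Your overall architecture --- feed the hybrid Caccioppoli inequality into Lemma \ref{revlem}, bound the emerging potentials via Lemma \ref{crit}, then close with Young's inequality and Lemma \ref{l5} --- matches the paper exactly. The difference is that you build on Proposition \ref{caccin2} (inequality \eqref{40}), whereas the paper uses the simpler Proposition \ref{caccin1} (inequality \eqref{40prima}).

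In \eqref{40prima} the $f$-term is just $\rr^\alpha\mint f\,\dx$ and $\alpham=\alpha$ is hard-wired; the only potential arising is $\mathbf{P}^{1,1}_{2,\alpha/2}(f;\cdot,\cdot)$, for which condition \eqref{lo.2.1} reads $n/\alpha>1$ (automatic) and Lemma \ref{crit} returns $\|f\|_{n/\alpha,1/2}$ on the nose. No auxiliary $\ppp$ appears, no optimization over it is needed, and the dominant $M$-exponent is the $c_\gggg$-term $\ssf q+\alpha+\gamma q/p$, giving the single scalar condition \eqref{condy}. The paper then rewrites \eqref{condy} as an explicit quadratic in $q-p$, solves it, and reads off $\kappa_1$; the two-sided bound \eqref{soprasotto} is isolated as Lemma \ref{funkklemma}.

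Your route through \eqref{40} drags the Sobolev parameter $\ppp$ along. For Lemma \ref{crit} to apply to the resulting potentials you need $\ppp>\pppm:=n\alpha/(n-\alpha)$, not merely $\ppp$ close to $p$. When $p>\pppm$ you may take $\ppp=p$, recover $\alpham=\alpha$, and your argument collapses onto the paper's. But for $1<p\le 2n\alpha/(2n-2\alpha+\alpha^2)$ you are forced to take $\ppp>\pppm>p$, whence $\alpham(\ppp)<\alpha$ by \eqref{alphagen}, the admissible range for $\beta$ shrinks, and the bound you obtain on $q/p$ is strictly worse than \eqref{bound1dopo}. This is exactly Case 3 of Proposition \ref{priori2}, where a different function $\kappa_2$ and condition \eqref{bound10dopo} appear. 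So in that low-$p$ regime your plan proves Proposition \ref{priori2} rather than Proposition \ref{priori1}; the choice of \eqref{40prima} is what sidesteps this. Incidentally, the Lorentz second indices produced by \eqref{40} via Lemma \ref{crit} are $p/(2(p-\alpha))$ and $1/(2-\alpha)$, i.e.\ the $\texttt{l}$ of \eqref{lo}, not $1/2$; this is harmless for \eqref{stima1pre0} since $\texttt{l}\ge 1/2$, but it explains why the paper reserves \eqref{40} for Theorem \ref{t5}.
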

\begin{proof} The function $\kappa_1(\cdot)$ will be computed in due course of the proof; it will be such that $p,q$ will verify the initial condition in \rif{marcbound}, which is necessary in order to apply Propositions \ref{caccin1}-\ref{caccin2}. Therefore we proceed assuming \rif{marcbound}. By the assumptions considered we observe that $u$ satisfies \rif{33.1bis}; in particular, $Du$ is locally bounded in $B_{r}$, that is what we ultimately need at this stage. Let $B_{t}\Subset B_s\Subset B_{r}$ be balls as in the statement of Proposition \ref{priori1}, and note that we can assume that $\nr{Du}_{L^{\infty}(B_{t})}\ge 1$, otherwise \eqref{stima1pre0} is trivial. Next we consider further concentric balls $B_{t}\Subset B_{\tau_{1}}\Subset B_{\tau_{2}}\Subset B_{s}$, and a generic point $x_{0} \in B_{\tau_{1}}$; note that every one of such points is a Lebesgue point for $E_{\mu}(Du)$ by virtue of \rif{33.1bis}. We let $r_0:=(\tau_{2}-\tau_{1})/8$, so that $B_{2r_0}(x_{0})\Subset B_{\tau_{2}}$. By \eqref{40prima} used with $M\equiv \nr{Du}_{L^{\infty}(B_{\tau_{2}})}\geq 1$, 
we can apply Lemma \ref{revlem} on $B_{r_0}(x_0)$ with $h\equiv 2$, $\kk_{0}\equiv 0$, $v\equiv E_{\mu}(Du)$, $f_1\equiv 1$, $f_2\equiv f$, $M_{0}\equiv M^{\ssf (q-p)/2}$, $M_{1}\equiv M^{(\ssf q+ \alpha+\gamma q/p)/2}$, $M_{2}\equiv M^{(\mathfrak{s}q + \alpha)/2}$, $t\equiv 2$, $\delta_{1}=\delta_{2}\equiv \alpha/2$, $m_1 = m_2\equiv 1$, $\theta_1=\theta_2\equiv 1$; also using \rif{trivialpot}, this yields
\begin{flalign}
\notag E_{\mu}(Du(x_{0})) &\le c\nr{Du}_{L^{\infty}(B_{\tau_{2}})}^{\frac{\ssf (q-p)\chi}{2(\chi-1)}}\left(\mint_{B_{r_0}(x_{0})}[E_{\mu}(Du)]^{2}  \dx\right)^{1/2}+c_{\gggg}
\nr{Du}_{L^{\infty}(B_{\tau_{2}})}^{\frac{\ssf (q-p)\chi}{2(\chi-1)}+\frac{\ssf p+\alpha+\gamma q/p}{2}}r_0^{\frac{\alpha}{2}}\\
&
\qquad +c\nr{Du}_{L^{\infty}(B_{\tau_{2}})}^{\frac{\ssf(q-p)\chi}{2(\chi-1)}+\frac{\mathfrak{s}p+\alpha}{2}}\mathbf{P}^{1,1}_{2,\alpha/2}(f;x_{0},2r_0)  \label{440}
\end{flalign}
with $c\equiv c(\data, \beta)$ and $\chi\equiv \chi(\beta)$ is as in \rif{bechi} with $\alpham:=\alpha$. Since $x_{0} \in B_{\tau_{1}}$ is arbitrary we also gain, after a few elementary manipulations
\begin{flalign}\label{44}
\nr{Du}_{L^{\infty}(B_{\tau_{1}})}&\le\frac{c}{(\tau_{1}-\tau_{2})^{\frac{n}{2p}}}
\nr{Du}_{L^{\infty}(B_{\tau_{2}})}^{\left(\frac qp-1\right)\frac{\ssf \chi}{2(\chi-1)}+\frac 12}\left(\int_{B_{s}}(\snr{Du}+1)^p  \dx\right)^{\frac{1}{2p}}\nonumber \\
&\qquad +c_{\gggg}\nr{Du}_{L^{\infty}(B_{\tau_{2}})}^{\left(\frac qp-1\right)\frac{\ssf \chi}{2(\chi-1)}+\frac\ssf 2+\frac{\alpha+\gamma q/p}{2p}}r_0^{\frac{\alpha}{2p}}\nonumber \\  &\qquad  +c\nr{Du}_{L^{\infty}(B_{\tau_{2}})}^{\left(\frac qp-1\right)\frac{\ssf \chi}{2(\chi-1)}+\frac { \ssf } {2}+\frac {\alpha} {2p}}\nr{\mathbf{P}^{1,1}_{2,\alpha/2}(f;\cdot,2r_0)}_{L^{\infty}(B_{\tau_{1}})}^{1/p}+c\,.
\end{flalign}
We have used that $B_{r_0}(x_0)\subset B_{s}$.  
In the above display, the constant $c$ depends on $\data$ and $\beta$ in such a way that $c\to \infty$ as $\beta$ approaches its upper limit in \rif{bechi} (with $\alpham=\alpha$; this ultimately comes from the application of Lemma \ref{l4} at the end of the proof of Proposition \ref{caccin1}). Note also that, in order to derive \rif{44}, in \rif{440} the second integral in \rif{440} has been estimated as follows (we also use $\nr{Du}_{L^{\infty}(B_{\tau_{2}})} \geq 1$):
\begin{flalign}\label{estimateas}
\notag\mint_{B_{r_0}(x_{0})}[E_{\mu}(Du)]^{2}  \dx  & \leq \|E_{\mu}(Du)\|_{L^{\infty}(B_{r_0}(x_{0}))}
\mint_{B_{r_0}(x_{0})}(\snr{Du}+1)^p \dx\\ & \leq c\nr{Du}_{L^{\infty}(B_{\tau_{2}})}^p \mint_{B_{r_0}(x_{0})}(\snr{Du}+1)^p \dx\,.
\end{flalign}
To proceed with the proof, by \rif{11} and $B_{\tau_{1}+(\tau_{2}-\tau_{1})/4}\subset B_{s}$, we have  
\eqn{vvee1}
$$
\nr{\mathbf{P}^{1,1}_{2,\alpha/2}(f;\cdot,2r_0)}_{L^{\infty}(B_{\tau_{1}})} =\nr{\mathbf{P}^{1,1}_{2,\alpha/2}(f;\cdot,(\tau_{2}-\tau_{1})/4)}_{L^{\infty}(B_{\tau_{1}})} \leq c\|f\|^{1/2}_{n/\alpha,1/2;B_{s}}\,.
$$
Looking at \rif{44}, we are led to consider the term with the highest power of $\nr{Du}_{L^{\infty}(B_{\tau_{2}})}$, which happens to be the second one appearing in the right-hand side. We are now interested in studying the range of parameters $n,p,q,\alpha, \beta, \gamma$ for which the following inequality holds true:
\eqn{condy}
$$
\left(\frac qp-1\right)\frac{\ssf \chi(\beta)}{2[\chi(\beta)-1]}+\frac\ssf 2+\frac{\alpha+\gamma q/p}{2p}=
\left(\frac qp-1\right)\frac{\ssf n}{4\beta}+\frac\ssf 2+\frac{\alpha+\gamma q/p}{2p}<1
 \,.
$$
Recalling \rif{marcexp} and \eqref{bechi}, reformulating \rif{condy} in terms of $q-p$ leads to
$$
\frac{(q-p)}{2}\left(
\frac{n[(q-p)+p]}{\beta[2p-n(q-p)]}+\frac{\gamma}{p}
\right)
+ \frac{p[(q-p)+p]}{2p-n(q-p)} +\frac{\alpha+\gamma}{2}<p\,.
$$
Via routine computations, we arrive at the second order polynomial inequality in $q-p$:
\eqn{roots0}
$$
\mathfrak{A}(\beta)(q-p)^2+ \mathfrak{B}(\beta)(q-p)+ \mathfrak{C}(\beta)<0\,,
$$
where 
\eqn{ABC}
$$
\begin{cases}
\, \mathfrak{A}(\beta):= n(p-\gamma\beta)>0\\
\,  \mathfrak{B}(\beta):= p[np+2\beta(p+\gamma)+n\beta(2p-\alpha-\gamma)]>0\\
\, \mathfrak{C}(\beta):= -2p^2\beta[p-(\alpha+\gamma)]<0\,.
\end{cases}
$$
Computing the roots of \rif{roots0}, yields a first bound on $q-p$, i.e., 
 \eqn{roots1}
$$
 q-p  < \frac{\mathfrak{B}(\beta)}{2\mathfrak{A}(\beta)}\left(
 \sqrt{1-\frac{4\mathfrak{A}(\beta)\mathfrak{C}(\beta)}{[\mathfrak{B}(\beta)]^2}}-1
 \right) =\frac{\mathfrak{B}(\beta)}{2\mathfrak{A}(\beta)} \left(
 \sqrt{1+\left(1-\frac{\alpha+\gamma}{p}\right)\frac{\beta}{n}\mathfrak{H}(\beta)}-1\right)
$$
where, by \rif{ABC}, it is
\eqn{ABCDE}
$$
\mathfrak{H}(\beta):= \frac{8p(p-\gamma \beta)}{[p+2\beta(p+\gamma)/n+\beta(2p-\alpha-\gamma)]^2}< \frac{8}{(1+\beta)^2}\,,
$$
so that
\eqn{ABCD}
$$
 \frac{\mathfrak{B}(\beta)\mathfrak{H}(\beta)}{2p\mathfrak{A}(\beta)} = \frac{4p}{p+2\beta(p+\gamma)/n+\beta(2p-\alpha-\gamma)} < 4 \,.
$$
Introducing the function
\eqn{funkk}
$$
\kappa_{\alpham}(n, p, \beta, \gamma):=\frac{1}{2+\alpham}\frac{\mathfrak{B}(\beta)\mathfrak{H}(\beta)}{2p\mathfrak{A}(\beta)} 
\mathfrak{S}\left(\left(1-\frac{\alpha+\gamma}{p}\right)\frac{\beta}{n}\mathfrak{H}(\beta)\right)\,,
$$
where
\eqn{grandeS}
 $$\mathfrak{S}(t):= \frac{\sqrt{1+t}-1}{t}  \qquad t>0\,,$$
we can rewrite \rif{roots1} as
\eqn{roots2}
$$
 \frac qp < 1+\kappa_{\alpham}(n, p, \beta, \gamma)\frac{\beta (2+\alpham)}{\alpham}\left(1-\frac{\alpha+\gamma}{p}\right)\frac{\alpham}{n}\,.
$$
Note that, although here it is $\alpham=\alpha$, we keep on using the notation $\alpham$ to employ the same computations later on. We determine $\kappa_1(\cdot)$ in \rif{bound1dopo} as 
$\kappa_1(n, p,\alpha, \gamma) := \kappa_{\alpham}(n, p,\gamma, \alpham/(2+\alpham),  \gamma)$
so that \rif{soprasotto} follows from Lemma \ref{funkklemma} below.    
Recalling that $\kappa_{\alpham}(\cdot)$ is a continuous function, we conclude that, if \rif{bound1dopo} holds, then we can find $\beta< \alpham/(2+\alpham)$ such that \rif{roots2}, and therefore \rif{condy}, holds. 
This allows to apply Young's inequality in \rif{44}; taking also into account \rif{vvee1} to estimate the terms containing the potentials, we come up with 
$$
\nr{Du}_{L^{\infty}(B_{\tau_{1}})}\leq \frac{1}{2}\nr{Du}_{L^{\infty}(B_{\tau_{2}})}+\frac{c}{(\tau_2-\tau_1)^{\chi_1}} \left[\|Du\|_{L^p(B_s)}   +\|f\|_{n/\alpha,1/2;B_{s}}+ 1\right]^{\chi_2}\,,
$$
where $c, \chi_1, \chi_2$ depend as described in the statement. 
We recall that $Du$ is locally bounded in $B_{r}$, therefore, applying Lemma \ref{l5} to the last inequality with the choice $h(\tau)\equiv  \|Du\|_{L^{\infty}(B_{\tau})}$, $s \leq \tau \leq t$, yields \rif{stima1pre0}. 
\end{proof}
\begin{lemma}\label{funkklemma} The function $ \kappa_{\alpham}(\cdot)$ defined in \trif{funkk} satisfies $1/5 < \kappa_{\alpham}(n, p,t,  \gamma)<1$, for every $t$ such that $0< t \leq 1/3$ and for every $\alpham \in (0,1]$. 
\end{lemma}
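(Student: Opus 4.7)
The statement is a purely analytic estimate on the function $\kappa_{\alpham}$, so the plan is to control each of the three factors appearing in \eqref{funkk} separately, using the identities \eqref{ABCD}--\eqref{ABCDE} together with elementary properties of $\mathfrak{S}(s)=(\sqrt{1+s}-1)/s$. The two facts about $\mathfrak{S}$ I will use throughout are that it is positive and strictly decreasing on $(0,\infty)$, and that $\mathfrak{S}(s)<1/2$ there (equivalent to $\sqrt{1+s}<1+s/2$). The upper and lower bounds require different pieces of information, so I handle them in turn.

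For the upper bound $\kappa_{\alpham}<1$, the inequality $\mathfrak{S}<1/2$ together with the identity $\mathfrak{B}(t)\mathfrak{H}(t)/[2p\mathfrak{A}(t)]<4$ already recorded in \eqref{ABCD} immediately gives
\[
\kappa_{\alpham}(n,p,t,\gamma) < \frac{1}{2+\alpham}\cdot 4 \cdot \frac{1}{2} = \frac{2}{2+\alpham} < 1,
\]
where the last strict inequality uses $\alpham>0$.

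For the lower bound $\kappa_{\alpham}>1/5$, since $\mathfrak{S}$ is decreasing I first need an upper bound on its argument
\[
X := \Bigl(1-\frac{\alpha+\gamma}{p}\Bigr)\frac{t}{n}\,\mathfrak{H}(t).
\]
Crude estimation of \eqref{ABCDE} (numerator $\leq 8p^2$, denominator $\geq p^2$) gives $\mathfrak{H}(t)\leq 8$. Combined with $(1-(\alpha+\gamma)/p)\in(0,1)$, $t\leq 1/3$ and $n\geq 2$, this yields $X\leq 4/3$, so
\[
\mathfrak{S}(X)\geq \mathfrak{S}(4/3)=\frac{\sqrt{21}-3}{4}.
\]
Next I lower-bound $\mathfrak{B}(t)\mathfrak{H}(t)/[2p\mathfrak{A}(t)]$ from its closed form in \eqref{ABCD}. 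Rewriting the denominator bracket as $2(p+\gamma)/n+2p-\alpha-\gamma = 2p+2p/n-\alpha+\gamma(2/n-1)$ and exploiting $n\geq 2$ (so $2/n-1\leq 0$) shows it is at most $3p$, hence the whole denominator is $\leq p(1+3t)$, and
\[
\frac{\mathfrak{B}(t)\mathfrak{H}(t)}{2p\mathfrak{A}(t)}\geq \frac{4}{1+3t}\geq 2.
\]
Using also $1/(2+\alpham)\geq 1/3$ when $\alpham\leq 1$, I conclude
\[
\kappa_{\alpham}(n,p,t,\gamma)\geq \frac{1}{3}\cdot 2 \cdot \frac{\sqrt{21}-3}{4}=\frac{\sqrt{21}-3}{6}>\frac{1}{5},
\]
the final strict inequality being equivalent to $\sqrt{21}>4.2$, i.e.\ $21>17.64$.

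The main obstacle is bookkeeping rather than genuine difficulty: one must check that the three estimates ($\mathfrak{H}\leq 8$; the ``bracket $\leq 3p$'' bound; $(\alpha+\gamma)/p<1$) are truly uniform in all free parameters $n\geq 2$, $p>1$, $\alpha\in(0,1]$, $0\leq\gamma<p-\alpha$, $\alpham\in(0,1]$, $t\in(0,1/3]$, and to notice that $1/5$ is a nearly tight threshold, so that evaluation at the endpoint $t=1/3$ only barely beats $0.2$; there is little room to be wasteful anywhere in the chain.
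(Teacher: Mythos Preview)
Your proof is correct and follows essentially the same strategy as the paper: bound each of the three factors in \eqref{funkk} separately, using that $\mathfrak{S}$ is decreasing with $\mathfrak{S}<1/2$, together with the closed forms in \eqref{ABCD}--\eqref{ABCDE}. The only differences are in the intermediate numerical constants: the paper bounds the argument of $\mathfrak{S}$ by $3/4$ (via the sharper $\mathfrak{H}(t)<8/(1+t)^2$) while you use $4/3$ (via the cruder $\mathfrak{H}\le 8$); conversely, you obtain the slightly better lower bound $2$ for $\mathfrak{B}\mathfrak{H}/(2p\mathfrak{A})$ where the paper records $12/7$. Both chains land comfortably above $1/5$.
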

\begin{proof} The function $\mathfrak{S}(\cdot)$ in \rif{grandeS} is decreasing, so that $\mathfrak{S}(\cdot)\leq 1/2$ and by \rif{ABCD} it  follows that 
$
\kappa_{\alpham}(n, p, t, \gamma)< 1
$
for every $t > 0$. Thanks to \rif{ABCDE} and \rif{ABCD},  elementary estimations give 
$$
\frac {12}{7} < \frac{\mathfrak{B}(\beta)\mathfrak{H}(\beta)}{2p\mathfrak{A}(\beta)} \quad \mbox{and} \quad 
\frac{\beta}{n}\left(1-\frac{\alpha+\gamma}{p}\right)\mathfrak{H}(\beta) < \frac{8\beta}{n(1+\beta)^2}< \frac {3}{2n}\leq \frac 34\,.
$$
Using again that $t \mapsto \mathfrak{S}(t)$ is decreasing, and the content of the previous display, we get
$$
\frac 15 < \frac{16}{21}\left(\sqrt{7/4}-1\right)< \frac{1}{2+\alpham}\frac{\mathfrak{B}(\beta)\mathfrak{H}(\beta)}{2p\mathfrak{A}(\beta)} 
\mathfrak{S}(3/4)< \kappa_{\alpham}(n, p, \beta, \gamma)\,,
$$
and the lemma is proved. 
\end{proof}
\begin{proposition}\label{priori2}
Let $u\in W^{1,q}(B_{r})$ be a minimizer of the functional in $\mathcal G(\cdot, B_{r})$ in \trif{ggg2}, where $B_{r}\Subset \Omega$ and $r\leq 1$, under assumptions \eqref{32.2}-\eqref{asshh22}. Assume that \trif{soprasotto}-\trif{bound1dopo} hold and that $p>2n\alpha/[2n-2\alpha+\alpha^2]$.  
Then \eqn{stima1pre}
$$
\nr{Du}_{L^{\infty}(B_{t})}\le \frac{c}{(s-t)^{\chi_1}}\left[\|Du\|_{L^p(B_s)} +\|f\|_{n/\alpha, \textnormal{\texttt{l}};B_{s}}+1\right]^{\chi_2}
$$
holds whenever $B_{t}\Subset B_s\Subset B_{r}$ are concentric balls, where $ \textnormal{\texttt{l}}$ has been defined in \trif{lo}, $c\equiv c (\data, \gamma )$, and $\chi_1, \chi_2\equiv \chi_1, \chi_2(\datae )$. 
In the case $1 < p\leq 2n\alpha/[2n-2\alpha+\alpha^2]$, there exists an explicitly computable 
function $\kappa_{2}(n,p,\alpha, \gamma) $, with $1/5 < \kappa_{2}(\cdot) <1$, such that if
\eqn{bound10dopo}
$$
\frac{q}{p}< 1+\kappa_{2}(n,p,\alpha, \gamma) \left(1-\frac{\alpha+\gamma}{p}\right)\frac{2(p-\alpha)}{p(2-\alpha)}\,,
$$
then \trif{stima1pre} holds. 
\end{proposition}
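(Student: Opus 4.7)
The plan is to run the scheme of Proposition \ref{priori1}, replacing the Caccioppoli inequality \eqref{40prima} by the refined \eqref{40} of Proposition \ref{caccin2}, and tuning the free parameter $\ppp \in [p,p^*)$ therein to the regime of $p$. Pick concentric balls $B_t \Subset B_{\tau_1}\Subset B_{\tau_2}\Subset B_s \Subset B_r$, assume $M := \nr{Du}_{L^\infty(B_{\tau_2})}\ge 1$, and apply \eqref{40} at a Lebesgue point $x_0 \in B_{\tau_1}$ on a ball $B_\rr(x_0)\Subset B_{\tau_2}$ with $\rr = (\tau_2-\tau_1)/8$ and $\kk = 0$. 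Feed the outcome into Lemma \ref{revlem} with $t = 2$, $v = E_\mu(Du)$, $h = 3$, $f_1\equiv 1$, $f_2 = f_3 \equiv f$, $m_2=m_3 = \ppp/(\ppp-\alpha)$, $(\theta_2, \delta_2) = (\theta(\ppp), p\alpha/[2(p-\alpha)])$ and $(\theta_3, \delta_3) = (\sigma(\ppp), \alpha/(2-\alpha))$ (the third block active only when $\una = 1$). A direct computation from \eqref{tetesisi} yields
\begin{flalign*}
\frac{m_i n \theta_i}{t \delta_i} = \frac{n}{\alpha} \ \ (i=2,3), \qquad \frac{m_2 \theta_2}{t} = \frac{p}{2(p-\alpha)}, \qquad \frac{m_3 \theta_3}{t} = \frac{1}{2-\alpha}\,,
\end{flalign*}
so, provided $\ppp > n\alpha/(n-\alpha)$ (so that \eqref{lo.2.1} holds), Lemma \ref{crit} controls the two nonlinear potentials arising in the De Giorgi bound by $\|f\|_{n/\alpha, p/[2(p-\alpha)]; B_s}$ and $\|f\|_{n/\alpha, 1/(2-\alpha); B_s}$ respectively, both majorized by $\|f\|_{n/\alpha, \textnormal{\texttt{l}}; B_s}$ via \eqref{lorentzbasic}. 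Taking the $1/p$-th power and supremum over $x_0\in B_{\tau_1}$ as in \eqref{44}, and noting that the two $f$-driven powers of $M$ are strictly smaller than the $c_{\gggg}$-driven one (a routine check using $\alpha<p$ and, when $\una=1$, $p<2$), the decisive absorption condition reduces to \eqref{condy}, with $\chi = n/(n-2\beta)$ and $\beta < \alpham(\ppp)/(2+\alpham(\ppp))$, $\alpham(\ppp)$ being as in \eqref{alphagen}.

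In Case 1, $p > 2n\alpha/[2n-2\alpha+\alpha^2]$, the elementary equivalence
\begin{flalign*}
p > \frac{2n\alpha}{2n-2\alpha+\alpha^2} \ \Longleftrightarrow\ \frac{n\alpha}{n-\alpha} < \frac{2np}{2n-\alpha p}
\end{flalign*}
opens a non-empty window of admissible $\ppp \in (\max\{p, n\alpha/(n-\alpha)\}, 2np/(2n-\alpha p)] \subset [p, p^*)$ (the inclusion uses $\alpha<p$); for such $\ppp$, every entry of the minimum defining $\alpham(\ppp)$ in \eqref{alphagen} is $\geq 1$, hence $\alpham(\ppp) = \alpha$ and the analysis \eqref{condy}--\eqref{roots2} proceeds verbatim as in Proposition \ref{priori1}, producing \eqref{bound1dopo} and, via Young's inequality and Lemma \ref{l5}, the estimate \eqref{stima1pre}.

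In Case 2, $1 < p \le 2n\alpha/[2n-2\alpha+\alpha^2]$, necessarily $p<2$, no $\ppp$ achieves $\alpham(\ppp) = \alpha$, and the binding entry of the minimum in \eqref{alphagen} is $2\alpha\mathfrak{a}(\ppp)/(2-\alpha)$. Pushing $\ppp$ arbitrarily close to $n\alpha/(n-\alpha)$ from above (which is admissible since $p \le n\alpha/(n-\alpha) < p^*$ thanks to $\alpha<p$), the value $\alpham(\ppp)$ approaches its supremum $\alpham_\star := 2n(p-\alpha)/[p(2-\alpha)]$, which satisfies $\alpham_\star \leq \alpha \leq 1$ precisely under the Case 2 assumption. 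Substituting $\alpham_\star/n = 2(p-\alpha)/[p(2-\alpha)]$ in place of $\alpha/n$ throughout the quadratic analysis \eqref{roots0}--\eqref{ABCD}, and defining, via the same formula \eqref{funkk},
\begin{flalign*}
\kappa_2(n, p, \alpha, \gamma) := \kappa_{\alpham_\star}\Big(n, p, \frac{\alpham_\star}{2+\alpham_\star}, \gamma\Big)\,,
\end{flalign*}
one checks $1/5 < \kappa_2 < 1$ by the very same estimations used in Lemma \ref{funkklemma} (applicable since $\alpham_\star \in (0,1]$). By continuity of $\kappa_\alpham$ in $\alpham$, the strict inequality \eqref{bound10dopo} transfers to the analogue of \eqref{roots2} for some admissible $\ppp$ close enough to $n\alpha/(n-\alpha)$, forcing the critical exponent in \eqref{condy} to be strictly less than $1$; the same absorption argument then yields \eqref{stima1pre}.

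The main technical subtlety lies in the observation that the first Lorentz index of both potentials equals $n/\alpha$ \emph{independently} of $\ppp$: this cancellation is what preserves intact the gap bound \eqref{bound1dopo} of Proposition \ref{priori1} in Case 1 despite the weakening of \eqref{asshh}$_2$ to \eqref{lo}; and it singles out in Case 2 the sole inequality $\ppp > n\alpha/(n-\alpha)$ of Lemma \ref{crit} as the obstruction responsible for the weaker gap bound \eqref{bound10dopo}.
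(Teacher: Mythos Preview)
Your argument is correct and follows the same route as the paper: apply \eqref{40} in place of \eqref{40prima}, feed it into Lemma~\ref{revlem}, control the two nonlinear potentials via Lemma~\ref{crit} (using that the first Lorentz index is $n/\alpha$ independently of $\ppp$), and then reduce the absorption step to \eqref{condy} with the paper's quadratic analysis. Your Case~1 compactifies the paper's Cases~1 and~2 (the paper distinguishes $p>\pppm$, where it simply takes $\ppp=p$, from $2n\alpha/[2n-2\alpha+\alpha^2]<p\le\pppm$, where it takes $\ppp$ slightly above $\pppm$); your Case~2 is the paper's Case~3 verbatim.

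There is one small slip in your Case~1 window. The claim ``every entry of the minimum in \eqref{alphagen} is $\ge 1$'' requires both $\ppp\le 2np/(2n-\alpha p)$ (from the $2\mathfrak{a}(\ppp)/(2-\alpha)$ entry) and $\ppp\le np/(n-\alpha)$ (from the $p\mathfrak{a}(\ppp)/(p-\alpha)$ entry). When $p\le 2$ the first bound is the smaller one and your window is correct; when $p>2$ the second is the smaller, so for $\ppp\in\big(np/(n-\alpha),\,2np/(2n-\alpha p)\big]$ the second entry drops below~$1$ and your claim fails there. The fix is immediate: take the upper endpoint of the window to be $\min\{2np/(2n-\alpha p),\,np/(n-\alpha)\}$, which still lies strictly above $\max\{p,\pppm\}$ (since $\alpha<p$), so the window remains nonempty and the rest of your argument goes through unchanged.
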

\begin{proof} 
We argue as for Proposition \ref{priori1}. This time we employ \rif{40} with $M\equiv \nr{Du}_{L^{\infty}(B_{\tau_{2}})}\linebreak \geq 1$, so that, for any $x_{0} \in B_{\tau_{1}}$, 
we apply Lemma \ref{revlem} on $B_{r_0}(x_0)$, with $h\equiv 3$, $\kk_{0}\equiv 0$, $v\equiv E_{\mu}(Du)$, $f_1\equiv 1$, $f_2\equiv f_3 \equiv f$, $M_{0}\equiv M^{\ssf(q-p)/2}$, $M_{1}\equiv M^{(\ssf q+ \alpha+\gamma q/p)/2}$, $M_{2}\equiv M^{\ssf q/2}$, $M_{3}\equiv \una M^{\ssf q/2+\alpha(2-p)/[2(2-\alpha)]}$, $t\equiv 2$, $\delta_{1}\equiv \alpha/2$, $\delta_{2}\equiv p\alpha/[2(p-\alpha)]$, $\delta_{3}\equiv \alpha/(2-\alpha)$, $m_1\equiv 1$, $m_2= m_3\equiv \ppp/(\ppp-\alpha)$, $\theta_1 \equiv 1$, $\theta_2 \equiv \theta(\ppp)$ and $\theta_3 \equiv \sigma (\ppp)$. Similarly to Proposition \ref{priori1}, we arrive at
\begin{flalign}\label{44ll}
\nr{Du}_{L^{\infty}(B_{\tau_{1}})}&\le\frac{c}{(\tau_{1}-\tau_{2})^{\frac{n}{2p}}}
\nr{Du}_{L^{\infty}(B_{\tau_{2}})}^{\left(\frac qp-1\right)\frac{\ssf \chi}{2(\chi-1)}+\frac 12}\left(\int_{B_{s}}(\snr{Du}+1)^p  \dx\right)^{\frac{1}{2p}}\nonumber \\
&\ +c_{\gggg}\nr{Du}_{L^{\infty}(B_{\tau_{2}})}^{\left(\frac qp-1\right)\frac{\ssf \chi}{2(\chi-1)}+\frac\ssf2+\frac{\alpha+\gamma q/p}{2p}}r_0^{\frac{\alpha}{2p}}\nonumber \\ 
 &\  +c\nr{Du}_{L^{\infty}(B_{\tau_{2}})}^{\left(\frac qp-1\right)\frac{\ssf \chi}{2(\chi-1)}+\frac \ssf2}\left\|\mathbf{P}^{\frac{\ppp}{\ppp-\alpha},\theta(\ppp)}_{2,\frac{p\alpha}{2(p-\alpha)}}(f;\cdot,(\tau_{2}-\tau_{1})/4)\right\|_{L^{\infty}(B_{\tau_{1}})}^{1/p}\nonumber \\  
 &\   +c\una \nr{Du}_{L^{\infty}(B_{\tau_{2}})}^{\left(\frac qp-1\right)\frac{\ssf \chi}{2(\chi-1)}+\frac \ssf 2+\left(\frac 2p-1\right)\frac{\alpha}{2(2-\alpha)}}\left\|\mathbf{P}^{\frac{\ppp}{\ppp-\alpha},\sigma(\ppp)}_{2,\frac{\alpha}{2-\alpha}}(f;\cdot,(\tau_{2}-\tau_{1})/4)\right\|_{L^{\infty}(B_{\tau_{1}})}^{1/p}+c\,.
\end{flalign}
In the following we shall always take $\ppp$ to be such that 
\eqn{sceltappp}
$$
\ppp>\frac{n\alpha}{n-\alpha}=:\pppm <2\,,
$$
and proceed checking that Lemma \ref{crit} can be applied to estimate the last two terms from \rif{44ll}. This ultimately boils down to check that \rif{lo.2.1} is satisfied with the current choice of parameters. Indeed we have 
$$
\frac{n\theta }{t\delta} \equiv  \frac{n\theta(\ppp) }{2\delta_{2}}=\frac{n\sigma(\ppp) }{2\delta_{3}}=\frac{n(\ppp-\alpha)}{\ppp\alpha}\stackrel{\rif{sceltappp}}{>}1\,.
$$
Therefore Lemma \ref{crit} applies, giving, in any case $p>1$
\eqn{tt1}
$$
\left\|\mathbf{P}^{\frac{\ppp}{\ppp-\alpha},\theta(\ppp)}_{2,\frac{p\alpha}{2(p-\alpha)}}(f;\cdot,(\tau_{2}-\tau_{1})/4)\right\|_{L^{\infty}(B_{\tau_{1}})}\leq  c\|f\|^{\frac{p}{2(p-\alpha)}}_{\frac{n}{\alpha},\frac{p}{2(p-\alpha)};B_{s}} \stackrel{\rif{lo},\rif{lorentzbasic}}{\leq} c\|f\|^{\textnormal{\texttt{l}}}_{n/\alpha,\textnormal{\texttt{l}};B_{s}}  
$$
and (this one occurring only when $p<2$)
\eqn{tt1p}
$$
\left\|\mathbf{P}^{\frac{\ppp}{\ppp-\alpha},\sigma(\ppp)}_{2,\frac{\alpha}{2-\alpha}}(f;\cdot,(\tau_{2}-\tau_{1})/4)\right\|_{L^{\infty}(B_{\tau_{1}})}\leq  c\|f\|^{\frac{1}{2-\alpha}}_{\frac{n}{\alpha},\frac{1}{2-\alpha};B_{s}}\stackrel{\rif{lo}}{=} c\|f\|^{\textnormal{\texttt{l}}}_{n/\alpha,\textnormal{\texttt{l}};B_{s}} 
\,,
$$
where $c\equiv c(n,p,\alpha)$.
Back to \rif{44ll}, and noting that
\eqn{notache}
$$
\left(\frac qp-1\right)\frac{\ssf \chi}{2(\chi-1)}+\frac \ssf2+\left(\frac 2p-1\right)\frac{\alpha}{2(2-\alpha)} \leq 
\left(\frac qp-1\right)\frac{\ssf \chi}{2(\chi-1)}+\frac\ssf 2+\frac{\alpha+\gamma q/p}{2p}\,,
$$
we are again led to determine $\beta <\alpham/(2+\alpham)$ such that \rif{condy} holds. For this, we distinguish three different cases. 
\begin{itemize}
\item Case 1: $p>\pppm$. We take $\mathfrak{p}=p$, that gives $\mathfrak{a}(\mathfrak{p})=\mathfrak{a}(p)=1$. It follows that $\alpham=\alpha$ in \rif{alphagen} and we conclude exactly as in Proposition \ref{priori1}, using the bound \rif{roots2} with $\alpham=\alpha$. Specifically, we take $\beta<\alpham/(2+\alpham)$ in order to satisfy \rif{condy}, then, using Young's inequality in \rif{44ll}, and taking also into account \rif{tt1}-\rif{tt1p}, we come up with 
$$
\nr{Du}_{L^{\infty}(B_{\tau_{1}})}\leq \frac{1}{2}\nr{Du}_{L^{\infty}(B_{\tau_{2}})}+\frac{c}{(\tau_2-\tau_1)^{\chi_1}} \left[\|Du\|_{L^p(B_s)}    +\|f\|_{n/\alpha, \textnormal{\texttt{l}};B_{s}}+ 1\right]^{\chi_2}
$$
where $c, \chi_1, \chi_2$ depend as described in the statement. Applying Lemma \ref{l5} yields \eqref{stima1pre} and the proof is complete. 
\item Case 2: $2n\alpha/(2n-2\alpha+\alpha^2)<p\leq \pppm$. The lower bound on $p$ implies $2\mathfrak{a}(\pppm)/(2-\alpha)>1$. It follows we can take $\mathfrak{p}>\pppm$ close enough to $\pppm$ in order to have $\alpham=\alpha$ in \rif{alphagen}. We then argue as in Case 1. 
\item Case 3: $1<p\leq 2n\alpha/(2n-2\alpha+\alpha^2)$. The upper bound on $p$ implies that $2\mathfrak{a}(\pppm)/(2-\alpha)\leq 1$. Recalling that $t \mapsto \mathfrak{a}(t)$ is decreasing, we have that
$$
\ppp> \pppm \Longrightarrow \alpham (\ppp)=\frac{2\alpha \mathfrak{a}(\mathfrak{p})}{2-\alpha}<
\frac{2\alpha \mathfrak{a}(\pppm)}{2-\alpha}=\alpham(\pppm)= \frac{2(p-\alpha)}{p(2-\alpha)} n\leq \alpha \leq 1\,.
$$
We formally take the limiting value $\ppp = \pppm$ and consider the corresponding version of \rif{roots2} with $\alpham\equiv \alpham(\pppm)$, $\beta= \alpham(\pppm)/(2+\alpham(\pppm))$, and finally set $\kappa_2(n, p,\alpha, \gamma) := \kappa_{\alpham}(n, p,\gamma, \alpham(\pppm)/(2+\alpham(\pppm)),  \gamma)$ ($> 1/5$ by Lemma \ref{funkklemma}). This leads to \rif{bound10dopo}. Summarising, if \rif{bound10dopo} holds, then we can find $\ppp>\pppm$ and $\beta < \alpham(\ppp)/(2+\alpham(\ppp))$ such that \rif{roots2}, this time holds with $\alpham \equiv \alpham(\ppp)$, and therefore also \rif{condy} hold. We then conclude as in Case 1. 
\end{itemize}\end{proof}
\begin{remark}[Refinements] \label{raffina}\emph {The lower bound $\kappa_1(\cdot)> 1/5$ in \rif{soprasotto} can be improved by using Taylor expansions of the function $\mathfrak{S}(\cdot)$ in \rif{grandeS}. This eventually leads to a slightly better bound  than that in \rif{bound1}. Anyway, this does not lead to a different asymptotic in terms of the ratio $\alpha/n$ and we will not pursue this path here. Further improvements come when considering more specific structures, as in for instance in \rif{ggg}. Let us for simplicity consider the range $p\geq 2$. In this case, looking at \rif{44ll} and recalling that $c_{\gggg}=0$, we arrive at
\eqn{condy2}
$$
\left(\frac qp-1\right)\frac{\ssf n}{4\beta}+\frac\ssf 2<1\,,
$$
that replaces \rif{condy} in this setting. Then, taking also into account the content of Remark \ref{differente}, and performing the same reasoning of \rif{condy}-\rif{roots2} and Lemma \ref{funkklemma}, we obtain a refinement of \rif{bound1}, but still preserving the same asymptotic with respect to $\alpha/n$.} 
\end{remark}
\subsection{Approximation} \label{generalapp} Let $u\in W^{1,p}_{\loc}(\Omega)$ be a (local) minimizer of $\mathcal{G}$ as in Theorem \ref{t5}; in particular, $F(\cdot), \gggg(\cdot), \hhh(\cdot)$ satisfy \rif{assFF}-\rif{asshh}.  With $B_{r}\Subset \Omega$, $r \leq 1$, 
in the following by $\eps, \delta\equiv \{\eps\}, \{\delta\}$ we denote two decreasing sequences of positive numbers such that $\eps, \delta \to 0$, $\eps \leq  \dist(B_{r}, \partial \Omega)/10$ and $\eps,\delta \leq 1$; we will several times extract subsequences and these will still be denoted by $\eps, \delta$. We denote by $\texttt{o}(\varepsilon,B)$ a quantity, also depending on a considered ball $B$, such that $\texttt{o}(\varepsilon,B)\to 0$ as $\eps \to 0$. Similarly, we denote by $\texttt{o}_{\eps}(\delta,B)$ a quantity, depending both on $\eps$ and $\delta$, such that $\texttt{o}_{\eps}(\delta,B)\to 0$ as $\delta \to 0$ for each fixed $\eps$. As usual, the exact value of such quantities might change on different occurences. In the following we will abbreviate 
\eqn{abbrevia}
$$
\mathbb{G}_{\eps}(u,B_{r}):=\mathcal{G}(u, B_{r})+\nr{\hhh(\cdot,u)}_{L^1(B_{r})}+\nr{f}_{n/\alpha, \textnormal{\texttt{l}};(1+\eps)B_{r}}^{p/(p-\alpha)}+1
$$
and denote $\FF(x,y,z):= F(z)+  \gggg(x,y,z)$. We fix a family of radially symmetric mollifiers $\{\phi_{\varepsilon}\}\subset C^{\infty}(\mathbb{R}^n)$, defined in a standard way as $\phi_{\varepsilon}(x):=\phi(x/\varepsilon)/\varepsilon^n$, where $\phi\in C^{\infty}_{c}(\BB)$, $ \nr{\phi}_{L^{1}(\mathbb{R}^n)}=1$, $\mathcal B_{3/4} \subset \supp\,  \phi$. We next define
\eqn{recap}
$$\tilde{u}_{\varepsilon}=u*\phi_{\eps} \,, \quad f_{\varepsilon}(x):=\mint_{\BB} f(x+\eps \lambda)\dla+1\,, \quad \hhh_{0,\varepsilon}(x):=\mint_{\BB} |\hhh(x+\eps \lambda,0)|\dla\,,$$
and
\begin{flalign}
\displaystyle \FF_{\eps, \delta}(x,y,z)  &:=(\FF(x,y,\cdot)*\phi_{\delta})(z)+\sigma_{\varepsilon}[H_{\mu_\delta}(z)]^{q/2}\notag \\ &=\int_{\BB}\FF(x,y,z+\delta \lambda)\phi(\lambda)\dla+\sigma_{\varepsilon}[H_{\mu_\delta}(z)]^{q/2}=: \FF_{\delta}(x,y,z) + \sigma_{\varepsilon}[H_{\mu_\delta}(z)]^{q/2}\,,\label{nuoviF}
\end{flalign}
where 
\begin{flalign}\label{48}
\sigma_{\varepsilon}:=\left(1+\varepsilon^{-1}+\nr{D\tilde{u}_{\varepsilon}}_{L^{q}(B_{r})}^{2q}\right)^{-1}
\Longrightarrow \sigma_{\varepsilon}\int_{B_{r}}[H_{\mu_\delta}(D\tilde{u}_{\varepsilon})]^{q/2}\dx\stackrel{\eps \to 0}{\to} 0\,,
\end{flalign}
uniformly with respect to $\delta$; we recall that $\mu_{\delta}=\mu+\delta \in (0, 2]$. Note that $\{u_{\eps}\}$ is bounded in $W^{1,p}(B_{r})$. 
Finally, we define $\hhh_{\varepsilon}\colon B_{r}\times \mathbb{R}\to \mathbb{R}$ by 
\eqn{defigg}
$$
\hhh_{\varepsilon}(x,y):=\tilde{\hhh}_{\varepsilon}(x,y)+\snr{\tilde{u}_{\eps}(x)-y}^{\alpha}\,, \quad 
\displaystyle \tilde{\hhh}_{\varepsilon}(x,y):=\int_{\BB}\hhh(x+\eps \lambda,y)\phi(\lambda)\dla\,.
$$
From \rif{nuoviF} and Lemma \ref{marclemma} it easily follows that 
\eqn{lippi}
$$|\FF_{ \delta}(x,y,z)
-\FF(x,y,z)|\lesssim \delta (\snr{z}+1)^{q-1}$$
(see also Lemma \ref{converlemma}) so that, since $D\tilde u_{\eps}$ is bounded for every $\eps$, we have
\eqn{jensen00} 
$$\|  \FF_{\delta}(\cdot, \tilde u_{\eps}, D\tilde u_{\eps})  - \FF(\cdot, \tilde u_{\eps}, D\tilde u_{\eps})\|_{L^1(B_{r})} =\texttt{o}_{\eps}(\delta,B_{r})\,,
$$
for every $\eps$.
Up to not relabelled subsequences, we can also assume that 
\eqn{convergenze} 
$$
\begin{cases}
\|D\tilde{u}_{\varepsilon} -Du\|_{L^{p}(B_{r})}=\texttt{o}(\varepsilon,B_{r}) \\
\mbox{$\tilde{u}_{\varepsilon} \to u$  in $L^{\gamma}(B_{r})$ for every $\gamma < p^*$ and a.e.}
\end{cases}
$$
By Remark \ref{jensiremark} below, we have $F(D\tilde u_{\eps}) \to F(Du)$ in $L^1(B_{r})$. On the other hand by \rif{assgg}$_2$,\rif{convergenze} and again Lebesgue domination, it follows  $\gggg(\cdot,\tilde u_{\eps},D\tilde u_{\eps}) \to \gggg(\cdot,  u,Du)$ in $L^1(B_{r})$. We conclude with 
\eqn{jensen0} 
$$
\|\FF(\cdot, \tilde u_{\eps}, D\tilde u_{\eps}) - \FF(\cdot, u, Du)\|_{L^1(B_{r})} =\texttt{o}(\varepsilon,B_{r})
$$
up to subsequences. Next, observe that \rif{asshh} gives
\eqn{crescehh}
$$
|\hhh(x,y)| \leq |\hhh(x, 0)| + L f(x)|y|^\alpha \leq |\hhh(x, 0)| + L [f(x)]^{n/\alpha}+ |y|^{n\alpha/(n-\alpha)}$$
for every $x\in \Omega, y \in \er$.   
This, and \rif{defigg}, easily imply the (nonuniform in $\eps$) estimate
\eqn{piccione}
$$
| \tilde \hhh_{\eps}(x,y)|+ | \hhh_{\eps}(x,y)| \leq c_{\eps} (|y|+1). 
$$
Again by the definitions in \rif{defigg}, given any $x\in B_{r}$ and $y_{1},y_{2}\in \er$, we have
\eqn{assge2}
$$
\snr{\hhh_{\varepsilon}(x,y_{1})-\hhh_{\varepsilon}(x,y_{2})}+\snr{\tilde \hhh_{\varepsilon}(x,y_{1})-\tilde \hhh_{\varepsilon}(x,y_{2})}\leq 2f_{\varepsilon} (x)\snr{y_{1}-y_{2}}^{\alpha} 
\leq 2\nr{f_{\varepsilon}}_{L^\infty(B_{r})}\snr{y_{1}-y_{2}}^{\alpha}.
$$
By \rif{convergenze}$_2$, we now have that 
\eqn{26}
$$
\nr{  \hhh_{\eps}(\cdot,\tilde{u}_{\eps})-\hhh(\cdot,u)}_{L^1(B_{r})}=\nr{\tilde \hhh_{\eps}(\cdot,\tilde{u}_{\eps})-\hhh(\cdot,u)}_{L^1(B_{r})}  =\texttt{o}(\eps,B_{r})\,.
$$
The proof of this fact involves a commutator type estimate on mollifiers, and it is postponed to Lemma \ref{veve} below. We next consider the functional 
\eqn{funzio}
$$
W^{1,q}(B_{r})\ni w\mapsto \mathcal{G}_{\varepsilon,\delta}(w,B_{r}):=\int_{B_{r}}[\FF_{\eps, \delta}(x,w,Dw) +
\hhh_{\eps}(x,w)]\dx,
$$
and note that, using \rif{48},\rif{jensen0},\rif{jensen00} and \rif{26}, we have   
\eqn{conv0}
$$
\mathcal{G}_{\eps, \delta}(\tilde u_{\eps}, B_{r})- \mathcal{G}(u, B_{r})= \texttt{o}(\varepsilon,B_{r}) + \texttt{o}_{\eps}(\delta,B_{r})\,.
$$
It is at this stage worth remarking that the original assumptions \rif{assFF}-\rif{assgg} from Theorem \ref{t1}, imply that $z \mapsto  \FF_{\eps, \delta}(\cdot, z)$ satisfies the following version of \rif{32.2}:
\begin{flalign}\label{32.2bis}
\qquad\begin{cases}
\ \sigma_{\eps}[H_{\mu_{\delta}}(z)]^{q/2}+\tilde \nu [H_{\mu_{\delta}}(z)]^{p/2}\le \FF_{\eps, \delta}(\cdot, z)\le  \tilde L[H_{\mu_{\delta}}(z)]^{q/2}+ \tilde L[H_{\mu_{\delta}}(z)]^{p/2}\\
\    \nu_0(\eps)[H_{\mu_{\delta}}(z)]^{(q-2)/2}\snr{\xi}^{2}+\tilde \nu [H_{\mu_{\delta}}(z)]^{(p-2)/2}\snr{\xi}^{2}\le \partial_{zz}\FF_{\eps, \delta}(\cdot, z)\xi\cdot \xi \\
\ \snr{\partial_{zz}\FF_{\eps, \delta}(\cdot, z)}\le  \tilde L[H_{\mu_{\delta}}(z)]^{(q-2)/2}+ \tilde L[H_{\mu_{\delta}}(z)]^{(p-2)/2},
\end{cases}\,,
\end{flalign}
for new constants $0< \tilde \nu \leq  \tilde L $ as in Section \ref{basicn}, depending on $\data$, that are independent of $\eps, \delta$; note that $\mu_\delta=\mu +\delta>0$. On the contrary, $\nu_0(\eps) =q\min\{q-1,1\} \sigma(\eps)$ depends on $\eps$. These  properties can be checked using the very definition in \rif{nuoviF}, in conjunction with assumptions \rif{assFF}-\rif{assgg}; see for instance \cite[Section 4.5]{dm1} for more details. Similarly, the function $\gggg_{\delta}(\cdot)$ satisfies \rif{assgg2} with $\mu\equiv \mu_{\delta}$, and a suitable $\tilde L$, as it happens in \rif{32.2bis}  (eventually we enlarge $\tilde L$ to fit both \rif{32.2} and \rif{assgg2}). Finally, $\hhh_{\eps}(\cdot)$ satisfies a version of \rif{asshh22} thanks to \rif{piccione}-\rif{assge2}, with $f(\cdot)$ replaced by $f_{\eps}(\cdot)$. In particular, the integrand $z \mapsto \FF_{\eps, \delta}(\cdot, z)$ is convex with $q$-polynomial growth; keeping also in mind \rif{piccione}, we conclude we can apply Direct Methods (lower semicontinuity $\oplus$ coercivity) to define 
$u_{\varepsilon, \delta}\in \tilde{u}_{\varepsilon}+W^{1,q}_{0}(B_{r})$ as a solution to
\eqn{solutio}
$$
u_{\eps, \delta}\mapsto \min_{w\in \tilde{u}_{\varepsilon}+W^{1,q}_{0}(B_{r})}\, \mathcal{G}_{\eps, \delta}(w, B_{r})\,.
$$
Indeed, standard coercivity estimates - see for instance \cite[Pages 986-987]{BM} or directly \rif{coerc0} below - give that
$$
\|Dw\|_{L^q(B_{r})}^q \lesssim \mathcal{G}_{\eps, \delta}(w, B_{r}) +  \|f_{\eps}\|_{L^{n/\alpha}(B_{r})}^{q/(q-\alpha)}+ \|D\tilde u_\eps\|_{L^{q}(B_{r})}^q + \|\hhh_{\eps}(\cdot, \tilde u_{\eps})\|_{L^{1}(B_{r})}$$
holds for every $w\in \tilde{u}_{\varepsilon}+W^{1,q}_{0}(B_{r})$, where the involved constants also depend on $\eps$. Weak lower semicontinuity follows for instance as in \cite[Chapter 4, Remark 4.1]{giu}. 
To proceed, using the minimality of $u_{\eps, \delta}$, Sobolev and Young's inequalities, we find
\begin{eqnarray}
&&\notag \tilde \nu\nr{Du_{\eps, \delta}}_{L^p(B_{r})}^p+  \sigma_{\eps}\nr{Du_{\eps, \delta}}_{L^q(B_{r})}^q   \stackrel{\eqref{32.2bis}_1}{\le} \int_{B_{r}}\FF_{\eps, \delta}(x, u_{\eps, \delta}, Du_{\eps, \delta})\dx\nonumber \\&& \ \ \, \qquad \leq  \mathcal{G}_{\eps, \delta}(u_{\eps, \delta}, B_{r})+\nr{\hhh_{\eps}(\cdot,u_{\eps, \delta})}_{L^1(B_{r})}\nonumber \\
&&\notag \ \  \, \qquad \leq  \mathcal{G}_{\eps, \delta}(\tilde u_{\eps}, B_{r})+\nr{\hhh(\cdot,u)}_{L^1(B_{r})}\\ && \qquad \qquad +\nr{\hhh_{\eps}(\cdot,\tilde u_{\eps})-\hhh(\cdot, u)}_{L^1(B_{r})}+\nr{\hhh_{\eps}(\cdot,\tilde u_{\eps})-\hhh_{\eps}(\cdot,u_{\eps, \delta})}_{L^1(B_{r})}\nonumber \\
&& \stackrel{\eqref{assge2},\eqref{26}, \eqref{conv0}}{\leq}  \mathcal{G}(u, B_{r})+\texttt{o}(\varepsilon,B_{r}) + \texttt{o}_{\eps}(\delta,B_{r})+\nr{\hhh(\cdot,u)}_{L^1(B_{r})}\nonumber \\
&&\hspace{27mm}+c \nr{f_{\eps}}_{L^{n/\alpha}(B_{r})}\left[\nr{D\tilde{u}_{\eps}}^{\alpha}_{L^{p}(B_{r})}+\nr{Du_{\eps, \delta}}_{L^{p}(B_{r})}^{\alpha}\right]\nonumber \\
&& \quad  \stackrel{\eqref{assFF}_2, \eqref{convergenze}_1}{\le}  \frac{\tilde \nu}{2}\nr{Du_{\eps, \delta}}_{L^p(B_{r})}^p +c \mathcal{G}(u, B_{r})+c\nr{\hhh(\cdot,u)}_{L^1(B_{r})}\nonumber \\
&&\hspace{25mm} +c\nr{f+1}_{L^{n/\alpha}((1+\eps)B_{r})}^{p/(p-\alpha)} +\texttt{o}(\varepsilon,B_{r}) + \texttt{o}_{\eps}(\delta,B_{r})\,.\label{coerc0}
\end{eqnarray}
Recalling the notation fixed in \rif{abbrevia}, we therefore conclude with
\eqn{conclude}
$$
\nr{Du_{\eps, \delta}}_{L^p(B_{r})}^p+  \sigma_{\eps}\nr{Du_{\eps, \delta}}_{L^q(B_{r})}^q \leq c  \mathbb{G}_{\eps}(u, B_{r}) +\texttt{o}(\varepsilon,B_{r}) + \texttt{o}_{\eps}(\delta,B_{r})\,,$$
where $c \equiv c(\data)$. In \rif{conclude} we have used the embedding 
$
\nr{f}_{L^{n/\alpha}(B_{(1+\eps)r})}\lesssim \|f\|_{n/\alpha, \textnormal{\texttt{l}};B_{(1+\eps)r}}
$, since $\texttt{l} < n/\alpha$ (recall \rif{lorentzbasic}$_{2,3}$ and \rif{lo}).
\begin{remark} \label{jensiremark} \emph{In the proof of \rif{jensen0}  we have used a standard argument that it is better to recall. By Jensen's inequality we find 
$F(D\tilde u_{\eps}) \leq F(Du)*\phi_{\varepsilon}$ on $B_{r}$, therefore, as $F(Du)\in L^1(B_{r})$, a well-known variant of Lebesgue dominated convergence gives that $F(D\tilde u_{\eps}) \to F(Du)$ in $L^1(B_{r})$. }
\end{remark}

\subsection{Proof of \rif{stima1} and \rif{stima11}, and proof of Theorem \ref{t5} concluded} \label{generalapp2} We start by \rif{stima11}. 
By \rif{32.2bis} and the subsequent discussion, we are in the setting of Sections \ref{laprima} and \ref{appi}. Applying estimate \rif{stima1pre} to $u_{\eps, \delta}$, and using it in conjunction to \rif{conclude}, and yet letting $s\to r$, we find
\eqn{stim1}
$$
\nr{Du_{\eps, \delta}}_{L^{\infty}(B_{t})}\le \frac{c}{(r-t)^{\chi_1}}\left[
\mathbb{G}_{\eps}(u, B_{r})\right]^{\chi_2}+ \frac{\texttt{o}(\varepsilon,B_{r}) + \texttt{o}_{\eps}(\delta,B_{r})}{(r-t)^{\chi_1}}\,,
$$
for a new constant $c\equiv c(\data)$, new exponents $\chi_1, \chi_2\equiv  \chi_1, \chi_2(\datae) $, and whenever $0<t<r$.
All in all, we have proved that for every $0<t<r$ and for every $\eps$, there exists a constant $M(t, \eps)$ such that $\|Du_{\eps,\delta}\|_{L^{\infty}(B_t)}\leq M(t, \eps)$ holds for every $\delta$. By \rif{lippi} we conclude that
\eqn{infla}
$$\|  \FF_{ \delta}(\cdot, u_{\eps,\delta},  Du_{\eps,\delta}) - \FF(\cdot, u_{\eps,\delta}, Du_{\eps,\delta})\|_{L^1(B_t)} = \texttt{o}_{\eps}(\delta, B_t)\
$$
holds for every $\eps$. 
By \rif{conclude} it follows that the sequence $\{u_{\eps, \delta}\}_{\delta}$ is bounded in $W^{1,q}(B_{r})$ for every fixed $\eps$. By \rif{stim1}, passing to subsequences and using standard diagonal arguments, we find that for every $\eps$ there exists $u_{\eps}\in \tilde u_{\eps}+W^{1,q}_{0}(B_{r})$ such that, as $\delta \to 0$, it holds that
\eqn{convyconvy0}
$$
\begin{cases}
\, u_{\eps, \delta}\rightharpoonup u_{\eps} \ \mbox{in}\ W^{1,q}(B_{r})\\
\, u_{\eps, \delta}\rightharpoonup^* u_{\eps} \ \mbox{in $W^{1,\infty}(B_{t})$ for every $t<r$} \\
\, u_{\eps, \delta}\to u_{\eps} \ \mbox{strongly in $L^{n/(n-1)}(B_{r})$ and a.e.}\\
\, \tilde \hhh_{\eps}(\cdot, u_{\eps, \delta}) \to \tilde \hhh_{\eps}(\cdot, u_{\eps})\ \mbox{strongly in $L^1(B_{r})$ \,.}
\end{cases}
$$
Note that \rif{convyconvy0}$_4$ follows from \rif{convyconvy0}$_3$, a well-known variant of Lebesgue dominated convergence and \rif{piccione}. By convexity and non-negativity of $z \mapsto \FF(\cdot, z)$, we can use standard weak lower semicontinuity theorems (see \cite[Theorem 4.5]{giu}) to deduce that 
\eqn{wesemi}
$$
\int_{B_t} \FF(x, u_{\eps}, Du_{\eps })\dx
 \leq \liminf_{\delta}\int_{B_t}  \FF(x, u_{\eps, \delta}, Du_{\eps, \delta})\dx
$$
holds whenever $t \leq r$. 
Next, letting $\delta \to 0$ in \rif{conclude} and \rif{stim1} allows to get a uniform bounds (with respect to $\eps$) on $\nr{Du_{\eps}}_{L^p(B_{r})}$ and  $\nr{Du_{\eps}}_{L^\infty(B_{t})}$ for every $t<r$, respectively, so that, again up to not relabelled subsequences as for \rif{convyconvy0}, we can assume that there exists $v \in u+W^{1,p}_0(B_{r})$ such that
\eqn{convyconvy}
$$
\begin{cases}
\, u_{\eps}\rightharpoonup v \ \mbox{in}\ W^{1,p}(B_{r})\\
\, u_{\eps}\rightharpoonup^* v  \ \mbox{in $W^{1,\infty}(B_{t})$ for every $t<r$} \\
\, u_{\eps }\to v \ \mbox{strongly in $L^{\gamma}(B_{r})$ for every $\gamma <p^*$, and a.e.}\\
\, \hhh(\cdot,u_{\eps})\to \hhh(\cdot,v) \ \mbox{strongly in $L^1(B_{r})$.}
\end{cases}
$$
Note that \rif{convyconvy}$_4$ follows from \rif{crescehh} and \rif{convyconvy}$_3$ via Lebesgue domination. 
Moreover, we have 
\eqn{ancora}
$$ \nr{\tilde \hhh_{\eps}(\cdot,u_{\eps})-\hhh(\cdot,u_{\eps}) }_{L^1(B_{r})}=\texttt{o}(\varepsilon,B_{r})\,,$$
the proof of which is again postponed to Lemma \ref{veve} below. Then observe that 
\begin{flalign*}
 &\liminf_{\delta} \mathcal{G}_{\eps, \delta}(u_{\eps, \delta}, B_{r})  \stackrel{\rif{convyconvy0}_{3,4} }{\geq} \liminf_{\delta} \int_{B_{t}} \mathbb{F}_{\delta}(x,u_{\eps, \delta}, Du_{\eps, \delta}) \dx+ \int_{B_{r}} [\tilde \hhh_{\eps}(x,u_{\eps})+\snr{\tilde{u}_{\eps}(x)-u_{\eps}}^{\alpha}]\dx\\
& \quad   \stackrel{\rif{ancora}}{\geq}  \liminf_{\delta} \int_{B_{t}} \mathbb{F}_{\delta}(x,u_{\eps, \delta}, Du_{\eps, \delta}) \dx  + \int_{B_{r}} [ \hhh(x,u_{\eps})+\snr{\tilde{u}_{\eps}(x)-u_{\eps}}^{\alpha}]\dx+\texttt{o}(\varepsilon,B_{r})\\
& \quad \quad  \ \ \,  \stackrel{\rif{infla},\rif{wesemi}}{\geq}\int_{B_t} \FF(x, u_{\eps}, Du_{\eps })\dx
+\int_{B_{r}} [\hhh(x, u_{\eps})+\snr{\tilde{u}_{\eps}(x)-u_{\eps}}^{\alpha}]\dx+\texttt{o}(\varepsilon,B_{r})\\
& \qquad \qquad \ \   =\mathcal{G}(u_{\eps},B_{t})+
\int_{B_{r}}\snr{\tilde{u}_{\eps}(x)-u_{\eps}}^{\alpha}\dx+\texttt{o}(\varepsilon,B_{r})
\,.
\end{flalign*} Letting $t \to r$ in the above display gives
\eqn{lower1}
$$
 \mathcal{G}(u_{\eps},B_{r})+
\int_{B_{r}}\snr{\tilde{u}_{\eps}(x)-u_{\eps}}^{\alpha}\dx\leq 
\liminf_{\delta} \mathcal{G}_{\eps, \delta}(u_{\eps, \delta}, B_{r}) +\texttt{o}(\varepsilon,B_{r})\,.
$$
Using, in order: \rif{convyconvy} and lower semicontinuity, \rif{lower1}, the minimality of $u_{\eps,\delta}$, 
\rif{conv0} and the minimality of $u$, we obtain
\begin{flalign}
 \notag \mathcal{G}(v, B_{r}) + \int_{B_{r}}\snr{u-v}^{\alpha} \dx & \leq\liminf_{\eps}
  \mathcal{G}(u_{\eps},B_{r}) + \lim_{\eps}\int_{B_t} \snr{\tilde{u}_{\eps}(x)-u_{\eps}}^{\alpha}\dx\\
\notag &\leq\liminf_{\eps}  \liminf_{\delta} \mathcal{G}_{\eps, \delta}(u_{\eps, \delta}, B_{r})\\
& 
 \leq  \liminf_{\eps}\lim_{\delta}  \mathcal{G}_{\eps, \delta}(\tilde u_{\eps},B_{r})
 = \mathcal{G}(u, B_{r})\leq\mathcal{G}(v, B_{r})\,.\label{comein}
\end{flalign}
We deduce 
$u\equiv v$. Letting first $\delta\to 0$, and then $\eps \to 0$ in \rif{stim1}, leads to
\rif{stima11}, that leads, via a standard covering argument, 
to the local Lipschitz continuity of $u$. Finally, we observe that the same arguments developed here and in the preceding Section \ref{generalapp}, apply verbatim to the setting of Theorem \ref{t1}, and in this case one replaces $\|f\|_{n/\alpha, \textnormal{\texttt{l}};B_{r}}$ by $\|f\|_{n/\alpha,1/2;B_{r}}$ in \rif{stima11}.  \hspace{22mm}$\square$
\vspace{1mm}
The following lemma provides the proof of \rif{26} and \rif{ancora}.
\begin{lemma}\label{veve} Let $\{v_{\eps}\}\subset L^{p}(B_{r})$, $p>1$, be a sequence such that $v_{\eps} \to u \in L^{\gamma}(B_{r})$, for every $\gamma < p^*$. Then 
\eqn{statlemma}
$$
\begin{cases}
\, \nr{\tilde \hhh_{\eps}(\cdot,v_{\eps})-\hhh(\cdot,v_{\eps}) }_{L^1(B_{r})}\stackrel{\eps \to 0}{\to}  0\\
\, \nr{\tilde \hhh_{\eps}(\cdot,v_{\eps})-\hhh(\cdot,u)}_{L^1(B_{r})}\stackrel{\eps \to 0}{\to} 0\,.
\end{cases}
$$
\end{lemma}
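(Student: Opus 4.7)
I will prove the second assertion in \rif{statlemma} first, and then deduce the first via an extra Hölder estimate. The starting decomposition is
\[
\tilde \hhh_{\eps}(x,v_{\eps}(x))-\hhh(x,u(x)) = I_{\eps}(x)+II_{\eps}(x),
\]
with $I_{\eps}(x):=\tilde \hhh_{\eps}(x,v_{\eps}(x))-\tilde \hhh_{\eps}(x,u(x))$ and $II_{\eps}(x):=\tilde \hhh_{\eps}(x,u(x))-\hhh(x,u(x))$. The elementary but crucial observation that $n\alpha/(n-\alpha)<p^{*}$, valid since $\alpha<p$ (compare $\alpha+\gamma<p$ in \rif{assgg}$_4$), will be used throughout: it ensures both that $v_{\eps}\to u$ in $L^{n\alpha/(n-\alpha)}(B_{r})$ by the hypothesis, and that $u\in L^{n\alpha/(n-\alpha)}_{\loc}(\Omega)$ in the ambient setting of Theorems \ref{t1} and \ref{t5}.

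For $I_{\eps}$, I will pass the Hölder bound \rif{asshh}$_1$ through the mollifier via Fubini, obtaining $|\tilde \hhh_{\eps}(x,y_{1})-\tilde \hhh_{\eps}(x,y_{2})|\le (\phi_{\eps}\ast f)(x)|y_{1}-y_{2}|^{\alpha}$, and then apply Hölder's inequality with exponents $n/\alpha$ and $n/(n-\alpha)$:
\[
\nr{I_{\eps}}_{L^{1}(B_{r})}\le \nr{\phi_{\eps}\ast f}_{L^{n/\alpha}(B_{r})}\nr{v_{\eps}-u}_{L^{n\alpha/(n-\alpha)}(B_{r})}^{\alpha}.
\]
Young's convolution inequality together with \rif{lorentzbasic} controls the first factor uniformly in $\eps$ by $\|f\|_{n/\alpha,1/2;\Omega}$, while the second vanishes. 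For $II_{\eps}$, I will add and subtract $\hhh(x+\eps\lambda,u(x+\eps\lambda))$ inside the mollification defining $\tilde \hhh_{\eps}$ in \rif{defigg} to write $II_{\eps}=A_{\eps}+B_{\eps}$, where
\[
A_{\eps}(x):=\int_{\BB}\phi(\lambda)\bigl[\hhh(x+\eps\lambda,u(x))-\hhh(x+\eps\lambda,u(x+\eps\lambda))\bigr]\dla
\]
and $B_{\eps}(x):=\int_{\BB}\phi(\lambda)[g(x+\eps\lambda)-g(x)]\dla$, with $g(x):=\hhh(x,u(x))$. The term $A_{\eps}$ can be estimated pointwise by $\int\phi(\lambda)f(x+\eps\lambda)|u(x)-u(x+\eps\lambda)|^{\alpha}\dla$ through \rif{asshh}$_1$, and then bounded in $L^{1}$ by Fubini and Hölder to produce a factor $\nr{u(\cdot+\eps\lambda)-u}_{L^{n\alpha/(n-\alpha)}(B_{r})}^{\alpha}$ that vanishes by translation continuity. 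For $B_{\eps}$, the growth bound \rif{crescehh} (together with $f\in L^{n/\alpha}$ and $u\in L^{n\alpha/(n-\alpha)}_{\loc}$) places $g$ in $L^{1}_{\loc}(\Omega)$, whence Fubini and $L^{1}$-continuity of translations give $\nr{B_{\eps}}_{L^{1}(B_{r})}\le \int_{\BB}\phi(\lambda)\nr{g(\cdot+\eps\lambda)-g}_{L^{1}(B_{r})}\dla\to 0$.

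The first identity in \rif{statlemma} will then follow from the triangle inequality and one more application of the Hölder step used for $I_{\eps}$:
\[
\nr{\tilde \hhh_{\eps}(\cdot,v_{\eps})-\hhh(\cdot,v_{\eps})}_{L^{1}(B_{r})}\le \nr{\tilde \hhh_{\eps}(\cdot,v_{\eps})-\hhh(\cdot,u)}_{L^{1}(B_{r})}+\nr{f}_{L^{n/\alpha}(B_{r})}\nr{u-v_{\eps}}_{L^{n\alpha/(n-\alpha)}(B_{r})}^{\alpha},
\]
with both summands vanishing. The only delicate point I anticipate is the commutator argument for $B_{\eps}$: a direct estimate on $\tilde \hhh_{\eps}(x,u(x))-\hhh(x,u(x))$ is out of reach because $\hhh$ is only measurable in $x$, and the decisive step is the identification of $g=\hhh(\cdot,u)$ as an $L^{1}_{\loc}$-function through \rif{crescehh}, which finally unlocks the standard translation-continuity machinery.
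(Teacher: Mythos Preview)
Your proof is correct and follows essentially the same approach as the paper: the decomposition $II_{\eps}=A_{\eps}+B_{\eps}$ is exactly the paper's splitting $\tilde \hhh_{\eps}(\cdot,u)-[\hhh(\cdot,u)]*\phi_{\eps}$ plus $[\hhh(\cdot,u)]*\phi_{\eps}-\hhh(\cdot,u)$, and the remaining estimates via \rif{asshh}$_1$ and translation continuity are the same. The only cosmetic differences are that the paper proves \rif{statlemma}$_1$ before \rif{statlemma}$_2$, first reduces to inner balls $B_{\varrho}\Subset B_{r}$ by equintegrability, and handles your $A_{\eps}$ via a Young-inequality $1/k$-trick rather than your direct H\"older step; your route is slightly cleaner there.
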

\begin{proof}
Using \rif{defigg} and \rif{crescehh}, it follows that
$
| \tilde \hhh_{\eps}(x, y)| \lesssim   \hhh_{0,\varepsilon}(x)+  [f_{\eps}(x)]^{n/\alpha}+ |y|^{n\alpha/(n-\alpha)}
$
holds for every $(x,y) \in B_{r} \times \er$. 
By this and the equintegrability of $\{v_{\eps}\}$ in $L^{n/(n-1)}$ (recall that $p>1$), it is sufficient to prove that \rif{statlemma} with $B_{r}$ replaced by any concentric ball $B_{\varrho}\Subset B_{r}$ and considering the range $\eps < (r-\varrho)/10$.  
Let us first show that 
\eqn{euna}
$$
\lim_{\eps}\, \nr{\tilde \hhh_{\eps}(\cdot,u)-\hhh(\cdot,u) }_{L^1(B_{\rr})} =0\,.
$$
For this we estimate
\begin{flalign}
&
\notag \nr{\tilde \hhh_{\eps}(\cdot,u)-\hhh(\cdot,u) }_{L^1(B_\varrho)} \\
& \qquad \leq \nr{\tilde \hhh_{\eps}(\cdot,u)-[ \hhh(\cdot,u)]*\phi_{\eps}}_{L^1(B_\varrho)}+\nr{[ \hhh(\cdot,u)]*\phi_{\eps}-\hhh(\cdot,u) }_{L^1(B_\varrho)}\,.\label{primastim}
\end{flalign}
The last term in \rif{primastim} goes to zero as $\eps  \to 0$, by basic properties of convolutions (note that \rif{crescehh} guarantees that $\hhh(\cdot,u) \in L^1(B_{r})$). For the first one, for every $k \in \en$ and $x \in B_{\varrho}$ we have
\begin{flalign*}
&\snr{ \tilde \hhh_{\eps}(x,u(x))-[[ \hhh(\cdot,u)]*\phi_{\eps}](x)} \leq \int_{\BB} f(x+\eps \lambda)|u(x+\eps \lambda)-u(x)|^{\alpha}\phi(\lambda)\dla\\
& \qquad \qquad \qquad \quad \leq \frac1{k} \int_{\BB}[f(x+\eps \lambda)]^{n/\alpha}\phi(\lambda) + c(k)  \int_{\BB}|u(x+\eps \lambda)-u(x)|^{\frac{n \alpha}{n-\alpha}}\phi(\lambda)\dla\,.
\end{flalign*}
Integrating the previous inequality over $B_{\varrho}$ and using Fubini, we obtain
$$
\nr{\tilde \hhh_{\eps}(\cdot,u)-[ \hhh(\cdot,u)]*\phi_{\eps}}_{L^1(B_\varrho)} \leq \frac{c}{k} \|f\|_{L^{n/\alpha}(B_{r})}^{n/\alpha} + c(k) \texttt{o}(\eps, B_{\varrho})\,.
$$
Using this in \rif{primastim}, letting first $\eps \to 0$ and then $k\to \infty$, yields \rif{euna}. For \rif{statlemma}$_1$ we estimate
\begin{flalign}
\notag \nr{\tilde \hhh_{\eps}(\cdot,v_{\eps})-\hhh(\cdot,v_{\eps}) }_{L^1(B_\varrho)} & \leq \nr{\tilde \hhh_{\eps}(\cdot,v_{\eps})-\tilde \hhh_{\eps}(\cdot,u) }_{L^1(B_\varrho)}
+\nr{\tilde \hhh_{\eps}(\cdot,u)-\hhh(\cdot,u) }_{L^1(B_\varrho)}\\
& \qquad  + \nr{\hhh(\cdot,u)-\hhh(\cdot,v_{\eps}) }_{L^1(B_\varrho)}\,.\label{edue}
\end{flalign}
Then, using \rif{assge2} and Fubini, we get
$$
\nr{\tilde \hhh_{\eps}(\cdot,v_{\eps})- \tilde \hhh_{\eps}(\cdot,u)}_{L^1(B_\varrho)} \leq c\nr{f}_{L^{n/\alpha}(B_{r})}\nr{v_{\varepsilon}-u}_{L^{\frac{n\alpha}{n-\alpha}}(B_{r})}^{\alpha}\stackrel{\eps \to 0}{\to} 0\,,
$$
and \eqref{asshh} gives
\eqn{treated}
$$
\nr{ \hhh (\cdot,u)-\hhh (\cdot,v_{\eps})}_{L^1(B_\varrho)}  \leq c\nr{f}_{L^{n/\alpha}(B_{r})}\nr{v_{\varepsilon}-u}_{L^{\frac{n\alpha}{n-\alpha}}(B_{r})}^{\alpha}\stackrel{\eps \to 0}{\to} 0\,.
$$
Using the content of the last two displays and \rif{euna} in \rif{edue} yields \rif{statlemma}$_1$. As for \rif{statlemma}$_2$, this follows via \rif{statlemma}$_1$ and  \rif{treated} via triangle inequality. 
\end{proof}

\subsection{Gradient H\"older continuity and proof of Theorem \ref{t1} concluded}
\label{holdergrad} Here we assume that $f \in L^{\mathfrak{q}}$ and $q > n/\alpha$ and prove that $Du$ is locally H\"older continuous. This completes the proof of Theorem \ref{t1}. Once we know that the gradient of minimizers is locally bounded, we can use more standard perturbation methods to prove its local H\"older continuity. Still we have to be careful at several points, since the boundedness of the gradient cannot be directly used, but must be transferred with an explicit control on the $L^\infty$-norm. For this, we reconsider some of the arguments explained in Sections \ref{hyb0}-\ref{hyb2} and use the same approximation described in Section \ref{generalapp}. First of all, as we are going to prove a local result, there is no loss of generality in assuming that $\mathcal G(u, \Omega)<\infty$, $h(\cdot, u)\in L^1(\Omega)$ and $f \in L^{\mathfrak {q}}(\Omega)$. Fix an open subset $\Omega_0 \Subset \Omega$, set  $r :=\min\{\dist(\Omega_0, \partial \Omega)/4, 1\}$; choose $B_{r}\equiv B_{r}(x_{\rm c})$ with $x_{\rm c}\in \Omega_0$ (that implies that $B_{r} \Subset \Omega$) and use \rif{stim1} to obtain, for any concentric $B_{\tau}\subset B_{r}$ with $\tau \leq r/2$
\begin{flalign}
\notag\nr{Du_{\eps, \delta}}_{L^{\infty}(B_{\tau})}&\le cr^{-\chi_1}\left[
\mathcal{G}(u, \Omega) +c\nr{\hhh(\cdot,u)}_{L^1(\Omega)}+\|f\|_{L^{\mathfrak{q}}(\Omega)} +1\right]^{\chi_2}\\
&\notag  \qquad  + cr^{-\chi_1}[\texttt{o}(\varepsilon,B_{r}) + \texttt{o}_{\eps}(\delta,B_{r})]\label{stim1bis}
\\ & =:M+ cr^{-\chi_1}[\texttt{o}(\varepsilon,B_{r}) + \texttt{o}_{\eps}(\delta,B_{r})]=: M_{B_{r}}(\eps, \delta)\geq 1 \,,
\end{flalign}
where $c\equiv c (\data, |\Omega|)$. Note that the constant $M$ does not depend on the ball chosen $B_{r}$, but only on the radius $r$, that is, on $\dist (\Omega_0, \partial \Omega)$. 
Recalling the meaning of $\texttt{o}(\varepsilon,B_{r})$ and $\texttt{o}_{\eps}(\delta,B_{r})$ introduced at the beginning of Section \ref{generalapp}, letting first $\delta \to 0$ (keeping $\eps$ fixed), and then $\eps \to 0$, we get that $M_{B_{r}}(\eps, \delta) \to M$ in the sense that 
\eqn{doppiaconv}
$$
M_{B_{r}}(\eps, \delta)  \stackrel{\delta \to 0}{\to} M+cr^{-\chi_1}\texttt{o}(\varepsilon,B_{r}) 
 \stackrel{\eps \to 0}{\to}M\,.
$$
We define $v \in u_{\eps, \delta}+W^{1,q}_0(B_{\tau})$ as the solution to
\begin{flalign}\label{pd-dopo}
 v\mapsto \min_{w \in u_{\eps, \delta}+W^{1,q}_0(B_{\tau})} \int_{B_{\tau}} F_{0}(Dw)  \dx\,, \quad 
 F_{0}(z) :=\mathbb{F}_{\eps, \delta}(x_{\rm c}, (u_{\eps, \delta})_{B_{\tau}}, z)
\end{flalign}
so that, by \rif{nuoviF} it is $\mathbb{F}_{\eps, \delta}(x_{\rm c}, (u_{\eps, \delta})_{B_{\tau}}, z)= F_{\eps, \delta}(z)  +  \gggg_{\delta}(x_{\rm c}, (u_{\eps, \delta})_{B_{\tau}}, z)$. 
Recall that $F_0(\cdot)$ satisfies \rif{32.2bis}, and that $\gggg_{\delta}(\cdot)$ satisfies \rif{assgg2} with $\mu\equiv \mu_{\delta}>0$ (see the discussion after \rif{32.2bis}). Three implied features of such definitions are: 
\begin{itemize}
\item The integrand $F_0(\cdot)$ is exactly of the form considered in \rif{congela} (without scaling, therefore   $\varrho =1$) and we can use some of the estimates developed in Section \ref{hyb0}. 
\item Applying Lemma \ref{marcth}, estimate \rif{stimaprimissima2}, and then minimality of $v$ exactly as in \rif{globale0}-\rif{globale}, and finally recalling \rif{stim1bis}, we gain an analog of \rif{fuckbsc}, i.e., 
\eqn{altrasup}
$$
\nr{Dv}_{L^{\infty}(B_{\tau/2})}  \leq \tilde c[M_{B_{r}}(\eps, \delta)]^\ssf\,, \qquad \tilde c \equiv \tilde c (n,p,q,\nu, L) \,.
$$
\item Let us fix a constant $\mathfrak{M}\geq 1$. The integrand $F_0(\cdot)$ is $C^2$-regular and satisfies 
\begin{flalign}\label{F_0part}
\qquad\begin{cases}
\, \tilde \nu [H_{\mu_\delta}(z)]^{p/2} \leq   F_0(z)\leq  c \mathfrak{M}^{q-p}[H_{\mu_\delta}(z)]^{p/2}\\
\,  \tilde \nu [H_{\mu_\delta}(z)]^{(p-2)/2}\snr{\xi}^{2}\le \partial_{zz}  F_0(z)\xi\cdot \xi  \\
\, \snr{\partial_{zz}  F_0(z)}\le c \mathfrak{M}^{q-p}[H_{\mu_\delta}(z)]^{(p-2)/2} 
\end{cases}
\end{flalign}
whenever $z,\xi \in \er^n$ with $\snr{z}\leq \mathfrak{M}$, where $c\equiv c (n,p,q,\tilde L)\equiv c (n,p,q,L)$. 
\end{itemize} 
We are now in position to apply Lemma \ref{manflemma} below to $v$ in the ball $B_{\tau/2}$, with $\mathfrak{M}\equiv \tilde c [M_{B_{r}}(\eps, \delta)]^{\mathfrak{s}}$, as appearing in \rif{altrasup}. We conclude with
\eqn{holly1}
$$
\mint_{B_\varrho} |Dv-(Dv)_{B_\varrho}|^p \dx \leq c_{\eps, \delta} \left(\frac{\varrho}{\tau}\right)^{p\beta_{\eps, \delta}} $$
that holds whenever $B_{\varrho}\subset B_{\tau/2}$ are concentric balls. Here $c_{\eps, \delta}\equiv c_{\eps, \delta} (n,p,q,\nu, L, M_{B_{r}}(\eps, \delta)):= c_{\rm h}(\mathfrak{M})\geq 1$ and $\beta_{\eps, \delta}\equiv \beta_{\eps, \delta} (n,p,q,\nu, L,, M_{B_{r}}(\eps, \delta) ):= \beta_{\rm h}(\mathfrak{M})\in (0,1)$ are non-decreasing and non-increasing functions of their last argument, respectively. This follows by the monotonicity properties of the functions $ c_{\rm h}(\cdot)$ and $ \beta_{\rm h}(\cdot)$
 asserted in Lemma \ref{manflemma}. By \rif{doppiaconv}, we get, up to not relabelled subsequences
\eqn{doppiaconv2}
$$
c_{\eps, \delta}  \stackrel{\delta \to 0}{\to} c_{\eps }  \stackrel{\eps \to 0}{\to} c_{\rm l}\equiv 
c_{\rm l}(\data,M)
 < \infty\quad \mbox{and} \quad  \beta_{\eps, \delta}  \stackrel{\delta \to 0}{\to} \beta_{\eps }  \stackrel{\eps \to 0}{\to} \beta_{\rm l} \equiv \beta_{\rm l}(\data,M)>0
$$
in the same sense of \rif{doppiaconv}. From now on we continue to denote by $c_{\eps, \delta}$ a double sequence of constants, depending in the most general case on $\data$, $\mathfrak{q}$ and $M_{B_{r}}(\eps, \delta)$, such that \rif{doppiaconv2} takes place. The exact value of $c_{\eps, \delta}$ might change on different occurrences, but still keeping the property in \rif{doppiaconv2}, as it will be clear by the way they will be determined, starting from \rif{doppiaconv} and \rif{doppiaconv2}. To proceed, 
we argue as for the proof of \rif{37bbb}; there replace $8|h|^{\beta_0}\equiv \tau$ and $f_{\rr}$ by $f_{\eps}$ and $\varrho$ by $1$, in order to adapt it to the present setting.  In particular \rif{assgg2}-\rif{asshh22} must be used instead of \rif{assaar}-\rif{assgr}. We find
\begin{flalign}
\notag\mint_{B_{\tau}}\snr{V(Du_{\eps, \delta})-V(Dv)}^{2}  \dx  & \le c \tau^{ \alpha}[M_{B_{r}}(\eps, \delta)]^{\alpha + \gamma q/p}+ c \tau^{ \alpha}[M_{B_{r}}(\eps, \delta)]^\alpha
\mint_{B_{\tau}}\snr{f_\eps}  \dx\\ & \leq c_{\eps, \delta}  \tau^{\alpha}+ c_{\eps, \delta}    \tau^{\alpha-n/\mathfrak{q}}\|f+1\|_{L^{\mathfrak{q}}(B_{r})}\leq  c_{\eps, \delta}  \tau^{\alpha-n/\mathfrak{q}}\,.\label{holly11}
\end{flalign}
Setting
$
\sigma := \min\{1, p/2\}(\alpha-n/\mathfrak{q})>0$, by \rif{sopradue} and H\"older's inequality, we further obtain
\eqn{holly2}
$$
\mint_{B_\tau}\snr{Du_{\eps, \delta}-Dv}^{p}  \dx   \leq c_{\eps, \delta}  \tau^{\alpha-n/\mathfrak{q}}
+c_{\eps, \delta}\una[M_{B_{r}}(\eps, \delta)]^{p(2-p)/2} \tau^{p (\alpha-n/\mathfrak{q})/2}\leq c_{\eps, \delta} \tau^{\sigma} .
$$ 
Estimates \rif{holly1} and \rif{holly2} are the two basic ingredients to apply a standard comparison argument implying local H\"older continuity. For this see for instance \cite[pp. 43-45]{manth1} and \cite[Proof of Theorem 2.2]{AM}. Specifically, we first find
$$
\mint_{B_\varrho} |Du_{\eps, \delta}-(Du_{\eps, \delta})_{B_\varrho}|^p \dx
\leq  c_{\eps, \delta} \left(\frac{\varrho}{ \tau}\right)^{p\beta_{\eps, \delta}} + c_{\eps, \delta} 
\left(\frac \tau \varrho \right)^n \tau^{\sigma} \,,
$$
and this holds whenever $\varrho \leq \tau/2$. 
By taking $\varrho=  (\tau/2)^{1+\sigma/(n+p\beta_{\eps, \delta})}$ we arrive at 
$$
 \mint_{B_\varrho}|Du_{\eps, \delta}-(Du_{\eps, \delta})_{B_\varrho}|^p   \dx
\leq  c_{\eps, \delta} \varrho^{\frac{p\sigma\beta_{\eps, \delta}}{n+\sigma+p\beta_{\eps, \delta}}}\,,\quad  \mbox{for every $\rr \leq (r/4)^{1+\sigma/(n+p\beta_{\eps, \delta})}$}\,.
$$
Letting first $\delta\to 0$ and then $\eps \to 0 $ in the above display, and recalling \rif{convyconvy0},\rif{convyconvy} and \rif{doppiaconv2}, we finally conclude with
\eqn{holdsfor}
$$
 \mint_{B_\varrho}|Du-(Du)_{B_\varrho}|^p  \dx
\leq  c \varrho^{\frac{p\sigma\beta_{\rm l}}{n+\sigma+p\beta_{\rm l}}}\,,\quad  \mbox{for every $\rr \leq (r/4)^{1+\sigma/(n+p\beta_{\rm l})}$} \,,
$$ 
where $c \equiv c (\data, M )$ and the exponent $\beta_{\rm l}\equiv \beta_{\rm l}(\data, M)\in (0,1)$ is defined in \rif{doppiaconv2}.  Summarizing,  we have proved that \rif{holdsfor} holds whenever $B_\varrho$ is centered in $\Omega_0$, and where $r =\min\{ \dist (\Omega_0, \partial \Omega)/4, 1\}$. Since $\Omega_0$ is arbitrary, Campanato-Meyers integral characterization of H\"older continuity yields that for every open subset $\Omega_0 \Subset \Omega$ it holds that 
\eqn{localeh}
$$
[Du]_{0,\alpha_*; \Omega_0} \leq c\,,  \quad \alpha_* := \frac{\sigma\beta_{\rm l}}{n+\sigma+p\beta_{\rm l}}\,,
$$
where $c\equiv c(\data,\mathcal{G}(u, \Omega), \nr{\hhh(\cdot,u)}_{L^1(\Omega)},\nr{f}_{L^{\mathfrak{q}}(\Omega)}, \dist(\Omega_0, \partial \Omega))$. The proof is complete.\hspace{10mm}$\square$
\begin{lemma}[Theorem 2 from \cite{manth1}, revisited] \label{manflemma}Let $v \in W^{1,q }(B)$, for some ball $B \subset \er^n$, be a weak solution to $\diver\, \partial_z F_0(Dv)=0$ in $B$, such that $\|Dv\|_{L^\infty(B)}\leq \mathfrak{M}$, where $\mathfrak{M}\geq 1$ is a fixed constant and $F_0(\cdot)$ is defined in \eqref{pd-dopo}. There exist two constants $c_{\rm h}(\mathfrak{M})\geq 1$ and $\beta_{\rm h}(\mathfrak{M})\in (0,1]$, depending on $n,p,q,\nu, L$ and $\mathfrak{M}$, but otherwise independent of $\mu$, $\eps, \delta, \nu_0$ and the integrand considered $F_0(\cdot)$, such that
\eqn{oscimanf}
$$
\osc\, (Dv,s B) \leq c_{\rm h}(\mathfrak{M}) s^{\beta_{\rm h}(\mathfrak{M})}
$$ 
holds whenever $s \in (0,1]$. Moreover, the functions $\mathfrak{M} \mapsto c_{\rm h}(\mathfrak{M})$ and $\mathfrak{M} \mapsto \beta_{\rm h}(\mathfrak{M})$ are non-decreasing and non-increasing, respectively. 
\end{lemma}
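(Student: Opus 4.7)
The plan is to realize \eqref{oscimanf} as a direct consequence of the classical gradient H\"older continuity theory for $p$-Laplace type equations with bounded gradient, applied to the frozen problem \eqref{pd-dopo}, and then to carefully trace how each quantity depends on $\mathfrak{M}$. The crucial observation is that assumption \eqref{F_0part} identifies $F_0(\cdot)$, on the range $\{|z|\le \mathfrak{M}\}$, as a uniformly elliptic $p$-Laplace type integrand whose ellipticity ratio is bounded by $c(n,p,q,L)\mathfrak{M}^{q-p}/\tilde\nu$. Since $\|Dv\|_{L^\infty(B)}\le \mathfrak{M}$, this is precisely the effective ellipticity ratio felt by the solution $v$, and the $q$-growth of $F_0(\cdot)$ plays no role in the a priori estimates that follow.

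The first step is higher differentiability. Applying classical difference-quotient arguments to the equation $\diver\,\partial_z F_0(Dv)=0$, using the Hessian bounds in \eqref{F_0part} together with $|Dv|\le\mathfrak{M}$, one obtains $V_{\mu_\delta}(Dv)\in W^{1,2}_{\loc}(B)$ with a Caccioppoli-type estimate whose constants depend on $n,p,q,\nu,L$ and on $\mathfrak{M}^{q-p}$. The use of the $V_{\mu_\delta}$-formalism, via \eqref{ineV}--\eqref{diffH}, guarantees uniformity as $\mu_\delta\to 0$, while the independence of $\nu_0$ is automatic since $\nu_0$ does not appear in \eqref{F_0part} at all.

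The second step is the oscillation decay itself. One differentiates the equation formally and applies the Uhlenbeck-Tolksdorf-DiBenedetto-Manfredi machinery: each partial derivative $D_k v$ satisfies a uniformly elliptic linear equation with measurable coefficients $\partial_{zz}F_0(Dv)$ whose ellipticity ratio is bounded by $c\mathfrak{M}^{q-p}$. Via De Giorgi-Moser iteration applied to truncations of $(|Dv|^2+\mu_\delta^2)^{p/2}$ and a standard scaled hole-filling argument as in \cite{manth1}, one arrives at
\[
\osc(Dv,sB)\le \tilde c_{\rm h}(\mathfrak{M})\, s^{\beta_{\rm h}(\mathfrak{M})}\bigl(\osc(Dv,B)+1\bigr),
\]
and absorbing the trivial bound $\osc(Dv,B)\le 2\mathfrak{M}$ into the prefactor yields \eqref{oscimanf}.

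Finally, the monotonicity of $c_{\rm h}(\cdot)$ (non-decreasing) and $\beta_{\rm h}(\cdot)$ (non-increasing) is built into the De Giorgi-Nash-Moser theory for linear uniformly elliptic equations in divergence form: the H\"older exponent is a non-increasing function of the ellipticity ratio, while the constant is non-decreasing; the ratio here is, up to a factor $c(n,p,q,\nu,L)$, equal to $\mathfrak{M}^{q-p}$, which is non-decreasing in $\mathfrak{M}$. The main obstacle is ensuring that all estimates are genuinely independent of $\mu_\delta\in(0,2]$, so that one can pass to the degenerate limit $\mu_\delta\to 0$ without losing control of the constants. This is handled through systematic use of the $V_{\mu_\delta}$-formalism in \eqref{ineV}--\eqref{elemint}, which keeps every quantity stable as $\mu_\delta\to 0$; the independence of the specific integrand $F_0(\cdot)$ beyond the structural parameters in \eqref{F_0part} and $\mathfrak{M}$ then follows by inspection of the proof.
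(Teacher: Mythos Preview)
Your proposal is correct and follows essentially the same route as the paper: both recognize that \eqref{F_0part} makes $F_0$ a $p$-Laplace type integrand with ellipticity ratio $c\mathfrak{M}^{q-p}$ on the relevant range $\{|z|\le\mathfrak{M}\}$, obtain $Dv\in W^{1,2}_{\loc}$ by difference quotients (the paper via Lemma~\ref{cacc-class1}), and then track the constants in the Manfredi-type oscillation argument through this ratio to get the monotone dependence on $\mathfrak{M}$.

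One nuance of emphasis is worth flagging. You identify the ``main obstacle'' as uniformity in $\mu_\delta\to 0$, but within the lemma $\mu_\delta>0$ is fixed; the uniformity is only needed so that the constants survive the later limit in the approximation scheme. The paper instead stresses a different subtlety: the proof in \cite{manth1} proceeds via an approximation (to pass from the degenerate to the nondegenerate case), and that approximation \emph{cannot} be used here because it would not preserve the a priori bound $\|Dv\|_{L^\infty(B)}\le\mathfrak{M}$. The paper's point is that this approximation is unnecessary precisely because $\mu_\delta>0$ already holds, so $Dv\in W^{1,2}_{\loc}$ is available directly and the differentiated equation can be tested rigorously. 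Your argument implicitly sidesteps this by going straight through difference quotients, which is fine, but it would be worth stating explicitly that the bound $|Dv|\le\mathfrak{M}$ must be preserved throughout and that no smoothing of $v$ is allowed.
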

\begin{proof} There are essentially two crucial remarks here. The first is that, when applying the methods for \cite[Theorem 2]{manth1} to $\diver\, \partial_zF_0(Dv)=0$, we only see $z\equiv Dv$ as arguments of $\partial_zF_0(z)$ and $\partial_{zz}F_0(z)$. Therefore we can argue as conditions \rif{F_0part} hold for every $z \in \er^n$ when dealing with weak solutions as in \cite{manth1}. At this point \rif{oscimanf} follows, with the dependence of $c_{\rm h}(\mathfrak{M}), \beta_{\rm h}(\mathfrak{M})$ described in the statement, by tracking the constants in \cite{manth1}. Specifically, the ratio $\gamma_1/\gamma_0$ appearing in \cite{manth1} is in this setting replaced by $c\mathfrak{M}^{q-p}$, where $c\equiv c (n,p,q,\nu, L)$. The second remark is that here we cannot use any approximation procedure, as in \cite{manth1}, since we have to keep the condition $\|Dv\|_{L^\infty(B)}\leq \mathfrak{M}$. Such an approximation is used in \cite{manth1} to deal with $C^2$-solutions; such higher regularity allows  for certain computations. It is not difficult to see that essentially the only point where this enters is the derivation and the testing of the differentiated equation 
$\diver\, (\partial_{zz}F_0(Dv)DD_sv)=0$, $s \in \{1, \dots, n\}$. On the other hand, in order to carry out the computation of \cite{manth1}, it is sufficient to have $Dv\in W^{1,2}_{\loc}(B,\er^n)\cap L^{\infty}_{\loc}(B, \er^n)$. As for $Dv \in L^{\infty}$, this is assumed in Lemma \ref{manflemma}. As for $Dv \in W^{1,2}_{\loc}$, this comes as in Lemma \ref{cacc-class1}, whose application is allowed as $\mu\equiv \mu_\delta >0$ in the present situation. This allows to avoid the approximation in \cite{manth1}, that was in fact implemented to reduce to the case $\mu>0$ (denoted by  $\eps$ in \cite{manth1}). 
\end{proof}

\section{Theorems \ref{t2}, \ref{t4} and Corollary \ref{c3}}
\begin{proposition}\label{priori3}
Let $u\in W^{1,q}(B_{r})$ be a minimizer of the functional $ \mathcal F_{\texttt{x}}(\cdot, B_{r})$ in \eqref{FMx}, where $B_{r}\Subset \Omega$ and $r\leq 1$, under assumptions \eqref{xx.3} and 
\begin{eqnarray}\label{bound3dopo}
\frac{q}{p}<  1+  \textnormal{\texttt{k}}\frac{\alpha^2}{n^2}\,, \qquad  \mbox{where}\  \textnormal{\texttt{k}} :=  
\begin{cases}  
\,4/9  & \ \mbox{if} \  p\geq 2\\
\,8/33 & \  \mbox{if}  \ 1<p<2\,.
 \end{cases}
\end{eqnarray}
Then
\eqn{stimasharp}
$$
\nr{Du}_{L^{\infty}(B_{t})}\le \frac{c}{(s-t)^{\chi_1}}\left[\|Du\|_{L^p(B_s)} +1\right]^{\chi_2}
$$
holds whenever $B_{t}\Subset B_s\Subset B_{r}$ are concentric balls, with $c\equiv c (\data)$ and $\chi_1, \chi_2\equiv \chi_1, \chi_2(\datae)$. 
\end{proposition}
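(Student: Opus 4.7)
The plan is to mirror the proof of Proposition~\ref{priori1}, now using the Caccioppoli inequalities from Proposition~\ref{caccin4} (tailored to functionals of the form $\mathcal F_{\texttt{x}}$ in \eqref{FMx}) in place of Propositions~\ref{caccin1}--\ref{caccin2}. Since Proposition~\ref{caccin4} distinguishes between $p\ge 2$ (via \eqref{xx.10}, with $\beta$ admissible up to $\alpha/(1+\alpha)$ in \eqref{bechi2}) and $1<p<2$ (via \eqref{xx.10-sotto2}, with $\beta$ admissible up to $\alpha/(2+\alpha)$ in \eqref{bechi}), the proof is carried out separately in the two cases, producing the two values of $\texttt{k}$ in \eqref{bound3dopo}.

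First I would invoke the a priori regularity \eqref{33.1bis} (as recast for the setting of Section~\ref{variante2f}) to ensure $Du\in L^\infty_{\loc}(B_r)$ and that every point of $B_r$ is a Lebesgue point of $E_\mu(Du)$. For concentric balls $B_t\Subset B_{\tau_1}\Subset B_{\tau_2}\Subset B_s\Subset B_r$, an arbitrary $x_0\in B_{\tau_1}$, the radius $r_0:=(\tau_2-\tau_1)/8$, and $M:=\nr{Du}_{L^\infty(B_{\tau_2})}\ge 1$, I would cast the appropriate Caccioppoli inequality in the form of the hypothesis \eqref{rev} of Lemma~\ref{revlem}, with $v=E_\mu(Du)$, $\kappa_0=0$, $t=2$, $M_0=M^{\mathfrak{s}(q-p)/2}$, and with the $\varrho^{2\alpha}\mint(|Du|+1)^{2q-2p+\mathrm{b}}$ term (resp.\ its $1<p<2$ counterpart) matched by choosing $f_1=|Du|+1$, $m_1=2q-2p+\mathrm{b}$, $\theta_1=1$, $\delta_1=\alpha$ and the appropriate power $M_1$, with $\mathrm{b}\in[0,p]$ still free. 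Applying Lemma~\ref{revlem} at $x_0$, taking the supremum over $x_0\in B_{\tau_1}$, controlling the $L^2$-average of $E_\mu(Du)$ on $B_{r_0}(x_0)$ as in \eqref{estimateas} through $\nr{Du+1}_{L^p(B_s)}$, bounding the nonlinear potential of $|Du|+1$ by the $L^\infty$-bound $M$ on $B_{\tau_2}$ (leaving a clean $r_0^{\alpha}$-factor), and extracting the $p$-th root via $(|Du|+1)^p\lesssim E_\mu(Du)+1$, one arrives at an inequality of the form
\begin{equation*}
\nr{Du}_{L^\infty(B_{\tau_1})}\le \frac{c}{(\tau_2-\tau_1)^{\chi_1}}\,M^{E_1(\beta,q/p)}\,\nr{Du+1}_{L^p(B_s)}^{1/2} + c\,r_0^{\alpha/p}\,M^{E_2(\beta,\mathrm{b},q/p)} + c,
\end{equation*}
where $E_1,E_2$ are explicit rational expressions in $\beta,\mathrm{b}$ and $q/p$. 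Young's inequality then absorbs the $M$-terms into $\nr{Du}_{L^\infty(B_{\tau_2})}$ as long as both $E_1<1$ and $E_2<1$, after which Lemma~\ref{l5} yields \eqref{stimasharp}.

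The heart of the argument is the parameter optimization. Reproducing the scheme \eqref{condy}--\eqref{roots2}, the condition $E_2<1$ translates into a quadratic inequality in $q-p$, analogous to \eqref{roots0} but with coefficients modified by the $\varrho^{2\alpha}$-scaling (resp.\ $\varrho^\alpha$-scaling together with the $1/p$-root in \eqref{xx.10-sotto2}) and by the exponent $2q-2p+\mathrm{b}$ of the self-referential integrand. Solving for $q-p$, letting $\beta$ approach its admissible supremum ($\alpha/(1+\alpha)$ for $p\ge 2$, $\alpha/(2+\alpha)$ for $1<p<2$) and minimizing over $\mathrm{b}\in[0,p]$, I expect to recover precisely \eqref{bound3dopo} with $\texttt{k}=4/9$ and $\texttt{k}=8/33$ respectively. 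The doubled power $\alpha^2$ in the final asymptotic is traceable to the combination of the $\varrho^{2\alpha}$ factor on the right-hand side of \eqref{xx.10} with the additional $\alpha$-power generated by the potential bound, a phenomenon absent in Propositions~\ref{priori1}--\ref{priori2} where the inhomogeneous term scales only as $\varrho^\alpha$.

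The hardest part will be the careful bookkeeping of the various $M$-exponents through the two cases, and in particular handling the $1<p<2$ subcase, where the $L^p$-integral enters \eqref{xx.10-sotto2} under a $1/p$-root rather than a $1/2$-root; this forces a different quadratic balance between $\mathfrak{B}(\beta)$ and $\mathfrak{C}(\beta,\mathrm{b})$ and is responsible for the less favorable constant $8/33$. As a preliminary sanity check, one should also verify that \eqref{bound3dopo} implies the standing hypothesis \eqref{marcbound} under which Proposition~\ref{caccin4} and the exponent $\mathfrak{s}$ are defined; this is immediate since $\texttt{k}\alpha^2/n^2\le 1/9<2/n$ whenever $n\ge 2$ and $\alpha\le 1$.
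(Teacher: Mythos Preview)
Your overall strategy matches the paper's, but the step where you ``bound the nonlinear potential of $|Du|+1$ by the $L^\infty$-bound $M$ on $B_{\tau_2}$'' is fatal. If you estimate $\mathbf{P}^{2q-2p+\mathrm{b},1}_{2,\alpha}(|Du|+1;\cdot,2r_0)\lesssim r_0^{\alpha}M^{q-p+\mathrm{b}/2}$ via $|Du|\le M$, then after the $1/p$-root the exponent of $M$ in the second term of \eqref{xx.15} becomes
\[
\left(\tfrac qp-1\right)\tfrac{\ssf\chi}{2(\chi-1)}+\tfrac{\ssf+1}{2}-\tfrac{\mathrm{b}}{2p}+\tfrac{q-p}{p}+\tfrac{\mathrm{b}}{2p}
=\left(\tfrac qp-1\right)\left[\tfrac{\ssf\chi}{2(\chi-1)}+1\right]+\tfrac{\ssf+1}{2}\,.
\]
The parameter $\mathrm{b}$ cancels, so there is nothing to optimize, and since $\ssf\ge 1$ with equality only at $p=q$, this exponent is $\ge 1$ whenever $q>p$. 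Young's inequality cannot absorb the term, and the scheme breaks down.

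The paper instead bounds the potential via Lemma~\ref{crit} in terms of the $L^p$-norm of $|Du|+1$, obtaining \eqref{procp}. This is exactly where the self-referential $(|Du|+1)^{2q-2p+\mathrm{b}}$ in the Caccioppoli inequality is redirected into the harmless quantity $\|Du\|_{L^p(B_s)}$ that appears on the right of \eqref{stimasharp}, rather than into another power of $M$. The price is the Lorentz-type integrability constraint \eqref{csisi3}, i.e.\ $n(2q-2p+\mathrm{b})/(2\alpha)<p$, which now competes with the Young condition \eqref{csisi22}. Decreasing $\mathrm{b}$ helps the latter but hurts the former; equalizing the two right-hand sides gives the optimal choice \eqref{ilb}, whose admissibility is secured by \eqref{notabase} via \eqref{totale}, and plugging back in at the limiting $\beta=\alpha/(1+\alpha)$ produces \eqref{bound3vero} with $\mathcal R_1>4/9$. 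The case $1<p<2$ follows the same pattern using \eqref{xx.10-sotto2}, the potential $\mathbf{P}^{q-p+\mathrm{b},1/p}_{2,\alpha/2}$, and the pair of constraints \eqref{csisi1-sotto2}, yielding $\mathcal R_2>8/33$. Thus the $\alpha^2/n^2$ asymptotic does not come from a ``clean $r_0^\alpha$-factor'' but from the product of the $\beta\sim\alpha$ admissible range in \eqref{bechi2} (resp.\ \eqref{bechi}) with the $\alpha/n$ in the Lorentz constraint---and the latter is precisely what your $L^\infty$ shortcut discards.
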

The setting of Proposition \ref{priori3} is the one of Section \ref{variante2f}.  
We will use the following fact
\eqn{totale}
$$
\frac qp <   1+\frac {\alpha^2} {(C+1/4)n^2}
 \Longrightarrow 
 \ssf < 1 + \frac{\alpha}{Cn} \,,
$$
that holds whenever $C\geq 1$ is a fixed number, where $\ssf$ is defined in \rif{marcexp}.
\begin{remark}\label{regolarissime} {\em The standard regularity theory for nonlinear elliptic equations \cite{manth1} gives that, under the assumptions of Proposition \ref{caccin5} (except \rif{marcexp2}), any $W^{1,q}$-solution $u$  to \rif{eulera} satisfies \rif{33.1bis}. The same applies to the minimizers $u$ considered in Proposition \ref{priori3}, as they are energy solutions to the Euler-Lagrange equation of the functional $ \mathcal{F}_{\texttt{x}}$; see comments after \rif{xx.3}. Such equations are of the type in \rif{eulera} considered in Proposition \ref{caccin5}, by assumptions \rif{xx.3}. Therefore in proving Proposition \ref{priori3} we can assume that $Du$ is locally bounded in $B_{r}$. }
\end{remark}
\subsection{Proposition \ref{priori3}, case $p\geq2$} \label{method} We first discuss the case $\alpha <1$; at the end we give the modifications for the case $\alpha =1$. We take balls $B_{t}\Subset B_s$ as in the statement of Proposition \ref{priori3};  we can assume that $\nr{Du}_{L^{\infty}(B_{t})}\ge 1$. Next, we consider further concentric balls $B_{t}\Subset B_{\tau_{1}}\Subset B_{\tau_{2}}\Subset B_{s}$, and a generic point $x_{0} \in B_{\tau_{1}}$. 
By \eqref{xx.10}, with $M\equiv \nr{Du}_{L^{\infty}(B_{\tau_{2}})}$, we apply Lemma \ref{revlem} on $B_{r_{0}}(x_0)\equiv B_{(\tau_1-\tau_1)/8}(x_0)$ with $h\equiv 1$, $\kappa_0\equiv 0$, $v \equiv E_{\mu}(Du)$, $f_1\equiv \snr{Du}+1$, $M_{0}\equiv M^{\ssf(q-p)/2}$, $M_{1}\equiv M^{(\ssf q+p-\mathrm{b})/2}$, $t \equiv2$, $\delta_{1}\equiv \alpha$, $m_1\equiv 2q-2p+\mathrm{b}$, $\theta_1 \equiv 1$. We obtain 
\begin{flalign}
\notag E (Du(x_{0}))&\le c\nr{Du}_{L^{\infty}(B_{\tau_{2}})}^{\frac{\ssf(q-p)\chi}{2(\chi-1)}}\left(\mint_{B_{r_{0}}(x_{0})}[E_{\mu}(Du)]^{2} \dx\right)^{1/2}\\
& \qquad +c\nr{Du}_{L^{\infty}(B_{\tau_{2}})}^{\frac{\ssf (q-p)}{2(\chi-1)}+\frac{\ssf q+p-\mathrm{b}}{2}}\mathbf{P}_{2,\alpha}^{2q-2p+\mathrm{b},1}(\snr{Du}+1;x_{0},(\tau_2-\tau_1)/4)\,,\label{xx.1500}
\end{flalign}
where $\chi\equiv \chi(\beta)= n/(n-2\beta)$, for every $\beta< \alpha/(1+\alpha)$ and $c\equiv c(\data, \beta)$. Using \rif{estimateas}, after a few manipulations we conclude with
\begin{flalign}\label{xx.15}
\nr{Du}_{L^{\infty}(B_{\tau_{1}})}&\le\frac{c}{(\tau_{1}-\tau_{2})^{\frac{n}{2p}}}
\nr{Du}_{L^{\infty}(B_{\tau_{2}})}^{\left(\frac qp-1\right)\frac{\ssf \chi}{2(\chi-1)}+\frac 12}\left(\int_{B_{s}}(\snr{Du}+1)^p  \dx\right)^{\frac{1}{2p}}\nonumber \\
&\quad +c\nr{Du}_{L^{\infty}(B_{\tau_{2}})}^{\left(\frac qp-1\right)\frac{\ssf \chi}{2(\chi-1)}+\frac{\ssf+1}{2}-\frac{\mathrm{b}}{2p}}\nr{\mathbf{P}^{2q-2p+\mathrm{b},1}_{2,\alpha}(\snr{Du}+1;\cdot,(\tau_{2}-\tau_{1})/4)}_{L^{\infty}(B_{\tau_{1}})}^{1/p}+c \,,
\end{flalign}
for every ${\rm b}\in [0,p]$. 
We now want to show that there exists $\beta < \alpha/(1+\alpha)$ such that 
\eqn{csisi1}
$$
 \left(\frac qp-1\right)\frac{\ssf\chi }{2(\chi-1)}+\frac{\ssf+1}{2}-\frac{\mathrm{b}}{2p} =
  \left(\frac qp-1\right)\frac{\ssf n}{4\beta}+\frac{\ssf+1}{2}-\frac{\mathrm{b}}{2p}
 < 1
$$
and the $L^\infty$-norm of $\mathbf{P}^{2q-2p+\mathrm{b},1}_{2,\alpha}$ appearing in \rif{xx.15} can be estimated by the $L^p$-norm of $Du$. In terms of $q/p$, condition \rif{csisi1} translates into
\eqn{csisi22}
$$
\frac qp < 1 + \left(1-\ssf+\frac{{\rm b}}{p}\right) \frac{2\beta}{\ssf n}\,.
$$
For the $\mathbf{P}^{2q-2p+\mathrm{b},1}_{2,\alpha}$-term, we start checking that condition \rif{lo.2.1} is satisfied, that is
\eqn{assumiche}
$$
\frac{n\theta }{t\delta} \equiv  \frac{n}{2\alpha}>1\,.
$$
This is the point where we use that $\alpha<1$, as the above quantity turns out to be equal to one when $\alpha=1$ and $n=2$. As mentioned above, the case $\alpha =1$ will be treated later. Now we get an  $L^\infty$-bound for $\mathbf{P}^{2q-2p+\mathrm{b},1}$. First, thanks to \rif{assumiche}, we apply \rif{11} to get
\eqn{procp}
$$
 \nr{\mathbf{P}^{2q-2p+\mathrm{b},1}_{2,\alpha}(\snr{Du}+1;\cdot,(\tau_{2}-\tau_{1})/4)}_{L^{\infty}(B_{\tau_{1}})}
\leq c\nr{\snr{Du}+1}_{L^{p}(B_s)}^{q-p+\mathrm{b}/2} 
$$
where $c \equiv c(n,p,q, \alpha)$, and provided
\eqn{csisi3}
$$\frac{mn\theta}{t\delta}\equiv \frac{n(2q-2p+\mathrm{b})}{2\alpha}  <p\Longleftrightarrow \frac qp < 1+ \frac{\alpha}{n} -\frac{\mathrm{b}}{2p}\,.$$
This provides a second condition on $(p,q)$, the one in \rif{csisi22} being the first. As specified in the statement of Proposition \ref{caccin4}, \rif{xx.15} and \rif{procp} remain valid for any choice of $\mathrm{b} \in [0, p]$ satisfying \rif{csisi3}. 
We optimize ${\rm b}$ matching both \rif{csisi22} and \rif{csisi3}; this leads to equalize the two right-hand sides in such inequalities and therefore to the choice
\eqn{ilb}
$$
\mathrm{b}\equiv \mathrm{b}(\beta)=\frac{2p[\ssf \alpha  +2\beta(\ssf-1)]}{\ssf n+4\beta}>0\,.
$$
This is admissible provided
\eqn{ammissibile}
$$
 \mathrm{b}  < \frac{2p\alpha}{n} \Longleftrightarrow \mathfrak{s} < 1 +\frac{2\alpha}{n}
$$
that serves to make the (equal) right-hand sides of \rif{csisi22} and \rif{csisi3} larger than one. Note that \rif{ammissibile} implies $\mathrm{b}  <p$. By \rif{totale}, that we use with $C=2$, note that 
\eqn{notabase}
$$
\trif{bound3dopo} \Longrightarrow \frac qp <   1+\frac {4\alpha^2} {9n^2} \Longrightarrow 
 \ssf < 1 + \frac{\alpha}{2n}\leq \frac 54\,.
$$
We conclude
that the values of $\mathrm{b}$ determined in \rif{ilb} are always admissible in \rif{xx.15}, whenever $q/p$ lies in the range fixed by \rif{bound3dopo}; moreover, $\mathrm{b}$ is a decreasing function of $\beta$ (and indeed the admissibility condition \rif{ammissibile} is independent of $\beta$). Plugging $\mathrm{b}$ from \rif{ilb} in \rif{csisi3} yields
\eqn{bound3veroprepre}
$$
\frac qp < 1 + \frac{2\beta}{n}\left[\frac{2\alpha-n(\ssf -1)}{\ssf n+4\beta}
\right]
$$
and the right-hand side is an increasing function of $\beta$. 
Formally taking the limiting value $\beta =\alpha/(1+\alpha)$ in \rif{bound3veroprepre}, we come up with 
\eqn{bound3vero}
$$
\frac qp < 1 + \frac{2\alpha}{(1+\alpha)n}\left[\frac{2\alpha-n(\ssf -1)}{\ssf n+4\alpha/(1+\alpha)}
\right]=: 1+\mathcal{R}_1(n, p,q,\alpha) \frac{\alpha^2}{n^2} \,.
$$
By \rif{notabase} it turns out that $ \mathcal{R}_1(n,p, q,  \alpha) >  2/3> 4/9=\textnormal{\texttt{k}}$ , so that \rif{bound3vero} is again implied by \rif{bound3dopo}. In conclusion, assuming \rif{bound3dopo} leads to find $\beta < \alpha/(1+\alpha)$ such that \rif{bound3veroprepre} and therefore both \rif{csisi1} and \rif{csisi3} hold with the specific choice of $\mathrm{b}$ made in \rif{ilb}. As a consequence, \rif{procp} holds too and inserting this one in \rif{xx.15}, and applying Young's inequality thanks to \rif{csisi1}, we arrive at
$$
\nr{Du}_{L^{\infty}(B_{\tau_{1}})}\leq \frac{1}{2}\nr{Du}_{L^{\infty}(B_{\tau_{2}})}+\frac{c}{(\tau_2-\tau_1)^{\chi_1}} \left[\|Du\|_{L^p(B_s)}  + 1\right]^{\chi_2} \,,
$$
with $c\equiv c (\data)$ and $\chi_1, \chi_2\equiv \chi_1, \chi_2(n,p,q,\alpha)$. 
Using Lemma \ref{l5} with $h(\tau)\equiv \nr{Du}_{L^{\infty}(B_{\tau})}$ (for this recall that $Du$ is locally bounded in $B_{r}$ by Remark \ref{regolarissime}) leads to \rif{stimasharp}, which is now proved when $\alpha <1$. It remains to fix the case $\alpha =1$, which in view of \rif{assumiche} is not covered only when $n=2$ (otherwise the same proof above works). Observe that \rif{xx.10} still holds replacing $\rr^{2\alpha}$, by $\varrho^{2\tilde \alpha}$ for any $\tilde \alpha <1$. In particular, we take $\tilde \alpha<  1$ such that $q/p< 1+ \textnormal{\texttt{k}}\tilde\alpha^2/n^2$. We can now argue as in the case $\alpha <1$ and the proof of Proposition \ref{priori3} is complete when $p\geq 2$.

\subsection{Proposition \ref{priori3}, case $p<2$} \label{method2}
The proof is similar to that   for the case $p\geq 2$,  but we are going to use \eqref{xx.10-sotto2} instead of \eqref{xx.10}. With $M\equiv \nr{Du}_{L^{\infty}(B_{\tau_{2}})}$, we apply Lemma \ref{revlem} on $B_{r_{0}}(x_0)\equiv B_{(\tau_1-\tau_1)/8}(x_0)$ with $h\equiv 1$, $\kk_{0}\equiv 0$, $v \equiv E_{\mu}(Du)$, $f_1\equiv \snr{Du}+1$, $M_{0}\equiv M^{\ssf(q-p)/2}$, $M_{1}\equiv M^{[(\ssf+1)q-\mathrm{b}/p]/2}$, $t \equiv2$, $\delta_{1}\equiv \alpha/2$, $m_1\equiv q-p+\mathrm{b}$ and $\theta_1 \equiv 1/p$. As $p\geq 2$, we obtain
\begin{flalign}\label{xx.15-sotto2}
\nr{Du}_{L^{\infty}(B_{\tau_{1}})}&\le\frac{c}{(\tau_{1}-\tau_{2})^{\frac{n}{2p}}}
\nr{Du}_{L^{\infty}(B_{\tau_{2}})}^{\left(\frac qp-1\right)\frac{\ssf\chi}{2(\chi-1)}+\frac 12}\left(\int_{B_{s}}(\snr{Du}+1)^p  \dx\right)^{\frac{1}{2p}}\nonumber \\
&\quad +c\nr{Du}_{L^{\infty}(B_{\tau_{2}})}^{\left(\frac qp-1\right)\frac{\ssf\chi}{2(\chi-1)}+\frac{\ssf}{2}+\frac{q-\mathrm{b}/p}{2p}}\nr{\mathbf{P}^{q-p+{\rm b},1/p}_{2,\alpha/2}(\snr{Du}+1;\cdot,(\tau_{2}-\tau_{1})/4)}_{L^{\infty}(B_{\tau_{1}})}^{1/p}+c.
\end{flalign} 
Reabsorbing the $\nr{Du}_{L^{\infty}}$-terms and estimating the $\mathbf{P}^{q-p+{\rm b},1/p}_{2,\alpha/2}$-ones in \rif{xx.15-sotto2} (via \rif{11}), leads to impose the conditions
\eqn{csisi1-sotto2} 
$$
 \left(\frac qp-1\right)\frac{\ssf n }{4\beta}+\frac{\ssf}{2}+\frac{q-\mathrm{b}/p}{2p} < 1 
\qquad \mbox{and} \qquad 
\frac{n(q-p+\mathrm{b})}{p\alpha} < p
 $$
for every ${\rm b}\in [0,p]$. These parallel  \rif{csisi22} and \rif{csisi3}, respectively, from the case $p\geq 2$. The potential $\mathbf{P}^{q-p+{\rm b},1/p}_{2,\alpha/2}$ can be used here as we have $
n\theta/(t\delta) = n/(p\alpha)>1$ as now $p<2$, so that Lemma \ref{crit} applies and yields \eqn{xx.11-sotto2}
$$
\nr{\mathbf{P}^{q-p+{\rm b},1/p}_{2,\alpha/2}(\snr{Du}+1;\cdot,(\tau_{2}-\tau_{1})/4)}_{L^{\infty}(B_{\tau_{1}})}
\leq  
c\nr{Du}_{L^{p}(B_s)}^{\frac{q-p+\mathrm{b}}{2p}}  +c
\,.
$$
Arguing as for the case $p\geq 2$ in Section \ref{method}, relations in \rif{csisi1-sotto2} translate into 
\eqn{rela00}
$$
 \frac qp < 1 + \left(1-\ssf +\frac{\rm b}{p^2}\right)\frac{2\beta}{\ssf n+2\beta}\qquad \mbox{and} \qquad 
\frac qp < 1+\frac{\alpha p}{n}-\frac{{\rm b}}{p}
$$
and this leads to consider
\eqn{sceltabb}
$$
\mathrm{b}\equiv \mathrm{b}(\beta)=\frac{p^2}{n}\left[
\frac{p\alpha(\ssf n+2\beta)+2\beta n(\ssf-1)}{2\beta(p+1)+\ssf np}
\right]>0\,.
$$
By \rif{totale}, now used with $C=3$, we have 
\eqn{notabasedopo}
$$
\trif{bound3dopo} \Longrightarrow  \frac qp <   1+\frac {4\alpha^2} {13n^2} \Longrightarrow 
 \ssf < 1 + \frac{\alpha}{3n} \leq  \frac 76
\,.
$$
This time the last inequality implies ${\rm b}\leq \alpha p^2/n \leq p$ (as $p\leq 2$) so that the choice in \rif{sceltabb} is admissible. 

Using $\mathrm{b}$ from \rif{sceltabb} in \rif{rela00}, and formally taking $\beta =\alpha/(2+\alpha)$, we find
$$
\frac qp < 1 
 + \frac{2\alpha p}{n(2+\alpha)}\left[\frac{\alpha-n(\ssf-1)}{\ssf np+2\alpha(p+1)/(2+\alpha)}\right]=:1+ \mathcal{R}_2(n, p, q,\alpha)\frac{\alpha^2}{n^2} \,.
$$
Using \rif{notabasedopo} yields $\mathcal{R}_2(n, p,q,  \alpha)> 8/33=\textnormal{\texttt{k}}$, and we can conclude as in the case $p\geq 2$. 
\subsection{Proof of Corollary \ref{t4}}\label{seziona0} 
Once \rif{stimasharp} is secured, we can modify the arguments of Sections \ref{generalapp}-\ref{generalapp2} to get \rif{stima3}. In Section \ref{generalapp}, replace the ball $B_{r}$ by $\Omega$; by the definition in \rif{rilassato} and the $q$-growth of the integrand $F(\cdot)$, we find  a sequence $\{\tilde u_{\eps}\}\subset W^{1,\infty}(\Omega)$ such that $u_{\eps} \deb u$ in $W^{1,p}(\Omega)$ and 
$ \mathcal F_{\texttt{x}}(\tilde u_{\eps},\Omega)-\overline{\mathcal F_{\texttt{x}}}(u,\Omega) = \texttt{o}(\eps,\Omega)$. Here we need that $\Omega$ is Lipschitz regular. The sequence $\{\tilde u_{\eps}\}$ will play the role of the similarly denoted one defined in \rif{recap}. We can  proceed as in Section \ref{generalapp}, where now it is $\FF(x,y,z)\equiv F(x, z)$. All the terms involving $f(\cdot)$ and $\hhh(\cdot)$, including $
\hhh_{\varepsilon}$ defined in \rif{defigg}, are absent. Defining this time $ \FF_{\eps, \delta}(x,z) :=(\FF(x,\cdot)*\phi_{\delta})(z)+\sigma_{\varepsilon}[H_{\mu_\delta}(z)]^{q/2}$ and $\mathcal{G}_{\varepsilon,\delta}(w,\Omega):=\int_{\Omega}\FF_{\eps, \delta}(x,Dw)
\dx$, we can repeat the arguments of Sections \ref{generalapp}-\ref{generalapp2} with the following replacements. Instead of \rif{conv0}, we have 
$
\mathcal{G}_{\eps, \delta}(\tilde u_{\eps}, \Omega)- \overline{\mathcal F_{\texttt{x}}}(u, \Omega)= \texttt{o}(\varepsilon,\Omega) + \texttt{o}_{\eps}(\delta,\Omega)
$, and \rif{conclude} becomes 
$
\nr{Du_{\eps, \delta}}_{L^p(\Omega)}^p +\sigma_{\eps}\nr{Du_{\eps, \delta}}_{L^q(\Omega)}^q\leq c  \overline{\mathcal F_{\texttt{x}}}(u, \Omega) +\texttt{o}(\varepsilon,\Omega) + \texttt{o}_{\eps}(\delta,\Omega)$. Using this last estimate in combination with \rif{stimasharp}, and a covering argument, we find the analog of \rif{stim1}
$$
\nr{Du_{\eps, \delta}}_{L^{\infty}(\Omega_0)}\le \frac{c(\data)}{[\dist(\Omega_0, \partial \Omega)]^{\chi_1}} \left[
\overline{\mathcal F_{\texttt{x}}}(u, \Omega)+1\right]^{\chi_2}+ \frac{\texttt{o}(\varepsilon,\Omega) + \texttt{o}_{\eps}(\delta,\Omega)}{[\dist(\Omega_0, \partial \Omega)]^{\chi_1}}\,,
$$
that holds for every open subset $\Omega_0 \Subset \Omega$. 
We can now proceed as in Section \ref{generalapp2}, with $u_{\eps, \delta}\deb u_\eps\in \tilde u_{\eps}+W^{1,q}_0(\Omega)$  weakly in $W^{1,q}(\Omega)$, and $u_{\eps}\deb v \in u+W^{1,p}_0(\Omega)$ weakly in $W^{1,p}(\Omega)$, up to the final argument in \rif{comein}, that now only yields $\overline{\mathcal F_{\texttt{x}}}(u, \Omega)=\overline{\mathcal F_{\texttt{x}}}(v, \Omega)$. The equality $u\equiv v$ is then implied by the strict convexity of $w \mapsto \overline{\mathcal F_{\texttt{x}}}(w, \Omega)$, see \cite[pp. 47-50]{sharp}; for this, recall that $u\equiv v$ on $\partial \Omega$. 
This settles the local boundedness of $Du$. For its local H\"older continuity, we revisit the arguments of Section \ref{holdergrad}. Using estimate \rif{stimasharp} instead of \rif{stima1pre}, we derive a suitable analog of \rif{stim1bis}, where the constant $M$ in \rif{stim1bis} is now of the form $M = c r^{-\chi_1}[\overline{\mathcal F_{\texttt{x}}}(u, \Omega)+1]^{\chi_2}$, where $r :=\min\{\dist(\Omega_0, \partial \Omega)/4, 1\}$. The integrand $F_0(\cdot)$ and $v$ are again defined via
\rif{pd-dopo} with the current definition of $\mathbb{F}_{\eps, \delta}(\cdot)$, so that that the properties in \rif{F_0part}-\rif{holly1} are still in force. Finally, proceeding as in the proof of \rif{nonabbiamo} and \rif{nonabbiamo2}, we find
$$\mint_{B_{\tau}}\snr{V(Du_{\eps, \delta})-V(Dv)}^{2}  \dx   \le  c\tau^{2 \alpha} [M_{B_{r}}(\eps, \delta)]^{2q-p} + c\una  \tau^{ \alpha} [M_{B_{r}}(\eps, \delta)]^{q+q/p-1}
 \leq c_{\eps, \delta}  \tau^{\alpha}\,,
$$
that in fact replaces \rif{holly11} in the present setting. After this, the rest of the proof proceeds as in Section \ref{generalapp2} almost unchanged and leads to \rif{localeh} for a different H\"older exponent $\alpha_*$.  

\subsection{Proof of Corollary \ref{c3}} By the discussion before Corollary \ref{c3}, $u$ minimizers the functional $w \mapsto \overline{\mathcal F_{\texttt{x}}}(u,B_{r})$ for every ball $B_{r} \Subset \Omega$. Therefore Corollary \ref{c3} follows from Corollary \ref{t4} applied with $\Omega \equiv B_{r}$ and then a standard covering argument. In particular, estimate \rif{stima3m} follows directly from \rif{stima3} by taking $B_t\Subset B_{r}$ as 
$\Omega'\Subset \Omega$ in Corollary \ref{t4}, and recalling that $\mathcal {L}_{\mathcal{F}_{\texttt{x}}}(w,B_{r})=0$ means that $\overline{\mathcal F_{\texttt{x}}}(u,B_{r})=\mathcal F_{\texttt{x}}(u,B_{r})$.
\subsection{Proof of Theorem \ref{t2}}\label{seziona} 
The integrand $\ccc(x)F(z)$ is of the type considered in Corollary \ref{t4}. Indeed, by Lemma \ref{marclemma}, the convexity of $F(\cdot)$ and \rif{assFF}$_2$ imply 
$ |\partial_{z} F(z)|\leq c(n,L) [H_{\mu}(z)]^{(q-1)/2}+c(n,L) [H_{\mu}(z)]^{(p-1)/2}$, so that assumption \rif{xx.0}$_2$ is verified. At this stage, it is sufficient to prove that $\mathcal L_{\mathcal{S}_{\texttt{x}}}(u, B)=0$ holds whenever $B\Subset \Omega$ is a ball, and Theorem \ref{t2} would follow from Corollary \ref{c3}. For this, observe that the sequence $\{\tilde u_\eps\}$ considered in \rif{recap}, is such that $\ccc(\cdot)F(D\tilde u_\eps)\to \ccc(\cdot)F(D u )$ in $L^1(B)$ by the convolution argument explained in Remark \ref{jensiremark}, and the proof is complete.   
\section{Theorem \ref{t3}} 
\begin{proposition}\label{priori5}
Let $u\in W^{1,q}(B_{r})$ be a minimizer of the functional $\mathcal{S}(\cdot, B_{r})$ in \trif{modellou}, where $B_{r}\Subset \Omega$ and $r\leq 1$, under assumptions \eqref{bound4}, \eqref{assu1} and $p>n$. 
Then
\eqn{stimasharpu}
$$
\nr{Du}_{L^{\infty}(B_{t})}\le \frac{c}{(s-t)^{\chi_1}}\left[\|Du\|_{L^p(B_s)} +\|f\|_{n/\alpha,1/2;B_{s}}+1\right]^{\chi_2}\,,
$$
holds whenever $B_{t}\Subset B_s\Subset B_{r}$ are concentric balls, where $c\equiv c (\data)$, $\chi_1, \chi_2\equiv \chi_1, \chi_2(\datae)$. \end{proposition}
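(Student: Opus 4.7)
The plan is to follow the scheme of Propositions \ref{priori1} and \ref{priori3}, this time using the Caccioppoli inequality of Proposition \ref{caccin3} (designed exactly for the functional $\mathcal{S}$). Specifically, I will feed \eqref{xx.10xu} into the De Giorgi iteration of Lemma \ref{revlem}, bound the resulting nonlinear Havin--Maz'ya potentials via Lemma \ref{crit}, reabsorb the $L^\infty$-norm via Young's inequality, and iterate the radii dependence away by Lemma \ref{l5}. By the a priori regularity discussion in Section \ref{giustifica}, we may a priori assume $Du\in L^\infty_{\loc}(B_r)$.

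Given concentric balls $B_t\Subset B_{\tau_1}\Subset B_{\tau_2}\Subset B_s$, set $M:=\|Du\|_{L^\infty(B_{\tau_2})}\geq 1$, pick a Lebesgue point $x_0\in B_{\tau_1}$ of $E_\mu(Du)$, and apply Lemma \ref{revlem} on $B_{(\tau_2-\tau_1)/8}(x_0)$ with $h=2$, $t=2$, $v=E_\mu(Du)$, $f_1=|Du|+1$, $f_2=f$, $\delta_1=\delta_2=\alpha/2$, $m_1=q-p+\mathrm{b}$, $m_2=\theta_1=\theta_2=1$, and coefficients read off \eqref{xx.10xu}, namely $M_0=cM^{\ssf(q-p)/2}$, $M_1=cM^{(\ssf q+p+\alpha-\mathrm{b})/2}$, $M_2=cM^{(\ssf q+\alpha)/2}$. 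Taking $p$-th roots, handling the mean-value term as in \eqref{estimateas}, and exploiting the arbitrariness of $x_0\in B_{\tau_1}$, this produces a pointwise estimate parallel to \eqref{44} carrying the two potentials $\mathbf{P}^{q-p+\mathrm{b},1}_{2,\alpha/2}(|Du|+1;\cdot,\cdot)$ and $\mathbf{P}^{1,1}_{2,\alpha/2}(f;\cdot,\cdot)$. As in \eqref{vvee1} the second is bounded by $c\|f\|_{n/\alpha,1/2;B_s}^{1/2}$, while Lemma \ref{crit} (with its basic condition $n/\alpha>1$ automatic) bounds the first by $c\|\,|Du|+1\,\|_{L^p(B_s)}^{(q-p+\mathrm{b})/2}$ provided $n(q-p+\mathrm{b})/\alpha<p$, that is
\[
\text{(II)}:\qquad \frac{q}{p}<1+\frac{\alpha}{n}-\frac{\mathrm{b}}{p}.
\]

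Young reabsorption demands the exponent of $M$ in the gradient-potential term to be strictly less than $1$; using $\chi/(\chi-1)=n/(2\beta)$ this reads
\[
\text{(I)}:\qquad \frac{q}{p}-1<\frac{2\beta\bigl[(\mathrm{b}-\alpha)/p-(\ssf-1)\bigr]}{\ssf\,n},
\]
the analogous condition on the $f$-potential exponent being automatic, since $\mathrm{b}\leq p$. The right-hand side of (I) is increasing in $\mathrm{b}$ while that of (II) is decreasing, so I optimize by equalizing them, getting the optimal
\[
\mathrm{b}^{\ast}=\frac{p\alpha\ssf+2\beta[p(\ssf-1)+\alpha]}{2\beta+\ssf n}, \qquad \frac{q}{p}-1<\frac{2\beta\bigl[\alpha(1-n/p)-n(\ssf-1)\bigr]}{n(2\beta+\ssf n)},
\]
and the formal limit $\beta\uparrow\alpha/(2+\alpha)$ yields the sharpest bound
\[
\frac{q}{p}<1+\frac{2\alpha\bigl[\alpha(1-n/p)-n(\ssf-1)\bigr]}{n[2\alpha+\ssf n(2+\alpha)]}.
\]

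The main obstacle will be verifying that the hypothesis \eqref{bound4} implies this final gap condition, for some admissible $\beta$ slightly below $\alpha/(2+\alpha)$. Here the assumption $p>n$ plays its crucial---and indeed only---role: the bracket $\alpha(1-n/p)$ is positive precisely in this regime. The verification is a bootstrap via \eqref{totale}: by choosing $C$ in \eqref{totale} strictly larger than $p/(p-n)$, the bound \eqref{bound4} forces $n(\ssf-1)\leq \alpha(1-n/p)/C$, so that
\[
\alpha(1-n/p)-n(\ssf-1)\geq (1-1/C)\,\alpha(1-n/p);
\]
an elementary estimation in the spirit of Lemma \ref{funkklemma}, exploiting that $\ssf$ and $\alpha$ are close to $1$ and $\beta$ is bounded, then reduces the displayed bound to at least $(1/5)(1-n/p)\alpha^2/n^2$, matching \eqref{bound4}. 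Once this is secured, Young's inequality delivers
\[
\|Du\|_{L^\infty(B_{\tau_1})}\leq \tfrac{1}{2}\|Du\|_{L^\infty(B_{\tau_2})}+\frac{c}{(\tau_2-\tau_1)^{\chi_1}}\bigl[\|Du\|_{L^p(B_s)}+\|f\|_{n/\alpha,1/2;B_s}+1\bigr]^{\chi_2},
\]
and \eqref{stimasharpu} follows from Lemma \ref{l5} applied to $h(\tau):=\|Du\|_{L^\infty(B_\tau)}$.
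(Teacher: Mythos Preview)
Your proof is essentially the paper's: the same application of Lemma \ref{revlem} to \eqref{xx.10xu}, the same optimal $\mathrm{b}$ (your $\mathrm{b}^{\ast}$ coincides with the paper's \eqref{valoreb}), the same resulting gap bound $q/p-1<2\beta[\alpha(1-n/p)-n(\ssf-1)]/[n(2\beta+\ssf n)]$, and the same closure via Young and Lemma \ref{l5}.

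The only point needing repair is the final numerical bootstrap. Invoking \eqref{totale} with parameter $C$ yields $n(\ssf-1)<\alpha/C$, \emph{not} $\alpha(1-n/p)/C$, so your claim that taking $C>p/(p-n)$ forces $n(\ssf-1)\le\alpha(1-n/p)/C$ is not what \eqref{totale} says, and it fails when $p$ is close to $n$ (then $p/(p-n)$ is large and \eqref{bound4} no longer implies the hypothesis of \eqref{totale} for such $C$). The paper handles this step directly: it checks (see \eqref{usingss}) that \eqref{bound4} implies $\ssf-1<\alpha(1-n/p)/(2n)$, which both guarantees $\mathrm{b}<p\alpha/n<p$ and, after substituting into the optimized bound and taking $\beta\uparrow\alpha/(2+\alpha)$, yields the sufficient condition \eqref{dett333} with numerical coefficient $\mathcal{R}_3>4/19>1/5$, so that \eqref{bound4} indeed suffices.
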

\begin{proof} Here the setting is the one of Section \ref{variante3}. By the discussion after \rif{F_0}, this time with $\tilde F(x,y,z)\equiv \ccc(x,y)F(z)$, and the assumptions considered in Proposition \ref{priori5}, $u$ satisfies \rif{33.1bis}. This allows us to use that $Du$ is bounded in $B_s$. 
As in Proposition \ref{priori3}, we can assume that $\nr{Du}_{L^{\infty}(B_{t})}\ge 1$ and consider further concentric balls $B_{t}\Subset B_{\tau_{1}}\Subset B_{\tau_{2}}\Subset B_{s}$, $x_{0} \in B_{\tau_{1}}$. 
By \eqref{xx.10xu} with $M\equiv \nr{Du}_{L^{\infty}(B_{\tau_{2}})}$, we apply Lemma \ref{revlem} on $B_{r_{0}}(x_0)\equiv B_{(\tau_1-\tau_1)/8}(x_0)$ with $h\equiv 2$, $\kk_{0}\equiv 0$, $v \equiv E_{\mu}(Du)$, $f_1\equiv \snr{Du}+1$, $f_2\equiv f$, $M_{0}\equiv M^{\ssf(q-p)/2}$, $M_{1} \equiv M^{(\ssf q+p+\alpha-\mathrm{b})/2}$, $M_2=M^{(\ssf q+\alpha)/2}$, $t \equiv 2$, $\delta_{1}=\delta_{2}\equiv \alpha/2$, $m_1\equiv q-p+\mathrm{b}$, $m_2\equiv 1$, $\theta_1 =\theta_2\equiv 1$. Proceeding as for \rif{xx.1500}-\rif{xx.15} leads to
\begin{flalign}\label{xx.15p2}
\nr{Du}_{L^{\infty}(B_{\tau_{1}})}&\le\frac{c}{(\tau_{1}-\tau_{2})^{\frac{n}{2p}}}
\nr{Du}_{L^{\infty}(B_{\tau_{2}})}^{\left(\frac qp-1\right)\frac{\ssf \chi}{2(\chi-1)}+\frac 12}\left(\int_{B_{s}}(\snr{Du}+1)^p  \dx\right)^{\frac{1}{2p}}\nonumber \\
&\quad +c\nr{Du}_{L^{\infty}(B_{\tau_{2}})}^{\left(\frac qp-1\right)\frac{\ssf\chi}{2(\chi-1)}+\frac{\ssf +1}{2}+\frac{\alpha-\mathrm{b}}{2p}}\nr{\mathbf{P}_{2,\alpha/2}^{q-p+\mathrm{b},1}(\snr{Du}+1; \cdot, (\tau_{2}-\tau_{1})/4)}_{L^{\infty}(B_{\tau_{1}})}^{1/p}\notag 
\\
& \quad  +c\nr{Du}_{L^{\infty}(B_{\tau_{2}})}^{\left(\frac qp-1\right)\frac{\ssf \chi}{2(\chi-1)}+\frac {\ssf }{2}+\frac {\alpha}{2p}}\nr{\mathbf{P}^{1,1}_{2,\alpha/2}(f;\cdot,(\tau_{2}-\tau_{1})/4)}_{L^{\infty}(B_{\tau_{1}})}^{1/p}+c\,,
\end{flalign}
where $c\equiv c(\data, \beta)$. Note that the highest power of $\nr{Du}_{L^{\infty}(B_{\tau_{2}})}$ appears in the second term of the above display since it is ${\rm b}\leq p$. As in Proposition \ref{priori3}, recalling also condition \rif{lo.2.1} to apply Lemma \ref{crit} (observe that \rif{lo.2.1} is automatically satisfied since $n\geq 2$), this time we impose 
\eqn{csisi1u}
$$
 \left(\frac qp-1\right)\frac{\ssf n}{4\beta}+\frac{\ssf +1}{2} + \frac{\alpha-\mathrm{b}}{2p}< 1 \quad \mbox{and} \quad \frac{n(q-p+\mathrm{b})}{\alpha} < p\,,
$$
that is
\eqn{csisi2}
$$
\frac qp < 1 + \left(1-\ssf +\frac{{\rm b-\alpha}}{p}\right)\frac{2\beta}{\ssf n}\quad \mbox{and} \quad 
 \frac qp < 1+ \frac{\alpha}{n} -\frac{\mathrm{b}}{p}\,,
$$
respectively. 
Equalizing the right-hand sides leads to consider
\eqn{valoreb}
$$
{\rm b}\equiv {\rm b}(\ssf)=\frac{p(2\beta +\alpha )(\ssf-1)+\alpha(2 \beta+p)}{\ssf n+2\beta}\,.
$$
Now, note that 
\eqn{usingss0}
$$
 \frac{\alpha}{n} -\frac{\mathrm{b}}{p} = \frac{\alpha}{n}  -\frac{2\beta +\alpha}{\ssf n+2\beta}(\ssf -1) - \frac{\alpha(2 \beta+ p)}{p(\ssf n+2\beta)}
=\frac{2\alpha\beta(1-n/p)}{n(\ssf n+2\beta)}- \frac{2\beta(\ssf-1)}{\ssf n+2\beta}\,.
$$
On the other hand, observe that 
\begin{flalign}
\notag \frac{2\beta(\ssf-1)}{\ssf n+2\beta} < \frac{\alpha\beta(1-n/p)}{n(\ssf n+2\beta)}& \Longleftrightarrow  \ssf -1 < \frac{\alpha(1-n/p)}{2n}\\
& \stackrel{\rif{marcexp}}{ \Longleftrightarrow }
\frac{q}{p} < 1+ \frac{2\alpha (1-n/p)}{n[2(n+2)+\alpha (1-n/p)]} \Longleftarrow \trif{bound4}\,.
\label{usingss}
\end{flalign}
Using the first inequality in \rif{usingss} in \rif{usingss0} yields that ${\rm b} < p\alpha/n < p$, so that the value of ${\rm b}$ in \rif{valoreb} is admissible in \rif{xx.10xu} and in \rif{xx.15p2}.  To proceed, we use
\rif{usingss0}-\rif{usingss} in \rif{csisi2} and conclude that, in order to verify \rif{csisi2} for some $\beta< \alpha /(2+\alpha)$, it suffices to verify
\eqn{dett33}
$$
\frac qp  < 1+ \frac{\alpha\beta(1-n/p)}{n(\ssf n+2\beta)}\,.
$$
By formally taking the limiting value $\beta= \alpha /(2+\alpha)$ here, we come to
\eqn{dett333}
$$
\frac qp  < 1+ \frac{1-n/p}{[\ssf+2\alpha/(n(2+\alpha))](2+\alpha)}\frac{\alpha^2}{n^2}
=: 1+\mathcal{R}_3(n,p,q, \alpha)\left(1-\frac{n}{p}\right) \frac{\alpha^2}{n^2} \,.
$$ 
Noting that it is $\ssf < 5/4$ by the second inequality in display \rif{usingss}, we infer the lower bound $\mathcal{R}_3(n,p,q, \alpha)> 4/19$, so that \rif{dett333} is again implied by \rif{bound4}. We conclude we can find $\beta < \alpha /(2+\alpha)$ such that \rif{dett33} and therefore \rif{csisi2} and \rif{csisi1u} are satisfied. Finally, thanks to the second inequality in \rif{csisi1u} we can use \eqref{11}, that yields
$$
\nr{\mathbf{P}_{2,\alpha/2}^{q-p+\mathrm{b},1}(\snr{Du}+1; \cdot, (\tau_{2}-\tau_{1})/4)}_{L^{\infty}(B_{\tau_{1}})}
\leq c\nr{\snr{Du}+1}_{L^{ p}(B_{s})}^{\frac{q-p+\mathrm{b}}{2}} \,.
$$
Using this last inequality and \rif{vvee1} in \rif{xx.15p2}, we can now conclude as for Proposition \ref{priori3}. \end{proof}

Once the a priori estimate of Proposition \ref{priori5} is available, we can proceed as for the proof of the Theorems \ref{t1} and \ref{t2}. Specifically, the proof of \rif{stima2mm} is totally analogous to the one of \rif{stima1}, via the approximation arguments of Sections \ref{generalapp}-\ref{generalapp2} applied with the new definition 
$\FF_{\eps, \delta}(x,y,z)  :=\ccc(x,y)(F*\phi_{\delta})(z)+\sigma_{\varepsilon}\ccc(x,y)[H_{\mu_\delta}(z)]^{q/2}$. This guarantees that the approximating integrands still preserve the product structure used in Proposition \ref{caccin3}; the convergence in \rif{conv0} still takes place (see also the remarks in the proof of Theorem \ref{t2} in Section \ref{seziona}). As for the local H\"older continuity of $Du$, we proceed exactly as in Section \ref{holdergrad}, with \rif{pd-dopo} used with the current definition of $\mathbb{F}(\cdot)$. In this case the analogs of \rif{holly11}-\rif{holly2} can be obtained estimating as in \rif{xx.6}.

\section{Theorem \ref{t6}}\label{equazioniprova}
\begin{proposition}\label{priori6}
Let $u\in W^{1,q}(B_{r})$ be a weak solution to \trif{eulera}, under assumptions \eqref{assAA} with $0 < \mu \leq 2$ and $\nu, L$ replaced by $\tilde \nu, \tilde L$ (as in Section \ref{basicn}), and assume also \eqref{xx.3equ}. If \eqref{bound5} is in force, then \eqref{stimasharp} 
holds whenever $B_{t}\Subset B_s\Subset B_{r}$ are concentric balls, where $c\equiv c (\data)$, $\chi_1, \chi_2\equiv \chi_1, \chi_2(\datae)$. 
\end{proposition}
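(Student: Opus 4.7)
The proof will follow the template of Proposition~\ref{priori3}, with Proposition~\ref{caccin5} (Caccioppoli for equations) systematically replacing Proposition~\ref{caccin4} and the exponent $\ssf$ from \eqref{marcexp} replaced by the larger $\sst$ from \eqref{marcexpeq}. By the standard regularity theory (Remark~\ref{regolarissime}) the solution $u$ satisfies $u\in C^{1,\alpha_1}_{\loc}(B_{r})$, so $Du$ is locally bounded on $B_{r}$ and every point is a Lebesgue point of $E_{\mu}(Du)$; one may also assume $\|Du\|_{L^\infty(B_{t})}\geq 1$. We take concentric balls $B_{t}\Subset B_{\tau_1}\Subset B_{\tau_2}\Subset B_{s}$ and, for $x_{0}\in B_{\tau_1}$, set $r_{0}:=(\tau_2-\tau_1)/8$ and $M:=\|Du\|_{L^\infty(B_{\tau_2})}\geq 1$. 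For $p\geq 2$ we feed \eqref{xx.10eq} into Lemma~\ref{revlem} with $t=2$, $\kappa_{0}=0$, $v=E_{\mu}(Du)$, $f_{1}=|Du|+1$, $M_{0}=M^{\sst(q-p)}$, $M_{1}=M^{[\sst(2q-p)+p-\mathrm{b}]/2}$, $\delta_{1}=\alpha$, $m_{1}=2q-2p+\mathrm{b}$, $\theta_{1}=1$ for $\mathrm{b}\in[0,p]$; for $1<p<2$ we iterate instead \eqref{xx.10-sotto2eq} with $\delta_{1}=\alpha/2$, $\theta_{1}=1/p$ and $m_{1}=p(q-1)/(p-1)-p+\mathrm{b}$. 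The resulting nonlinear potential is bounded via Lemma~\ref{crit}: condition \eqref{lo.2.1} becomes $n/(2\alpha)>1$ for $p\geq 2$ (the borderline $\alpha=1$, $n=2$ being handled by replacing $\alpha$ with some $\tilde\alpha<1$, as in Proposition~\ref{priori3}), or $n/(p\alpha)>1$ for $p<2$ (automatic since $p\alpha<2\leq n$).

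After taking $p$-th roots via \eqref{estimateas}, the scheme will produce an estimate of the form
\[
\|Du\|_{L^\infty(B_{\tau_1})}\leq \frac{c}{(\tau_2-\tau_1)^{\chi_1}}\|Du\|_{L^\infty(B_{\tau_2})}^{E}\bigl[\|Du\|_{L^p(B_{s})}+1\bigr]^{\chi_2}+c,
\]
where, for $p\geq 2$, the critical exponent is
\[
E\equiv E(\beta,\mathrm{b})=\Bigl(\tfrac{q}{p}-1\Bigr)\tfrac{\sst n}{2\beta}+\tfrac{\sst+1}{2}-\tfrac{\mathrm{b}}{2p},\qquad \beta<\tfrac{\alpha}{1+\alpha}.
\]
The denominator $2\beta$ here, versus $4\beta$ in \eqref{xx.15}, reflects the doubling of the coefficient in the non-symmetric Caccioppoli \eqref{pri-cacc-2}, as used to derive Proposition~\ref{caccin5}. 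The Young reabsorption condition $E<1$, matched with the Lorentz-type constraint $q/p<1+\alpha/n-\mathrm{b}/(2p)$ forced by Lemma~\ref{crit}, is optimized by the choice
\[
\mathrm{b}(\beta)=\frac{2p[\sst\alpha+(\sst-1)\beta]}{\sst n+2\beta},
\]
and, formally pushing $\beta$ to the endpoint $\alpha/(1+\alpha)$, yields the constraint
\[
\frac{q}{p}<1+\frac{\alpha[2\alpha-n(\sst-1)]}{n[\sst n(1+\alpha)+2\alpha]}.
\]
The analogous optimization for $1<p<2$, carried out with $\beta<\alpha/(2+\alpha)$, leads to $q/p<1+\beta(p-1)[\alpha-n(\sst-1)]/\{n[(p-1)\sst n+p\beta]\}$.

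The delicate final step is to check that \eqref{bound5} forces $\sst$ close enough to $1$ for these constraints to close self-consistently. The key identity is
\[
\sst-1=\frac{(q-p)[n(p-1)+p]}{[(n+1)p-nq](p-1)},
\]
which shows that $\sst-1$ inherits a $1/(p-1)$ amplification as $p\to 1^{+}$. Plugging \eqref{bound5} into this identity, and using that \eqref{bound5} also forces $(n+1)p-nq\geq p/2$, one gets $n(\sst-1)\leq \alpha^{2}(p-1)/(5p)+\alpha^{2}/(5n)\leq \alpha^{2}/5$, so $2\alpha-n(\sst-1)\geq 9\alpha/5$ for $p\geq 2$, with a comparable lower bound in the sub-quadratic regime. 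A routine numerical comparison then shows that the right-hand side of the derived constraint is at least $\sim 3\alpha^{2}/(10n^{2})$ (super-quadratic case) and $\sim (p-1)\alpha^{2}/(3pn^{2})$ (sub-quadratic), each comfortably dominating $(p-1)\alpha^{2}/(10pn^{2})$. Once $E<1$ is secured, Young's inequality yields $\|Du\|_{L^\infty(B_{\tau_1})}\leq \tfrac{1}{2}\|Du\|_{L^\infty(B_{\tau_2})}+c(\tau_2-\tau_1)^{-\chi_1}[\|Du\|_{L^p(B_{s})}+1]^{\chi_2}$, and since $Du$ is already known to be locally bounded, Lemma~\ref{l5} applied to $\tau\mapsto\|Du\|_{L^\infty(B_{\tau})}$ concludes \eqref{stimasharp}.

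\textbf{Main obstacle.} The real work is the self-consistent step just described: the constraint produced by the optimization involves $\sst$, which itself depends on $q/p$, creating a fixed-point loop that must be closed quantitatively. The specific factor $(p-1)/(10p)$ in \eqref{bound5} is not cosmetic; it is precisely calibrated so that the $(p-1)$ in the numerator cancels the $1/(p-1)$ inflation of $\sst-1$ near $p=1$, while the numerical constant $1/10$ provides a uniform margin through which both the super-quadratic and the sub-quadratic constraints pass. Treating $p\geq 2$ and $1<p<2$ separately (and the borderline $\alpha=1$, $n=2$ within the former) is what produces the exact form of \eqref{bound5}.
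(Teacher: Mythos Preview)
Your proposal is correct and follows essentially the same approach as the paper: iterate the equation Caccioppoli inequalities \eqref{xx.10eq}--\eqref{xx.10-sotto2eq} through Lemma~\ref{revlem}, optimize~$\mathrm{b}$, and verify that \eqref{bound5} controls $\sst-1$ well enough to close the loop. The only procedural differences are that in the sub-quadratic case the paper simply picks $\mathrm{b}=14p^{2}\alpha/(15n)$ rather than optimizing, and it verifies the control on $\sst$ via the chain of implications \eqref{alge11} (with $C=1$ for $p\ge 2$ and $C=2$ for $p<2$) instead of your direct identity for $\sst-1$; your arithmetic at the step ``$\le \alpha^{2}/5$'' is a touch loose with the crude bound $(n+1)p-nq\ge p/2$, but tightening to $\ge 19p/20$ (which \eqref{bound5} easily gives) makes it go through.
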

\begin{proof}
The setting of Proposition \ref{priori6} is the one of Section \ref{variante2e} and by Remark \ref{regolarissime} we have that $Du$ is locally bounded in $B_{r}$. Note that \eqref{bound5} implies \rif{marcexp2} and therefore Proposition \ref{caccin5} can be used. The proof closely follows the one of Proposition \ref{priori3}. We therefore confine ourselves to give a sketch of it. First, we note that
$$
\frac{q}{p} \leq 1 + \frac{p-1}{p}\frac{\alpha}{2C n} \Longleftrightarrow \frac{q-1}{p-1} \leq  1 + \frac{\alpha}{2Cn}
$$ whenever $C \geq 1$. 
Using this, and the definition of $\mathfrak{t}$ in \rif{marcexpeq}, it is not difficult to see that 
\eqn{alge11}
$$
\frac{q}{p} \leq  1+ \frac {p-1}{p} \frac{\alpha^2}{4Cn^2} \Longrightarrow 
\frac{q}{p} <  1+\frac{\alpha^2}{2Cn(n+1)} 
\Longrightarrow \sst <1+\frac{\alpha}{Cn}\,.
$$
We take $C=1$ when $p\geq 2$, so that the right-hand side inequality in \rif{alge11} is implied by \rif{bound5}. Proceeding as in Section \ref{method} for the case $p\geq 2$, but using \rif{xx.10eq} instead of \rif{xx.10}, we arrive at the following analog of \rif{xx.15}:
\begin{flalign*}
\nr{Du}_{L^{\infty}(B_{\tau_{1}})}&\le\frac{c}{(\tau_{1}-\tau_{2})^{\frac{n}{2p}}}
\nr{Du}_{L^{\infty}(B_{\tau_{2}})}^{\left(\frac qp-1\right)\frac{\sst \chi}{\chi-1}+\frac 12}\left(\int_{B_{s}}(\snr{Du}+1)^p  \dx\right)^{\frac{1}{2p}}\nonumber \\
&\qquad +c\nr{Du}_{L^{\infty}(B_{\tau_{2}})}^{\left(\frac qp-1\right)\frac{\sst \chi}{\chi-1}+\frac{\sst+1}{2}-\frac{\mathrm{b}}{2p}}\nr{\mathbf{P}^{2q-2p+\mathrm{b},1}_{2,\alpha}(\snr{Du}+1;\cdot,(\tau_{2}-\tau_{1})/4)}_{L^{\infty}(B_{\tau_{1}})}^{1/p}+c\,,
\end{flalign*}
valid for every ${\rm b}\in [0,p]$, and this leads to consider the conditions
\eqn{csisi22eq}
$$
\frac qp < 1 + \left(1-\sst+\frac{{\rm b}}{p}\right) \frac{\beta}{\sst n} \quad \mbox{and} \quad \frac qp < 1+ \frac{\alpha}{n} -\frac{\mathrm{b}}{2p}\,.
$$
As done in Proposition \ref{priori3}, we can restrict to the case $\alpha <1$. We choose $
\mathrm{b}\equiv \mathrm{b}(\beta)=2p[\sst \alpha  +\beta(\sst-1)]/[\sst n+2\beta]$, which is admissible by \rif{alge11} (with $C=1$). Using such $\mathrm{b}$ in \rif{csisi22eq}, and formally taking $\beta =\alpha/(1+\alpha)$, we get
\eqn{bound3veroeq}
$$
\frac qp < 1 + \frac{\alpha}{(1+\alpha)n}\left[\frac{2\alpha-n(\sst -1)}{\sst n+2\alpha/(1+\alpha)}
\right]=: 1+ \mathcal{R}_4(n,p,q, \alpha)\frac{\alpha^2}{n^2} \,.
$$
By \rif{alge11} it is $ \mathcal{R}_4(n,p,q, \alpha)>  1/4$, so that \rif{bound3veroeq} is implied by \rif{bound5}. We now come to the case $p< 2$, where we use \rif{alge11} with $C=2$, and \rif{xx.10-sotto2eq} gives this time 
\begin{flalign*}
&\nr{Du}_{L^{\infty}(B_{\tau_{1}})}\le\frac{c}{(\tau_{1}-\tau_{2})^{\frac{n}{2p}}}
\nr{Du}_{L^{\infty}(B_{\tau_{2}})}^{\left(\frac qp-1\right)\frac{\sst\chi}{\chi-1}+\frac 12}\left(\int_{B_{s}}(\snr{Du}+1)^p  \dx\right)^{\frac{1}{2p}}\nonumber \\
&\qquad +c\nr{Du}_{L^{\infty}(B_{\tau_{2}})}^{\left(\frac qp-1\right)\frac{\sst\chi}{\chi-1}+\frac{\sst}{2}+\frac{q-\mathrm{b}/p}{2p}}\nr{\mathbf{P}^{p(q-p)/(p-1)+{\rm b},1/p}_{2,\alpha/2}(\snr{Du}+1;\cdot,(\tau_{2}-\tau_{1})/4)}_{L^{\infty}(B_{\tau_{1}})}^{1/p}+c\,,
\end{flalign*} 
as an analog of \rif{xx.15-sotto2}. Therefore we consider the conditions
$$
\left(\frac qp-1\right)\frac{\sst n }{2\beta}+\frac{\sst}{2}+\frac{q-\mathrm{b}/p}{2p} < 1 \quad \mbox{and} \quad 
 \, \frac{n}{p\alpha}\left[\frac{p(q-1)}{p-1}-p+\mathrm{b}\right] < p\,,
 $$
that are equivalent to
\eqn{rela00eq}
$$
\begin{cases}
\, \frac qp < 1 + \left(1-\sst +\frac{\rm b}{p^2}\right)\frac{\beta}{\sst n+\beta}\\
\, \frac {q-1}{p-1} < 1+\frac{\alpha p}{n}-\frac{{\rm b}}{p} \Longleftrightarrow 
\frac qp < 1 + \left(\frac{\alpha}{n}-\frac{{\rm b}}{p^2}\right)(p-1)\,.
\end{cases}
$$
We take ${\rm b}=14p^2\alpha/(15n)\leq p$; using this value in \trif{rela00eq}, and \rif{alge11} with $C=2$, makes the two inequalities in \rif{rela00eq} implied by \rif{bound5} provided we take $\beta$ close enough to $\alpha/(2+\alpha)$, and we conclude again as in Proposition \ref{priori3}.  
\end{proof}
\begin{proposition}\label{apxsharp}
Let $u\in W^{1,q}(B_{r})$ be a weak solution to \trif{eulera}, under assumptions \eqref{assAA} with $0 < \mu \leq 2$ and $\nu, L$ replaced by $\tilde \nu, \tilde L$ (as in Section \ref{basicn}), and assume also \eqref{xx.3equ}.
Then
\eqn{higherint}
$$
[Du]_{\tilde \alpha, p;B_{t}}\le \frac{c}{(s-t)^{\chi_1}}\left[\|Du\|_{L^q(B_s)} +1\right]^{\chi_2}
$$
holds whenever $B_{t}\Subset B_s\Subset B_{r}$ are concentric balls and $\tilde \alpha<\min\{1/p,1/2\}\alpha$, where $ c\equiv c (\data)$ and $\chi_1, \chi_2\equiv \chi_1, \chi_2 (\datae)$. \end{proposition}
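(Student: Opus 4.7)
The plan is to derive \trif{higherint} by testing the equation \trif{eulera} directly with a finite-difference test function, without passing through any comparison with a frozen auxiliary problem. Crucially, this approach bypasses both $L^{\infty}$ bounds on $Du$ and any gap-type condition such as \trif{marcexp2}, working under the sole membership $u \in W^{1,q}(B_{r})$. First, I would fix concentric balls $B_{t}\Subset B_{s}\Subset B_{r}$, a standard cutoff $\eta \in C^{\infty}_{c}(B_{(t+s)/2})$ with $\eta \equiv 1$ on $B_{t}$, $0\le \eta \le 1$ and $\snr{D\eta}\leq c/(s-t)$, and a vector $h \in \er^n$ with $0 < \snr{h}\leq (s-t)/16$. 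Since $u \in W^{1,q}(B_{r})$, the function $\varphi := \tau_{-h}(\eta^{2q}\tau_{h}u)$ is an admissible test in the weak formulation of \trif{eulera}, and the standard discrete integration-by-parts yields
$$
\int \left[A(x+h,Du(x+h))-A(x+h,Du(x))\right]\cdot D(\eta^{2q}\tau_{h}u) \dx = -\int \left[A(x+h,Du(x))-A(x,Du(x))\right]\cdot D(\eta^{2q}\tau_{h}u) \dx.
$$

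Next, $p$-monotonicity of $A(x+h,\cdot)$, i.e., \trif{hamine2} applied at the frozen point $x+h$, bounds the principal part of the left-hand side from below by $c\int \eta^{2q}\snr{V_{\mu}(Du(x+h))-V_{\mu}(Du(x))}^{2} \dx$. The Leibniz expansion $D(\eta^{2q}\tau_{h}u) = \eta^{2q}\tau_{h}Du + 2q\eta^{2q-1}(D\eta)\tau_{h}u$ generates cutoff-boundary terms controlled pointwise via the growth bound $\snr{A(x,z)}\leq c(\snr{z}+1)^{q-1}$ of \trif{assAA}$_{1}$; on the right-hand side, the H\"older property \trif{assAA}$_{3}$ provides the pointwise factor $L\snr{h}^{\alpha}(\snr{Du}+1)^{q-1}$. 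Every remainder term is then handled by H\"older's inequality with exponents $q/(q-1)$ and $q$, together with \trif{diffbasic}, which gives $\nr{\tau_{h}Du}_{L^{q}(B_{s})}\leq 2\nr{Du}_{L^{q}(B_{s})}$ and $\nr{\tau_{h}u}_{L^{q}(B_{s})}\leq c\snr{h}\nr{Du}_{L^{q}(B_{s})}$. All cutoff-boundary contributions carry $\tau_{h}u$ and hence an extra factor $\snr{h}$, absorbed into $\snr{h}^{\alpha}$ since $\snr{h}\leq 1$. The outcome is the fundamental finite-difference inequality
$$
\int_{B_{t}} \snr{V_{\mu}(Du(x+h))-V_{\mu}(Du(x))}^{2} \dx \leq \frac{c\snr{h}^{\alpha}}{(s-t)^{\chi}}\left(\nr{Du}_{L^{q}(B_{s})}+1\right)^{q},
$$
where $c$ depends on $\data$ and $\chi$ on $\datae$ only.

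At this point I would convert the previous bound into a finite-difference estimate for $Du$ in $L^{p}$ using \trif{sopradue}. For $p\geq 2$, $\una = 0$ and the first term of \trif{sopradue} alone yields $\snr{\tau_{h}Du}^{p}\lesssim \snr{V_{\mu}(\tau_{h}Du)}^{2}$ pointwise, whence
$$
\nr{\tau_{h}Du}_{L^{p}(B_{t})} \leq \frac{c\snr{h}^{\alpha/p}}{(s-t)^{\chi/p}}\left(\nr{Du}_{L^{q}(B_{s})}+1\right)^{q/p}.
$$
For $1<p<2$ the second term in \trif{sopradue} is active: combining it with the previous $L^{2}$-bound via H\"older's inequality with exponents $2/p$, $2/(2-p)$, and with the embedding $\nr{Du}_{L^{p}(B_{s})}\leq c\nr{Du}_{L^{q}(B_{s})}$, produces a bound of the form $\nr{\tau_{h}Du}_{L^{p}(B_{t})}\leq c\snr{h}^{\alpha/2}(s-t)^{-\chi_{1}}(\nr{Du}_{L^{q}(B_{s})}+1)^{\chi_{2}}$. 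In both cases the exponent of $\snr{h}$ is exactly $\min\{1/p,1/2\}\alpha$, so applying Lemma \ref{l4} to $Du$ on the pair $B_{t}\subset B_{(t+s)/2}$ with $\beta = \min\{1/p,1/2\}\alpha$ and any $\tilde{\alpha}<\beta$ finally yields \trif{higherint} with exponents depending only on $\datae$.

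The main technical obstacle is the careful bookkeeping of the non-principal terms in the first step, in order to avoid producing any power of $\nr{Du}_{L^{q}(B_{s})}$ greater than $q$: every product of the form $(\snr{Du}+1)^{q-1}\cdot \snr{\tau_{h}Du}$ or $(\snr{Du}+1)^{q-1}\cdot \snr{\tau_{h}u}$ must be split by H\"older with exponents $q/(q-1)$ and $q$, which works exactly because the reciprocals sum to one. A secondary delicate point is the case $p<2$, where the specific interplay between the embedding $L^{q}\supset L^{p}$ and the H\"older exponents $2/p$, $2/(2-p)$ in the auxiliary term of \trif{sopradue} produces precisely the threshold $\alpha/2$ matching the sharp $\min\{1/p,1/2\}\alpha$ of the statement.
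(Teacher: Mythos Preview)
Your proposal is correct and follows essentially the same route as the paper. The paper's proof simply cites the finite-difference estimate \trif{puff1} from \cite[(51)]{sharp}, whereas you spell out its derivation by testing \trif{eulera} with $\tau_{-h}(\eta^{2q}\tau_{h}u)$; this is exactly the argument behind \cite[(51)]{sharp}, and the subsequent conversion from $\tau_{h}V_{\mu}(Du)$ to $\tau_{h}Du$ via \trif{sopradue} together with the H\"older splitting in the $p<2$ case matches the paper's display line for line.
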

\begin{proof} In the case of minimizers, \rif{higherint} is hidden in \cite[Proof of Theorem 4]{sharp}. The arguments in \cite{sharp} rely on the use of the Euler-Lagrange equation and they work in the case of the general equations considered here. Indeed, from \cite[(51)]{sharp} we have that 
\eqn{puff1}
$$
\int_{B_t} \snr{\tau_{h}V(Du)}^2\dx \leq \frac{c |h|^{\alpha}}{(s-t)^{\theta}} \int_{B_{s}} (\snr{Du}+1)^q\dx
$$
holds whenever $B_{t}\Subset B_s\Subset B_{r}$ are concentric balls and $h \in \er^n$ such that $|h| \leq (s-t)/4$, with $c \equiv c (\data)$ and $\theta\equiv \theta(\datae)$. Using \rif{puff1} with  \rif{sopradue}, we obtain 
\begin{flalign*}
\int_{B_t} \snr{\tau_{h}Du}^p\dx  &\leq c\int_{B_t} \snr{\tau_{h}V(Du)}^2\dx +  c\una
\int_{B_t} \snr{\tau_{h}V(Du)}^p(\snr{Du}+1)^{p(2-p)/2}\dx  \\
 & \leq c\int_{B_t} \snr{\tau_{h}V(Du)}^2\dx +c\una\left(\int_{B_t}\snr{\tau_{h}V(Du)}^2\dx\right)^{\frac{p}{2}}
\left(\int_{B_t}(\snr{Du}+1)^p\dx\right)^{\frac{2-p}{2}}\\
&\leq   \frac{c |h|^{\min\{1,p/2\}\alpha}}{(s-t)^{\theta}} \int_{B_{s}} (\snr{Du}+1)^q\dx
\end{flalign*}
for $c\equiv c (\data)$, $\theta \equiv \theta(n,p,q,\alpha)$. The information in the last display and Lemma \ref{l4} now imply \rif{higherint}. Note that \rif{higherint} does not require any upper bound on $q/p$. 
\end{proof} 
We now complete the proof of Theorem \ref{t6}. With the same notation on sequences $\{\eps\}$ and mollifiers $\{\phi_{\eps}\}$ of Section \ref{generalapp}, we set
\eqn{vettoriA}
$$A_{\eps} (x,z):=\left(A(x, \cdot)*\phi_\eps\right)(z)+\eps [H_{\mu_{\eps}}(z)]^{(q-2)/2}z\,,\quad (x,z)\in \Omega \times \er^n\,,$$  
compare with \rif{nuoviF}. We then have 
\begin{lemma}\label{converlemma} Under assumptions \eqref{assAA}, $| A_{\eps}(x, z)-A(x, z)|\leq c\eps^{\min\{1, p-1\}}$ holds for $(x, z)\in \Omega \times \er^n$, provided $\snr{z}\leq M$, where $M\geq 1$ and $c$ is independent of $\eps$. Moreover, $|A_{\eps} (x,z)|\leq c[H_1(z)]^{(q-1)/2}$ holds for every $(x, z)\in \Omega \times \er^n$, where $c$ is again independent of $\eps$. 
\end{lemma}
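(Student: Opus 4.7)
\smallskip
\noindent\textbf{Proof proposal.} The plan is to split
$$
A_{\eps}(x,z)-A(x,z) = \underbrace{\int_{\mathcal B_1}[A(x, z-\eps\lambda)-A(x,z)]\phi(\lambda)\, \d\lambda}_{=:\textnormal{(I)}} + \underbrace{\eps [H_{\mu_{\eps}}(z)]^{(q-2)/2}z}_{=:\textnormal{(II)}}
$$
and to bound each piece separately. Term (II) is the easiest: since $|z|\le M$ and $\mu_{\eps}\le 2$, the elementary estimate
$[H_{\mu_{\eps}}(z)]^{(q-2)/2}|z|\le |z|^{q-1}+\mu_{\eps}^{q-1}\le c(M)$, valid for every $q>1$ and split according to whether $|z|\geq \mu_{\eps}$ or not, yields $|\textnormal{(II)}|\le c(M)\eps\le c(M)\eps^{\min\{1,p-1\}}$ (using that $\eps\le 1$).

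For term (I) everything reduces to a quantitative continuity estimate: I would show
\begin{equation}\label{propkey}
|A(x,w)-A(x,z)|\le c(M)\eps^{\min\{1,p-1\}}\qquad \mbox{for every $|w-z|\le \eps$,\ $|z|\le M$.}
\end{equation}
When $p\ge 2$, assumption \eqref{assAA}$_1$ gives $|\partial_z A(x,z)|\le c[H_\mu(z)]^{(q-2)/2}+c[H_\mu(z)]^{(p-2)/2}$, which is uniformly bounded on $\{|z|\le M+1\}$; the mean value theorem then provides \eqref{propkey} with exponent $1$. When $1<p<2$, the factor $[H_\mu(z)]^{(p-2)/2}$ can blow up near the origin if $\mu=0$, and this is the main obstacle. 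To bypass it, I would split according to the location of $z$:
\begin{itemize}
\item If $|z|>2\eps$, the entire segment joining $z$ to $w=z-\eps\lambda$ lies outside $\mathcal B_{\eps}$, so $[H_\mu(tw+(1-t)z)]^{(p-2)/2}\le \eps^{p-2}$; combined with the mean value theorem this gives $|A(x,w)-A(x,z)|\le c(M)\eps\cdot \eps^{p-2} = c(M)\eps^{p-1}$.
\item If $|z|\le 2\eps$, then $|w|\le 3\eps$ as well, and I would simply use the growth bound in \eqref{assAA}$_1$ directly: $|A(x,w)|+|A(x,z)|\le c(|w|^{p-1}+|w|^{q-1}+|z|^{p-1}+|z|^{q-1}+1)\lesssim \eps^{p-1}$ (again using $\eps\le 1$ and $p\le q$).
\end{itemize}
Integrating \eqref{propkey} against $\phi$ gives $|\textnormal{(I)}|\le c(M)\eps^{\min\{1,p-1\}}$, completing the first claim.

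For the second claim, I would bound
$$
|A_{\eps}(x,z)|\le \int_{\mathcal B_1}|A(x,z-\eps\lambda)|\phi(\lambda)\,\d\lambda + \eps [H_{\mu_\eps}(z)]^{(q-2)/2}|z|\,.
$$
Using \eqref{assAA}$_1$ inside the integral and the elementary comparability $[H_\mu(z-\eps\lambda)]\le 2|z|^2+2+\mu^2\le c[H_1(z)]$ (valid since $\mu,\eps\le 1$ and $|\lambda|\le 1$), each factor $[H_\mu(z-\eps\lambda)]^{(q-1)/2}$ and $[H_\mu(z-\eps\lambda)]^{(p-1)/2}$ is controlled by $c[H_1(z)]^{(q-1)/2}$ since $H_1(z)\ge 1$ and $p\le q$. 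The regularising term is handled as in part one, and yields the same bound. Collecting everything gives the desired global estimate with a constant independent of $\eps$.
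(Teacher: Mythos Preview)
Your argument is essentially correct and close in spirit to the paper's, but there is one slip. In the sub-case $|z|\le 2\eps$ (with $1<p<2$) you write
$$|A(x,w)|+|A(x,z)|\le c(|w|^{p-1}+|w|^{q-1}+|z|^{p-1}+|z|^{q-1}+1)\lesssim \eps^{p-1}\,,$$
which is false as stated: the ``$+1$'' (coming from the $\mu^{p-1}$ contribution in $[H_\mu(z)]^{(p-1)/2}$) does not vanish with $\eps$. The repair is immediate, though: if $\mu>0$ is fixed, the derivative bound $|\partial_z A|\le c(M)\mu^{p-2}$ holds on the whole segment and the mean value theorem already gives $|A(x,w)-A(x,z)|\le c(M,\mu)\eps\le c(M,\mu)\eps^{p-1}$, so no case split is needed; your split is only required when $\mu=0$, and then the ``$+1$'' is absent and your bound goes through. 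Since the lemma only demands that $c$ be independent of $\eps$, a $\mu$-dependent constant is acceptable.

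The paper's route is a bit different in the small-$|z|$ regime. Rather than bounding $|A(x,w)-A(x,z)|$ pointwise in $w$ and then integrating against $\phi$, it writes the increment via the integral mean value theorem, invokes the identity \eqref{elemint} to replace $\int_0^1[H_\mu(tw+(1-t)z)]^{(p-2)/2}dt$ by $(|w|^2+|z|^2+\mu^2)^{(p-2)/2}$, and then \emph{keeps} the $\lambda$-integration: dropping $|z|^2+\mu^2$ and changing variables $\lambda'=z+\eps\lambda$ gives $\eps\int_{\mathcal B_1}|z+\eps\lambda|^{p-2}\,d\lambda\le c\eps^{p-1}$. This averaging makes the constant $\mu$-independent throughout. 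Your direct growth-bound approach is more elementary and avoids \eqref{elemint}, at the price of the extra $\mu$ case distinction above.
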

\begin{proof}
We have
\begin{flalign*}
& |A_{\eps}(x, z)-A(x, z)|  \leq  c\eps \int_{\BB}\int_0^1 [H_{\mu}(t(z+\eps \lambda)+(1-t)z)]^{\frac{q-2}{2}}\, dt \,  \phi_\eps(y) \dla  \\
& \hspace{35mm}+  c\eps \int_{\BB}\int_0^1 [H_{\mu}(t(z+\eps \lambda)+(1-t)z)]^{\frac{p-2}{2}}]\, dt \,\phi_\eps(y) \dla  + c\eps M^{q-1}\\
& \qquad \quad  \leq  c\eps \int_{\BB}\left[(\snr{z+\eps \lambda}^2+\snr{z}^2+\mu^2 )^{\frac{q-2}{2}}+(\snr{z+\eps \lambda}^2+\snr{z}^2 +\mu^2)^{\frac{p-2}{2}}\right]\phi_1(y)\dla  + c\eps M^{q-1}\\
& \qquad \quad =:  c\eps \mathbb{I}_{p}(z) +   c\eps \mathbb{I}_{q}(z) + c\eps M^{q-1} \leq c\eps^{\min\{1, p-1\}}c(M) \,.
\end{flalign*}
Note that, to get the estimate in the first line of the above display, we have used the definition in \rif{vettoriA} and the pointwise bound on $\partial_zA(\cdot)$ assumed in \eqref{assAA}$_1$. For the second one, we have used \rif{elemint}. It remains to justify the estimate in the last line. We treat $\eps \mathbb{I}_{q}(z)$, the estimate for $\eps \mathbb{I}_{p}(z)$ being completely similar. When $q\geq 2$, we find $\eps\mathbb{I}_{q}(z)\leq c\eps [H_{\mu_{\eps}}(z)]^{(q-2)/2}\leq c \eps M^{q-2} \leq \eps c(M)$. When $q<2$, instead we further  distinguish two cases. The first is when $\snr{z}\geq \eps$; then we have 
$\eps \mathbb{I}_{q}(z) \leq c \eps \snr{z}^{q-2} \leq c\eps^{q-1}$. Finally, if $q<2$ and $\snr{z} \leq \eps$, then we have 
$\eps \mathbb{I}_{q}(z)\leq c \eps \int_{\BB}\snr{z+\eps\lambda}^{q-2}\, d\lambda \leq c  
\eps^{1-n}\int_{\mathcal B_{2\eps}}|\lambda|^{q-2}\, d\lambda \leq c \eps^{q-1}$. The first assertion in the lemma is proved. The second one trivially follows from \rif{assAA}$_1$. 
\end{proof}
To proceed, we define $u_{\eps} \in u_0 + W^{1,q}_0(\Omega)$ as the (unique) solution to $\diver\, A_{\eps} (x,Du_{\eps})=0$ in $\Omega$, and such that 
$u_{\eps}\in u_0+W^{1,q}_0(\Omega)$, 
where 
$
A_{\eps} (\cdot)$ is defined in \rif{vettoriA}. 
For every $\eps$, the vector field $A_{\eps}(\cdot)$ satisfies the assumptions required on $A(\cdot)$ in Proposition \ref{caccin5}. In particular, \eqref{assAA} are satisfied with $\mu$ replaced by $\mu_{\eps}:= \mu+\eps>0$, and for new constants $0 <\tilde \nu \leq  \tilde  L$ as in Section \ref{basicn}, replacing $\nu, L$, and independent of $\eps$. This can be easily proved using the arguments of \cite[Section 4.5]{dm1}. Using \rif{coerpq} yields the uniform bound $\|Du_{\eps}\|_{L^{p}(\Omega)} \lesssim \||Du_{0}|+1\|_{L^{p(q-1)/(p-1)}(\Omega)}^{(q-1)/(p-1)} 
$ (note that $p(q-1)/(p-1)\geq q$). Combining this with \rif{stimasharp}, we get the local estimate
\eqn{stimasharpdopo}
$$
\nr{Du_{\eps}}_{L^{\infty}(B/2)}\le  \frac{c}{|B|^{\chi_1}} \left(\int_{\Omega} (|Du_0|+1)^{\frac{p(q-1)}{p-1}}\dx +1 \right)^{ \chi_2}\,.
$$
This holds whenever $B \Subset \Omega$ is a ball, where $c\equiv c (\data)\geq 1$, $\chi_1, \chi_2\equiv \chi_1, \chi_2(\datae)\geq 1$ are otherwise independent of $\eps$. 
Up to not relabelled subsequences we can therefore assume that $u_{\eps}\deb u$ in $W^{1,p}(\Omega)$, for some $u \in u_0 +W^{1,p}_0(\Omega)$, and that $\{Du_{\eps}\}$ is bounded in $L^{\infty}_{\loc}(\Omega, \er^n)$. This and Proposition \ref{apxsharp} yield a uniform bound on $\{Du_{\eps}\}$ in $W^{\tilde \alpha, p}_{\loc}(\Omega,\er^n)$. Again up to subsequences, we can assume that $Du_{\eps} \to Du$ in $L^{\gamma}_{\loc}(\Omega,\er^n)$ for some $\gamma < np/(n-p \tilde \alpha)$, and a.e. Using this last fact and \rif{stimasharpdopo}, by interpolation it then follows that $Du_{\eps} \to Du$ in $L^{\gamma}_{\loc}(\Omega, \er^n)$ for every $\gamma < \infty$. This is sufficient to claim that $u$ is a distributional solution to \rif{dir1}$_1$, by Lemma \ref{converlemma} and dominated convergence. Letting $\eps \to 0$ in \rif{stimasharpdopo} leads to \rif{stima} via coverings. Finally, the local H\"older continuity of $Du$ follows by the methods employed in Section \ref{holdergrad}. Note that in defining the comparison functions $v$ in \rif{pd-dopo}, now we take $v$ as the solution to\,$\diver\, A_\eps(x_{\rm c}, Dv)=0$ such that $v\equiv u_{\eps}$ on $B_{\tau}$. We remark that the analog of \rif{stim1bis} is obtained via \rif{stimasharpdopo}, and the one of \rif{altrasup} follows as in \rif{fuckbsceq}. Finally, the analog of \rif{holly11} follows estimating as for  \rif{nonabbiamo} and \rif{servizio}. 
\section{Corollaries \ref{c1}, \ref{c2} and \ref{c4}}\label{reloaded}
\subsection{Proof of Corollary \ref{c1}}\label{S1} Under the assumptions of Corollary \ref{c1}, 
once $Du$ is known to be locally H\"older continuous, its H\"older exponent can be upgraded up to the maximal one via a combination of a few classical regularity arguments and estimates for nonuniformly elliptic problems. Here we give the details. Since the result is local in nature, we can assume that 
\eqn{starty}
$$\|Du\|_{L^\infty(\Omega)}+[Du]_{0, \beta;\Omega}=:M <\infty$$ and that $M\geq 1$; here $\beta$ is the H\"older exponent of $Du$ provided by Theorem \ref{t2}. We can also assume that $\beta< \alpha$, otherwise there is nothing to prove. In the following, the constants denoted by $c$ will depend on $\data$; additional dependences will be emphasized in parentheses. We take a ball $B_{r}\equiv B_{r}(x_{\rm c})\Subset \Omega$ such that $r\leq 1$, denote $A(x, z)= \ccc(x)\partial_zF(z)$ and $A_{r}(z):=A(x_{\rm c},z)+r^{\alpha}[H_{\mu}(z)]^{(q-2)/2}z$. 
Since $A_{r}(\cdot)$ is $q$-monotone, we can take $v\in u+W^{1,q}_0(B_{r})$ such that 
$
\diver\, A_{r}(Dv) =0$ in $B_{r}$. As in the proof of \rif{nonabbiamo} in Proposition \ref{caccin4}, we find
\eqn{campcomp0}
$$\mint_{B_{r}}\snr{V(Du)-V(Dv)}^{2}\dx\leq cM^{2q-p}
r^{2\alpha}\,.$$
Using \rif{ineV} in the above inequality, and recalling that $p\geq 2$ and $\mu>0$, yields
\eqn{campcomp}
$$
 \mint_{B_{r}}\snr{Du-Dv}^{2}\dx  \leq  c\mu^{2-p}M^{2q-p}r^{2\alpha}\,.$$
This last inequality and \rif{starty} gives
\eqn{findy0}
$$
  \mint_{B_{r}} |Dv-(Du)_{B_{r}}|^2 \dx\leq c(\mu, M)r^{2\beta}\,.
$$
By $\mu>0$, standard regularity theory, or Lemma \ref{cacc-class1}, give $Dv\in L^{\infty}_{\loc}(B_{r})\cap W^{1,2}_{\loc}(B_{r})$. Note that \rif{starty}  and \rif{campcomp0} imply $\|Dv\|_{L^p(B_{r})}^p\leq c(M)r^n$.
This and \rif{stimaprimissima} give
\eqn{findy}
$$\|Dv\|_{L^\infty(B_{r/2})}\leq c(M)\,.$$
Moreover, every component $\mathfrak{v}\equiv D_{s}v$, $s\in \{1, \ldots, n\}$ is an energy solution to the linear elliptic equation $\diver\, (\mathbb{A}(x)D\mathfrak{v})=0$, where $[\mathbb{A}(x)]_{ij}:= \partial_{z_j}A_{r}^i(x_{{\rm c}},Dv(x))(=\partial_{z_iz_j}F_{r}(x_{{\rm c}},Dv(x)))$. Again by $\mu>0$ and thanks to \rif{findy}, we find $\lambda \equiv \lambda (\data, \mu, M)>0$, independent of $r$, such that $\lambda \mathds{I}_{\rm d} \leq \mathbb{A}(x)\leq (1/\lambda)\mathds{I}_{\rm d}$ holds for a.e. $x \in B_{r/2}$. This allows to apply De Giorgi-Nash-Moser theory, that yields $\beta_0 \equiv \beta_0 (\data, \mu, M)\in (0,1)$, and $c \equiv c (\data, \mu, M)\geq 1$, such that 
\eqn{findy2}
$$
r^{2-n}\|DD_sv\|_{L^2(B_{r/4})}^2+
r^{2\beta_0}[D_sv]_{0, \beta_0; B_{r/4}}^2 \leq c
\mint_{B_{r/2}}|D_sv-a|^2\dx\,,
 $$ 
holds for every $a \in \er$ and  $s\in \{1, \ldots, n\}$. 
Note that we have also incorporated in \rif{findy2} the standard Caccioppoli inequality for linear elliptic equations \cite[Theorem 6.5]{giu}. As there is no loss of generality in assuming that $\beta_0 \leq \beta$, choosing $a\equiv (D_su)_{B_{r}}$ and using \rif{findy0} in \rif{findy2}, yields 
$[Dv]_{0, \beta_0; B_{r/4}} \leq c_*\equiv c_*(\data, \mu, M)$.  We now denote by $\omega(\cdot)$ the modulus of continuity of $\partial_{z} A_{r}(x_{{\rm c}}, \cdot)$ on $\mathcal B_{M}$. This is independent of $r$ and of the point $x_{\rm c}$. Recalling the definition of $\mathbb{A}(\cdot)$, we therefore have $|\mathbb{A}(x)-\mathbb{A}(y)| \leq \omega(c_*|x-y|^{\beta_0})=: \tilde \omega (|x-y|)$, whenever $x, y \in B_{r/4}$, so that the entries $\mathbb{A}(\cdot)$ are continuous, with a modulus of continuity that depends in a quantitative way on $\data, \mu$ and $M$. Campanato's perturbation theory and \rif{findy2}, now imply
$$
\int_{B_\varrho} |D^2v|^2 \dx \leq\frac {c}{r^2}\left[\left(\frac{\rr}{r}\right)^{n}+[\tilde \omega(r)]^2\right]\int_{B_{r}} |Dv-(Du)_{B_{r}}|^2 \dx
$$
holds whenever $\varrho \leq r/4$, where $c \equiv c (\data, \mu, M)$; here we have also used \cite[(10.42)]{giu}. Poincar\'e inequality now gives, this time whenever $\varrho \leq r$
\eqn{camp}
$$
\int_{B_\varrho} |Dv-(Dv)_{B_\varrho}|^2 \dx \leq c\left[\left(\frac{\rr}{r}\right)^{n+2}+\left(\frac{\rr}{r}\right)^{2}[\tilde \omega(r)]^2\right]\int_{B_{r}} |Dv-(Du)_{B_{r}}|^2 \dx\,.
$$ 
Using \rif{camp} in combination with \rif{campcomp}, and a standard comparison argument, we conclude with 
$$
\int_{B_\varrho} |Du-(Du)_{B_\varrho}|^2 \dx  \leq c\left[\left(\frac{\rr}{r}\right)^{n+2}+[\tilde \omega(r)]^2\right]\int_{B_{r}} |Du-(Du)_{B_{r}}|^2 \dx+ cr^{2\alpha+n}
$$
again for every $\varrho \leq r $, where $c \equiv c (\data, \mu, M)$. Since $h(\varrho):=\|Du-(Du)_{B_{\varrho}}\|_{L^{2}(B_{\rr})}^2$ is non-decreasing, we are in position to use Lemma \ref{l5bis}. It follows that there exists $r_0$ depending on $\data, \mu, M$ and on the local modulus of continuity of $\partial_{zz}F(\cdot)$, such that
$
\int_{B_\varrho} |Du-(Du)_{B_\varrho}|^2 \dx \leq c \varrho^{n+2\alpha}
$
holds provided $\varrho \leq r_0$, where $c $ depends on $\data, \mu, M$ and the modulus of continuity of $\partial_{zz}F(\cdot)$. At this stage, the local $C^{1, \alpha}$ continuity of $Du$ follows from Campanato-Meyers integral characterization of H\"older continuity. This completes the proof of Corollary \ref{c1}. 
\subsection{Proof of Corollary \ref{c2}}\label{S11} The proof is a modification of the one for Corollary \ref{c1}, and we keep the notation used there. Without loss of generality we can assume that $\|f\|_{L^{\infty}(\Omega)}\leq 1$ and $\beta<\alpha/2$. This time we define $v\in u+W^{1,q}_0(B_{r})$ as the unique minimizer of $w\mapsto \int_{B_{r}}F_r(Dw)\, dx$ in its Dirichlet class, where $F_r(z):= \ccc(x_{\rm c}, u(x_{\rm c}))F(z) + r^\alpha [{H_{\mu}(z)}]^{q/2}$. We proceed as for \rif{nonved}-\rif{xx.6} (with $8|h|^{\beta_0}\equiv r$ and $\rr\equiv 1$), getting
\begin{flalign*}
& \int_{B_{r}}\snr{V(Du)-V(Dv)}^{2}  \dx \leq c\int_{B_{r}} \left[ F_{r}(Du)- F_{r}(Dv)\right]  \dx \\
& \qquad \leq c\int_{B_{r}} \ccc(x_{\rm c}, u(x_{\rm c}))[F(Du)- F(Dv)]  \dx + c r^{\alpha} \int_{B_{r}} [H_{\mu}(Du)]^{q/2}\dx \\
& \qquad  \leq c
 \int_{B_{r}} \ccc(x_{\rm c}, u(x_{\rm c}))[F(Du)- F(Dv)]  \dx +c M^q r^{n+\alpha} \\
& \qquad  \leq c
 M^{q+\alpha}r^{n+\alpha}  +c M^q r^{n+\alpha} \leq c (M)r^{n+\alpha} \,.
 \end{flalign*}
 In turn, via \rif{ineV} this implies $\int_{B_{r}}\snr{Du-Dv}^{2}  \dx \leq c (\mu,M)r^{n+\alpha}$. 
These estimates are the counterparts of \rif{campcomp0} and \rif{campcomp} and from this point on the proof develops as in Corollary 
\ref{c1}, replacing $\alpha$ by $\alpha/2$ everywhere. 
\subsection{Proof of Corollary \ref{c4}}\label{S111} Let $u$ be a distributional solution to $\diver \, A(x, Du)=0$, satisfying \rif{starty}, and under the assumptions of Corollary \ref{c4}. It is easy to see that the proof of Corollary \ref{c1} applies verbatim to this situation, as it does not use the minimality of $u$ beyond the fact that $u$ solves the Euler-Lagrange equation. Indeed, up to passing to inner domains of $\Omega$, we can assume that $\partial_z A(\cdot)$ is uniformly continuous on $ \Omega \times \mathcal B_{M}$, so that we can find a modulus of continuity of $z \mapsto \partial_z A (x_{\rm c}, z)$, which is independent of the chosen point $x_{\rm c}$. We conclude that Corollary \ref{c4} follows from Theorem \ref{t6} (used to get \rif{starty} up to passing to smaller open subsets).

\end{document}